\newtheorem{theorem}{Theorem}[section]
\newtheorem{corollary}[theorem]{Corollary}
\newtheorem{proposition}[theorem]{Proposition}
\newtheorem{lemma}[theorem]{Lemma}
\newtheorem*{theorem*}{Theorem}
\newtheorem{definition}[theorem]{Definition}
\newtheorem{example}[theorem]{Example}
\DeclareMathOperator{\bB}{{\mathbb B}}
\DeclareMathOperator{\bK}{{\mathbb K}}
\DeclareMathOperator{\Cdb}{{\mathbb C}}
\DeclareMathOperator{\Rdb}{{\mathbb R}}
\DeclareMathOperator{\bF}{{\mathbb F}} 
\DeclareMathOperator{\Tdb}{{\mathbb T}}
\DeclareMathOperator{\Ndb}{{\mathbb N}}
\DeclareMathOperator{\cW}{{\mathcal W}}
\DeclareMathOperator{\cC}{{\mathcal C}}
\DeclareMathOperator{\cO}{{\mathcal O}}
\DeclareMathOperator{\cR}{{\mathcal R}}
\DeclareMathOperator{\cT}{{\mathcal T}}
\DeclareMathOperator{\cS}{{\mathcal S}}
\DeclareMathOperator{\bH}{{\mathbb H}}
\DeclareMathOperator{\bC}{{\mathbb C}}
\DeclareMathOperator{\bN}{{\mathbb N}}
\DeclareMathOperator{\bR}{{\mathbb R}}
\DeclareMathOperator{\bT}{{\mathbb T}}
\DeclareMathOperator{\bQ}{{\mathbb Q}}
\DeclareMathOperator{\bZ}{{\mathbb Z}}
\DeclareMathOperator{\maxten}{\otimes_{\rm max}}
\DeclareMathOperator{\minten}{\otimes_{\rm min}}
\DeclareMathOperator{\cten}{\otimes_{\mathfrak c}}
\DeclareMathOperator{\erten}{\otimes_{\rm er}}
\DeclareMathOperator{\elten}{\otimes_{\rm el}}
\title{Real operator systems}
\author{David P. Blecher}
\address{Department of Mathematics, University of Houston, Houston, TX
77204-3008, USA}
\email[David P. Blecher]{dpbleche@central.uh.edu}
\author{Travis B. Russell}
\address{Department of Mathematics, Texas Christian University, PO BOX 298900,
Fort Worth, Texas 76129}
\email[Travis B. Russell]{travis.b.russell@tcu.edu}
\date{Final revision of September, 2025.  To appear J. Functional Analysis} 
\subjclass[2020]{Primary: 46L07, 46M05, 47L25; Secondary 26E99, 46L51, 46L52, 47B92, 47B93, 47L05, 47L30, 81P40} 
\begin{document}

\begin{abstract}  Operator systems are the unital self-adjoint subspaces of the bounded operators on a Hilbert space.  Complex operator systems are an important category containing the C$^*$-algebras and von Neumann algebras, which is increasingly of interest in modern analysis and also in modern quantum physics (such as quantum information theory). They have an extensive theory, and have very important applications in all of these subjects. We present here the real case of the theory of (complex) operator systems, and also the real case of their remarkable tensor product theory, due in the complex case to Paulsen and his coauthors and students (such as Kavruk), building on pioneering earlier work of Kirchberg and others. We uncover several notable differences between the real and complex theory, including the absence of minimal and maximal functors in the category of real operator systems.   We also develop very many foundational structural results for real operator systems, and elucidate how the complexification interacts with the basic constructions in the subject.
In the final two sections of our paper we study real analogues  of the Kirchberg conjectures (and of several important related problems that have attracted much interest recently), and study the deep relationships between them.   \end{abstract}

\maketitle
\tableofcontents

\bigskip

\section{Introduction}

Complex operator systems are an important 
category containing the C$^*$-algebras and  von Neumann algebras, which is increasingly of interest in modern analysis (see e.g.\ \cite{CvS}) and also in modern quantum physics (such as quantum information theory).   They  have an extensive theory
(see e.g.\ the texts \cite{Pnbook,P} and the very many papers by Paulsen and his students and collaborators, and many others), and have very important applications in all of these subjects. 

 Real structure occurs naturally and crucially
 in very many areas of mathematics and mathematical physics.  In several deep mathematical theories, at some point a crucial advance has been made by switching to the real case (e.g.\ in $K$-theory and the Baum-Connes conjecture, see for example the very recent survey \cite{BSc}, or see \cite{Ros} for some more examples).   This is sometimes because  the real category is bigger, as it is in our case, and hence allows more freedom. 
 Ruan initiated the study of real operator spaces in \cite{ROnr,RComp}, and this study was continued in 
\cite{Sharma, BT,BReal,BCK}.   Indeed \cite{BReal}, as well as providing some basic structural results for real operator spaces 
and investigation how the 
facts and structures there interact with the complexification, 
completed the verification that a large portion of the theory of complex 
operator spaces and operator algebras as represented by the text {\rm \cite{BLM}} for specificity, 
transfers to the real case.     
Here we do the same thing but for real  operator systems, that is real unital selfadjoint spaces  $\cS$ of operators on a real Hilbert space.
These of course are operator spaces.  There are many motivations for such a task, some mentioned in some of the above cited works.  For example,
 the sequels \cite{BMcI, BMcII} to  the present  paper  develop
a real noncommutative convexity, the real case of the recent and profound complex theory  developed by  Davidson and Kennedy which is based on complex operator systems. 
Classical convexity, is essentially a `real theory', as an inspection  of the texts in that subject will show.  Also, historically, 
much of the motivation for noncommutative convexity is based on real 
examples as in the work of the Helton school for example (e.g.\ see references in \cite{BMcI, BMcII}) 
so it makes sense to develop a noncommutative real convexity based on real operator systems (and hence on the present paper).

We develop here the theory of  real operator systems.  One goal of the present paper is to create for the fast growing 
community of operator systems users a resource or repository 
for the real systems theory, in addition to providing some basic structural results.   Since this is a daunting and  not very well defined task, we restrict 
ourselves  mostly to the more modest target 
of checking  the real case of what seems to be some of the most important results in the theory, and how the 
facts and structures there interact with the complexification
$\cS_c$, which is sometimes quite nontrivial. 
Much of the last third of our paper checks 
the real case of  the remarkable theory of operator system tensor products due to Paulsen and his coauthors
and students, such as Kavruk, and the exciting applications 
surrounding the Connes-Kirchberg problem \cite{P}.  This is an extensive theory, building in part on the earlier C$^*$-algebra and operator space tensor product theories (see e.g.\ \cite{Kir,ER,Pisbk,P,Pnbook,BLM,BP}).  Thus again we 
focus on what seems to be some of  the most important parts. 
For example we establish the real version of the delicate implications between the Kirchberg-Connes, and Tsirelson conjectures, etc, and prove the equivalence with the complex case.  (That the real version of the Connes' embedding
problem was equivalent to the complex was already known, see \cite{Oz,BDKS}.) We also check many other facts in the ``nuclearity/exactness/Kirchberg problem theory'' in the real case, and their relationships with the complexification.

  As we have alluded to earlier, not only do we want to check that 
the real versions of the complex theory  work, 
 we also want to know what the complexifications are 
of standard constructions, and this is often as important but much less obvious.   
For example, it is important to know that the complexification of a particular operator system tensor product is 
a particular  tensor product of the complexifications.
More generally, it is important to know for which `constructions' $F$ in the theory we have $F(X)_c = F(X_c)$ canonically completely isometrically.  In some cases 
one has to be careful with the identifications.  

Some of the topics that work out rather differently in the real case concern the relationship between the positive elements of $V$ and the positive elements of $M_n(V)$ for $n > 1$ when $V$ is a real operator system. To summarize these differences: a real matrix ordering $\{C_n\}$ with a proper cone $C_1$ may not be a proper matrix ordering; an order unit for a real matrix ordered space may fail to be a matrix order unit; the kernel of a ucp map is generally not an intersection of kernels of states; a real operator system can be isometrically completely order isomorphic to its dual  (although not completely isometrically); and there is often no maximal or minimal operator system structure extending the positive cone on a given real operator system. In connection with the last item: unlike the case for real and complex Banach spaces (see e.g.\ \cite[Chapter 3]{Pisbk}, \cite[Chapter 1]{BLM}), and unlike the case for complex operator systems viewed as ordered Banach spaces (or more accurately, as archimedean order unit $*$-vector spaces \cite{PTT}),  there is no MIN nor MAX functor for general real operator systems.   
This is related to the fact that in the real case archimedean order unit $*$-vector spaces are not usually function spaces.
Nonetheless we will prove for this class an analogue of Kadison's characterization of real archimedean order unit spaces.   
These results complement known results concerning differences between real quantum physics and complex quantum physics which have recently appeared in the literature (e.g. \cite{CDPR2}).

We do not discuss here nonunital operator systems, matrix convexity and boundary representations and the Choquet boundary and Choquet simplices, coproducts and universal systems, or many of the important applications to quantum information theory, such as 
nonlocal games, etc. We also do not attempt here to solve the  real analog of the Smith-Ward conjecture and its connection  to the Riesz separation property.  We hope to discuss these and other topics in the future, with some of these projects involving students.

To not try the readers patience we have attempted to be brief. 
Our proofs are often deceptively short, but are usually shorthand for, or are referencing deep results and arguments from, the earlier complex case which had to be checked in the real case. Indeed generally in our paper we are able to include a huge number of results in a relatively short manuscript since many proofs are similar to their complex counterparts, or follow by complexification, and thus we often need only discuss the new points that arise. 

We will begin our paper with several sections containing fundamental structural results about real operator systems. Section \ref{CE} contains a proof of the Choi-Effros-Ozawa abstract characterization theorem for real operator systems, which is slightly different from Ozawa's. Ozawa has some theory of real operator systems in the remarkable paper \cite{Oz}. As we will remark at some point in this section, our proof also gives a useful complexification for general matrix ordered spaces, and this will be important later. 
 We remark that real variants of the abstract metric-linear characterizations of operator systems due to the first author and Neal were given in  \cite[Section 2]{BReal}.  
  Section \ref{rcast} begins the real case of the basic theory of unital operator systems as well as more advanced constructions such as real operator $A$-systems and real dual operator systems. 
Section \ref{cx}
 begins with a discussion of complete $M$-projections and the real operator space centralizer space $Z(V)$  in real operator systems.  Then we characterize when a real operator space may be given a 
complex structure, and we classify such structures.
In Section \ref{csc} real C$^*$-covers are studied, and their relation with complexification.
Section \ref{archs} studies archimedeanization as well as the relationship between the matrix ordering at the first and second levels with the rest of the matrix ordering. We point out a few fundamental differences between the real and complex case here.
Section \ref{quot} is devoted to real operator system quotients and kernels, and their relation with the complexification. Section \ref{Duality} investigates duality for 
finite dimensional real operator systems, and its relation with the complexification.  Unlike in the complex case a finite dimensional real operator system can be isometrically  order isomorphic to its dual. Section \ref{nmin} considers real $n$-minimal and $n$-maximal operator system structures.  Several new points appear in the real theory here, for example we show that there is no MIN and MAX functor for general real operator systems.  
We also prove in Theorem \ref{hasn} a quite nontrivial characterization of real archimedean order unit $*$-vector spaces, the analogue of Kadison's famous characterization of function spaces (although in our case they are not necessarily function spaces).

Section \ref{tens} reviews quickly the basic operator system tensor products and their relation with the complexification. 
  Section \ref{stabil} considers the real versions of  the five main `nuclearity' related properties, and their stability under complexification. 
  A real operator system is `nuclear' (that is, has the Completely Positive Factorization Property (CPFP)) (resp.\ is exact, has the local lifting property (SLLP), has the weak expectation property (WEP), has the double-commutant expectation property (DCEP)) if and only if its  complexification has these properties. These properties are explicitly defined in Section \ref{stabil}. 
 (Ruan already pointed out some very small part of this in 2003 in the operator space setting, without details.)
 We also establish many other results in the real case, and discuss many examples. 
 
 In the Sections \ref{kk} and \ref{aptp} we find the real versions of the Kirchberg conjectures, and their relation to Tsirelson's problem.  The approach to the latter uses the second authors theory with Araiza of abstract projections in operator systems.  We also consider the  finite representability conjecture.  That these conjectures are all false gives an enormous amount of new structural information  
 about operator systems, since so much of the theory can be connected to some form of one of these conjectures. 
Also there is always the hope with regard to these and other  problems which require similar techniques, 
that it may be easier to find explicit counterexamples in the real case.
For example the open problem as to whether all three dimensional operator systems  are exact only involves three real variables in the real case.  
Also examining the real versions of these well known problems  will serve in some measure as a guide through the real theory: in order to reach our conclusions we will check the real case of very many important positive results en route.   Research is often driven by specific problems.  Moreover, 
 it will hopefully serve as a showcase of tools and techniques 
 that may be helpful in the future in proving real operator system results.  
 It also will not take us very long.  Indeed towards the end of the paper some of our proofs become briefer, because hopefully the reader has by this point become comfortable with the basic tricks and principles of `checking the real case' in the present endeavor. 

Many of our results would apply immediately to unital operator spaces $X$, via Arveson's $\cS = X + X^*$ trick (note that $(X + X^*)_c = X_c + (X_c)^*$, 
\cite{BT}).  Similarly one may obtain applications to 
general operator spaces $X$, via the Paulsen system $\cS(X)$  trick and 
the canonical complete isometry, hence  complete order isomorphism, $\cS_{\bR}(X)_c = \cS_{\bC}(X_c)$ shown in the proof of Proposition 2.10 in \cite{BT}. 
We do not usually take the time to point out such applications.

We now turn to notation.   
The reader will need to be familiar with the  basics of complex operator spaces and systems and von Neumann algebras,  
as may be found in early chapters of \cite{BLM,ER,Pnbook, Pisbk}, and e.g.\  \cite{P}. 
It would be helpful to also browse the  existing real operator space theory  \cite{ROnr,RComp,Sharma,BT,BReal,BCK}.  Some basic 
real $C^*$- and von Neumann algebra theory may be found in \cite{Li} or \cite{ARU,Good}.  We write $M_n(\bR)$ for the real $n \times n$ matrices, or sometimes simply $M_n$ when the context is clear.  Similarly in the complex case. 
We will often use the quaternions $\bH$ as an example: this is simultaneously  a real operator system, a real Hilbert space, and a real C$^*$-algebra, usually thought of as a real $*$-subalgebra of $M_4(\bR)$ or $M_2(\bC)$. It is also a complex Banach space,  but is not a complex operator space, and its complexification is $M_2(\bC)$.  
 The letters $H, K$ are usually reserved for real or complex Hilbert spaces.  Every complex Hilbert space $H$ is a real Hilbert space with the `real part' of the inner product, this is sometimes called the {\em realification} of $H$.   I.e. we forget the complex structure.   More generally we write $X_r$ 
 for a complex Banach space regarded as a real Banach space. 
The identity mapping $U_r: H \to H_r$ is isometric. However, this mapping is only real linear and not `unitary' from the complex to the real Hilbert space.  Indeed, such notion of `unitary' does not make sense using the original inner product here (note $\langle U_r(ih), U_r h \rangle_r = \Re{i\|h\|^2} = 0$).  

\begin{proposition} \label{concon} Let $H$ be a complex Hilbert space, and let  $H_r$ be its realification.  Then the identity inclusion $B_{\bC}(H) \to B_{\bR}(H_r)$ is a real unital (selfadjoint) complete order embedding.  That is, a selfadjoint 
matrix in $M_n(B_{\bC}(H))$ is in the canonical 
positive cone  $M_n(B_{\bC}(H))^+$ if and only if 
it is in the canonical 
positive cone  $M_n(B_{\bR}(H_r))^+$.
\end{proposition}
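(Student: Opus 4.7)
The plan is to first reduce to the $n=1$ case by the standard identifications $M_n(B_{\bC}(H)) \cong B_{\bC}(H^n)$ and $M_n(B_{\bR}(H_r)) \cong B_{\bR}((H^n)_r)$, noting that the realification commutes with the formation of direct sums: $(H^n)_r = (H_r)^n$ as real Hilbert spaces. Thus it suffices to show that if $T \in B_{\bC}(H)$ is selfadjoint when viewed as a complex operator, it is selfadjoint when viewed as a real operator, and that the two notions of positivity coincide.

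Next I would verify that the complex adjoint and the real adjoint of a complex linear operator $T \in B_{\bC}(H)$ agree. Denote the real Hilbert space inner product by $\langle x,y\rangle_r = \Re \langle x,y\rangle_{\bC}$, and let $T^*$ denote the complex adjoint. Then for all $x,y \in H$,
\[
\langle Tx, y\rangle_r \;=\; \Re \langle Tx, y\rangle_{\bC} \;=\; \Re \langle x, T^*y\rangle_{\bC} \;=\; \langle x, T^*y\rangle_r,
\]
which shows $T^*$ is also the adjoint of $T$ regarded in $B_{\bR}(H_r)$. In particular the inclusion is $*$-preserving, and hence selfadjoint elements map to selfadjoint elements. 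It is also unital since the identity operator maps to itself.

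Finally, for the positivity equivalence: suppose $T \in B_{\bC}(H)$ is selfadjoint. Then $\langle Tx,x\rangle_{\bC}$ is already real for every $x \in H$, so
\[
\langle Tx, x\rangle_r \;=\; \Re \langle Tx, x\rangle_{\bC} \;=\; \langle Tx, x\rangle_{\bC}.
\]
Therefore $T$ is positive in $B_{\bC}(H)$ (i.e.\ $\langle Tx,x\rangle_{\bC} \geq 0$ for all $x \in H$) if and only if $T$ is positive in $B_{\bR}(H_r)$ (i.e.\ $\langle Tx,x\rangle_r \geq 0$ for all $x \in H_r$, noting that the two underlying sets coincide). Combined with the reduction in the first step, applied to $H^n$ in place of $H$, this yields the claimed complete order embedding.

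The main step to take care of is the first one, namely checking that the canonical identifications $M_n(B_{\bC}(H)) \cong B_{\bC}(H^n)$ and $M_n(B_{\bR}(H_r)) \cong B_{\bR}((H_r)^n)$ are compatible with the realification, so that matrix-level positivity genuinely reduces to the scalar case. Once that is in hand, the rest is an essentially one-line computation exploiting that a complex inner product on a selfadjoint vector is real.
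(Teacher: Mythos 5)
Your proposal is correct and is essentially the same argument as the paper's: the paper carries out the matrix-level computation $\langle \widetilde{\pi}^{(n)}(x)h,h\rangle_r = \sum_{ij}\Re\langle x_{ij}h_j,h_i\rangle = \Re\langle xh,h\rangle = \langle xh,h\rangle$ directly, which is precisely your scalar argument applied to $H^n$, resting on the same two observations (the complex and real adjoints coincide, and $\langle xh,h\rangle$ is already real for selfadjoint $x$). The explicit reduction via $M_n(B(H))\cong B(H^n)$ that you flag as the main thing to check is handled implicitly in the paper by writing out the amplification, so the two proofs differ only in presentation.
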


\begin{proof} This is clear from an abstract result but we display the  direct calculation.
    Let $\widetilde{\pi}$ be this identity inclusion, and $U_r: H \to H_r$ be as above, so that $\widetilde{\pi}(T) = U_r T U_r^{-1}$. 
    Suppose that $x \in B_{\bC}(H)_{\rm sa}$. Then if $h, k \in H_r$, \[ \langle \widetilde{\pi}(x) h,k \rangle_r = \Re \,     \langle x h, k \rangle  = \Re \, \langle  h, x k \rangle  = \langle h, \widetilde{\pi}(x) k \rangle_r .\] So  $\widetilde{\pi}(x)^* = \widetilde{\pi}(x)$. Now let $h \in H_r^n$ and $x \in M_n(B_{\bC}(H)_{\rm sa})$. Then 
    \begin{eqnarray}
        \langle \widetilde{\pi}^{(n)}(x) h, h \rangle_r & = & \sum_{ij} \, \Re \, \langle x_{ij} h_j, h_i \rangle \nonumber \\
        & = & \Re \,  \sum_{ij} \langle x_{ij} h_j, h_i \rangle  \nonumber \\
        & = & \Re \,  \langle x h, h \rangle  \nonumber \\
        & = & \langle x h, h \rangle. \nonumber
    \end{eqnarray}
    using that $\langle x h, h \rangle \in \mathbb{R}$. It follows that $x \geq 0$ if and only if $\widetilde{\pi}^{(n)}(x) \geq 0$.
\end{proof}  

We shall only need this once, but related to the last result, it is easy to see that the real bicommutant of a subsystem $\cS$ of  $B_{\bR}(H_r)$ is the complex bicommutant in $B_{\bC}(H).$

 For us a {\em projection}  in an algebra 
is always an orthogonal projection (so $p = p^2 = p^*$).    A  normed algebra $A$  is {\em unital} if it has an identity $1$ of norm $1$, 
and a map $T$ 
is unital if $T(1) = 1$. 
 We write $X_{\rm sa}$ for the selfadjoint elements  
 in a $*$-vector space $X$.  In the complex case it is easy to see that
 $M_n(X)_{\rm sa} \cong (M_n)_{\rm sa} \otimes X_{\rm sa}$, but a dimension count shows that this identity can easily fail for real spaces $X$ if $X_{\rm sa}$ is small.   For example $M_2(\bH)_{\rm sa}$ has dimension 6, while $M_2(\bR)_{\rm sa}$ has dimension 3 and $\bH_{\rm sa}$ has dimension 1.
 
A real operator space may either be viewed as a real subspace of $B(H)$ for a real Hilbert space $H$, or abstractly as 
a vector space with a norm $\| \cdot \|_n$ on $M_n(X)$ for each $n \in \Ndb$ satisfying  the conditions of
Ruan's characterization in  \cite{ROnr}. 
 Sometimes the sequence of norms $(\| \cdot \|_n)$ 
 is called the {\em operator space structure}. 
If $T : X \to Y$ we write $T^{(n)}$ for the canonical `entrywise' amplification taking $M_n(X)$ to $M_n(Y)$.   
The completely bounded norm is $\| T \|_{\rm cb} = \sup_n \, \| T^{(n)} \|$, and $T$ is 
completely  contractive if  $\| T \|_{\rm cb}  \leq 1$. 
 Of course $T$ is 
{\em selfadjoint} if $T(x^*) = T(x)^*$ for $x \in X$.  
A map $T$ is said to be {\em positive} if it is selfadjoint (the reader should note this carefully)
 and takes  positive elements to positive elements, and  {\em 
completely positive} if $T^{(n)}$ is  positive for all $n \in \Ndb$. A {\rm ucp map} is  unital, linear, and completely positive. Every completely positive map between real or complex
 operator systems   is automatically selfadjoint  \cite[Lemma 2.3]{BT}.   
As we said earlier, the real case of most operator space results from \cite{BLM} have been checked, for example  in \cite{BReal}.

 A {\em concrete real operator system} is a unital selfadjoint subspace of $B(H_r)$, where $H_r$ is a real Hilbert space. 
 Likewise, a {\em concrete  complex operator system} is a unital selfadjoint subspace $V$ 
 of $B(H_c)$ where $H_c$ is a complex Hilbert space.   An operator system $V$ has a canonical positive cone in $M_n(V)$ for all integers $n$.   We write this cone as 
 $M_n(V)^+$ or $C_n$.  
We will give more basic information about this in Section 2.  We recall our convention that 
 a {\em complete order embedding} (resp.\ {\em complete order  isomorphism}) is a {\em selfadjoint} completely positive linear injective  map (resp.\ bijective  map) $u : V \to W$ whose inverse is completely positive as a map on 
 $u(V)$. 
 Unital 
complete isometries (resp.\ unital surjective complete isometries) between operator systems are complete order embeddings (resp.\ complete order  isomorphisms).  

An    {\em (abstract) real  unital operator space} (resp.\ {\em (abstract) real operator system}) may be defined to be a  operator space  $X$ with a distinguished element $u \in X$ (resp.\ and also with an involution $*$) such that there exists a  unital (so  $T(u) = I$)
 real complete isometry $T : X \to B(H)$ 
 (resp.\   which is also selfadjoint). 
 In the operator system case this embedding induces a  positive cone 
 $M_n(X)^+$ in $M_n(X)$ for all integers $n$.   Since unital 
surjective complete isometries between concrete operator systems are complete order isomorphisms it follows that the 
induced matrix cones $M_n(X)^+$ are independent of the particular $H$ on which $X$ is represented as a real operator system.  
The main result in Section 2 characterizes abstract real operator system in terms of these cones.
 Some ``metric-linear'' characterizations of real operator systems are discussed  in  \cite[Section 2]{BReal}.

 Every complex operator system is a real operator system.   Since there are several ways to see this we leave this to the reader. 
 A {\em subsystem} of an operator system $V$ is a selfadjoint subspace of $V$ containing $1_V$. 
As a real operator system the quaternions $\bH$ has trivial `first level' 
(sometimes called `base level') cone'(indeed $\bH_{\rm sa} = \bR 1$), but it `makes up for this' at the higher matrix levels. 
Other common examples in this paper include $\cS_n$, the span of the identity and the first $n$ generators of the group C$^*$-algebra $C^*(\bF_n)$ and their adjoints.  We use this notation in both the real and complex case, sometimes if we wish to distinguish we will say: real $\cS_n$.
Then $\cC = C^*(\bF_\infty)$, the full group $C^*$-algebra of the free group $F_\infty$,
 or if we wish to indicate the real case will write e.g.\  $\cC_{\bR}$ or $C^*_{\bR}(\bF_\infty)$.
 We sometimes write $\bB$ for $B(\ell^2_{\bR})$, with the compacts denoted by $\bK$.

If  $T : X \to Y$ is a surjective unital complete isometry between real unital operator spaces with
$I_H \in X \subseteq B(H)$ and $I_K \in Y \subseteq B(K)$, then the canonical extension $\tilde{T}: X + X^*  \to B(K) : x + y^*  \mapsto T (x) + T (y)^*$ is well defined
for $x,y \in X$, is 
selfadjoint and is a completely isometric complete order embedding onto $Y + Y^*$ \cite{BT}.

We use $X_c$ to denote the complexification of a real vector space $X$. 
An {\em operator space complexification} of a real operator space $X$ 
is a pair $(X_c, \kappa)$ consisting of a complex operator space $X_c$ and a real linear complete isometry $\kappa : X \to X_c$ 
such that $X_c = \kappa(X) \oplus i \, \kappa(X)$ as a vector space.   For simplicity we usually identify $X$ and $\kappa(X)$ and write $X_c = X + i \, X$.
We say that the complexification is {\em reasonable} if the map $\theta_X(x+iy) = x - iy$ on $X_c$ (that is
$\kappa(x) + i \kappa(y) \mapsto  \kappa(x) - i \kappa(y)$ for $x, y \in X$), is 
a complete isometry.  Ruan proved that a  real operator space has a unique reasonable operator space complexification
$X_c = X + i X$ up to complete isometry (see \cite{RComp}, or  \cite[Theorem 2.2]{BCK}
for a  simple proof). 
We will use the notation $\theta_X$ repeatedly. 
We recall that $B_{\bR}(H)_c \cong B_{\bC}(H_c)$ for a real Hilbert space $H$, and that every complex Hilbert space is unitarily isomorphic to $H_c$ for such $H$. 

Ruan showed in \cite{RComp} that 
$(X_c)^* \cong (X^*)_c$ completely  isometrically.   For linear functionals we have 
$\| \varphi \| = \| \varphi \|_{cb}$, as follows from \cite[Lemma 5.2]{ROnr}.
  It is pointed out however in \cite[Proposition 2.8]{Sharma} that $(X_r)^* \neq (X^*)_r$  real completely isometrically
for a complex operator space 
(the mistakes in the proof of that Proposition are easily fixed).    Here $X_r$ is the space regarded as a real operator space.   For quotient operator spaces, we have $X_c/Y_c \cong (X/Y)_c$ completely isometrically.

The  complexification of a real  operator space (resp.\ system) may be identified up to real  complete isometry with the operator subspace 
(resp.\ subsystem) $V_X$ of $M_2(X)$ 
consisting of matrices of the form 
\begin{equation} \label{ofr} c(x,y) \; = \; \begin{bmatrix}
       x    & -y \\
       y   & x
    \end{bmatrix}
    \end{equation} 
    for $x, y \in X$.   We identify $c(x,y)$ with $x + iy \in X_c$. 
    It is evident from this that the canonical projection Re $: X_c \to X$ is completely contractive (resp.\ ucp).
    Note that $V_X$ may be made into a complex operator space with multiplication by $i$ implemented by  multiplication by
    $$u = \begin{bmatrix}
       0   & -1 \\
       1   & 0
    \end{bmatrix}  .$$
    Also, $H_c \cong H^{(2)}$ as real Hilbert spaces
and we have 
$$B_{\Cdb}(H_c) \subset B_{\Rdb}(H_c) \cong B(H^{(2)}) \cong M_2(B(H)).$$ These identifications are easily checked to be real complete isometries.
The canonical embedding $\kappa : B(H) \to B_{\Cdb}(H_c)$ above and the associated embedding $B(H) + i B(H) \to \kappa(B(H)) + i \kappa(B(H)) = B_{\Cdb}(H_c)$, 
when viewed as a map into $M_2(B(H))$ by the identifications above, correspond to 
the map $$x + iy  \mapsto c(x,y) \; = \; \begin{bmatrix}
       x    & -y \\
       y   & x
    \end{bmatrix} \in V_{B(H)} \subseteq M_2(B(H)) \; , \qquad x, y \in B(H).$$
Thus $B_{\Cdb}(H_c)$ is real completely isometric to $V_{B(H)}$.
 The operator $i I$ in $B_{\Cdb}(H_c)$
corresponds to the matrix $u$ above (considered as a matrix 
    in $M_2(B(H))$).
    Then  $\theta_{B(H)}(x + iy) = x -iy$ for $x, y \in B(H)$,     corresponds to the completely isometric 
    operation $z \mapsto u^* z u = - u zu$ on $V_{B(H)}$.  
     Restricting to a subspace (resp.\ subsystem)  $X \subseteq B(H)$, we see that  $\theta_{X}$ is a complete isometry (resp.\ a unital complete order isomorphism), and $X_c$ 
    is real completely isometric (resp.\  unitally completely order isomorphic) to $V_X$.

\section{The Choi-Effros-Ozawa characterization of real operator systems} \label{CE}

To motivate for nonexperts 
the definitions in this section, 
let $V \subseteq B(H_r)$ be a concrete real operator system. 
Given $z \in V$, we may write $z = x + y$, where $x \in V_{\rm sa}$ and $y \in V_{\rm as}$, by setting $x = \frac{1}{2}(z + z^*)$ and $y = \frac{1}{2}(z - z^*)$. We say that $x \in V$ is {\em positive} if $x \in V_{\rm sa}$ and $\langle x h, h \rangle \geq 0$ for all $h \in H_r$. We let $C \subseteq V_{\rm sa}$ denote the set of positive elements. If $x \in V_{\rm sa}$, then setting $t = \|x\| \geq 0$ we see that for any $h \in H_r$,
\[ \langle (tI + x)h, h \rangle = t\|h\|^2 + \langle xh, h \rangle \geq 0 \]
since $\|x\| \|h\|^2 \geq | \langle xh, h \rangle|$ by Cauchy-Schwarz. Thus, for every $x \in V_{\rm sa}$, there exists $t \geq 0$ such that $tI + x \in C$. Also, if $tI + x \in C$ for every $t > 0$, then for any $h \in H_r$ with $\|h\|=1$,
\[  0 \leq \langle (tI + x)h, h \rangle = t + \langle xh, h \rangle \]
and hence $\langle xh, h \rangle \geq 0$. So $x \in C$. Finally, suppose that both $x$ and $-x$ are positive. Then for every $h \in H_r$, $\pm \langle xh, h \rangle \geq 0$ and hence $\langle xh, h \rangle = 0$. It follows that for every $h,k \in H_r$ we have \[ \langle x (h+k), h+k \rangle = \langle x h, h \rangle + \langle x h, k \rangle + \langle x k, h \rangle + \langle x k, k \rangle = 2 \langle x h, k \rangle \] since $x=x^*$. So $\langle x h, k \rangle = 0$ for every $h, k \in H_r$ and therefore $x = 0$.

For each $n \in \mathbb{N}$, we can identify $M_n(V)$ with operators on $B(H_r^n)$ by setting $[ x_{ij}] (h_k)_{k=1}^n = (\sum_{k=1}^n x_{ik}h_k)_{i=1}^n \in H_r^n$ for each $(h_k)_{k=1}^n \in H_r^n$. With these identifications, $M_n(V)$ is also a real operator system. 
We let $C_n \subseteq M_n(V)_{\rm sa}$ denote the positive elements. If $x \in C_n$ and $y \in C_m$, then $x \oplus y \in C_{n+m}$ where
\[ x \oplus y := \begin{bmatrix} x & 0 \\ 0 & y \end{bmatrix} \]
since $(x \oplus y)^* = x \oplus y$ and, for any $h \in H_r^n$ and $k \in H_r^m$, $$\langle (x \oplus y) (h \oplus k), (h \oplus k) \rangle = \langle x h, h \rangle + \langle y k, k \rangle \geq 0. $$ Finally, let $\alpha \in M_{n,m}(\mathbb{R})$. We may regard $\alpha$ as an element of $B(H_r^m, H_r^n)$ by setting $\alpha (h_k)_{k=1}^m = (\sum_{k=1}^m \alpha_{ik} h_k)_{i=1}^n \in H_r^n$. Then for any $x \in C_n$ and $h \in H_r^m$, $\langle \alpha^T x \alpha h,h \rangle = \langle x (\alpha h), (\alpha h) \rangle \geq 0$.

We now extend the features observed above to an abstract setting.  Let $V$ be a real vector space. We call a map $*:V \to V$, denoted as $x \mapsto x^*$, an {\em involution} provided that for any $x,y \in V$ and $t \in \mathbb{R}$, $(x+ty)^* = x^* + ty^*$ and $(x^*)^* = x$. Given a real vector space $V$ with involution $*$, we may extend the involution to $M_n(V)$ by setting the $ij$ entry of $x^*$ equal to $x_{ji}^*$ for any $x \in M_n(V)$. We let $M_n(V)_{\rm sa} := \{ x \in M_n(V) : x = x^*\}$ and we let $M_n(V)_{\rm as} := \{x \in M_n(V) : x = -x^*\}$. Since $x = \frac{1}{2}(x+x^*) + \frac{1}{2}(x-x^*)$ for any $x \in M_n(V)$, we see that $M_n(V) = M_n(V)_{\rm sa} + M_n(V)_{\rm as}$ for each $n \in \mathbb{N}$. We call a sequence of subsets $\{ C_n \subseteq M_n(V)_{\rm sa} \}_{n=1}^\infty$ a {\em real matrix ordering} if for every $n,m \in \mathbb{N}$ and $\alpha \in M_{n,m}(\mathbb{R})$ we have $C_n \oplus C_m \subseteq C_{n+m}$ and $\alpha^T C_n \alpha \subseteq C_m$. A real matrix ordering $\{C_n\}$ is called {\em proper} if $\pm x \in C_n$ implies that $x = 0$, i.e. $C_n \cap -C_n = \{0\}$. An element $e \in V_{\rm sa}$ is called a {\em matrix order unit} if for every $x \in M_n(V)_{\rm sa}$ 
there exists $t > 0$ such that $t e_n + x \in C_n$ where 
\[ e_n := e \otimes I_n = \begin{bmatrix} e & & \\ & \ddots & \\ & & e \end{bmatrix}. \]
We say that a matrix order unit is {\em archimedean} if whenever 
$x \in M_n(V)_{\rm sa}$ and $te_n + x \in C_n$ for every $t > 0$ then $x \in C_n$.

\begin{definition}[Abstract real operator system]
    Let $V$ be a vector space over $\mathbb{R}$, $*: V \to V$ an involution, $\{C_n\}_{n=1}^\infty$ a proper real matrix ordering and $e \in V_{\rm sa}$ an archimedean matrix order unit for $\{C_n\}_{n=1}^\infty$. Then the tuple $(V,*,\{C_n\},e)$ is called an {\em abstract real operator system}.
\end{definition}

(As a consequence of the main result of this Section it will follow that this definition is equivalent to the one given in the introduction.) 
In the text preceeding the definitions, we have proven the following.

\begin{proposition}
    Let $H_r$ be a real Hilbert space and $V \subseteq B(H_r)$ be a real operator system. Then $(V,*,\{C_n\},I)$ is an abstract operator system, where $*$ denotes the operator adjoint and $C_n \subseteq M_n(V)_{\rm sa}$ denotes the positive operators.
\end{proposition}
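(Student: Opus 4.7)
The proposition is essentially a bookkeeping exercise: all the nontrivial content has been verified in the paragraphs preceding the definition. My plan is therefore to check that each clause of the abstract definition corresponds to something already established for $V \subseteq B(H_r)$, being careful that the level-$n$ verifications follow from the level-$1$ arguments applied to $M_n(V) \subseteq B(H_r^n)$.

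First I would confirm the involution axioms. Since $*$ is the operator adjoint on $B(H_r)$, the identities $(x + ty)^* = x^* + ty^*$ and $(x^*)^* = x$ are standard, and $V$ is closed under $*$ by hypothesis. Under the identification $M_n(V) \subseteq B(H_r^n)$ the operator adjoint of $[x_{ij}]$ is $[x_{ji}^*]$, so the extended involution at matrix level coincides with the one specified in the definition.

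Next I would assemble the matrix-ordering axioms. The direct-sum closure $C_n \oplus C_m \subseteq C_{n+m}$ was verified in the preamble via $\langle (x\oplus y)(h\oplus k), h\oplus k\rangle = \langle xh,h\rangle + \langle yk,k\rangle \geq 0$, and the congruence closure $\alpha^T C_n \alpha \subseteq C_m$ for $\alpha \in M_{n,m}(\bR)$ was shown via $\langle \alpha^T x \alpha h, h\rangle = \langle x(\alpha h), \alpha h\rangle \geq 0$. Properness $C_n \cap -C_n = \{0\}$ was also established: $\pm x \in C_n$ forces $\langle xh,h\rangle = 0$ for all $h \in H_r^n$, and the polarization identity (using $x = x^*$) gives $\langle xh,k\rangle = 0$ for all $h,k$, hence $x=0$.

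Finally I would promote $I_V$ to an archimedean matrix order unit. At level $1$, the preamble showed $t = \|x\|$ gives $tI + x \in C$ via Cauchy-Schwarz, and that $tI + x \in C$ for all $t > 0$ forces $\langle xh,h\rangle \geq 0$ and hence $x \in C$. The key observation is that the same two arguments run verbatim at level $n$: the identity of $M_n(V)$ is $I_n = I \otimes I_n \in B(H_r^n)$, so for any $x \in M_n(V)_{\rm sa}$ we have $\|x\|\, I_n + x \in C_n$, and if $tI_n + x \in C_n$ for all $t > 0$ then $x \in C_n$. Thus $I$ is an archimedean matrix order unit, completing the verification.

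There is essentially no obstacle; the only point requiring care is noting that the level-$n$ ``positive cone'' $C_n$ is defined by exactly the same Hilbert-space criterion as $C_1$ (but on $H_r^n$), so every argument given at level $1$ lifts with the identity replaced by $I_n$.
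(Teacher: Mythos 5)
Your proposal is correct and matches the paper's approach exactly: the paper's entire proof is the remark that the verification was carried out in the text preceding the definitions, and your write-up simply organizes that preamble (direct sums, congruences, properness via polarization, and the Cauchy--Schwarz/archimedean arguments) into the clauses of the abstract definition, with the correct observation that the level-$n$ cases follow by running the level-$1$ arguments on $H_r^n$ with $I_n$ in place of $I$.
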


In Section 17 of \cite{Oz}, Ozawa gives a slightly different definition of a real operator system, which includes the assumption that the set 
\[ \{x \in V :  \begin{bmatrix} \epsilon I & x \\ x^* & \epsilon I \end{bmatrix} \geq 0 \text{ for all } \epsilon > 0 \} \]
is $(0)$.   In our case (of real operator systems), one can show that this condition is automatic.  With these axioms, Ozawa constructs the universal C*-cover as a real C*-algebra and demonstrates a complete order embedding into that C*-algebra. We will recover Ozawa's results by different methods, proceeding by complexification.

Given a abstract real operator system, we wish to define a corresponding abstract complex operator system. To motivate the definition, we consider the concrete case.  Some of what follows is well known, but we shall need some of the formal calculations anyway in our next proof.

Let $H_r$ be a real Hilbert space and let $V \subseteq B(H_r)$ be a real operator system. We define the complex Hilbert space $H_c = H_r + iH_r$ to be the complexification of the vector space $H_r$. Given $x,y,z,w \in H_r$, we define $\langle x+iy, z+iw \rangle = (\langle x,z \rangle + \langle y,w \rangle) + i(\langle y,z \rangle - \langle x,w \rangle)$. It is well-known that the resulting inner-product space is complete and hence a complex Hilbert space. Let $\widetilde{V} = V + iV$ denote the complexification of $V$. Then we have an embedding $\widetilde{V} \subseteq B(H_c)$ given by setting $(x+iy)(h+ik) = (xh -yk) + i(xk + yh)$. It is easily verified that the operators $x+iy$ are bounded and that $(x+iy)^* = x^* - iy^*$ for every $x,y \in V$.

Now suppose that $(x+iy)^* = x+iy$. Write $x=x_{\rm sa} + x_{\rm as}$ and $y=y_{\rm sa} + y_{\rm as}$, where $x_{\rm sa}, y_{\rm sa}$ are hermitian and $x_{\rm as}, y_{\rm as}$ are skew-adjoint (antisymmetric). Then we have $x_{\rm sa} - x_{\rm as} - iy_{\rm sa} + iy_{\rm as} = x_{\rm sa} + x_{\rm as} + iy_{\rm sa} + iy_{\rm as}$. Hence $x_{\rm as} + iy_{\rm sa} = 0$. But this implies $x_{\rm as} = y_{\rm sa} = 0$ by the linear independence of the real and imaginary parts of the complexification of a vector space. Hence we have $x=x^*$ and $y=-y^*$. It is well known  that $x+iy \geq 0$ if and only if
\[ c(x,y) := \begin{bmatrix} x & -y \\ y & x \end{bmatrix} \geq 0. \] 
That is, $x+iy \geq 0$ as an operator on $B(H_c)$ if and only if $x=x^*$, $y=-y^*$, and $c(x,y) \geq 0$ as an operator on $B(H_r^2)$. This
motivates the following definition.

\begin{definition}
    Let $(V,*,\{C_n\},e)$ be an abstract real vector space. Let $\widetilde{V} = V + iV$ denote the complexification of $V$. For each $n \in \mathbb{N}$ and for each $x,y \in M_n(V)$, define $(x+iy)^* := x^* - iy^*$. For each $n \in \mathbb{N}$, let
    \[ \widetilde{C}_n := \{ x + iy : x \in M_n(V)_{\rm sa}, y \in M_n(V)_{\rm as}, c(x,y) \in C_{2n} \}. \]
    We call the tuple $(\widetilde{V}, *, \{ \widetilde{C}_n \}, e)$ the {\em complexification} of $V$.
\end{definition}

\begin{theorem} \label{Thm: Complexification is an op sys}
    Let $(V,*,\{C_n\},e)$ be an abstract real operator system. Then the complexification $(\widetilde{V}, *, \{ \widetilde{C}_n \}, e)$ is an abstract complex operator system.
\end{theorem}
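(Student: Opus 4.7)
The plan is to verify the three defining properties of an abstract complex operator system for the tuple $(\widetilde{V}, *, \{\widetilde{C}_n\}, e)$: namely, that the involution on $\widetilde{V}$ is conjugate-linear, that $\{\widetilde{C}_n\}$ is a proper complex matrix ordering (closed under addition, direct sums, and compressions by \emph{complex} matrices, with $\widetilde{C}_n \cap -\widetilde{C}_n = \{0\}$), and that $e$ is an archimedean matrix order unit for this ordering. The unifying tool for all of these verifications is the $\mathbb{R}$-linear injection $c \colon M_n(\widetilde{V}) \to M_{2n}(V)$ sending $x+iy \mapsto c(x,y) = \bigl[\begin{smallmatrix} x & -y \\ y & x \end{smallmatrix}\bigr]$. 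The point is that $c$ transports the complex $*$-structure and compression action on $M_n(\widetilde{V})$ to the real ones on $M_{2n}(V)$, so each complex axiom for $\widetilde{C}_n$ will be reduced mechanically to the corresponding real axiom for $C_{2n}$.

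The easy verifications come first. That $(x+iy)^* := x^* - iy^*$ is a well-defined conjugate-linear involution on $\widetilde{V}$ is immediate, and the computation in the motivation already records that $z = x+iy$ is self-adjoint iff $x \in M_n(V)_{\mathrm{sa}}$ and $y \in M_n(V)_{\mathrm{as}}$; so $\widetilde{C}_n \subseteq M_n(\widetilde{V})_{\mathrm{sa}}$. From the axioms $C_n \oplus C_m \subseteq C_{n+m}$ and $\alpha^T C_n \alpha \subseteq C_m$ one deduces in the standard way (using $\alpha = \bigl(\begin{smallmatrix}I_n \\ I_n\end{smallmatrix}\bigr)$ and $\alpha = \sqrt{t}\,I_n$) that each $C_n$ is a convex cone. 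Since $c$ is $\mathbb{R}$-linear, addition and positive scalar closure of $\widetilde{C}_n$ reduce to the corresponding closures of $C_{2n}$; properness of $\widetilde{C}_n$ reduces in the same way to properness of $C_{2n}$, since $c$ is injective. Closure under direct sums follows by observing that $c(z_1 \oplus z_2)$ and $c(z_1) \oplus c(z_2)$ agree up to conjugation by a real permutation matrix $P \in M_{2(n+m)}(\mathbb{R})$, so that $c(z_1 \oplus z_2) = P^T(c(z_1) \oplus c(z_2)) P \in C_{2(n+m)}$.

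The heart of the proof, and what I expect to be the main obstacle, is closure under compressions by complex matrices. The key identity to establish is
\[ c(\alpha^* z \alpha) \;=\; c(\alpha)^T \, c(z) \, c(\alpha), \qquad \alpha \in M_{n,m}(\mathbb{C}),\ z \in M_n(\widetilde{V}), \]
where for $\alpha = a+ib$ with $a,b$ real we set $c(\alpha) = \bigl[\begin{smallmatrix} a & -b \\ b & a \end{smallmatrix}\bigr] \in M_{2n,2m}(\mathbb{R})$. This is a direct block computation: expanding both sides in terms of $a,b,x,y$ yields matching real and imaginary blocks. Granting this identity, if $z \in \widetilde{C}_n$ then $c(z) \in C_{2n}$, so $c(\alpha)^T c(z) c(\alpha) \in C_{2m}$ by the real compression axiom, i.e.\ $c(\alpha^* z \alpha) \in C_{2m}$; and since $\alpha^* z \alpha$ is clearly self-adjoint, this gives $\alpha^* z \alpha \in \widetilde{C}_m$.

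Finally, for the order unit axioms, note that $c(te_n + z) = te_{2n} + c(z)$, since $e_n \oplus e_n = e \otimes I_{2n} = e_{2n}$. So given $z \in M_n(\widetilde{V})_{\mathrm{sa}}$, applying the matrix order unit property of $e$ at level $2n$ to the self-adjoint element $c(z) \in M_{2n}(V)_{\mathrm{sa}}$ supplies $t>0$ with $te_{2n} + c(z) \in C_{2n}$, hence $te_n + z \in \widetilde{C}_n$. Conversely, if $te_n + z \in \widetilde{C}_n$ for every $t>0$, then $te_{2n} + c(z) \in C_{2n}$ for every $t>0$, so by the archimedean property of $e$ for $\{C_n\}$ we obtain $c(z) \in C_{2n}$, i.e.\ $z \in \widetilde{C}_n$. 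This completes the verification that $e$ is an archimedean matrix order unit for $\{\widetilde{C}_n\}$, and hence that the complexification is an abstract complex operator system.
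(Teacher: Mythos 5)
Your proof is correct and follows essentially the same route as the paper: reduce each complex axiom for $\widetilde{C}_n$ to the corresponding real axiom for $C_{2n}$ via the map $x+iy \mapsto c(x,y)$, handling direct sums by a permutation conjugation and the (archimedean) order unit via $c(te_n+x,y)=te_{2n}+c(x,y)$. The only cosmetic difference is in the compression step: you conjugate by $c(\alpha)=\bigl[\begin{smallmatrix} a & -b \\ b & a\end{smallmatrix}\bigr]$, whereas the paper uses the equivalent matrix $\bigl[\begin{smallmatrix} B & A \\ -A & B\end{smallmatrix}\bigr]$; both yield $c(\hat{x},\hat{y})\in C_{2m}$.
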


\begin{proof}
    It is clear that $*$ is a conjugate-linear involution on $\widetilde{V}$. Suppose that $x \in M_n(V)_{\rm sa}$ and $y \in M_n(V)_{\rm as}$. Then $(x+iy)^* = x^*-iy^* = x+iy$. Hence $\widetilde{C}_n \subseteq M_n(\widetilde{V})_{\rm sa}$. Repeating the arguments preceding the above definition, we see that every element of $M_n(\widetilde{V})_{\rm sa}$ may be expressed uniquely in the form $x + iy$ where $x \in M_n(V)_{\rm sa}$ and $y \in M_n(V)_{\rm as}$.

    Suppose that $x+iy \in \widetilde{C}_n$ and $z+iw \in \widetilde{C}_m$ where $x,z \in M_n(V)_{\rm sa}$ and $y,w \in M_n(V)_{\rm as}$. Then $(x+iy) \oplus (z + i w) = (x \oplus z) + i (y \oplus w) \in M_{n+m}(\widetilde{V})_{\rm sa}$ with $(x \oplus z) \in 
    M_{n+m}(V)_{\rm sa}$ and $(y \oplus w) \in M_{n+m}(V)_{\rm as}$. Then
    \[ c(x \oplus z, y \oplus w) = \begin{bmatrix} x \oplus z & -(y \oplus w) \\ y \oplus w & x \oplus z \end{bmatrix} = P^T (c(x,y) \oplus c(z,w)) P \]
    where 
    \[ P = \begin{bmatrix} 1 & 0 & 0 & 0 \\ 0 & 0 & 1 & 0 \\ 0 & 1 & 0 & 0 \\ 0 & 0 & 0 & 1 \end{bmatrix}. \] 
    Since $c(x,y) \in C_{2n}$ and $c(z,w) \in C_{2m}$, we conclude that $c(x \oplus z, y \oplus w) \in C_{2(n+m)}$. Hence $(x+iy) \oplus (z + i w) \in \widetilde{C}_{n+m}$. Therefore $\widetilde{C}_n \oplus \widetilde{C}_m \subseteq \widetilde{C}_{n+m}$.

    Next suppose that $x+iy \in \widetilde{C}_n$ with $x \in M_n(V)_{\rm sa}$ and $y \in M_n(V)_{\rm as}$, and let $\alpha \in M_{n,m}(\mathbb{C})$. Breaking $\alpha$ into its real and imaginary parts entrywise, we may write $\alpha = A + iB$ where $A,B \in M_{n,m}(\mathbb{R})$. Using $\alpha^* = (A^T - iB^T)$, we have
    \[ \alpha^*(x+iy)\alpha = \hat{x} + i\hat{y} \]
    where $\hat{x} = A^TxA + B^TxB + B^TyA - A^TyB$ and $\hat{y} = A^TxB - B^TxA + A^TyA + B^TyB$. Now 
    \begin{eqnarray} c(\hat{x},\hat{y}) & = & \begin{bmatrix} \hat{x} & -\hat{y} \\ \hat{y} & \hat{x} \end{bmatrix} \nonumber \\
    & = & \begin{bmatrix} B^T & -A^T \\ A^T & B^T \end{bmatrix} \begin{bmatrix} x & -y \\ y & x \end{bmatrix} \begin{bmatrix} B & A \\ -A & B \end{bmatrix} \in C_{2m}. \nonumber
    \end{eqnarray}
    It follows that $\alpha^* \widetilde{C}_n \alpha \subseteq \widetilde{C}_m$.

    To see that each $\widetilde{C}_n$ is proper, suppose that $\pm (x + iy) \in \widetilde{C}_n$. Then \[ \pm c(x,y) = \pm \begin{bmatrix} x & -y \\ y & x \end{bmatrix} \in C_{2n}. \] Since $C_{2n}$ is proper, $c(x,y)$ must be the zero matrix and thus $x=y=0$. So $\widetilde{C}_n$ is proper.

    It remains to check that $e$ is an archimedean matrix order unit for $\{\widetilde{C}_n\}_{n=1}^\infty$. To see that $e$ is a matrix order unit, let $x \in M_n(V)_{\rm sa}$ and $y \in M_n(V)_{\rm as}$. Then there exists $t > 0$ such that $te_{2n} + c(x,y) \in C_{2n}$. Since $te_{2n} + c(x,y) = c(te_n + x, y)$, it follows that $te_n + x + iy \in \widetilde{C}_n$. So $e$ is a matrix order unit. Similarly, if $te_n + x + iy \in \widetilde{C}_n$ for all $t > 0$, then $c(te_n + x, y) = te_{2n} + c(x,y) \in C_{2n}$ for all $t > 0$. Since $e$ is an archimedean matrix order unit for $\{C_n\}_{n=1}^\infty$, $c(x,y) \in C_{2n}$ and therefore $x+iy \in \widetilde{C}_n$. This concludes the proof.
\end{proof}

Since $\widetilde{V}$ is an abstract complex operator system, there exists a unital complex linear complete order embedding $\widetilde{\pi}: \widetilde{V} \to B(H)$ for some complex Hilbert space $H$ by the 
(complex)  Choi-Effros characterization. We 
may view this as a representation of $V$ on the real Hilbert space
$H_r$, the realification of $H$ (i.e.\ we forget the complex structure).  

\begin{corollary} \label{coreisab} 
Let $(V,*,\{\widetilde{C}_n\},e)$ be a complex operator system. Then $(V,*,\{\widetilde{C}_n\},e)$, regarded as a real vector space, 
is an abstract  real operator system and may be represented via a real 
complete order embedding as a concrete  real operator system.
\end{corollary}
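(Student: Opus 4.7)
The plan is to show first that the real axioms are inherited for free from the complex axioms, and second to obtain a concrete real representation by composing the complex Choi–Effros embedding with the forgetful inclusion $B_{\bC}(H) \hookrightarrow B_{\bR}(H_r)$, using Proposition \ref{concon}.

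For the abstract part, I would begin by observing that when $V$ is regarded as a real vector space, the involution $*$ (which was conjugate-linear over $\bC$) becomes $\bR$-linear, so the tuple $(V, *)$ is a real $*$-vector space in the sense defined above. Crucially, $M_n(V)_{\rm sa}$ is literally the same subset regardless of whether $V$ is viewed as a real or complex vector space, since the defining condition $x^*=x$ does not involve scalar multiplication. Next I would check the four conditions to be an abstract real operator system: that $\widetilde{C}_n \oplus \widetilde{C}_m \subseteq \widetilde{C}_{n+m}$ is simply the complex direct-sum property; that $\alpha^T \widetilde{C}_n \alpha \subseteq \widetilde{C}_m$ for $\alpha \in M_{n,m}(\bR)$ follows from the compression property in the complex case applied to $\alpha \in M_{n,m}(\bR) \subseteq M_{n,m}(\bC)$ (noting $\alpha^T = \alpha^*$ for real $\alpha$); that properness of $\widetilde{C}_n$ is unchanged; and that the matrix order unit/archimedean conditions are unchanged because they are quantified over the real-vector-space set $M_n(V)_{\rm sa}$, which coincides with its complex counterpart.

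For the concrete part, I would apply the (complex) Choi–Effros theorem to obtain a unital complex complete order embedding $\widetilde{\pi}: V \to B_{\bC}(H)$ for some complex Hilbert space $H$. Proposition \ref{concon} then says that the identity inclusion $\iota: B_{\bC}(H) \to B_{\bR}(H_r)$ is a real unital selfadjoint complete order embedding: a matrix in $M_n(B_{\bC}(H))_{\rm sa}$ lies in the positive cone of $B_{\bC}(H)$ at level $n$ iff it lies in the positive cone of $B_{\bR}(H_r)$ at level $n$. Composing, $\iota \circ \widetilde{\pi}$ is a real unital selfadjoint linear map into $B_{\bR}(H_r)$ which is a complete order embedding in the real sense. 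Its image is a unital selfadjoint subspace of $B_{\bR}(H_r)$, i.e.\ a concrete real operator system representing $V$.

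There is no real obstacle here; the statement is essentially a bookkeeping corollary. The only point that deserves a line of justification is the observation that $M_n(V)_{\rm sa}$ is the same set in both interpretations, since this is what makes the archimedean matrix order unit condition transfer automatically. Everything else reduces either to a containment $M_{n,m}(\bR) \subseteq M_{n,m}(\bC)$ or to a direct invocation of Proposition \ref{concon}.
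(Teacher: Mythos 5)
Your proposal is correct and follows essentially the same route as the paper: apply the complex Choi--Effros theorem to realize $V$ in $B_{\bC}(H)$ and then invoke Proposition \ref{concon} to conclude that the identity inclusion into $B_{\bR}(H_r)$ is a real complete order embedding. The extra paragraph verifying the abstract real axioms directly (via $M_n(V)_{\rm sa}$ being the same set in both interpretations and $M_{n,m}(\bR)\subseteq M_{n,m}(\bC)$) is sound but not needed, since the paper obtains the abstract structure as an immediate consequence of the concrete embedding.
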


\begin{proof} Consider a complex operator system $V$ complex complete order embedded in $B_{\bC}(H)$.  
By Proposition \ref{concon} we have  $V$ is   real complete order embedded in 
 $B _{\bR}(H)$ as a concrete real operator  system.        \end{proof}

The following is now easy to see: 

\begin{proposition} \label{inco}
    Let $(V,*,\{C_n\},e)$ be a real operator system with complexification $(\widetilde{V},*,\{\widetilde{C}_n\},e)$. If $\widetilde{V}$ is regarded as a real operator system, then the inclusion $j: V \to \widetilde{V}$ is a  unital (selfadjoint) complete order embedding of real operator systems.
\end{proposition}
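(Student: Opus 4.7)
The plan is to reduce the complete order embedding condition to a direct consequence of the matrix ordering axioms, by unpacking the definition of $\widetilde{C}_n$ when the imaginary part is zero. First, I note that $j$ is unital ($j(e) = e$) and selfadjoint ($j(x^*) = x^* = j(x)^*$) essentially by definition. The real linearity is immediate since $V$ is a real vector space and $j(x) = x + i\cdot 0$. Before extracting positivity, I would point out that when $\widetilde V$ is viewed as a real operator system, its positive cones at each matrix level are again $\widetilde C_n$: this follows from Corollary \ref{coreisab}, whose proof used Proposition \ref{concon} to confirm that the cones of $B_{\bC}(H)$ agree with those of $B_{\bR}(H)$ on selfadjoint elements.

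The main content is therefore to show that for every $n$ and every $x \in M_n(V)_{\rm sa}$,
\[
x \in C_n \iff j^{(n)}(x) \in \widetilde C_n.
\]
Since $j^{(n)}(x) = x + i\cdot 0$ with $0 \in M_n(V)_{\rm as}$, the definition of $\widetilde C_n$ makes this equivalent to $c(x,0) = x \oplus x \in C_{2n}$.

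For the forward implication, $x \in C_n$ yields $x \oplus x \in C_n \oplus C_n \subseteq C_{2n}$ by the matrix ordering axiom. For the reverse implication, I compress by the isometry $\alpha = \begin{bmatrix} I_n \\ 0 \end{bmatrix} \in M_{2n,n}(\bR)$: the compression axiom $\alpha^T C_{2n} \alpha \subseteq C_n$ gives $\alpha^T(x \oplus x)\alpha = x \in C_n$. This handles both directions.

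There is no real obstacle here; the argument is a two-line use of the $\oplus$ and compression axioms for a real matrix ordering, once one unwinds the definition of $\widetilde C_n$ and confirms via Corollary \ref{coreisab} that the cone structure of $\widetilde V$ as a real operator system coincides with $\{\widetilde C_n\}$. The only point requiring attention is that $j$ is being treated as a map of real operator systems even though $\widetilde V$ has further complex structure, but this is exactly what Corollary \ref{coreisab} legitimizes.
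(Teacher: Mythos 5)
Your proof is correct and is essentially the argument the paper intends (the paper states the result with "The following is now easy to see" and no written proof): unwind $\widetilde C_n$ at $y=0$ to reduce to $c(x,0)=x\oplus x\in C_{2n}$, use the $\oplus$ axiom for one direction and compression by $\begin{bmatrix} I_n \\ 0\end{bmatrix}$ for the other, with Corollary \ref{coreisab} and Proposition \ref{concon} justifying that the cones of $\widetilde V$ as a real operator system are still the $\widetilde C_n$. No gaps.
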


{\bf Remark.}  The previous proofs work to show that the 
     complexification $(\widetilde{V},*,\{\widetilde{C}_n\},e)$
     of a matrix ordered space $(V,*,\{C_n\},e)$ is a matrix ordered space with $V$ contained via a unital complete order embedding,
     and $e$ will  be a matrix order unit (resp.\ be archimedean) in $\widetilde{V}$ if it had that property in $V$.  

\bigskip

Combining the preceding propositions, we get the following representation theorem for real operator systems.

\begin{theorem} \label{ce} 
    Let $(V, *, \{C_n\}, e)$ be an abstract real operator system. Then there exists a real Hilbert space $H_r$ and a unital (self-adjoint) complete order embedding $\pi: V \to B(H_r)$.
\end{theorem}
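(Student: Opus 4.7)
The plan is to leverage the complex Choi-Effros characterization via the complexification machinery developed just before this theorem. Given an abstract real operator system $(V,*,\{C_n\},e)$, Theorem \ref{Thm: Complexification is an op sys} already gives us that the complexification $(\widetilde{V},*,\{\widetilde{C}_n\},e)$ is an abstract complex operator system. So I would simply chain together three embeddings.

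First I would invoke the classical complex Choi-Effros theorem on $\widetilde{V}$ to obtain a unital complex-linear complete order embedding $\widetilde{\pi}: \widetilde{V} \to B_{\bC}(H)$ for some complex Hilbert space $H$. Next I would use Proposition \ref{inco} to embed $V$ into $\widetilde{V}$ as a real unital complete order embedding via the inclusion $j(x) = x + i0$; this is where the matching of real matrix cones $C_n$ with $\widetilde{C}_n \cap M_n(V)$ gets used, and it follows directly from the definition $\widetilde{C}_n = \{x + iy : c(x,y) \in C_{2n}\}$ since $c(x,0) \in C_{2n}$ is equivalent to $x \in C_n$ (using the diagonal structure and the matrix ordering axioms). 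Then I would apply Proposition \ref{concon} to realify: the identity inclusion $B_{\bC}(H) \to B_{\bR}(H_r)$ is a real unital complete order embedding.

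Composing these three maps, $\pi := \widetilde{\pi}|_V$ viewed as landing in $B_{\bR}(H_r)$ is a real unital self-adjoint complete order embedding, as required. Unitality is immediate since each stage sends $e$ to the identity operator, and self-adjointness is preserved at every stage.

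The main subtlety, already absorbed into the earlier results, was ensuring that the complexified cones $\widetilde{C}_n$ correctly restrict back to the original $C_n$ on the real part (so that $j$ is a complete order embedding and not merely completely positive), and that the realification step preserves positivity of matrices in both directions. The latter is the content of Proposition \ref{concon}, which shows that positivity of a self-adjoint matrix over $B_{\bC}(H)$ is the same as positivity over $B_{\bR}(H_r)$. With these two ingredients already in hand, the present theorem is just a three-line composition, and there is no genuine remaining obstacle.
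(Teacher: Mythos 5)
Your proposal is correct and follows exactly the route the paper takes: complexify via Theorem \ref{Thm: Complexification is an op sys}, apply the complex Choi--Effros theorem to $\widetilde{V}$, embed $V$ into $\widetilde{V}$ by Proposition \ref{inco}, and realify using Proposition \ref{concon}. The paper's own proof is precisely this composition of the preceding results, so there is nothing to add.
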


We say that an operator system complexification $W$ of $V$ is {\em reasonable} if the map $x + iy \mapsto x - iy$, for $x,y \in V$, is a complete order isomorphism of $W$.  As in the operator space case the operator system complexification is the unique 
reasonable complexification:  
 
 \begin{theorem} \label{rcth} {\rm (Ruan's unique complexification theorem) } \ Let $V$ be a real operator system. Then $V$ possesses a  reasonable operator system complexification, which is unique up to unital complete order isomorphism.   That is if $Y_1, Y_2$ are complex operator systems such that for $k = 1, 2$ if $u_k : V \to Y_k$ is a real linear unital complete order embedding
with $u_k(V) + i u_k(V) = Y_k$, and such that $u_k(x) + i u_k(y) \mapsto u_k(x) - i u_k(y)$ is a complete order isomorphism, 
then there exists a unique surjective complex linear  unital  complete order isomorphism $\rho : Y_1 \to Y_2$ with $\rho \circ u_1 = u_2$.
 \end{theorem}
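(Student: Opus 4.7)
The plan splits into existence and uniqueness. For existence, I would take the abstract complexification $(\widetilde{V}, *, \{\widetilde{C}_n\}, e)$ provided by Theorem \ref{Thm: Complexification is an op sys}, use the complex Choi--Effros characterization to realize it as a concrete complex operator system, and then verify reasonableness. Proposition \ref{inco} already says that the inclusion $j : V \hookrightarrow \widetilde{V}$ is a unital real complete order embedding with $\widetilde{V} = V + iV$, so the only point to check is that $\theta(x + iy) = x - iy$ preserves $\widetilde{C}_n$. This follows immediately from the identity $c(x,-y) = J^T c(x,y) J$ with $J = I_n \oplus (-I_n) \in M_{2n}(\mathbb{R})$, together with the axiom $\alpha^T C_{2n} \alpha \subseteq C_{2n}$ and $\theta^2 = \mathrm{id}$.

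For uniqueness, given two reasonable complexifications $Y_1, Y_2$ with real unital complete order embeddings $u_k : V \to Y_k$, the formula $\rho(u_1(x) + i u_1(y)) = u_2(x) + i u_2(y)$ is forced by complex linearity and $\rho \circ u_1 = u_2$. It is well-defined by the direct sum decomposition $Y_k = u_k(V) \oplus i u_k(V)$, and is evidently complex linear, unital, and selfadjoint; a symmetric formula provides its inverse. The substantive point is to verify that $\rho$ is a complete order isomorphism.

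The strategy for this last step is to characterize the positive cone of each $Y_k$ intrinsically via $V$. For a selfadjoint matrix $Z = [u_k(x_{ij}) + i u_k(y_{ij})] \in M_n(Y_k)$ (which forces $[x_{ij}] \in M_n(V)_{\rm sa}$ and $[y_{ij}] \in M_n(V)_{\rm as}$, since $u_k$ is selfadjoint and injective), I claim that $Z \ge 0$ in $M_n(Y_k)$ if and only if $c([x_{ij}], [y_{ij}]) \in C_{2n}$. The well-known characterization of positivity in a complex operator system recalled in the discussion preceding Theorem \ref{Thm: Complexification is an op sys} reduces $Z \ge 0$ to positivity of $c(u_k^{(n)}([x_{ij}]), u_k^{(n)}([y_{ij}])) = u_k^{(2n)}(c([x_{ij}], [y_{ij}]))$ in $M_{2n}(Y_k)$, and this in turn is equivalent to $c([x_{ij}], [y_{ij}]) \in C_{2n}$ exactly because $u_k$ is a real complete order embedding. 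Since this characterization is independent of $k$, both $\rho$ and $\rho^{-1}$ are completely positive. The main obstacle is really the bookkeeping needed to reconcile the intrinsic complex operator system cone on each $Y_k$ with the canonical complexification cone built from the matrix ordering on $V$; the reasonableness hypothesis on the $Y_k$ is what keeps the decomposition into real and imaginary parts unambiguous throughout.
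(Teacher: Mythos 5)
Your proposal is correct in outline but takes a genuinely different route from the paper's. The paper disposes of Theorem \ref{rcth} in one line: since real linear unital complete order embeddings (resp.\ isomorphisms) between operator systems are the same as real linear unital complete isometries (resp.\ surjective complete isometries), both existence and uniqueness transfer directly from Ruan's operator space complexification theorem, already available in the real case. You instead work entirely at the level of matrix orderings: existence via Theorem \ref{Thm: Complexification is an op sys} and the identity $c(x,-y)=J^Tc(x,y)J$ (this is essentially the content of the proposition the paper proves immediately after Theorem \ref{rcth}), and uniqueness via an intrinsic description of $M_n(Y_k)^+$ in terms of $C_{2n}(V)$. Your route is self-contained and makes the role of the cones explicit; the paper's route is shorter and inherits uniqueness for free from the operator space category.

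One step in your uniqueness argument needs repair. You invoke ``the well-known characterization of positivity'' to claim that $Z=a+ib\geq 0$ in $M_n(Y_k)$ (with $a=u_k^{(n)}([x_{ij}])$, $b=u_k^{(n)}([y_{ij}])$) is equivalent to $c(a,b)\geq 0$ in $M_{2n}(Y_k)$. The characterization recalled before Theorem \ref{Thm: Complexification is an op sys} concerns operators $x,y\in B(H_r)$ acting on the complexified Hilbert space $H_c$; it does not apply verbatim to arbitrary elements of an abstract complex operator system $Y_k$. What is true in general is that conjugating $c(a,b)$ by (the amplification of) the complex unitary $\tfrac{1}{\sqrt 2}\begin{bmatrix} 1 & 1 \\ -i & i\end{bmatrix}$ diagonalizes it to $\mathrm{diag}(a+ib,\,a-ib)$, so $c(a,b)\geq 0$ if and only if \emph{both} $a+ib\geq 0$ and $a-ib\geq 0$ (the forward implication alone also follows by compressing with $\tfrac{1}{\sqrt 2}\begin{bmatrix} I & iI\end{bmatrix}$). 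The missing half, that $a+ib\geq 0$ forces $a-ib\geq 0$, is exactly where the reasonableness of $Y_k$ must be used --- apply the $n$-th amplification of the conjugation $u_k(x)+iu_k(y)\mapsto u_k(x)-iu_k(y)$ --- not merely, as you suggest, to keep the real/imaginary decomposition unambiguous. With that one-line insertion your cone characterization is independent of $k$ and the argument closes.
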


 \begin{proof} Since real linear unital complete order embeddings (resp.\ isomorphisms) are the same as real linear unital complete isometries (resp.\ surjective isometries), this follows from the operator space case of the theorem.   
 \end{proof}

\begin{proposition}  If $V$ is a real operator system then the complexification $\widetilde{V}$ above is reasonable and agrees with the canonical unital operator system complexification $V_c$ above up to unital linear complete isometry and 
complete order isomorphism.   
 \end{proposition}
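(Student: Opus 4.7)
The plan is to establish two things: first, that $\widetilde{V}$ is reasonable in the sense defined just before Theorem \ref{rcth}; and second, that $\widetilde{V}$ coincides with $V_c$. Then reasonableness plus Ruan's uniqueness theorem (Theorem \ref{rcth}) closes the argument.

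For reasonableness, I would exhibit the map $\theta(x+iy) = x-iy$ as a unital complete order isomorphism on $\widetilde{V}$ directly from the definition of $\widetilde{C}_n$. Given $x + iy \in \widetilde{C}_n$ with $x \in M_n(V)_{\rm sa}$ and $y \in M_n(V)_{\rm as}$, we need $c(x,-y) \in C_{2n}$. The key observation is the identity
\[ c(x,-y) \;=\; D^T \, c(x,y) \, D, \qquad D = \begin{bmatrix} I_n & 0 \\ 0 & -I_n \end{bmatrix} \in M_{2n}(\bR), \]
which is a direct block-matrix calculation. Since $D \in M_{2n,2n}(\bR)$ and $c(x,y) \in C_{2n}$, the matrix-ordering axiom $\alpha^T C_{2n} \alpha \subseteq C_{2n}$ gives $c(x,-y) \in C_{2n}$, so $\theta$ (and by the same argument $\theta^{(n)}$) is positive. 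Since $\theta = \theta^{-1}$, it is a complete order isomorphism; that it is a unital complete isometry then follows from the fact that unital complete order isomorphisms between operator systems are unital complete isometries.

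For the identification with $V_c$, I would give two alternative routes. The direct route uses Theorem \ref{ce}: pick a real unital complete order embedding $\pi : V \to B(H_r)$, and consider $\widetilde{\pi}: \widetilde{V} \to B_{\bC}(H_c)$, $x + iy \mapsto \pi(x) + i \pi(y)$. By the calculation recalled in the paragraph preceding the definition of $\widetilde{C}_n$, the element $\pi(x) + i \pi(y)$ is positive in $B_{\bC}(H_c)$ precisely when $c(\pi(x), \pi(y))$ is positive in $B_{\bR}(H_r^2) \cong M_2(B(H_r))$, i.e. precisely when $x + iy \in \widetilde{C}_n$ at each matrix level. Hence $\widetilde{\pi}$ is a complex unital complete order embedding, and its image together with the inclusion of $V$ realizes the operator space complexification from the introduction (via the $c(x,y)$ picture in $V_X \subseteq M_2(X)$).

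The slicker route is to observe that the previous paragraph already shows that $\widetilde{V}$ is an operator system complexification of $V$ in the sense of the introduction, and the first paragraph of this sketch shows it is reasonable. Since $V_c$ is also a reasonable operator system complexification of $V$, Theorem \ref{rcth} supplies a unique unital complete order isomorphism between them extending the identity on $V$. The only step requiring any care is checking that the concrete representation of $\widetilde{V}$ inside $B_{\bC}(H_c)$ is complex linear and unital, which is built into the definition of $\widetilde{\pi}$; the main technical content is really the block-matrix identity above plus the already-established Theorem \ref{ce}, so no new obstacle arises.
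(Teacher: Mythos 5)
Your proposal is correct and follows essentially the same route as the paper: establish reasonableness by conjugating $c(x,y)$ into $c(x,-y)$ with a real orthogonal matrix (the paper uses a block permutation where you use the signature matrix $\mathrm{diag}(I_n,-I_n)$; both are instances of the axiom $\alpha^T C_{2n}\alpha \subseteq C_{2n}$), and then invoke Ruan's uniqueness theorem (Theorem \ref{rcth}). Your extra "direct route" via a concrete representation $\widetilde{\pi}$ is a harmless elaboration of what the paper leaves to Proposition \ref{inco} and the $V_X \subseteq M_2(X)$ identifications in the introduction.
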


 \begin{proof}  To see that it is reasonable, suppose $x + iy \in \widetilde{C}_n$ with $x \in M_n(V)_{\rm sa}, y \in M_n(V)_{\rm as}$, so that $c(x,y) \geq 0$.  Multiplying by unitary permutations we see $c(x,-y) \geq 0$, so that $x - iy \in \widetilde{C}_n$.
 By symmetry the map $x + iy \mapsto x - iy$, for $x,y \in V$, is a complete order isomorphism, and completely isometric isomorphism.  The result follows by Ruan's theorem and Theorem \ref{rcth}.
\end{proof}

\begin{proposition} \label{cco}  A complex operator system is the reasonable complexification of a real operator system if and only if 
 it has an  ``operator system conjugation'', that is, a conjugate linear period 2 bijective unital complete order isomorphism $\theta$.
 In this case it is the operator system complexification of the fixed points of $\theta$.
 \end{proposition}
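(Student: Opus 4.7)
The forward direction is essentially a restatement: if $W = V_c$, then $\theta := \theta_V$, given by $x+iy \mapsto x-iy$ for $x,y \in V$, is conjugate linear, unital, of period 2, and bijective by inspection, and is a complete order isomorphism by the preceding proposition (which established reasonability of $V_c$). So the substance lies in the reverse direction.

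Assume now that $W$ is a complex operator system with an operator system conjugation $\theta$, and set $V := \{w \in W : \theta(w) = w\}$. Conjugate linearity combined with $\theta^2 = \mathrm{id}$ makes $V$ a real subspace; since $\theta$ is unital, $1 \in V$; and since complete order isomorphisms are selfadjoint by the paper's convention (so $\theta(w^*) = \theta(w)^*$), $V$ is closed under $*$. Every $w \in W$ decomposes as $w = x + iy$ with $x = \tfrac{1}{2}(w+\theta(w)) \in V$ and $y = \tfrac{1}{2i}(w-\theta(w)) \in V$; checking $\theta(y) = y$ is where conjugate linearity of $\theta$ is used essentially. The sum $V + iV$ is direct: if $iv \in V$ with $v \in V$, then $\theta(iv) = -i\theta(v) = -iv$ equals $iv$, so $v = 0$.

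To equip $V$ with its real operator system structure, I would realify $W$ via Corollary \ref{coreisab} and set $M_n(V)^+ := M_n(V) \cap M_n(W)^+$. Properness of the cones, the matrix order unit property of $1$, and the archimedean property all descend from $W$ to the unital selfadjoint real subspace $V$ by routine restriction arguments; the inclusion $V \hookrightarrow W$ is by construction a unital real linear complete order embedding with $W = V + iV$. The reasonability of this complexification is precisely the statement that $x+iy \mapsto x-iy$ on $W$ is a complete order isomorphism; but this map is $\theta$, which is such by hypothesis. Ruan's uniqueness theorem (Theorem \ref{rcth}) then identifies $W$ unitally and completely order isomorphically with $V_c$, and by construction $V$ is the fixed point set of the conjugation.

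The one genuinely non-mechanical step is verifying that the ``imaginary part'' formula $y = \tfrac{1}{2i}(w-\theta(w))$ lands back in $V$; this uses precisely the conjugate linearity of $\theta$ (a complex linear period-2 map would produce the wrong sign). All remaining verifications are standard: the axioms of an abstract real operator system are inherited by unital selfadjoint real subspaces, and Ruan's theorem then does the identification work without further effort.
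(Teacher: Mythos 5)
Your proof is correct and follows exactly the method the paper invokes by citation (the Ruan-style argument of \cite[Proposition 2.1]{BCK}, adapted from operator spaces to operator systems): take the fixed-point set of $\theta$, verify the direct decomposition $W = V \oplus iV$ using conjugate linearity, restrict the matrix cones, and conclude via the uniqueness theorem. The paper simply leaves these details to the reference, so your writeup is a faithful expansion of its one-line proof rather than a different route.
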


\begin{proof} This is obvious if one has seen the operator space case, due to Ruan.   See  e.g.\  \cite[Proposition 2.1]{BCK} for the method if needed.
 \end{proof}

 {\bf Remark.}  As is well known, if we only care about the first level cone $C_1$, 
 complex operator systems are the same as function systems, and thus have a  simple ordered space characterization in terms of an archimedean order unit (see \cite[Section 4.3]{KRI}, \cite{Alfsen,PTT}, and 
 \cite[Proposition 6.1]{BMcI}).  This category has several nice features, including MIN and MAX functors into the category of complex operator systems.   The category ROSy1 of real operator systems with their  first level 
 cone 
 only
  are not the same as real function systems since the latter objects can contain no skew elements.    
 We will not use this but it is interesting that one can characterize the real 
 operator systems together with only their first cone $C_1$ as the fixed points of a period 2 conjugate linear automorphism on a complex function system. 
However in the later result Theorem \ref{hasn} we will prove an ordered space characterization of this category ROSy1 in terms of an archimedean order unit. 
 Indeed these are exactly the 
 real $*$-vector spaces $V$ with a positive cone $V^+ \subseteq V_{\rm sa}$ and archimedean order unit $V$.  
 We will also demonstrate  in the same section that there exist no MIN and MAX functors in this setting, in contrast to  the complex theory. 

 \bigskip

 Finally, we discuss the case of real operator systems 
 with trivial involution (i.e.\ the identity map $x^* = x$).  These are the real operator systems with 
 $\cS = \cS_{\rm sa}$.  It is also easy to see that they coincide with the operator systems  which are the selfadjoint part of complex operator systems.  (Or, of real operator systems.)  Thus they form an important class of real operator systems somewhat analogous to the class of JC-algebras.  
 Note however that $M_n(\cS)$ has a nontrivial involution, the transpose, and may have nontrivial skew elements. 
 
 This class may be characterized as the real vector spaces $V$ with proper cones $C_n$ of symmetric matrices (i.e.\ $x = x^T$) in $M_n(\cS)$ satisfying the usual conditions
 $C_n \oplus C_m \subseteq C_{n+m}$ and $\alpha^T C_n \alpha \subseteq C_m$ for $\alpha \in M_{n,m}(\bR)$, having an archimedean matrix order unit.  By the main theorem of this section $\cS$ is a real operator system with identity involution, and there is 
 a unital complete order embedding $u : V \to B(H)_{\rm sa}$
 onto a concrete operator subsystem of $B(H)_{\rm sa}$. 
 Note that $V = (\tilde{V})_{\rm sa}$. 
 They are also order isomorphic to real function systems.

\section{Properties of real operator systems} \label{rcast}

In this section, we check that many of basic theorems and constructions (for instance, those found in \cite{Pnbook}) for complex operator systems also hold for real operator systems. We begin with some basic and useful facts.  If $\cS$ is a real operator system then it is easy to see (e.g.\ from facts in Section 2 such as Proposition \ref{inco}) that $(\cS_c)_{\rm sa} \cap \cS = $$\cS_{\rm sa}$, and $(\cS_c)^+ \cap \cS = $$\cS^+$. 

By a (real) state on a real operator system we mean a unital real valued functional which is contractive, or equivalently selfadjoint and positive,
or equivalently ucp \cite{BT}.   A real separable operator system possesses a faithful state.
To see this notice that the generated C$^*$-algebra is separable, and so is its complexification. Choose any faithful state $\psi$ on its complexification.   The real part $g$ of its restriction to $\cS$ is faithful: if $x \in \cS^+$ satisfies 
    $${\rm Re} \, \psi(x) =\psi(x) =0 $$
    then $x=0$.   Note that the complexification $g_c$ of a faithful state $g$ is a faithful state.   Indeed if $x+iy \geq 0$ for $x,y \in \cS$, and $g_c(x+iy) =0$, then $x=0$ since $g$  is faithful.   However since $x \pm iy \geq 0$ this implies $y=0$.
The fact that a selfadjoint element $u$ is positive if $f(u)$ is positive for all states $f$ of an operator system, in the real case, can be deduced from  Proposition 5.2.6 in  \cite{Li}.  For an  operator system $\cS$ we have $\cS = \cS_{\rm sa} \oplus \cS_{\rm as}$, where $\cS_{\rm as}$ are the skew or antisymmetric elements ($x^* = -x$).  Note that  selfadjoint functionals annihilate $\cS_{\rm as}$. 
As in \cite[Lemma 5.1]{BReal}, $\cS_c \cong \cS \oplus \cS^{\circ}$ unitally complex complete order isomorphically if 
$\cS$ is a complex operator system.   Here $\cS^{\circ}$ is the `opposite
operator system', identifiable with the obvious subspace of the opposite C$^*$-algebra $B^{\circ}$ if $\cS$ is a subsystem of $B$.   As operator systems, 
$\cS^{\circ}$ is completely order isomorphic (via $\bar{x}^*$) to the {\em conjugate operator system} 
$\bar{\cS}$.  We recall that if $X$ is a complex operator space then
$\bar{x}$ is the set of formal symbols $\bar{x}$ for $x \in X$, with the operator space structure making $X \to \bar{X} : x \mapsto \bar{x}$ a  conjugate linear complete isometry. 
So $\cS_c \cong \cS \oplus^\infty \bar{\cS}$.

\begin{lemma} \label{ncomma} Let $u : X \to B$ be a real completely contractive 
(resp.\ completely positive, completely contractive homomorphism, $*$-homomorphism) map from a real operator space (resp.\ system, operator algebra, C$^*$-algebra) $X$ into a complex C$^*$-algebra $B$.
Then $u$ extends uniquely to a complex linear completely contractive 
(resp.\ completely positive, completely contractive homomorphism, $*$-homomorphism) map $X_c \to B$.
\end{lemma}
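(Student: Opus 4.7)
The plan is to define the extension in the only way possible and then verify each of the listed preservation properties in turn. Set $\tilde u(x+iy) := u(x) + iu(y)$ for $x,y \in X$. Any complex-linear extension must satisfy this, and since $X_c = X + iX$ as a real direct sum, this formula is well-defined and gives the unique complex-linear extension. So uniqueness is immediate.

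For the completely contractive (resp.\ completely positive) case, I would exploit the matricial model of $X_c$ already developed in the introduction. Under the real complete isometry (resp.\ unital complete order embedding) $X_c \to M_2(X)$, $x+iy \mapsto c(x,y)$, the norm (resp.\ positive cone) on $M_n(X_c)$ is computed as the norm (resp.\ positive cone) on the obvious subset $V_{M_n(X)} \subseteq M_{2n}(X)$. Exactly the same construction applies to $B$ (since $B$ is itself a complex operator system sitting in $B_{\bC}(H)$, Proposition \ref{concon} identifies $B$ with $V_B \subseteq M_2(B_r)$ real completely order isomorphically). Under these two identifications the map $\tilde u$ becomes simply the restriction of the real amplification $u^{(2)} : M_2(X) \to M_2(B)$ to the subspace $V_X$. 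Since $u$ is real completely contractive (resp.\ completely positive), so is $u^{(2)}$ at every further matrix level, and the restriction inherits these properties. Transporting back via the (reasonable) complexification identifications yields that $\tilde u$ is complex completely contractive (resp.\ completely positive).

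For the homomorphism and $*$-homomorphism cases, I would just compute directly, since nothing more than bilinearity is at stake. If $u$ is multiplicative, then
\[
\tilde u((x_1+iy_1)(x_2+iy_2)) = u(x_1 x_2 - y_1 y_2) + i\, u(x_1 y_2 + y_1 x_2) = \tilde u(x_1+iy_1)\,\tilde u(x_2+iy_2),
\]
and $\tilde u((x+iy)^*) = u(x^*) - iu(y^*) = u(x)^* - iu(y)^* = \tilde u(x+iy)^*$. Combining with the norm assertion from the previous paragraph takes care of the completely contractive homomorphism and $*$-homomorphism statements.

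The main point requiring care is the first paragraph of the verification: one must be sure that the matricial embedding of $X_c$ into $M_2(X)$ really does carry norms and positive cones to the correct objects on the $B$-side under the codomain identification $B \hookrightarrow V_B \subseteq M_2(B_r)$. This is where I expect to spend the most attention, although it is essentially just bookkeeping given the material in the introduction and Section \ref{CE}; alternatively one can package it as an appeal to Ruan's unique complexification theorem (Theorem \ref{rcth}) together with its operator space analogue from \cite{RComp}, treating $B$ as a reasonable complexification of its own real part.
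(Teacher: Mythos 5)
Your overall strategy (define $\tilde u(x+iy)=u(x)+iu(y)$, get uniqueness for free, check multiplicativity and $*$-preservation by hand, and reduce the completely contractive/positive assertions to the matricial model of the complexification) is essentially the paper's route, and the uniqueness and algebraic computations are fine. But the key analytic step has a genuine gap: the claimed identification of $B$ with $V_B \subseteq M_2(B_r)$ is false. The subspace $\{c(b_1,b_2): b_1,b_2\in B\}$ of $M_2(B_r)$ is the complexification $(B_r)_c$ of $B$ \emph{as a real space}, and this is $*$-isomorphic to $B\oplus \bar B$ — a C$^*$-algebra of twice the complex dimension of $B$ — not to $B$ itself. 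Consequently the restriction of $u^{(2)}$ to $V_X$ is the complexified map $u_c: X_c\to B_c$, which lands in $B_c$, not in $B$; the phrase ``transporting back via the (reasonable) complexification identifications'' hides precisely the one nontrivial ingredient. (The alternative you suggest at the end has the same problem: $B$ is not a reasonable complexification of $B_r$, since $\dim_{\bC}(B_r)_c = 2\dim_{\bC}B$.)

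The missing step, which is exactly what the paper's proof supplies, is to compose $u_c$ with the first-coordinate projection $B_c\cong B\oplus\bar B\to B$, $(b,\bar c)\mapsto b$, and to observe that this projection is a complex linear $*$-homomorphism, hence completely contractive and completely positive; in the matricial picture it is the map $c(b_1,b_2)\mapsto b_1+ib_2 = \tfrac12\,\alpha\, c(b_1,b_2)\,\alpha^*$ with $\alpha=[\,I\ \ iI\,]$, from which its complete positivity and contractivity are immediate. With that projection inserted after your $u^{(2)}|_{V_X}$, the composite is $\tilde u$, complex linear, and has all the stated properties; without it, the argument does not produce a map into $B$ at all.
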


\begin{proof}    
We obtain a 
complex linear complete contraction (resp.\ completely positive, completely contractive homomorphism, $*$-homomorphism) $X_c \to B_c$.  It is shown in 
\cite{BReal} that $B^c \cong B \oplus \bar{B}$.  The map $(b, \bar{c}) \mapsto b$
is a complex linear $*$-homomorphism
$B \oplus \bar{B} \to B$.   Composing these maps gives our result. 
\end{proof} 

Any selfadjoint bounded functional $\psi$ on an operator system is the difference of two (completely) positive  functionals whose norms have sum $\|\psi\|$.   Indeed by the Hahn-Banach theorem $\psi$ can be extended to a functional on a containing C$^*$-algebra of the same norm.  We may assume that this is  selfadjoint by averaging with the involution of the functional. By the C$^*$-theory there exist positive selfadjoint linear functionals $\psi_+, \psi_-$ such that $\psi = \psi_+ - \psi_-$ and $\|\psi\| = \|\psi_+\| + \|\psi_-\|$. Restricting to the operator system the result follows. 
A selfadjoint completely bounded map $\Phi : \cS \to B(H)$ is the difference of two completely positive maps into $B(H)$.  Indeed $\Phi$ extends  as above
 to a selfadjoint completely bounded map $\hat{\Phi}$ on $C^*(\cS)$.
    In the complex case the latter map is the difference of two completely positive maps, so is dominated by the sum of these.
    In the real case the complexification of $\hat{\Phi}$ 
    is selfadjoint, so is the difference of two completely positive maps.  One then gets the same fact for $\Phi$ after composing with the projection $B(H)_c \to B(H)$. 

    In places the reader will also need to be familiar with the theory of the injective envelope $I(X)$.  In the complex case this may be found 
in e.g.\ \cite{BLM,ER,Pnbook}.   The real case was initiated in \cite{Sharma}, and continued in \cite{BCK} and  \cite{BReal} (see particularly Section 4 there). 

A systematic check of \cite{Pnbook} reveals that almost all operator system results there are valid in the real case, provided we assume that all positive  maps and functionals are taken to be selfadjoint (that is, $T(x^*) = T(x)^*$, or for real functionals $\varphi(x^*) = \varphi(x)$).  (We already said that 
completely positive maps on real operator systems are selfadjoint.)  
 We will assume this in the discussion below. 
Some of the results from \cite{Pnbook} have already appeared for the real case in e.g.\ 
\cite{ROnr,RComp,Sharma, Oz, BT, BCK,BReal,CDPR1, CDPR2}, and we will not repeat these.  For example Ruan already checked the real Stinespring and Arveson-Wittstock-Hahn-Banach theorems, etc.   
 Or, it is shown in 5.13 in \cite{BT} that Proposition 2.1 in \cite{Pnbook} is not valid in the real case.
 Indeed a positive unital selfadjoint real linear map on a real operator system need not be bounded. 
  Nor are valid  certain exercises in that text related to Proposition 2.1.  A helpful example here is the 
 real span of a spin system $(U_i)$ (see e.g.\ p.\ 174 in \cite{Pisbk}). See also \cite{CDPR1, CDPR2}.
We have not checked if positive contractive maps on a real JC$^*$-algebra extend to unital positive maps on the unitization, however certainly completely positive contractive maps  do, as in \cite[Lemma 3.9]{CEcplp} and by complexification, and the extension is ucp.  However the vast majority of results are valid in the real case, either by the analogous proof, or by complexification.  For example Lemma 2.10 there is valid,
however we recall that the spectrum in a real algebra is defined in terms of the spectrum in the complexification \cite{Li}.  
 Proposition 2.11
is not true unless $\phi$ is selfadjoint, but if selfadjoint then the same proof works \cite{BT}.
 We also mention in particular (in addition to results mentioned in the papers in the  list above): Example 1.4, Lemma 2.3 and Theorem 2.4, and Exercises 2.3--2.5 and 2.10 are all valid in the real case with the same proof.  
 Results 2.8--2.9 are false even if $\phi$ is selfadjoint (consider selfmaps of the quaternions). Similarly for 3.1--3.4, and 3.7 and 3.14 (3.5, 3.6, 3.8 and 3.9 appear e.g.\ in \cite{BT}), and Exercises 3.2--3.4, and 3.18.
Theorem 3.11 is true with the same proof, however we have not checked if the result extends to positive maps on all commutative real C$^*$-algebras.  In Chapter 6, Theorems 6.1--6.4, 
and Lemma 6.5, follow with the same proof in the real case, 
as do later results 6.6--6.10, and Exercise 6.2 which these rely on.    In the latter exercise,  note that the restriction $r$ of $P$ to 
its range $K$ is invertible there, and we may take  $R = r^{-\frac{1}{2}}$ there, and 0 on $K^\perp$.  It has been noted by others that where Paulsen invokes
 Krein-Milman in 
 6.6, he means the geometric Hahn-Banach theorem applied in $M_n(\cS)_{\rm sa}$.    In our real case,
 apply the real geometric Hahn-Banach theorem in these places
  to obtain a separating functional $\varphi : M_n(\cS) \to \bR$, and then replace it by 
 $(\varphi + \varphi^*)/2$.  
Recall that we are assuming that all positive  maps and functionals are taken to be selfadjoint.  
We will improve on 6.7 later in the finite dimensional case (see Remark 3 after Theorem \ref{fdom}). 

\begin{corollary} \label{plem} Let $\cS$ be a real operator system and $x \in  M_n(\cS)$. If $u^{(n)}(x) \geq 0$
for every 
(selfadjoint) ucp $u : \cS \to M_k(\bR)$  and every $k \in \bN$, then $x \geq 0$.
If further $x = x^*$ then the above holds with $k = n$.
\end{corollary}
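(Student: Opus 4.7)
The plan is to argue in two parts: the first (general) assertion by complexification, and the second (sharper $k=n$) assertion by a direct Hahn-Banach-plus-Choi argument, since complexification unavoidably produces a larger codomain.

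For the first statement, regard $x \in M_n(\cS) \subseteq M_n(\cS_c)$. By Proposition~\ref{inco}, $x \in M_n(\cS)^+$ if and only if $x \in M_n(\cS_c)^+$, so it suffices to prove the latter. Assume for contradiction that $x \not\in M_n(\cS_c)^+$. The complex case of Paulsen's Proposition~6.7 in \cite{Pnbook} then yields a complex ucp $\phi : \cS_c \to M_j(\bC)$ for some $j \in \bN$ with $\phi^{(n)}(x) \not\geq 0$. Restrict $\phi$ to $\cS$ and compose with the canonical real complete order embedding $M_j(\bC) \hookrightarrow M_{2j}(\bR)$ of Proposition~\ref{concon} to obtain a real ucp map $u : \cS \to M_{2j}(\bR)$. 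Proposition~\ref{concon} applied at the $n$-th matrix level transfers $\phi^{(n)}(x) \not\geq 0$ in $M_n(M_j(\bC))$ into $u^{(n)}(x) \not\geq 0$ in $M_n(M_{2j}(\bR))$, contradicting the hypothesis.

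The complexification route above yields $k = 2j$, not $k = n$, so for the second statement I would mimic Paulsen's own proof of 6.7 directly. Suppose $x = x^* \not\in M_n(\cS)^+$. The cone $M_n(\cS)^+$ is closed and convex in the real vector space $M_n(\cS)_{\rm sa}$, closedness coming from the archimedean axiom. The real geometric Hahn-Banach theorem, as already used in this paper in the discussion of Paulsen's 6.6, furnishes a real linear functional $f : M_n(\cS)_{\rm sa} \to \bR$ that is $\geq 0$ on $M_n(\cS)^+$ and satisfies $f(x) < 0$; extend $f$ to a selfadjoint functional on all of $M_n(\cS)$ by declaring it zero on $M_n(\cS)_{\rm as}$. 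The real form of Choi's correspondence then converts $f$ into a completely positive map $\phi : \cS \to M_n(\bR)$ via $\phi(a)_{ij} := f(E_{ij} \otimes a)$; after dividing by the positive scalar $f(I_n \otimes 1_\cS)$ (which preserves the sign of $f(x)$), $\phi$ becomes ucp, and the pairing of $\phi^{(n)}(x)$ against the vector $\xi_0 := \sum_{i=1}^n e_i \otimes e_i \in \bR^{n^2}$ recovers a positive multiple of $f(x) < 0$, so $\phi^{(n)}(x) \not\geq 0$, the required contradiction.

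The main obstacle is the real form of Choi's correspondence: one must verify that the recipe $f \mapsto \phi$ above actually produces a \emph{completely} positive map, not merely a positive one. All the relevant formulas are real-linear in real matrix entries, so Choi's argument transfers verbatim once one checks that no covert use is made of complex scalars or a complex inner product; this reduces to observing that the matrix trick underlying complete positivity uses only the matrix-ordering axioms $C_n \oplus C_m \subseteq C_{n+m}$ and $\alpha^T C_n \alpha \subseteq C_m$ of an abstract real operator system.
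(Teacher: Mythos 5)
Your strategy coincides with the paper's own proof on both counts: the first assertion via complexification (restricting complex ucp maps $\cS_c \to M_j(\bC)$ to real ucp maps into $M_{2j}(\bR)$, using Propositions \ref{inco} and \ref{concon}), and the second via Hahn--Banach separation of $x$ from the closed cone $M_n(\cS)^+$ followed by Choi's correspondence $f \mapsto \phi$, $\phi(a)_{ij} = f(E_{ij}\otimes a)$, and the pairing against $\sum_i e_i\otimes e_i$ --- this is precisely the argument from p.\ 179 of \cite{Pnbook} that the paper adapts.

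There is, however, one step that fails as written: dividing $\phi$ by the scalar $f(I_n\otimes 1_{\cS})$ does not make $\phi$ unital. The element $\phi(1_{\cS})$ is the positive matrix $[f(E_{ij}\otimes 1_{\cS})]_{i,j}$, and scalar normalization only fixes its trace, not its value; in general $\phi(1_{\cS})$ is not a multiple of $I_n$, so your map is cp but not ucp, whereas the hypothesis of the corollary quantifies only over ucp maps. The repair is the real variant of Paulsen's Exercise 6.2(ii), which the paper verifies earlier in Section \ref{rcast}: writing $P=\phi(1_{\cS})$, one conjugates by the inverse square root of the restriction of $P$ to its range (extended by $0$ on the orthocomplement) to produce a ucp map $\psi:\cS\to M_n(\bR)$ with $\phi = P^{1/2}\psi(\cdot)P^{1/2}$. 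Then $\psi^{(n)}(x)\geq 0$ would force $\phi^{(n)}(x) = (I_n\otimes P^{1/2})\,\psi^{(n)}(x)\,(I_n\otimes P^{1/2})\geq 0$ and hence $f(x)\geq 0$, the desired contradiction, so $\psi$ is the ucp witness you need. With this correction your argument is complete and matches the paper's.
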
 

\begin{proof}  Indeed the real case of the first assertion  follows by the same proof as in complex case (see 
\cite[Lemma 4.1]{KPTT1}), or follows  the from the complex case by complexifying.  Note that any complex linear ucp $u : \cS_c \to M_k(\bR)_c = M_k(\bC)$ gives a real linear  ucp $v = u_{| \cS} : \cS \to M_{2k}(\bR)$, so that $u^{(n)}(x) = v^{(n)}(x) \geq 0$.  If $x = x^*$ we use an idea from 
 p.\ 179 of \cite{Pnbook}.  Claim: If $x \notin M_n(\cS)^+$, then there exists a cp $\varphi: \cS \to M_n$ 
 such that $\varphi^{(n)}(x) \notin M_{n^2}^+$. 
Since $K = M_n(\cS)^+$ is a closed cone there exists a linear functional $\psi : M_n(\cS) \to \bR$ with $\psi(K) \geq 0 > \psi(x)$.  
We may assume that $\psi = \psi^*$, then $\psi$  is (completely) positive.   If the Claim were false then the associated completely 
positive map $\Psi : \cS  \to M_n$ (see the start of \cite[Chapter 6]{Pnbook}) satisfies $\Psi^{(n)}(x) \geq 0$.   This implies that 
$0 \leq \sum_{i,j} \, \Psi(x_{ij})_{ij} = \sum_{i,j} \, \psi(x_{ij} \otimes e_{ij}) = \psi(x)$ by e.g.\ the `Alternatively' on p.\ 73 in \cite{Pnbook}. 
This is a contradiction. 
\end{proof} 

That is, the matrix states determine the order.   

Relating to the Exercise set for Chapter 8, 8.6 works as expected.  Regarding Haagerup's facts about 
decomposable maps in the real case and similar settings, 
and some aspects of these are quite interesting.   For example $CB(A,B(H))$ 
is not the span of the completely positive maps, unlike in the complex case.  Nonetheless nearly all of the basic results in the theory of decomposable maps 
work in the real case (see the sequel \cite{BPr}). 
The main result in Chapter 13 is considered in our Section \ref{CE} in the real case. 

Finally, the basic injectivity results in Chapter   15 of \cite{Pnbook} are mostly verified in the real case in some of the papers in the list above.  Although  the real case of Theorem 15.12 and Corollary 15.13 follows by complexification, it is worth saying a few words about this proof   and supplying a few additional details.  First, we define an operator $A$-system in the real case exactly as it is defined in the complex case 
above \cite[Theorem 15.12]{Pnbook}, 
That is, $\cS$  is a real operator system and nondegenerate bimodule over a real C$^*$-algebra $A$, such that $a 1_{\cS} = 1_{\cS} a$  for $a \in A$,
and $a^* M_n(\cS)^+ a \subseteq M_n(\cS)^+$ for  all $a \in M_n(A)$. 
 
\begin{theorem} \label{roAsy}  Let $A$ be a real (resp.\ complex) unital C$^*$-algebra.
\begin{itemize} \item [{\rm (1)}] A real (resp.\ complex) operator system  and $A$-bimodule $\cS$ is a real (resp.\ complex) operator $A$-system if and only if it
 is a nondegenerate (so $1 x = x 1 = x$ for $x \in \cS$) operator $A$-bimodule in the sense of {\rm \cite[Chapter 3]{BLM}}  or 
 {\rm  \cite[Theorem 15.14]{Pnbook}}
(resp.\ {\rm \cite[Theorem 2.4]{BReal}}) and $a \, 1_{\cS}
= 1_{\cS} \, a$ for $a \in A$. 
\item [{\rm (2)}] For a  real operator $A$-system, if we view the
real operator system injective envelope $(I(\cS),j)$ as a real C$^*$-algebra  with $\cS$  unitally completely order embedded in it via $j$ as in e.g.\ {\rm \cite[Theorem 4.2(2)]{BCK}}, then
the canonical map $a \mapsto a 1_{\cS}$ is a $*$-homomorphism $\pi : A \to I(\cS)$ satisfying
$\pi(a) j(x) = j (ax)$ for $a \in A, x \in \cS$.  \item [{\rm (3)}] If $\cS$ is a  real operator $A$-system 
there is a real Hilbert space $H$ and  unital (selfadjoint) complete order  embedding $\gamma : \cS \to B(H)$,
and a unital  $*$-representation
 $\pi : A \to B(H)$  such that $\pi(a) \gamma(x) = \gamma(ax)$ for $a \in A, x \in \cS$. 
 \end{itemize} 
\end{theorem}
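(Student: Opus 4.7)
The plan is to deduce all three parts from the complex analogues (\cite[Theorem 15.12]{Pnbook} and \cite[Theorem 15.14]{Pnbook}) by complexification, following the template used throughout this paper. The key transfer inputs, most of which are already established earlier in the paper or in its cited precursors, are: (i) $\cS_c$ is a complex operator system with $M_n(\cS)^+ = M_n(\cS_c)^+ \cap M_n(\cS)$ (Proposition \ref{inco}); (ii) if $\cS$ is a nondegenerate $A$-bimodule, the canonical extension $(a+ib)(x+iy) = (ax-by) + i(ay+bx)$ makes $\cS_c$ into a nondegenerate $A_c$-bimodule; and (iii) $I(\cS)_c = I(\cS_c)$ as complex operator systems and as complex C$^*$-algebras, which follows from the real injective envelope theory of \cite{BCK,BReal} combined with the uniqueness of injective envelopes (note that $I(\cS)_c$ is injective and a rigid extension of $\cS_c$).

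For Part (2), I would apply \cite[Theorem 15.12]{Pnbook} to $\cS_c$ viewed as a complex operator $A_c$-system (using (i)–(ii)) to obtain a unital complex linear $*$-homomorphism $\pi_c : A_c \to I(\cS_c)$ with $\pi_c(\alpha) j_c(z) = j_c(\alpha z)$ for $\alpha \in A_c$, $z \in \cS_c$. The crucial observation, exploiting unitality, is that for $a \in A$,
\[ \pi_c(a) \; = \; \pi_c(a) \cdot 1_{I(\cS_c)} \; = \; \pi_c(a) \cdot j_c(1_\cS) \; = \; j_c(a \cdot 1_\cS) \; = \; j(a \cdot 1_\cS) \; \in \; j(\cS) \; \subset \; I(\cS), \]
where the penultimate equality uses (iii) and the inclusion $I(\cS) \subset I(\cS)_c = I(\cS_c)$. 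Hence $\pi := \pi_c|_A$ takes values in $I(\cS)$ and is a real unital $*$-homomorphism with the stated intertwining property. For Part (3), I would then embed the real C$^*$-algebra $I(\cS)$ faithfully into $B(H)$ for some real Hilbert space $H$ via the real Gelfand-Naimark theorem (see e.g.\ \cite{Li}), call this $\rho$, and set $\gamma := \rho \circ j$ together with the representation $\rho \circ \pi$ of $A$ on $H$; the intertwining $\rho(\pi(a)) \gamma(x) = \gamma(ax)$ follows by applying $\rho$ to the identity from Part (2), and $\gamma$ is a unital complete order embedding as a composition of two such maps.

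For Part (1), the forward direction is an easy consequence of Part (3): the simultaneous embedding of $\cS$ and $A$ into $B(H)$ reduces the BLM operator bimodule axioms of \cite[Chapter 3]{BLM} to standard operator-norm estimates, and the commutation $a \, 1_\cS = 1_\cS \, a$ is built into the definition of an operator $A$-system. For the converse, given a nondegenerate real operator $A$-bimodule $\cS$ with $a \, 1_\cS = 1_\cS \, a$, the transfer facts (i)–(ii) promote $\cS_c$ to a nondegenerate complex operator $A_c$-bimodule satisfying $\alpha \, 1_{\cS_c} = 1_{\cS_c} \, \alpha$ for $\alpha \in A_c$; then \cite[Theorem 15.14]{Pnbook} yields $\alpha^* M_n(\cS_c)^+ \alpha \subseteq M_n(\cS_c)^+$ for every $\alpha \in M_n(A_c)$, and restricting to $\alpha \in M_n(A)$ and intersecting with $M_n(\cS)$ gives the real cone condition via Proposition \ref{inco}. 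The main obstacle I anticipate is pinning down (iii) with enough precision that the identifications $j_c|_\cS = j$ and $I(\cS) \subset I(\cS_c)$ are unambiguous; however this is precisely what the real injective envelope theory already cited in the statement of Part (2) is designed to provide.
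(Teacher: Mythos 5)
Your overall strategy for (2) and (3) coincides with the paper's: prove (2) by passing to $\cS_c$ and applying the complex \cite[Theorem 15.12]{Pnbook}, then obtain (3) by faithfully representing the real C$^*$-algebra $I(\cS)$ and composing. However, there is a genuine gap in your proof of (2). Your transfer input (ii) only upgrades $\cS_c$ to a nondegenerate $A_c$-\emph{bimodule}; it does not give the positivity axiom $\alpha^* M_n(\cS_c)^+ \alpha \subseteq M_n(\cS_c)^+$ for $\alpha \in M_n(A_c)$, which is part of the definition of a complex operator $A_c$-system and is exactly the hypothesis that \cite[Theorem 15.12]{Pnbook} requires. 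The real hypothesis only controls conjugation by matrices over $A$, while the cones of $M_n(\cS_c)$ are defined through the real matrices $c(s,t)$, so one must actually verify that $s+it \geq 0$ and $x+iy \in M_n(A_c)$ imply $(x+iy)^*(s+it)(x+iy) \geq 0$. This verification is the main computational content of the paper's proof of (2): identifying $A_c$ and $\cS_c$ with matrices of the form $c(x,y)$, the conjugation becomes conjugation of $c(s,t) \in M_{2n}(\cS)^+$ by the element $c(x,y) \in M_{2n}(A)$, to which the real axiom applies. The step is short, but it is not contained in (i)--(ii), and you have instead flagged (iii) (the identification $I(\cS)_c = I(\cS_c)$) as the delicate point, which the paper treats as a routine citation to \cite{BCK,BReal}.

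For (1) the routes diverge. The paper proves the harder direction directly inside the injective envelope: the CES-type characterization of operator bimodules produces two $*$-homomorphisms $\pi, \theta : A \to I(\cS)$ with $j(axb) = \pi(a) j(x)\theta(b)$, and the hypothesis $a\, 1_{\cS} = 1_{\cS}\, a$ forces $\pi = \theta$, whence $j(\cS)$ is a concrete $\pi(A)$-system and the cone axiom follows. You complexify and assert that \cite[Theorem 15.14]{Pnbook} ``yields'' $\alpha^* M_n(\cS_c)^+ \alpha \subseteq M_n(\cS_c)^+$; but that theorem is only the representation theorem for operator bimodules and does not by itself produce any positivity statement --- without the $\pi = \theta$ step the element $\pi(\alpha)^* j(x) \theta(\alpha)$ need not even be selfadjoint. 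So your citation is carrying more weight than the cited result supports; the fix is precisely the paper's unitality argument, after which your complexify-and-restrict scheme for the converse of (1), and your deduction of the forward direction of (1) from (3), both go through essentially as in the paper.
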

 
\begin{proof}  In both the real and complex case,
if $B = (I(\cS),j)$ is a real injective envelope of $\cS$ regarded as a C$^*$-algebra, with $j$ a unital complete order embedding, then by  \cite[Proposition 4.4.13]{BLM} (the real case of which was checked in \cite{BReal}), we have $I_{11}(\cS) = I_{22}(\cS) = I(\cS) = B$. 
 If $\cS$ is a nondegenerate operator $A$-bimodule then by
the characterization of operator $A$-bimodules in \cite[Chapter 4]{BLM} or  \cite[Theorem 15.14]{Pnbook}
 (or Section 4 and Theorem 2.4 in \cite{BReal} in the real case)
there exists $*$-homomorphisms $\pi : A \to B, \theta  : A \to B$ with $j(axb) = \pi(a) j(x) \theta(b)$ for  $a, b \in A, x \in \cS$.
Thus $$\pi(a) = \pi(a) j(1_{\cS}) = j(a 1_{\cS}) = j(1_{\cS} a) = j(1_{\cS}) \theta(a) = \theta(a).$$
So $\pi = \theta$ and also $$j((ax)^*) =j(ax)^*  = (\pi(a) j(x))^* = j(x)^* \pi(a^*) = j(x^* a^*).$$
Thus $(ax)^* = x^* a^*$ for $a \in A, x \in \cS$.    Clearly $j(X)$ is a concrete $\pi(A)$-system, so that
$X$ is an abstract $A$-system.   This  proves the harder direction of (1). 
 
For the second  statement, by results about the real injective envelope from e.g.\ \cite{BCK} it is easy to see that it suffices to show that $\cS_c$ is a complex operator $A_c$-system (to which we may apply \cite[Theorem 15.12]{Pnbook}). 
This in turn follows by a standard technique:\ identifying $A_c$ and $\cS_c$ with the corresponding sets of matrices of form
$$\begin{bmatrix} x & -y \\ y & x \end{bmatrix} .$$
If $a, b \in M_n(A), s , t \in M_n(\cS)$, and
$s \pm i t \geq 0$, or equivalently $$\begin{bmatrix} s & -t \\ t &  s \end{bmatrix} \geq 0,$$  it  follows that
$$\begin{bmatrix} a & -b \\ b &  a \end{bmatrix} \begin{bmatrix} s & -t \\ t &  s \end{bmatrix} \begin{bmatrix} a^* & b^* \\ -b^* &  a^* \end{bmatrix} \geq 0.$$ Computing this formal matrix product yields a matrix of form $$\begin{bmatrix} z & -w \\ w &  z \end{bmatrix} $$ in $M_{2n}(A)$.
Thus by the correspondence in (\ref{ofr}) we have that
$z-iw \geq 0$ in $M_n(A_c)$.  The reader can verify that this is precisely saying that $$(x+iy) (s+it) (x+iy)^* \geq 0$$ in $M_n(A_c)$.
So  $\cS_c$ is an operator $A_c$-system. 
 
The
third statement follows from the 
second by the same proof as in the complex case. Indeed in both the real and complex case, once we see that $j(\cS)$ is a concrete $\pi(A)$-subbimodule of   $B$, then it is an operator $A$-bimodule, and any faithful $*$-representation of $B$ on a Hilbert space $H$ will do the trick.   
This also proves the converse to (1) (the complex case is similar).  \end{proof}

{\bf Remarks.} 1)\ Any operator system (or unital operator space) $V$ has a well-defined and canonical partially defined 
product.  Indeed if we have an injective envelope $B = (I(V),j)$ which is a unital C$^*$-algebra and $j$ is unital,  define 
$x y = j^{-1}(j(x) j(y))$ for pairs $x, y \in V$ for which this product is defined (i.e.\ 
$j(x) j(y) \in j(V)$).  
This is well-defined independently of the particular 
injective envelope C$^*$-algebra chosen.   Indeed the latter well-definedness may be taken to be one formulation of the well known Banach-Stone theorem for unital complete isometries between unital operator spaces--namely that such isometries are the restriction of  $*$-isomorphisms between their injective or C$^*$-envelopes.  
 (The real case of this topic was discussed in  \cite{BReal}.  Indeed 
 the  unital complete isometry `extends to' a surjective unital complete isometry  
between the injective envelopes, by rigidity, whereupon apply the unital case of \cite[Theorem 4.4]{RComp}.)

With this in mind, operator $A$-systems contain a unital C$^*$-subalgebra $D$ with respect to  this canonical partial product, namely $j^{-1}(\pi(A))
= A \cdot 1$ in the notation of (2) of the theorem.
Moreover, operator $A$-systems 
are precisely the operator systems $\cS$ containing a unital C$^*$-subalgebra $D$ with respect to  this canonical partial product
such that there is a $*$-epimorphism $\rho : A \to D$
with $a x = \rho(a) x$ for all $a \in A, x \in \cS$, where the latter is the partial product.
To prove the more difficult direction of this, set $\rho(a) 
= j^{-1}(\pi(a)) = a \cdot 1$ in the notation above, and the rest is an exercise. 

Note that the partial product may be viewed as  `everywhere defined', but may not always be in $\cS$.

\medskip

2)\ We say that an operator $A$-system $\cS$ is {\em faithful} if the action of $A$ on $\cS$ is faithful (that is $a = 0$ if $a \cS = (0)$).   It is obvious, and has been observed by others, that this is equivalent to $\pi$ being one-to-one in (2) or (3).   Indeed this is a basic feature of operator bimodules over C$^*$-algebras, viewing $\cS$ as an operator $A$-bimodule as in (1) of the theorem.   In this case the C$^*$-algebra $D$ in the last remark is $*$-isomorphic to $A$, and we have:

\begin{corollary} \label{fasy}
    Faithful (real or complex) operator $A$-systems 
are precisely the (real or complex) operator systems $\cS$ containing a unital (real or complex) C$^*$-subalgebra $A$ with respect to  the canonical partial product of $\cS$ in the Remarks above.
That is, if and only if there is a unital subsystem $A$ of $\cS$ 
which is a (real or complex) C$^*$-algebra in the partial product, such that 
$A \cS \subseteq \cS$ with respect to the partial product.
\end{corollary}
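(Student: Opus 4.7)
The plan is to work inside the injective envelope $B = I(\cS)$, which by the Remark immediately preceding the corollary is a C$^*$-algebra containing $\cS$ unitally via the complete order embedding $j$, and in which the canonical partial product on $\cS$ is simply the restricted multiplication of $B$. This lets me transfer the question back and forth between multiplicative data in $B$ and the abstract module structure on $\cS$.

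For the forward direction, I would assume $\cS$ is a faithful operator $A$-system and invoke Theorem~\ref{roAsy}(2): the map $\pi\colon A \to B$, $a \mapsto a \cdot 1_{\cS}$, is a unital $*$-homomorphism with $\pi(a)j(x) = j(ax)$. From $\pi(a) = \pi(a)j(1_{\cS}) = j(a \cdot 1_{\cS})$ one sees $\pi(A) \subseteq j(\cS)$. Faithfulness of the action immediately forces $\pi$ to be injective, since $\pi(a) = 0$ implies $j(ax) = 0$ and hence $ax = 0$ for all $x \in \cS$, whence $a = 0$. Then $D := j^{-1}(\pi(A))$ is an isomorphic copy of $A$ inside $\cS$ that is a unital C$^*$-subalgebra with respect to the partial product, and the containment $D\cS \subseteq \cS$ in the partial product is just the relation $\pi(a)j(x) = j(ax) \in j(\cS)$.

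For the converse, suppose $A$ is a unital subsystem of $\cS$ that forms a C$^*$-algebra in the partial product and satisfies $A\cS \subseteq \cS$. Since $A$ is selfadjoint, taking adjoints yields $\cS A \subseteq \cS$ as well. Identifying $\cS$ with $j(\cS) \subseteq B$, the hypotheses say $A$ is a unital C$^*$-subalgebra of $B$ with $Aj(\cS), j(\cS)A \subseteq j(\cS)$, so $\cS$ inherits an $A$-bimodule action from multiplication in $B$. This action is nondegenerate since $1_A = 1_\cS$, the relation $a \cdot 1_\cS = 1_\cS \cdot a$ is automatic, and the positivity requirement $\alpha^* M_n(\cS)^+ \alpha \subseteq M_n(\cS)^+$ for $\alpha \in M_n(A)$ descends from the analogous fact in the ambient C$^*$-algebra $M_n(B)$ because $j$ is a complete order embedding. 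By Theorem~\ref{roAsy}(1) this makes $\cS$ an operator $A$-system, and faithfulness is immediate since $a \cdot \cS = 0$ forces $a = a \cdot 1_\cS = 0$.

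The main delicate step is the converse's verification that the abstract partial-product hypothesis genuinely promotes to the full operator $A$-system structure; but because both $A$ and $\cS$ live inside the honest C$^*$-algebra $B = I(\cS)$, the required bimodule positivity is inherited from $B$ at no cost. The forward direction is then essentially a restatement of the Remark preceding the corollary, together with the translation of faithfulness of the action into injectivity of $\pi$.
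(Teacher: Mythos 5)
Your proof is correct and follows essentially the same route as the paper's: the forward direction extracts the copy $D = j^{-1}(\pi(A)) = A\cdot 1$ from Theorem~\ref{roAsy}(2) and the Remarks (with faithfulness giving injectivity of $\pi$), and the converse realizes $j(\cS)$ as a nondegenerate $j(A)$-sub-bimodule of the C$^*$-algebra $B=I(\cS)$ and appeals to Theorem~\ref{roAsy}(1). Your write-up is just a more detailed version of the paper's two-line argument, including the useful explicit observations that $\cS A\subseteq\cS$ follows by taking adjoints and that the matricial positivity condition is inherited from $M_n(B)$ via the complete order embedding $j$.
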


\begin{proof}   The one direction follows from the Theorem and Remarks, taking $A$ to be the $D$ in the Remarks.  

For the other direction, inside $B = (I(V),j)$ we have that $j_{|A} : A \to B$ is a unital $*$-homomorphism, and 
$j(\cS)$ is a nondegenerate $j(A)$-submodule of $B$ and $j(a) j(1) = j(a) = j(1) j(a)$ for $a \in A$.   So $\cS$ is
an operator $A$-system by (1) of the Theorem, and it clearly is faithful. 
\end{proof}

\begin{theorem} \label{dual}  If $X$ is a real operator system (resp.\ real unital operator space) such that $X$ is a dual as a real  operator
space, then there exists a completely isometric weak* homeomorphism  from $X$
onto a weak* closed operator subsystem 
(resp.\ unital subspace) in $B(H)$ for a real Hilbert space $H$, which maps
the identity into the identity.  In the operator system case this weak* homeomorphism is also a complete order isomorphism.
\end{theorem}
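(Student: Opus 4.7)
The plan is to deduce the real case from the complex analogue via complexification. Choose a real operator space $Y$ with $Y^{\ast} \cong X$ completely isometrically. By Ruan's identification $(Z_{c})^{\ast} \cong (Z^{\ast})_{c}$ recalled in the introduction, we obtain $X_{c} \cong (Y_{c})^{\ast}$ completely isometrically, so $X_{c}$ is a dual complex operator space with predual $Y_{c}$. The canonical conjugation $\theta_{X}$ on $X_{c}$ is the adjoint of the conjugation on $Y_{c}$, hence is weak* continuous; in particular $X$, as the fixed-point set of $\theta_{X}$, is weak* closed in $X_{c}$. A direct pairing calculation then shows that the weak* topology on $X_{c}$ inherited from $Y_{c}$ restricts on $X$ to the weak* topology coming from $Y$.

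In the operator system (resp.\ unital operator space) case, $X_{c}$ is a complex operator system (resp.\ a complex unital operator space) by Theorem \ref{Thm: Complexification is an op sys} and the unital operator space complexification recalled in the introduction, and it inherits the unit of $X$. I now invoke the complex analogue of the present theorem --- the dual operator space representation theorem of Blecher--Magajna, combined with the standard Banach--Stone machinery to carry over the unital (and, in the operator system case, the order) structure; see e.g.\ \cite[Chapter 2.7 and Section 4.4]{BLM} --- to obtain a weak* homeomorphic complex linear unital complete isometry $\Phi : X_{c} \to B_{\bC}(K)$ whose image is weak* closed; in the operator system case $\Phi$ is additionally a complete order isomorphism onto its range.

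Finally, let $H$ denote the realification of $K$. By Proposition \ref{concon}, the inclusion $\iota : B_{\bC}(K) \hookrightarrow B_{\bR}(H)$ is a real unital complete order embedding, hence in particular a real unital complete isometry. The image $\iota(B_{\bC}(K))$ is the commutant inside $B_{\bR}(H)$ of the single operator that corresponds to multiplication by $i$ on $K$, hence is weak* closed, and the inner-product identity $\langle T h_{1}, h_{2} \rangle_{\bR} = \operatorname{Re} \langle T k_{1}, k_{2} \rangle_{\bC}$ shows that the weak* topology of $B_{\bR}(H)$ restricts on $\iota(B_{\bC}(K))$ to the original weak* topology of $B_{\bC}(K)$. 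Setting $\pi := \iota \circ \Phi\vert_{X}$ then yields a real unital complete isometry (a complete order embedding in the operator system case) $\pi : X \to B_{\bR}(H)$ which is a weak* homeomorphism onto its image, and $\pi(X) = \iota(\Phi(X))$ is weak* closed in $B_{\bR}(H)$ as a weak* closed subset of the weak* closed subspace $\iota(B_{\bC}(K))$. The principal technical point throughout is ensuring the compatibility of weak* topologies through the two layers of complexification and realification; granted this, everything else is a direct transfer of structure.
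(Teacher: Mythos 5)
Your proposal is correct and follows essentially the same route as the paper: complexify $X$ to get a dual complex operator space $X_c \cong (Y_c)^*$, invoke the complex dual representation theorem of Blecher--Magajna to realize $X_c$ weak* homeomorphically and completely isometrically as a weak* closed unital subspace of a von Neumann algebra, and then restrict to $X$ viewed inside the realification. The paper's proof is just a terser version of this, leaving implicit the weak*-topology compatibility checks (weak* closedness of $X$ in $X_c$, agreement of the weak* topologies through complexification and realification) that you spell out.
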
 

\begin{proof}  We need only prove the unital operator space case, since the system case follows from this.
We have that $X_c$ is a dual complex  operator
space as well as a complex operator system.  So by \cite{BMag} there exists a completely isometric complex linear weak* homeomorphism $u$ from $X_c$
onto a weak* closed unital subspace of a complex, hence real, von Neumann algebra. The restriction of $u$ to $X$ is our desired weak* homeomorphism. 
\end{proof}

We call the operator systems characterized in the last result (real)  {\em dual operator systems}.    
Of course a bidual operator system is a dual operator system, and we have a unital weak* homeomorphic complete order 
isomorphism $(V_c)^{**} \cong (V^{**})_c$.

\section{Complex structure} \label{cx}

We begin with a discussion of  the real operator space centralizer space $Z(V)$  in real operator systems.  Real operator space centralizers are defined and  developed in 
Sections 3 and  4 in \cite{BReal}.  We will not review their pretty theory here, or alternative definitions, but simply say that the proof of the next result shows that in our operator system case 
the 
algebra $Z(V)$ may be defined to be 
the operators $T : V \to V$  of left multiplication by elements of the 
$*$-subalgebra $Z$ of $I(V)$ of elements that commute with $V$ and multiply $V$ into itself.  
This is a commutative C$^*$-algebra and is isometrically a subalgebra of $B(V)$ and of CB$(V)$.  It is also identifiable with an operator subsystem of $V$.  Thus the main new feature in the operator system case is that centralizers may be considered to live inside $V$.  The complete $M$-projections are the projections $P$ in $Z(V)$, and the complete $M$-summands are the ranges $P(V)$ of complete $M$-projections.

We recall  that we discussed the characterization of  real  operator $A$-systems in 
Theorem \ref{roAsy}.

\begin{proposition} \label{asyz}   A real (resp.\  complex) operator system  $V$ is a faithful real (resp.\  complex) operator $A$-system with $A$ the  real (resp.\  complex) centralizer algebra $Z(V)$. 
Thus $Z(V)$ may be regarded as a C$^*$-subalgebra $Z$ of $V$ in the sense of Corollary {\rm \ref{fasy}} or the Remarks after Theorem {\rm \ref{roAsy}}.  Moreover in the real case, $Z(V_c) = Z(V)_c$. 
 \end{proposition}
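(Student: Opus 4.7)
The plan is to identify $Z = Z(V)$ with the unital commutative $*$-subalgebra of $I(V)$ consisting of those $z \in I(V)$ with $z V \subseteq V$ and $z v = v z$ for every $v \in V$. Since $1_V = 1_{I(V)}$ lies in $V$, every such $z$ satisfies $z = z \cdot 1_V \in V$, so $Z \subseteq V$ as a unital subsystem. I would then check the operator $A$-system axioms of Theorem \ref{roAsy}(1) with $A = Z$. The action $z \cdot x := zx$ (computed inside $I(V)$) is a nondegenerate bimodule action satisfying $z \cdot 1_V = 1_V \cdot z$ automatically by the commutation property defining $Z$. The matrix-positivity requirement $a^* M_n(V)^+ a \subseteq M_n(V)^+$ for $a \in M_n(Z)$ is inherited from $I(V)$ (a C$^*$-algebra in which $V$ is unitally completely order embedded), together with the fact that $a$ maps $M_n(V)$ into itself. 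Hence $V$ is an operator $Z$-system. Faithfulness is immediate: $z \cdot 1_V = z$ forces $z = 0$ whenever $z \cdot V = 0$.

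Having verified the first assertion, the second assertion that $Z$ is a C$^*$-subalgebra of $V$ in the sense of Corollary \ref{fasy} follows directly from that corollary. One can also see this concretely: the canonical partial product on $V$ induced from $I(V)$ is everywhere defined on $Z$ and restricts to the original C$^*$-algebra product on $Z$, while the $Z$-action on $V$ coincides with this partial product.

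For the complexification identity $Z(V_c) = Z(V)_c$ in the real case, the key input is that the real injective envelope complexifies to the complex injective envelope, $I(V)_c \cong I(V_c)$ as complex operator systems with $V_c$ sitting unitally completely order embedded inside; this is established in Section 4 of \cite{BReal} and in \cite{BCK}. Under this identification, write any $z \in I(V)_c$ as $z = a + ib$ with $a, b \in I(V)$. Separating real and imaginary parts, $z$ commutes with $V_c = V + iV$ in $I(V)_c$ if and only if both $a$ and $b$ commute with $V$, and $z V_c \subseteq V_c$ if and only if $a V, b V \subseteq V$. This immediately gives $Z(V_c) = Z(V) + i\, Z(V) = Z(V)_c$.

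The main obstacle is the identification $I(V)_c \cong I(V_c)$, which I would simply quote from the real operator space literature; once this is in place, every remaining step reduces to a straightforward separation of real and imaginary parts of identities in $I(V)_c$. A secondary subtlety worth noting is that $Z$ is defined inside a particular copy of $I(V)$, so one should confirm that its intrinsic description (as multiplications $V \to V$ intertwining the unital operator system structure) is independent of the choice of injective envelope; this follows from the uniqueness of $I(V)$ up to unital complete isometry and the Banach--Stone machinery recalled in the Remarks after Theorem \ref{roAsy}.
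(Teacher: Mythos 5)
Your argument is essentially correct, but it is grounded differently from the paper's. You take as your \emph{definition} of $Z(V)$ the set of $z\in I(V)$ that commute with $V$ and multiply $V$ into itself, and from there the verification of the operator $A$-system axioms, faithfulness ($z=z\cdot 1_V$), and the complexification identity all go through cleanly. The paper, by contrast, starts from the centralizer algebra as defined in \cite{BReal} (the maps on $V$ that are simultaneously left and right adjointable), and the bulk of its proof is devoted to \emph{deriving} your starting description: it invokes $I_{11}(V)=I_{22}(V)=I(V)$ and the multiplier characterization to show that left (resp.\ right) adjointable maps are left (resp.\ right) multiplications by elements of $D=\{x\in I(V): x\,V+x^*V\subseteq V\}\subseteq V$, and then that a map which is both is multiplication by an element of $Z=D\cap V'$. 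The sentence immediately preceding the Proposition makes clear that this identification is part of the intended content (``the proof of the next result shows that \ldots $Z(V)$ may be defined to be \ldots''), so if you adopt the $I(V)$-description as a definition you should at least flag that its equivalence with the adjointable-maps definition of \cite{BReal} is being assumed rather than proved; otherwise the first assertion of the Proposition is not fully established for $Z(V)$ as the paper means it. On the other hand, your treatment of the last claim is a genuine improvement in explicitness: the paper simply cites \cite{BReal} for $Z(V_c)=Z(V)_c$, whereas you give a direct proof by quoting $I(V)_c\cong I(V_c)$ and separating real and imaginary parts of the relations $zv=vz$ and $zv\in V_c$ (note that $z^*V\subseteq V$ is automatic here from $zV\subseteq V$, commutation, and selfadjointness of $V$, so your one-sided condition suffices). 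Your closing remark on well-definedness independent of the choice of $I(V)$ is also a worthwhile point the paper leaves implicit.
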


\begin{proof}  We prove only the real case of the first statement, the complex being similar. If $B = (I(V),i)$ is the real injective envelope of $V$ regarded as a real C$^*$-algebra, with $i$ a unital complete order embedding, then by the real case of \cite[Proposition 4.4.13]{BLM}, which was checked in \cite{BReal}, we have $I_{11}(V) = I_{22}(V) = I(V)$.  By \cite[Section 4]{BReal}, the left adjointable maps on $V$ are thus describable in terms of the real injective envelope of $V$, as left multiplication by elements from 
$$D = \{ x \in B : x \, i(V) + x^*  \, i(V) \subseteq i(V) \}.$$  In particular $D\subseteq i(V)$ since $x = x i(1) \in i(V)$. Taking adjoints,  $D = \{ x \in B :  i(V) \, x + i(V) \, x^* \subseteq i(V) \}$, and the right adjointable maps on $V$ are right multiplication by elements from 
$D$.   Note that $D$ is a C$^*$-subalgebra of $B$.  If $T$ is both a left and  a right adjointable map on $V$ then there exist $z, y \in D$ with $$i(T(x)) = z \, i(x) = i(x) \, y, \qquad x \in V.$$  Setting $x = 1$ we have $z = y \in i(V)$.   Conversely any $z \in D$ with $z \, i(x) = i(x) \, z$ for all $x \in V$ comes from such a $T$. Thus
$Z(V)$ may be identified with the commutative C$^*$-subalgebra $Z = D \cap i(V)'$, where the latter is the commutant of $i(V)$ in $B$.   The canonical map $Z \to Z(V)$ is a $*$-isomorphism. It is now clear that $i(V)$ is  a concrete real  operator $Z$-system, and $V$ is a real  operator $Z(V)$-system.

The last statement was proved and used in \cite[Sections 3 and 4]{BReal}.
 \end{proof}

 \begin{corollary} \label{asyzc}  A  selfadjoint real centralizer $T$ of a real operator system  $V$ is selfadjoint as a map on $V$.  The complete  $M$-projections of $V$ 
 correspond to multiplication by a projection in the real commutative C$^*$-algebra $Z$.  The complete  $M$-summands of $V$
 are selfadjoint subspaces and are abstract real operator systems.
 \end{corollary}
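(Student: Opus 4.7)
The plan is to invoke the explicit identification of $Z(V)$ built in the proof of Proposition \ref{asyz}. There, the centralizer algebra is realized as the commutative C$^*$-subalgebra $Z = D \cap i(V)'$ of $I(V)$, with each $T \in Z(V)$ acting as $i(T(x)) = z \, i(x) = i(x) \, z$ for a unique $z \in Z$. Crucially, $Z$ lies inside $i(V)$ and commutes with all of $i(V)$. A selfadjoint centralizer corresponds to a selfadjoint $z \in Z$.

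For the first claim, given $z = z^* \in Z$ and $x \in V$, I would simply compute
$$i(T(x^*)) = z \, i(x^*) = z \, i(x)^* = (i(x) \, z)^* = (z \, i(x))^* = i(T(x))^*,$$
using commutativity of $z$ with $i(V)$ and selfadjointness of $z$. Since $i$ is an injective selfadjoint map, $T(x^*) = T(x)^*$, so $T$ is selfadjoint as a map on $V$.

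For the second claim, complete $M$-projections are by definition the projections in $Z(V)$; under the $*$-isomorphism $Z(V) \cong Z$ these correspond precisely to the projections $p \in Z$, which act on $V$ by the partial product described in Remark 1 after Theorem \ref{roAsy}.

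For the third claim, let $p \in Z$ be such a projection, so that $i(P(V)) = p \, i(V)$. The same sort of computation shows selfadjointness: $(p \, i(x))^* = i(x)^* \, p = p \, i(x^*) \in p \, i(V)$. Since $p$ commutes with $i(V)$, one checks $p \, i(V) \subseteq p \, I(V) \, p$, where the latter is a unital real C$^*$-algebra with unit $p$. Thus $p \, i(V)$ is a unital selfadjoint subspace of a real C$^*$-algebra, hence a concrete real operator system, and pulling back via $i^{-1}$ equips $P(V)$ with the structure of an abstract real operator system in the sense of Theorem \ref{ce}, with unit $p$. The only subtle point is purely bookkeeping---that $p$ viewed as an element of $V$ is really $i^{-1}(p)$ and the multiplications appearing are the partial product of $V$---and after unpacking this the argument is essentially formal, the substantive work having already been done in Proposition \ref{asyz}.
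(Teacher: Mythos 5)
Your proposal is correct and follows essentially the same route as the paper: both rely on the identification $Z(V)\cong Z = D\cap i(V)'\subseteq i(V)$ from the proof of Proposition \ref{asyz}, perform the same selfadjointness computation with a selfadjoint $z$ commuting with $i(V)$, and identify the $M$-summands with $p\cdot i(V)$, an operator subsystem of $p\,I(V)\,p$ with unit $p$. The paper's version is just terser, leaving the bookkeeping you spell out as implicit.
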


 \begin{proof} In the language of the last proof  
 $T$ is multiplication by an element from 
$Z_{\rm sa}$, so that $$T(x^*)^* = (z \, i(x^*))^* =z \, i(x) = T(x), \qquad x \in V.$$ 

The complete $M$-projections of $V$ correspond to projections $p$ in $Z(V) \cong Z$.  The complete  $M$-summands of $V$ correspond to 
$p \cdot i(V)$ for such $p$.  Clearly $p \cdot i(V)$ is an operator subsystem of $pBp$, with identity $p$ of course. 
\end{proof}

In \cite{BCK,BReal} we have discussed real and complex structure in operator spaces.  {\em Real structure} in a space $X$ examines the deep question of when $X$ is a complexification, or the various possible real spaces (in our case, real operator systems) that have 
$X$ as a complexification.  Even for a C$^*$-algebra this is a very deep topic \cite{C}.  For example there are  no known uncomplicated  complex examples with no real structure.  We will not say more on this besides Proposition \ref{cco}.   {\em Complex structure} on a real operator space asks for example when a real  space is a complex space, and how this can occur.
Here we offer the following two theorems.  The first says complex structure exists if and only if there is a real linear antisymmetry 
$J : V \to V$  (antisymmetry means that $J^2 = -I$)  which is also a centralizer:

\begin{theorem}  \label{idee}   A  real operator system  $V$ is a complex operator system 
if and only if it possesses a 
real operator system  centralizer $z$ with $z^2 = -1$.  This is also equivalent to $V$ being real linearly completely isometric to a complex operator space.
\end{theorem}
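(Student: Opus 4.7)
The plan is to establish the chain of implications (a) $\Rightarrow$ (c) $\Rightarrow$ (b) $\Rightarrow$ (a), where (a), (b), (c) are the three conditions in the statement. The direction (a) $\Rightarrow$ (c) is immediate since every complex operator system is in particular a complex operator space. For (c) $\Rightarrow$ (b), I would invoke the operator space analogue from \cite{BReal}: a real operator space $X$ admits a complex structure (equivalently, is real linearly completely isometric to a complex operator space) if and only if its centralizer algebra contains an element $J$ with $J^2 = -I$. Combined with the description recalled just before the theorem (and proved in Proposition \ref{asyz}) of $Z(V)$ as a $*$-subalgebra of $I(V)$ consisting of elements that commute with $V$ and multiply $V$ into itself, this $J$ supplies a centralizer of the required form.

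The substance is the direction (b) $\Rightarrow$ (a). My first step would be to observe that any $z \in Z(V)$ with $z^2 = -1$ is automatically skew. Writing $z = a + b$ with $a = (z + z^*)/2$ selfadjoint and $b = (z - z^*)/2$ skew, the commutativity of the real C$^*$-algebra $Z(V)$ together with the identities $z^2 = (z^*)^2 = -1$ yields $ab = 0$ and $a^2 + b^2 = -1$. Multiplying the second relation by $a$ and using $ab = 0$ gives $a(1 + a^2) = 0$; since $1 + a^2 \geq 1$ is invertible in the commutative real C$^*$-algebra $Z(V)$, we conclude $a = 0$, so $z^* = -z$.

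Next I would define a complex scalar multiplication on $V$ by $i \cdot x := z x$, where the product is the canonical partial product of $V$ inherited from $I(V)$ (discussed in the Remarks following Theorem \ref{roAsy}). Since $z \in Z(V)$ maps $V$ into $V$, this is well defined inside $V$; since $z^2 = -1$, it genuinely gives a complex vector space structure; and the centrality of $z$ together with $z^* = -z$ yields $(i \cdot x)^* = (zx)^* = x^* z^* = -z x^* = -i \cdot x^*$, so the involution is conjugate linear for this new complex structure. To promote it to a complex operator system structure, I would represent the real C$^*$-algebra $I(V)$ faithfully on a real Hilbert space $K$; under this representation $z$ becomes an operator $J \in B(K)$ with $J^* = -J$ and $J^2 = -I$, which equips $K$ with a complex Hilbert space structure $K_c$. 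Centrality of $z$ in $I(V)$ forces each element of $V$ to commute with $J$, hence to act as a complex linear operator on $K_c$, producing a unital selfadjoint embedding $V \hookrightarrow B_{\bC}(K_c)$.

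The main obstacle is the combination of the skew-ness argument and the upgrade from a bare complex vector space structure on $V$ to a genuine complex operator system structure; the former is handled by the commutative C$^*$-algebra identity $a(1 + a^2) = 0$, and the latter by the realification/complexification correspondence for Hilbert spaces recorded in the introduction and in Proposition \ref{concon}.
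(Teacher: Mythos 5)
Your proposal is correct, but the route for the substantive direction is genuinely different from the paper's. The paper disposes of the whole theorem in two lines: it first proves the second equivalence (complex operator space structure $\Rightarrow$ complex operator system structure) by observing that the metric--linear characterization of operator systems in \cite[Theorem 2.3]{BReal}/\cite{BNmetric2} is expressed by an equation with no reference to the scalar field, so the transported complex operator space structure on $V$ is automatically a complex operator system structure; the first equivalence then follows from \cite[Corollary 3.2]{BReal}. You instead prove (b) $\Rightarrow$ (a) constructively: you show any centralizer $z$ with $z^2=-1$ is automatically skew (a nice observation the paper does not record, and your computation $ab=0$, $a(1+a^2)=0$ in the commutative real C$^*$-algebra $Z$ is correct), define the complex multiplication explicitly as $i\cdot x = zx$ via the partial product, and realize the complex operator system structure by representing $I(V)$ on a real Hilbert space and using $J=\pi(z)$ as an orthogonal complex structure, with Proposition \ref{concon} guaranteeing the matrix cones are unchanged. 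What the paper's argument buys is brevity and the fact that it proves the stronger second assertion directly (any real-linear complete isometry with a complex operator space suffices, with no centralizer in sight); what yours buys is an explicit description of the complex structure and of the representing complex Hilbert space, at the cost of routing (c) $\Rightarrow$ (b) back through the same operator space result \cite[Corollary 3.2]{BReal} that the paper uses. One small imprecision: $z$ is not central in $I(V)$ but only lies in the commutant of $V$ inside $I(V)$ (together with the multiplier condition of Proposition \ref{asyz}); this is all your argument actually uses, so the slip is harmless.
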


\begin{proof} The first statement follows from the second and \cite[Corollary 3.2]{BReal}.  The second statement follows for example from \cite[Theorem 2.3]{BReal} and its earlier complex variant from \cite{BNmetric2}. 
Indeed the centered equation in the statement of \cite[Theorem 2.3]{BReal}, which is the same equation as in the complex case from \cite{BNmetric2}, has no reference to the scalar field being used, so implies that 
the complex operator space structure on $V$ is a
complex operator system structure.  \end{proof}

{\bf Remark.} It follows that a real operator system  $V$ has  complex  operator space structure if it satisfies the conditions in \cite[Theorem 3.1]{BReal}, or if the real  operator system bidual $V^{**}$ has complex  operator system structure (\cite[Corollary 3.3]{BReal}).  

\bigskip

A second question one may ask about complex structure is whether we can classify all of the scalar multiplications $\Cdb \times V \to V$ on a real operator system that make $V$ a  complex operator system?   Any complex operator system $V$ has a second complex operator system structure $\bar{V}$. 
If $V$ and $W$ are complex operator systems then $V \oplus^\infty \bar{W}$ is a  complex operator system.
This is just $V \oplus^\infty W$ with a `twisted'
complex multiplication given by $i \cdot (x,y) = (ix, -iy)$.  As in the operator space case \cite[Theorem 3.4]{BReal} the latter  {\em is the only} example of a complex structure on an
complex operator system besides the given one. 

\begin{theorem}  \label{coosstr}  A  complex operator system $V$  has an essentially unique complex operator system  structure up to `conjugates' of complete $M$-summands. 
Indeed for any other complex operator system  structure  on $V$ (with the same underlying real operator system  structure) there exists 
two complex (with respect to the first  complex structure) selfadjoint subspaces 
$X_+$ and $X_{-}$ of $V$ which are each abstract operator systems, with $V = X_+ \oplus^\infty X_{-}$ completely order isomorphically, 
such that the second complex multiplication of $\lambda \in \Cdb$ with 
$x_+ + x_{-}$ is $(\lambda \, x_+ ) + (\bar{\lambda} \, x_{-})$, for $x_+ \in X_+,  x_{-} \in X_{-}$.  
 \end{theorem}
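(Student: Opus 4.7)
My plan is to extract both complex structures as anti-selfadjoint square roots of $-1$ inside the centralizer algebra $Z = Z(V)$, form a selfadjoint symmetry from their product, and use its spectral projections to split $V$ into the two pieces on which the structures agree or conjugate each other. This parallels the operator space argument of \cite[Theorem 3.4]{BReal}, with Corollary \ref{asyzc} ensuring the resulting complete $M$-summands are genuine operator subsystems.

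Invoking Theorem \ref{idee}, each complex operator system structure on $V$ is implemented by a centralizer $z_k \in Z$ with $z_k^2 = -1$, for $k=1,2$. Since in any complex operator system multiplication by $i$ is conjugate linear with respect to $*$, i.e.\ $(ix)^* = -i x^*$, viewing $z_k$ inside $I(V)$ as in Proposition \ref{asyz} forces $z_k^* = -z_k$. Because $Z$ is commutative we have $z_1 z_2 = z_2 z_1$, and $w := -z_1 z_2$ satisfies $w^* = -z_2^* z_1^* = -(-z_2)(-z_1) = -z_2 z_1 = w$ and $w^2 = z_1^2 z_2^2 = 1$, so $w$ is a selfadjoint symmetry in $Z$. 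Define $P_\pm := \tfrac{1}{2}(1 \pm w) \in Z$, which are orthogonal projections with $P_+ + P_- = 1$. Setting $X_\pm := P_\pm V$, Corollary \ref{asyzc} gives that $X_\pm$ are selfadjoint abstract real operator systems furnishing the complete $M$-sum decomposition $V = X_+ \oplus^\infty X_-$ (which is automatically a unital complete order isomorphism). Each $X_\pm$ is also a complex subspace with respect to the first structure, since $z_1 P_\pm = P_\pm z_1$ inside the commutative $Z$.

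To identify the second complex structure on each summand, on $X_+$ the relation $w = 1$ gives $z_1 z_2 = -1$, so multiplying by $z_1$ and using $z_1^2 = -1$ yields $z_2 = z_1$ on $X_+$; similarly $w = -1$ on $X_-$ gives $z_1 z_2 = 1$ and hence $z_2 = -z_1$ on $X_-$. Therefore for $\lambda = a + bi \in \Cdb$ and $x_\pm \in X_\pm$, the second complex multiplication is
\[ a(x_+ + x_-) + b(z_2 x_+ + z_2 x_-) = (a x_+ + b z_1 x_+) + (a x_- - b z_1 x_-) = (\lambda \cdot x_+) + (\bar{\lambda} \cdot x_-), \]
where ``$\cdot$'' denotes the first complex structure $\lambda \cdot x = a x + b z_1 x$, giving exactly the claimed formula. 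The most delicate point is verifying the anti-selfadjointness $z_k^* = -z_k$ inside the injective envelope realization of $Z$ from Proposition \ref{asyz}; this is where the operator system (as opposed to merely operator space) structure is essential, and matching the concrete description of centralizers with the abstract existence statement of Theorem \ref{idee} is the step requiring the most care.
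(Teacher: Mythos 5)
Your proof is correct, and it takes a somewhat different route from the paper's. The paper's proof imports the decomposition $V = X_+ \oplus^\infty X_-$ and the conjugate multiplication formula wholesale from the operator space result \cite[Theorem 3.4]{BReal}, and then only has to upgrade it: Corollary \ref{asyzc} makes the summands selfadjoint abstract operator systems and the direct sum a complete order isomorphism. You instead rebuild the decomposition from scratch inside the centralizer algebra: both multiplications by $i$ give skew-adjoint elements $z_1, z_2$ of the commutative C$^*$-algebra $Z \subseteq I(V)$, the symmetry $w=-z_1z_2$ yields the projections $P_\pm$, and the identities $z_2=\pm z_1$ on $X_\pm$ give the formula. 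This is essentially the proof of \cite[Theorem 3.4]{BReal} transplanted into the operator system setting, so it buys self-containedness and makes the mechanism explicit, at the cost of length. Two small remarks. First, the point you flag as delicate is actually immediate: $z_k$ lies in the commutative C$^*$-algebra $Z$, hence is normal, and $z_k^2=-1$ forces its spectrum into $\{\pm i\}$, so $z_k^*=-z_k$ by the functional calculus; no appeal to $(ix)^*=-ix^*$ is needed. Second, the step you leave implicit is that the multiplication-by-$i$ operator of the \emph{given} second structure actually lies in $Z(V)$ (Theorem \ref{idee} only asserts existence of \emph{some} centralizer square root of $-1$); this is justified by the identification $Z_{\bC}(V)=Z_{\bR}(V_r)$ from \cite[Theorem 4.5]{BReal}, quoted in the Remark after Corollary \ref{nntm}. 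With those citations supplied, the argument is complete.
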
 

\begin{proof} Any other  complex operator system  structure  on $V$ is a  complex operator space  structure  on $V$. 
Thus by   \cite[Theorem 3.4]{BReal}  there exists 
two complex (with respect to the first  complex structure) subspaces 
$X_+$ and $X_{-}$ of $X$  with $X = X_+ \oplus^\infty X_{-}$ 
such that the second complex multiplication is as stated. 
Since $X_+$ and $X_{-}$  are complete $M$-summands, Corollary \ref{asyzc} informs us that they are selfadjoint subspaces which are abstract real operator systems.
  Indeed in the language of the proof of  Corollary \ref{asyzc} and the theorem preceding  that, we have  $i(V) = p \cdot i(V) + (1-p) \cdot i(V)$, which is
completely order isomorphic to the operator system sum  $p \cdot i(V) \oplus^\infty (1-p) i(V)$.  This shows that  $X = X_+ \oplus^\infty X_{-}$ completely order isomorphically.
\end{proof}

{\bf Remark.} In \cite[Theorem 3.4]{BReal} 
it is stated that every complex operator space $X$ has a unique complex operator space structure up to complete isometry.  This is somewhat misleading, since `real complete isometry' was meant there, which is tautological unless one looks at the specific form given for these complete isometries. The result is probably best stated similarly to the system case above.  

In a probabilistic sense, most operator spaces have no nontrivial complete $M$-summands.  They would have 
unique complex operator space structure up to complete isometry:

 \begin{corollary} \label{nntm} A complex operator space (resp.\ system) with no nontrivial complete complex $M$-summands 
 has unique complex operator space  (resp.\ system) structure up to complex linear complete isometry (resp.\ unital complete 
 order isomorphism). 
 \end{corollary}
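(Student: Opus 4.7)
The plan is to read the result directly off Theorem \ref{coosstr} (in the operator system case) and \cite[Theorem 3.4]{BReal} (in the operator space case), with the hypothesis on complete $M$-summands collapsing the classification decomposition to its two trivial extremes.

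For the operator system assertion, suppose $V$ carries two complex operator system structures sharing the same underlying real operator system. Theorem \ref{coosstr} furnishes a decomposition $V = X_+ \oplus^\infty X_-$ into complete $M$-summands of $V$ (with respect to the first structure), on which the second complex multiplication agrees with the first on $X_+$ and is its conjugate on $X_-$. Since every complete $M$-summand is trivial by hypothesis, either $X_- = \{0\}$ or $X_+ = \{0\}$. In the first subcase the two complex structures coincide and the identity is the desired unital complete order isomorphism. In the second subcase the second structure is precisely the conjugate operator system $\overline{V}$ of the first; here I would invoke the involution $* : V \to V$, which (since any positive matrix over a C$^*$-algebra is selfadjoint) is a conjugate linear unital complete order isomorphism $V \to V$, and hence by the very definition of the conjugate operator system a complex linear unital complete order isomorphism $V \to \overline{V}$.

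The operator space assertion is handled by the same strategy, substituting \cite[Theorem 3.4]{BReal} for Theorem \ref{coosstr}. The one subtle point is the conjugated subcase: unlike in the operator system setting, there is no universal conjugate linear involution available on a general operator space to play the role of $*$. One must therefore read off the required complex linear complete isometry $V \to \overline{V}$ from the explicit form of the complete isometries provided by \cite[Theorem 3.4]{BReal}, which directly links the two complex structures on $V$. This is the main (and really only) obstacle in the argument; once it is settled from the machinery of \cite{BReal}, the operator space statement follows exactly as in the operator system case above.
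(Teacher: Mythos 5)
Your reduction to the two cases $X_-=\{0\}$ and $X_+=\{0\}$ via Theorem \ref{coosstr} (resp.\ \cite[Theorem 3.4]{BReal}) is exactly the route the paper has in mind; the corollary carries no written proof precisely because that dichotomy is meant to be the whole content. The problem is how you dispose of the second case. The involution $*:V\to V$ is \emph{not} a conjugate linear complete order isomorphism of $V$ onto itself: with the entrywise amplification used throughout the paper, $[x_{ij}]\mapsto[x_{ij}^*]$ sends a positive (hence selfadjoint) matrix $[x_{ij}]=[x_{ji}^*]$ to its transpose $[x_{ji}]$, and the transpose is positive but not completely positive (already not $2$-positive on $M_2$). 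So $*$ is an order isomorphism at the first level only. What it actually implements is the identification $\bar V\cong V^{\circ}$ with the \emph{opposite} system recorded in Section \ref{rcast} (``$\cS^{\circ}$ is completely order isomorphic (via $\bar{x}^*$) to the conjugate operator system $\bar{\cS}$''), not an identification of $\bar V$ with $V$.

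This gap cannot be patched by a different map: $\bar V\cong V^{\circ}$ need not be unitally completely order isomorphic, nor even complex linearly completely isometric, to $V$. Connes's factor \cite{C}, cited in Section \ref{cx}, is a factor (so has no nontrivial complete $M$-summands) that is not anti-isomorphic to itself; a surjective complex linear complete isometry $V\to\bar V\cong V^{\circ}$ would contradict the Banach--Stone theorem for complete isometries of C$^*$-algebras. The same example blocks your proposed repair in the operator space case: there is no complex linear complete isometry $V\to\bar V$ to be ``read off'' from \cite[Theorem 3.4]{BReal}, because none exists in general. What the no-$M$-summand hypothesis actually yields from Theorem \ref{coosstr} is that the only possible second structure is the \emph{global conjugate} (replace $i$ by $-i$ everywhere), i.e.\ uniqueness ``up to conjugates'' in the precise sense of that theorem; any argument that claims to produce an honest complex linear complete isometry between the two structures in the conjugate case must fail.
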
  

{\bf Remark.} We recall that in a complex operator space $X$ the complete complex $M$-summands  are the same as the 
complete real $M$-summands of $X_r$.  This may be deduced from the fact that $Z_{\bC}(X) = Z_{\bR}(X_r)$ in 
\cite[Theorem 4.5]{BReal}. 

\section{C*-covers} \label{csc}

As mentioned earlier, the basic theory of the injective and C$^*$-envelopes of real operator systems is already in the literature (e.g.\ \cite{Sharma, BCK,BReal}).  
The maximal or universal C$^*$-cover $C^*_{\rm u}(\cS)$ is designed to be a C$^*$-algebra generated by a copy of $\cS$, with the universal property that any ucp map
$T : \cS \to A$ into a C$^*$-algebra extends to a complex $*$-homomorphism $\pi : C^*_{\rm u}(\cS) \to A$ with
$\pi = T$ on the copy of $\cS$.  Such an object is clearly unique.  
Indeed one may define the maximal (or universal) C$^*$-cover $C^*_{\rm u}(\cS)$ of a real operator system $\cS$ to be the real C$^*$-algebra $B$ generated by the copy of $\cS$ inside its complex  universal C$^*$-cover $C^*_{\rm u}(\cS_c)$.   Any ucp map
$T : \cS \to A$ extends to a complex $*$-homomorphism $C^*_{\rm u}(\cS_c) \to A_c$ making the obvious diagram commute, which in turn restricts to a real  $*$-homomorphism $\rho : B \to A_c$ such that $\rho$ `agrees with' $T$ on $\cS$.
By continuity and density this actually maps into $A$ since $T$ does.  Thus $B$ has the desired universal property above.

\begin{lemma} \label{ucom} For a real operator system $\cS$ we have
   $C^*_{\rm u}(\cS_c) = (C^*_{\rm u}(\cS))_c$.
\end{lemma}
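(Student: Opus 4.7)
The plan is to verify that $(C^*_{\rm u}(\cS))_c$ satisfies the universal property defining $C^*_{\rm u}(\cS_c)$, so that by uniqueness of the universal C$^*$-cover the two complex C$^*$-algebras are canonically unitally $*$-isomorphic.

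First I would observe that the canonical inclusion $j\colon \cS \hookrightarrow C^*_{\rm u}(\cS)$ is a unital complete order embedding by the definition set up in the excerpt, so its complexification $j_c \colon \cS_c \to (C^*_{\rm u}(\cS))_c$ is a unital complex linear complete isometry, hence a unital complete order embedding between operator systems. The image generates $(C^*_{\rm u}(\cS))_c$ as a complex C$^*$-algebra, since $\cS$ generates $C^*_{\rm u}(\cS)$ as a real C$^*$-algebra and $(C^*_{\rm u}(\cS))_c = C^*_{\rm u}(\cS) + i\,C^*_{\rm u}(\cS)$.

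Next I would verify the universal property. Given a complex C$^*$-algebra $A$ and a complex linear ucp map $T\colon \cS_c \to A$, the restriction $T_0 := T|_\cS \colon \cS \to A$ is a real ucp map into $A$ viewed as a real C$^*$-algebra. The universal property of $C^*_{\rm u}(\cS)$ recorded just above the lemma then yields a real $*$-homomorphism $\pi_0 \colon C^*_{\rm u}(\cS) \to A$ with $\pi_0|_\cS = T_0$. Using the complex structure already present on $A$, I define
\[ \pi(x + iy) \; := \; \pi_0(x) + i\,\pi_0(y), \qquad x, y \in C^*_{\rm u}(\cS). \]
A routine computation using the identities $(a+ib)^* = a^* - ib^*$ and $(a+ib)(c+id) = (ac-bd) + i(ad+bc)$ in $A$ shows that $\pi$ is a well-defined unital complex $*$-homomorphism $(C^*_{\rm u}(\cS))_c \to A$, and complex linearity together with $\pi|_\cS = T_0 = T|_\cS$ gives $\pi \circ j_c = T$. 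Uniqueness of $\pi$ is immediate: two such extensions agree on $\cS$, hence on the real C$^*$-algebra it generates (namely $C^*_{\rm u}(\cS)$), and therefore on all of $(C^*_{\rm u}(\cS))_c$ by complex linearity.

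Thus $(C^*_{\rm u}(\cS))_c$ carries out the universal role of $C^*_{\rm u}(\cS_c)$, yielding a unique complex unital $*$-isomorphism $C^*_{\rm u}(\cS_c) \cong (C^*_{\rm u}(\cS))_c$ restricting to the identity on $\cS_c$. The one mildly delicate point is the middle step: one must invoke the universal property of $C^*_{\rm u}(\cS)$ with target the \emph{underlying real} C$^*$-algebra of a complex $A$, and then use the ambient complex structure on $A$ to promote the resulting real $*$-homomorphism to a complex $*$-homomorphism defined on the complexification of the domain, rather than extending to $A_c$ and projecting back.
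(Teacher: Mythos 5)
Your proof is correct, but it follows a genuinely different route from the paper's. You characterize $(C^*_{\rm u}(\cS))_c$ by verifying that it satisfies the universal property of $C^*_{\rm u}(\cS_c)$: restrict a complex ucp map on $\cS_c$ to $\cS$, invoke the real universal property (which the paper does establish just before the lemma, so there is no circularity), and then promote the resulting real $*$-homomorphism to a complex one on the complexified domain using the complex scalars already present in the target $A$ --- the step you rightly flag as the delicate one, and which does go through since every element of $(C^*_{\rm u}(\cS))_c$ has a unique expression $x+iy$ and the formula $\pi(x+iy)=\pi_0(x)+i\pi_0(y)$ is multiplicative and $*$-preserving because multiplication by $i$ in $A$ is central scalar multiplication. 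The paper instead works inside $C^*_{\rm u}(\cS_c)$: it extends the conjugation $\theta_{\cS}$ to a period-$2$ conjugate-linear $*$-automorphism $\theta$ of $C^*_{\rm u}(\cS_c)$ (via the universal property applied to the conjugate algebra $\overline{C^*_{\rm u}(\cS_c)}$), checks that $\theta$ fixes the real subalgebra $B$ generated by $\cS$, and then proves $B\cap iB=(0)$ and $B+iB=C^*_{\rm u}(\cS_c)$ by a closedness argument with the $\pm 1$ eigenspaces of $\theta$. The paper's route buys an explicit conjugation exhibiting $C^*_{\rm u}(\cS)$ as the fixed-point algebra and locating it concretely inside $C^*_{\rm u}(\cS_c)$ (which matters because the paper \emph{defines} $C^*_{\rm u}(\cS)$ as that subalgebra); your route is shorter and more categorical, identifying the two algebras by a canonical $*$-isomorphism fixing $\cS_c$, which then necessarily carries the subalgebra generated by $\cS$ onto $C^*_{\rm u}(\cS)$, so it recovers the paper's statement. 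The only small point worth making explicit in your write-up is that $(C^*_{\rm u}(\cS))_c$ carries a (unique) complex C$^*$-norm, so that it is a legitimate candidate for the universal cover and so that your $\pi$ is automatically contractive; this is standard (see \cite{Li}) and used freely elsewhere in the paper.
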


\begin{proof} The canonical period 2 unital completely isometric conjugate linear map 
$\theta_{\cS} : \cS_c \to \cS_c$ extends to a period 2 conjugate linear $*$-automorphism $\theta$ of $C^*_{\rm u}(\cS_c)$ that fixes $B$.  
To see this, we will use the operator space structure 
$\bar{X}$ associated with a complex operator space $X$.  This is the set of formal symbols $\bar{x}$ for $x \in X$, with the operator space structure making $X \to \bar{X} : x \mapsto \bar{x}$ a  conjugate linear complete isometry.  In our case $X$ is a unital complex operator system, namely 
$\cS_c$ for a real operator system $\cS$, 
in which case the latter map is unital. 
Apply the universal property of $C^*_{\rm u}(\cS_c)$ to the composition of the latter map with $\theta_{\cS}$.  This composition is a complex linear complete order 
isomorphism $\cS_c \to \overline{\cS_c}$.  We obtain a $*$-isomorphism $C^*_{\rm u}(\cS_c) \to C^*_{\rm u}(\overline{\cS_c})$.
However it is easy to see  by a diagram chase that $C^*_{\rm u}(\overline{\cS_c})$ may be identified with the `conjugate C$^*$-algebra' 
$\overline{C^*_{\rm u}(\cS_c)}$.   The last $*$-isomorphism then yields a 
conjugate linear $*$-automorphism $\theta$ of $C^*_{\rm u}(\cS_c)$.
It is easy to check that this fixes $\cS$, hence also fixes $B$, and is period 2. 

We will be done if $B \cap iB = (0)$ and $B + iB = 
C^*_{\rm u}(\cS_c)$.  This will be obvious already to some readers, but for completeness we give one version of the argument.
Let $D$ be the fixed points of $\theta$, and $E$ the points with $\theta(x) = -x$.  These are closed with 
 $D \cap E = (0)$ and $D + E = C^*_{\rm u}(\cS_c)$. 
 Also $B \subseteq D$ and $iB \subseteq E$.  If $b_n, c_n \in B$ with 
 $b_n + ic_n \to x$ then by applying $\theta$ we see that 
 $b_n - ic_n \to \theta(x)$.  It follows that $x + \theta(x) \in B$ and $x - \theta(x) \in iB$, so that $x \in B + iB$.  Thus $B + iB$ is closed, hence is a complex C$^*$-subalgebra of
 $C^*_{\rm u}(\cS_c)$ containing 
 $\cS_c$.    Thus $B \oplus iB = 
C^*_{\rm u}(\cS_c)$.
\end{proof}

The next result is probably mostly folklore (partially used in \cite{BK} for example).  

\begin{proposition} \label{gpalg} 
    Let $G$ be a locally compact group.  The complexification of the 
    real full (resp.\ reduced) group C$^*$-algebra of $G$ 
    is $*$-isomorphic to the 
    complex full (resp.\ reduced) group C$^*$-algebra of $G$.   Also $G$ is amenable if and only if the two real algebras here coincide: 
    $C^*_{\Rdb}(G) = (C^*_{\lambda})^{\Rdb}(G)$.
\end{proposition}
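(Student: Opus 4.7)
My plan is to handle the three claims in turn: the full group C$^*$-algebra by a universal-property argument, the reduced case via the identification $B_{\bR}(H)_c \cong B_{\bC}(H_c)$ recalled in the introduction, and the amenability equivalence by transfer to the known complex theorem through complexification.

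For the full case, I would show $(C^*_{\bR}(G))_c \cong C^*_{\bC}(G)$ by constructing mutually inverse $*$-homomorphisms. Viewing $C^*_{\bC}(G)$ as a real C$^*$-algebra, the inclusion $G \hookrightarrow C^*_{\bC}(G)$ is a real (strongly continuous) unitary representation, hence by the universal property of $C^*_{\bR}(G)$ extends to a real $*$-homomorphism $\psi: C^*_{\bR}(G) \to C^*_{\bC}(G)$. By Lemma \ref{ncomma}, $\psi$ extends uniquely to a complex $*$-homomorphism $\psi_c: (C^*_{\bR}(G))_c \to C^*_{\bC}(G)$. Conversely, the composition $G \hookrightarrow C^*_{\bR}(G) \hookrightarrow (C^*_{\bR}(G))_c$ is a complex unitary representation, so the universal property of $C^*_{\bC}(G)$ yields a complex $*$-homomorphism $\phi: C^*_{\bC}(G) \to (C^*_{\bR}(G))_c$. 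Each of $\phi\circ\psi_c$ and $\psi_c\circ\phi$ is a complex $*$-homomorphism fixing $G$; since complex $*$-polynomials in $G$ are dense in both targets (using that $C^*_{\bR}(G) + iC^*_{\bR}(G)$ is dense in the complexification), both compositions equal the identity.

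For the reduced case, I would apply the canonical real complete isometry $B_{\bR}(H)_c \cong B_{\bC}(H_c)$ to $H = L^2_{\bR}(G)$, so that $H_c = L^2_{\bC}(G)$. The complexification $(C^*_{\lambda,\bR}(G))_c$ embeds as a complex C$^*$-subalgebra of $B_{\bC}(L^2_{\bC}(G))$, and the image of each real left-translation operator $\lambda_g^{\bR}$ under this embedding is exactly the complex left-translation operator $\lambda_g^{\bC}$ (this follows from the formula in the introduction since left translation by $g$ preserves real-valued functions). As the $\lambda_g^{\bC}$ generate $C^*_{\lambda,\bC}(G)$ and lie in the image, the image is $C^*_{\lambda,\bC}(G)$.

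For the amenability statement, let $q_{\bR}: C^*_{\bR}(G) \to C^*_{\lambda,\bR}(G)$ be the canonical surjection arising from the real regular representation. By the naturality of complexification (Lemma \ref{ncomma}) and the two isomorphisms just proved, the complexification $(q_{\bR})_c$ is a complex $*$-homomorphism $C^*_{\bC}(G) \to C^*_{\lambda,\bC}(G)$ that agrees with the classical complex surjection $q_{\bC}$ on the dense image of $G$, hence equals $q_{\bC}$. The classical theorem states that $G$ is amenable iff $q_{\bC}$ is injective, equivalently an isomorphism. Since a real C$^*$-algebra embeds injectively into its complexification and complexification preserves isomorphisms, $q_{\bR}$ is an isomorphism iff $q_{\bC}$ is, giving the equivalence. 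The main point requiring care is confirming that the real full and reduced group C$^*$-algebras are set up so that (i) the universal property of $C^*_{\bR}(G)$ over real unitary representations holds, and (ii) $C^*_{\lambda,\bR}(G)$ is concretely realized in $B_{\bR}(L^2_{\bR}(G))$ in a manner compatible with the complex reduced algebra in $B_{\bC}(L^2_{\bC}(G))$; once these foundational matches are made, everything else is a mechanical consequence of complexification.
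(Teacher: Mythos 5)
Your argument is correct, and two of its three parts (the reduced case via the identification $B_{\bR}(L^2_{\bR}(G))_c \cong B_{\bC}(L^2_{\bC}(G))$, and the transfer of the amenability criterion through the complexified canonical surjection) are exactly what the paper sketches. Where you genuinely diverge is the full group C$^*$-algebra: the paper handles $C^*_{\bR}(G)_c \cong C^*_{\bC}(G)$ by adapting the proof of Lemma \ref{ucom}, i.e.\ by producing a period-$2$ conjugate-linear $*$-automorphism $\theta$ of $C^*_{\bC}(G)$ whose fixed-point algebra contains the real algebra $B$, and then showing $B \cap iB = (0)$ and $B + iB = C^*_{\bC}(G)$ via the $\theta$-eigenspace decomposition; you instead build mutually inverse $*$-homomorphisms directly from the two universal properties plus Lemma \ref{ncomma} and a density argument. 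Your route is arguably more self-contained (it never needs to exhibit the conjugation or verify closedness of $B+iB$, since both compositions are seen to be the identity on dense $*$-subalgebras), whereas the paper's conjugation method has the advantage of fitting the general ``real structure as fixed points of $\theta$'' framework of Proposition \ref{cco} and reusing work already done for $C^*_{\rm u}(\cS)$. The only point you should make fully precise is the one you already flag: for non-discrete $G$ the group elements live in the multiplier algebra, so the universal properties should be invoked for nondegenerate $*$-representations of $L^1_{\bR}(G)$ (resp.\ $L^1_{\bC}(G)$), or for strictly continuous unitary representations into multiplier algebras; with that standard adjustment everything you wrote goes through.
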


\begin{proof}  See \cite{B} for  details.  
For example a modification of some ideas in the proof of 
Lemma \ref{ucom} shows that $C^*_{\Rdb}(G)_c = C^*_{\Cdb}(G)$.  It is well known that that $G$ is amenable if and only if 
 $C^*_{\Cdb}(G) = (C^*_{\lambda})^{\Cdb}(G)$.  By the relation  $C^*_{\Rdb}(G)_c = C^*_{\Cdb}(G)$ and the analogous (easier, since we can use  the action on $L^2(G,\bC) = L^2(G,\bR) \oplus_2 L^2(G,\bR)$) fact for the reduced group C$^*$-algebra, the identity in the last sentence holds 
  if and only   $C^*_{\Rdb}(G) = (C^*_{\lambda})^{\Rdb}(G)$.
\end{proof}

In a later work we plan to discuss boundary representations and their relation with real matrix convex sets and the real Choquet boundary, for example.

    \begin{theorem} \label{ucomla} The (canonical equivalence classes of) real (resp.\ complex) C$^*$-covers of a real  (resp.\ complex) operator system $\cS$ form a lattice, indeed a topology.  
    In the real case it may be identified with a complete sublattice of the complete lattice or topology of (equivalence classes of) complex C$^*$-covers of $\cS_c$.  
\end{theorem}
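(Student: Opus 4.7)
The plan is to reduce to the well-known structure of the lattice of closed two-sided ideals of a C$^*$-algebra. First, in either the real or complex case, (canonical equivalence classes of) C$^*$-covers $(A,\iota)$ of $\cS$ are in order-reversing bijection with closed two-sided ideals $I$ of $C^*_{\rm u}(\cS)$ satisfying $I \cap \cS = (0)$, via $(A,\iota) \mapsto \ker(C^*_{\rm u}(\cS) \twoheadrightarrow A)$. By maximality of the Shilov boundary ideal $J_{\rm Sil}$ (whose quotient is the C$^*$-envelope), these are precisely the ideals contained in $J_{\rm Sil}$. The closed two-sided ideals of any C$^*$-algebra form a complete lattice canonically isomorphic, via $I \mapsto \{P \in \mathrm{Prim} : I \not\subseteq P\}$, to the open-set topology of the primitive spectrum; the subfamily $\{I : I \subseteq J_{\rm Sil}\}$ is closed under arbitrary intersections and closed sums, so is itself a complete lattice and a topology (on the open subset corresponding to $J_{\rm Sil}$). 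This proves the first assertion in both cases.

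For the embedding of the real lattice into the complex one, I would exploit Lemma \ref{ucom}: $C^*_{\rm u}(\cS_c) = C^*_{\rm u}(\cS)_c$ carries a period-two conjugate-linear $*$-automorphism $\theta$ with fixed-point algebra $C^*_{\rm u}(\cS)$. The map $I \mapsto I_c := I + iI$ is a bijection between closed two-sided real ideals of $C^*_{\rm u}(\cS)$ and $\theta$-invariant closed two-sided complex ideals of $C^*_{\rm u}(\cS_c)$, with inverse $J \mapsto J \cap C^*_{\rm u}(\cS)$; surjectivity uses the decomposition $x = \tfrac{1}{2}(x+\theta(x)) + \tfrac{1}{2}(x-\theta(x))$ for $x$ in a $\theta$-invariant $J$, whose two summands lie in $C^*_{\rm u}(\cS)$ and $i C^*_{\rm u}(\cS)$ respectively. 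Moreover $I \cap \cS = (0)$ if and only if $I_c \cap \cS_c = (0)$, since $\cS_c = \cS + i\cS$ as an internal direct sum of real subspaces. Translated back, real C$^*$-covers of $\cS$ correspond precisely to $\theta$-invariant complex C$^*$-covers of $\cS_c$, via $A \mapsto A_c$.

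To close, I would verify this is a complete sublattice embedding. The $\theta$-invariant ideals form a complete sublattice of all closed two-sided ideals of $C^*_{\rm u}(\cS_c)$, since intersections and closed sums of $\theta$-invariant ideals are $\theta$-invariant ($\theta$ being a continuous $*$-automorphism). The bijection above intertwines lattice operations: $(\bigcap_\alpha I_\alpha)_c = \bigcap_\alpha (I_\alpha)_c$ and $(\overline{\sum_\alpha I_\alpha})_c = \overline{\sum_\alpha (I_\alpha)_c}$ for real ideals $I_\alpha$. The main (if shallow) obstacle is the bookkeeping around the Shilov boundary ideals: one must check $I \subseteq J_{\rm Sil}^{\bR}$ if and only if $I_c \subseteq J_{\rm Sil}^{\bC}$. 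This follows because $J_{\rm Sil}^{\bC}$ is automatically $\theta$-invariant by maximality ($\theta(J_{\rm Sil}^{\bC})$ is another complex ideal meeting $\cS_c$ trivially), hence equals $(J_{\rm Sil}^{\bR})_c$ by the bijection of the previous paragraph combined with the maximality characterization on the real side.
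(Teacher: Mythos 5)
Your overall strategy matches the paper's: identify C$^*$-covers with the closed ideals of $C^*_{\rm u}(\cS)$ lying below the Shilov boundary ideal, and use $C^*_{\rm u}(\cS_c)=C^*_{\rm u}(\cS)_c$ together with $K\mapsto K_c$ to embed the real lattice into the complex one. Your second and third paragraphs (the $\theta$-invariance description of the image and the preservation of infs and sups) are fine and in fact slightly sharper than what the paper records. However, there is a genuine gap in your first paragraph, and it sits exactly where the paper locates the real work.

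You assert that for \emph{any} C$^*$-algebra the lattice of closed two-sided ideals is canonically isomorphic, via $I \mapsto \{P \in \mathrm{Prim} : I \not\subseteq P\}$, to the open-set lattice of the primitive spectrum, and you then read off that the sublattice below $J_{\rm Sil}$ is a topology. In the complex case this is standard (it rests on the fact that every closed ideal is an intersection of primitive ideals, II.6.5.2--II.6.5.3 in Blackadar). In the real case it is precisely the statement that cannot simply be quoted: injectivity of $I \mapsto \{P : I \not\subseteq P\}$ \emph{is} the assertion that closed ideals of a real C$^*$-algebra are intersections of primitive ideals, and without it you have no grounds for the distributivity of finite meets over arbitrary joins, i.e.\ no grounds for the lattice being a topology. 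The paper's proof spends most of its length on exactly this point: it invokes the lattice-theoretic criterion that a complete lattice is a topology iff its prime elements order-generate it, and then verifies the real-C$^*$-algebraic input by checking that the arguments leading to II.6.5.2--II.6.5.3 of Blackadar go through in the real case, substituting \cite[Proposition 5.3.7]{Li}, \cite[5.1.6(3)]{Li} and \cite[5.3.5(2),(3)]{Li} where needed (e.g.\ that a selfadjoint element attains its norm at a pure state, reducing to $x^*x$ to handle general elements). Your proof needs this verification, or an explicit reference establishing the real primitive-ideal theory, to be complete.

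A secondary slip: you characterize the relevant ideals as those with $I \cap \cS = (0)$ and then invoke maximality of $J_{\rm Sil}$. The correct condition is that the quotient map $q_I$ restricts to a \emph{complete order embedding} (complete isometry) on $\cS$; this is strictly stronger than $I \cap \cS = (0)$ in general, and the maximality of the Shilov ideal is maximality among such boundary ideals, not among ideals meeting $\cS$ trivially. The conclusion you want --- covers correspond to ideals contained in $J_{\rm Sil}$ --- is correct, because any ideal contained in a boundary ideal is itself a boundary ideal, but the justification as written conflates two different classes of ideals. The same conflation appears in your final sentence when you argue that $\theta(J_{\rm Sil}^{\bC})$ ``meets $\cS_c$ trivially''; the fix is to note that $\theta$ restricts to a (conjugate-linear) complete isometry of $\cS_c$, so $\theta(J_{\rm Sil}^{\bC})$ is again a boundary ideal.
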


\begin{proof}
    Just as in the complex operator algebra case 
    discussed in the Notes on Chapter 2.4 in \cite{BLM}, the partially ordered set  of
(equivalence classes of) C$^*$-covers of an operator system is a lattice.  (This was later rediscovered in the operator algebra case by others.) Indeed the  C$^*$-covers of $\cS$
correspond, as in the Notes just referred to, exactly to the lattice $L$ of closed ideals
in the kernel  $J$ of the canonical 
$*$-homomorphism $C^*_{\rm u}(\cS) \to C^*_{\rm e}(\cS)$.  Similarly, the complex C$^*$-covers of $\cS_c$
correspond to the lattice $\hat{L}$ of closed complex ideals
in the kernel $\hat{J}$ of the
$*$-homomorphism $C^*_{\rm u}(\cS_c) \to C^*_{\rm e}(\cS_c)$.
Since $C^*_{\rm u}(\cS_c) = C^*_{\rm u}(\cS)_c$ by Lemma \ref{ucom}, 
and $C^*_{\rm e}(\cS_c) = C^*_{\rm e}(\cS)_c$, it follows that 
$\hat{J} = J_c = J + iJ$.  Moreover $K \mapsto K_c$
injects the lattice $L$ into the lattice $\hat{L}$ preserving infs and sups. 

 It is  known (see the Notes on Chapter 2.4 in \cite{BLM}) that the closed complex ideals of a $C^*$-algebra,
 and in particular $\hat{L}$, is lattice isomorphic to the set of open sets
in a certain (generally non Hausdorff) topological space.  Namely the open sets in the {\em spectrum}
of the C$^*$-algebra,   in particular of $\hat{J}$ above. We recall that the spectrum 
is the set of the equivalence classes of irreducible
representations of $J$.  This is probably true in the real case     too, following from some of the points below, and may be in the literature, but  we have not checked this.  Instead we give a novel proof that is of independent interest (both in the real and complex case) 
 that the closed ideals form  a topology.
 
 It seems to be a strange omission in the books on the subject that it is not said explicitly that a complete lattice is a topology if and only if 
 the prime elements in the lattice order-generate the lattice.   Here we recall that $p$ is prime if $p \neq 1$ and $a \wedge  b \leq p$ implies 
 $a$ or $b$ is $\leq p$; and $P$ order-generates means that every $x$ is the infimum of $\{ p \in P : x \leq p \}$. 
 We are indebted to a 2018 Stackexchange post of Eric Wofsey which pointed this out together with a proof. The fact is sometimes attributed to B\"uchi; it is referred to for example with some references on p.\ 392 in \cite{XY}.   
 
 The set of closed ideals in a C$^*$-algebra is certainly a complete lattice.  
 In the complex case it is well known that the closed prime ideals order-generate all the ideals in the sense above--see  II.6.5.3 and II.6.5.2 in \cite{Bla}.  That is, every closed ideal is an intersection of primitive ideals (i.e.\ kernels of irreducible representations).
 The real case of this latter fact may be known but we offer a proof.   Or rather we checked that 
 the standard proofs in \cite{Bla} leading to  II.6.5.3 and II.6.5.2 there, work in the real case except for the proof of 6.4.8 there.
 The latter instead needs \cite[Proposition 5.3.7 (2)]{Li}.  Indeed    we will 
need all parts of  \cite[Proposition 5.3.7]{Li}. We also need \cite[5.1.6 (3)]{Li} (norm equals spectral radius on normal elements, which follows from the definitions and the complex case)  and \cite[5.3.5 (2) and (3)]{Li} (in particular that any selfadjoint element achieves its norm at a pure state).  Note that we only need selfadjoint elements since an element $x$ is annihilated by every irreducible representation if and only if the same is true for $x^* x$.
Finally, II.6.1.11 in \cite{Bla} (in the irreducible representation case) needs \cite[Proposition 5.3.7 (1)]{Li}.  \end{proof}

{\bf Remarks.} 1)\ Some open questions about the relations between pure states on $A_c$ and pure states on $A$, for a real C$^*$-algebra $A$,
are stated on p.\ 91 in \cite{Li}.  A negative solution with $A = M_2(\bR)$ was given in \cite{BMcII}.

\smallskip

2)\ Note that closed complex ideals in $J_c$ need not all be of form $K_c$ for a closed ideal  $K$ in $J$.   For example consider $J$ the real C$^*$-algebra $$\left\{f\in C(\bT,\bC): \tau (f) (t) = \overline{f(\overline{t})} \ (t\in \bT) \right\},$$  on the unit circle $\bT$.   
Hence there does not seem to be a canonical bijection between the complex C$^*$-covers of $\cS_c$ and the real C$^*$-covers of $\cS$. 

\smallskip

3)\ The results here would apply immediately to unital operator spaces $X$, via Arveson's $\cS = X + X^*$ trick. 

\section{Archimedeanization} \label{archs}

In this section and Section \ref{nmin}, we consider various ways to modify a given real matrix ordering $\{C_n\}$ on a real $*$-vector space $V$, as well as how these modifications interact with the complexification.

We make note of two well-known facts for complex operator systems which which may fail to hold in the real case.

\begin{proposition} \label{2enf}
    Suppose that $V$ is a (real or complex) $*$-vector space, $\{C_n\}$ is a matrix ordering on $V$, $e \in V_{\rm sa}$, and $I_2 \otimes e$ is an order unit for $M_2(V)_{\rm sa}$. Then $e$ is a matrix order unit.
\end{proposition}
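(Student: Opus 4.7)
The plan is to decompose an arbitrary $x = [x_{ij}] \in M_n(V)_{\rm sa}$ as a sum of hermitian pieces, each supported in at most two rows and columns, and then dominate each piece using the hypothesis and its immediate consequences after a compression of the form $\alpha^T(\,\cdot\,)\alpha$.

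First I would check that the hypothesis already forces $e$ to be a positive order unit of $V$. Given $y \in V_{\rm sa}$, the element $y \oplus 0 \in M_2(V)_{\rm sa}$ is dominated by some $t\,I_2 \otimes e$; compressing by $\alpha = \left[\begin{smallmatrix}1\\0\end{smallmatrix}\right]\in M_{2,1}(\bR)$ yields $te + y \in C_1$, so $e$ is an order unit of $V$. Applying this to $y = -e$ produces $(t-1)e \in C_1$ for some $t > 1$, and rescaling by $\alpha = (t-1)^{-1/2} \in \bR$ gives $e \in C_1$. Consequently, for each $k$, the ``rank-one'' diagonal $D_k \in M_n(V)$ having $e$ at position $(k,k)$ and zeros elsewhere lies in $C_n$ (compress $e$ via $e_k^T \in M_{1,n}(\bR)$), and sums of such terms are also positive.

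Next, write $x = \sum_{i \leq j} y_{ij}$, where $y_{ij}$ ($i<j$) carries $x_{ij}$ at $(i,j)$ and $x_{ij}^*$ at $(j,i)$ with all other entries zero, and $y_{ii}$ carries $x_{ii}$ at $(i,i)$. For $i < j$, the hypothesis applied to
\[ z_{ij} := \begin{bmatrix} 0 & x_{ij} \\ x_{ij}^* & 0 \end{bmatrix} \in M_2(V)_{\rm sa} \]
gives $s_{ij} > 0$ with $s_{ij}\,I_2\otimes e + z_{ij} \in C_2$. Taking $\alpha_{ij} \in M_{2,n}(\bR)$ with rows $e_i^T, e_j^T$, a direct computation gives $\alpha_{ij}^T z_{ij} \alpha_{ij} = y_{ij}$ and $\alpha_{ij}^T(I_2 \otimes e)\alpha_{ij} = D_i + D_j$, so
\[ s_{ij}(D_i + D_j) + y_{ij} \;=\; \alpha_{ij}^T\bigl(s_{ij}\,I_2\otimes e + z_{ij}\bigr)\alpha_{ij} \;\in\; C_n. \]
Adding $s_{ij}\sum_{k\neq i,j} D_k$ (positive by the previous paragraph) upgrades this to $s_{ij}\,I_n \otimes e + y_{ij} \in C_n$. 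The diagonal pieces $y_{ii}$ are handled similarly, using the order unit property of $e$ in $V$ to find $s_{ii} > 0$ with $s_{ii} e + x_{ii} \in C_1$, compressing by $e_i^T$ to get $s_{ii}D_i + y_{ii} \in C_n$, and again adding the remaining $D_k$'s.

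Setting $t = \sum_{i\leq j} s_{ij}$ and summing (using the standard fact $C_n + C_n \subseteq C_n$, which follows from $\alpha^T(u \oplus v)\alpha = u+v$ for $\alpha = \left[\begin{smallmatrix} I_n \\ I_n\end{smallmatrix}\right] \in M_{2n,n}(\bR)$) produces $t\,I_n\otimes e + x \in C_n$, so $e$ is a matrix order unit. The main subtlety that needs care is the ``padding'' step: the compression $\alpha_{ij}^T(I_2\otimes e)\alpha_{ij}$ produces $e$ only at positions $i,j$, so the independently-verified positivity of $e$ is needed to fill in the missing diagonal entries without disturbing the inequality.
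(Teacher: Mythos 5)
Your proof is correct and follows essentially the same route as the paper's: both arguments reduce the problem to $2\times 2$ compressions and then sum over pairs $i<j$, the only difference being that the paper dominates the full principal submatrices (including the diagonal entries) and corrects afterwards for the over-counted diagonal terms, whereas you strip the diagonal out first and dominate the hollow blocks $z_{ij}$ and the diagonal pieces separately; the bookkeeping is equivalent. One small slip: in deriving $e \in C_1$ you apply the order-unit hypothesis to $y=-e$ and assert the resulting $t$ satisfies $t>1$, but the hypothesis only supplies \emph{some} $t>0$, and if $t\le 1$ you cannot conclude; the immediate fix is to apply the hypothesis to $x=0 \in M_2(V)_{\rm sa}$, which gives $t\,I_2\otimes e \in C_2$ directly and hence $e\in C_1$ after compression and rescaling. (The paper uses $e\in C_1$ only implicitly, in the step $A(i,j)\oplus 0_{n-2}+t_{ij}I_n\otimes e\ge 0$, so your explicit attention to the padding is a welcome addition.)
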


\begin{proof}
    Let $[a_{ij}] \in M_n(V)_{\rm sa}$. Fix $1 \leq i<j \leq n$. Consider the $2 \times 2$ matrix \[ A(i,j) := \begin{bmatrix} a_{ii} & a_{ij} \\ a_{ji} & a_{jj} \end{bmatrix} \in M_2(V)_{\rm sa}. \]
    Choose $t_{ij} > 0$ such that $A(i,j) + t_{ij} I_2 \otimes e \geq 0$. Then $A(i,j) \oplus 0_{n-2} + t_{ij} I_n \otimes e \geq 0$.  Conjugating by an appropriate permutation matrix, we get that $M(i,j) := \sum_{k,l \in \{i,j\}} E_{kl} \otimes a_{kl} + t_{ij} I_n \otimes e \geq 0$ (where $E_{ij}$ are the canonical matrix units). Adding these matrices for all $1 \leq i < j \leq n$, we get
    \[ \sum_{i=1}^n \sum_{j > i} M_{ij} = [a_{ij}] + {\rm diag}(r_1 a_{11}, \dots, r_n a_{nn}) + \sum t_{ij} I_n \otimes e \geq 0 , \] where $r_1, \dots, r_n \geq 0$ account for the repeated occurrences of $a_{11}, \dots, a_{nn}$ on the diagonal in the sum. Since $a_{ii} \in V_{\rm sa}$ for each $i$, there exists $t > 0$ such that $tI_n \otimes e - {\rm diag}(r_1 a_1, \dots, r_n a_n) \geq 0$. Adding this positive term to the sum above, we get $[a_{ij}]+ (t+ \sum t_{ij}) I_n \otimes e \geq 0$.
\end{proof}

In the complex case, it suffices for $e$ to be an order unit.  This was shown in \cite{CEinj}, but for the readers convenience we give a quick alternative proof using the previous result.

\begin{proposition}
    Suppose that $V$ is a complex $*$-vector space, $\{C_n\}$ is a matrix ordering, and $e \in V_{\rm sa}$ is an order unit for $(V_{\rm sa}, C_1)$. Then $e$ is a matrix order unit.
\end{proposition}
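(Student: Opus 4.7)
The plan is to invoke the preceding Proposition \ref{2enf}: it suffices to show that $I_2 \otimes e$ is an order unit for $M_2(V)_{\rm sa}$. Given
\[ x = \begin{bmatrix} a & b \\ b^* & c \end{bmatrix} \in M_2(V)_{\rm sa}, \]
I would use the complex structure of $V$ to split $b = b_1 + i b_2$, where $b_1 = (b+b^*)/2$ and $b_2 = (b-b^*)/(2i)$ both lie in $V_{\rm sa}$. This splitting is the one genuinely complex step, and is precisely why the real analogue fails: in a real $*$-vector space, $b$ may have a nonzero skew-adjoint component that an order unit for $V_{\rm sa}$ cannot control.

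Next, since $e$ is an order unit for $(V_{\rm sa}, C_1)$, I would pick $s > 0$ large enough that each of $se \pm a,\ se \pm c,\ se \pm b_1,\ se \pm b_2$ lies in $C_1$. The matrix ordering axiom $\alpha^{*} C_1 \alpha \subseteq C_2$ applied with $\alpha = [1\ \pm 1] \in M_{1,2}(\bC)$ to the positive scalars $se \mp b_1$, followed by summing the two resulting inequalities in $C_2$, yields a matrix that dominates $\begin{bmatrix} 0 & b_1 \\ b_1 & 0 \end{bmatrix}$ by a multiple of $I_2 \otimes e$. The analogous compression with $\alpha = [1\ \pm i]$ applied to $se \mp b_2$ handles the skew-Hermitian piece $\begin{bmatrix} 0 & i b_2 \\ -i b_2 & 0 \end{bmatrix}$, where a final conjugation by $\operatorname{diag}(1,-1)$ (or simply replacing $b_2$ by $-b_2$) fixes the signs. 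The diagonal part of $x$ is dominated by $sI_2\otimes e$ directly via the direct-sum axiom applied to $se\pm a,\ se\pm c \in C_1$.

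Summing these three bounds and absorbing the constants into a single $t > 0$ produces $tI_2 \otimes e + x \in C_2$, which is the desired conclusion; Proposition \ref{2enf} then finishes the proof. The only real subtlety I anticipate is bookkeeping the signs and scalar factors across the four compressions; the substantive conceptual point is that the compression $\alpha^{*}\,\cdot\,\alpha$ with the complex row vector $[1\ i]$ has no real analogue, which is exactly why the real version of this proposition must fail (and, as flagged by the authors, why the real theory admits no MIN/MAX functor in the sense of Section \ref{nmin}).
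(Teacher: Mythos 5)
Your proposal is correct and is essentially the paper's own argument: both reduce to showing $I_2\otimes e$ is an order unit for $M_2(V)_{\rm sa}$ via the preceding proposition, split $b$ into its self-adjoint real and imaginary parts, and control the off-diagonal block using the positive scalar matrices $[1\ \pm 1]^*[1\ \pm 1]$ and $[1\ \pm i]^*[1\ \pm i]$ (the paper phrases this as tensoring with $\begin{bmatrix}1&1\\1&1\end{bmatrix}$ and $\begin{bmatrix}1&i\\-i&1\end{bmatrix}$ and subtracting diagonal correction terms, which is the same computation). Your identification of the compression by $[1\ i]$ as the genuinely complex step that has no real analogue matches the paper's discussion as well.
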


\begin{proof}
    It suffices to show that $I_2 \otimes e$ is an order unit for $(M_2(V)_{\rm sa}, C_2)$. Suppose that \[ \begin{bmatrix} a & b \\ b^* & c \end{bmatrix} \in M_2(V)_{\rm sa}. \] Choose $t > 0$ such that $a + te, c + te \geq 0$. Then ${\rm diag}(a,c) + t I_2 \otimes e \geq 0$. Also, notice that we have \[ \begin{bmatrix} 0 & b \\ b^* & 0 \end{bmatrix} = \begin{bmatrix} 1 & 1 \\ 1 & 1 \end{bmatrix} \otimes \Re(b) + \begin{bmatrix} 1 & i \\ -i & 1 \end{bmatrix} \otimes \Im(b) + \begin{bmatrix} 1 & 0 \\ 0 & 1 \end{bmatrix} \otimes (-\Re(b)) + \begin{bmatrix} 1 & 0 \\ 0 & 1 \end{bmatrix} \otimes (-\Im(b)). \] Now observe that if $A \in M_2(\mathbb{R})^+$ and $x \in V_{\rm sa}$, $pI_2 \geq A$, and $x + qe \geq 0$, then $$A \otimes x + pq I_2 \otimes e \geq A \otimes x + A \otimes (qe) = A \otimes (x + qe) \geq 0.$$ Thus, in the expression above we can find $t_1, t_2, t_3, t_4 \geq 0$ such that \[ \begin{bmatrix} 0 & b \\ b^* & 0 \end{bmatrix} + (t_1 + t_2 + t_3 + t_4) I_2 \otimes e \geq 0. \] This implies \[ \begin{bmatrix} a & b \\ b^* & c \end{bmatrix} + (t+t_1+t_2+t_3+t_4)I_2 \otimes e \geq 0. \] So $I_2 \otimes e$ is an order unit and therefore $e$ is a matrix order unit.
\end{proof}

We will see an example soon where $e$ is an order unit for a real matrix ordered $*$-vector space but fails to be a matrix order unit. We now turn our attention to the matrix ordering.

\begin{proposition} \label{prop: C2 proper}
    Let $V$ be a real or complex matrix ordered $*$-vector space and suppose that $C_2$ is proper. Then $\{C_n\}$ is proper for all $n$.
\end{proposition}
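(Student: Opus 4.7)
The plan is to exploit the compression axiom of a matrix ordering to reduce the question at level $n$ to the already-assumed properness at level $2$. Concretely, suppose $x \in C_n$ with $-x \in C_n$; I want to show $x = 0$. For each pair of indices $p, q \in \{1,\dots,n\}$, I would take $\alpha \in M_{n,2}(\mathbb{R})$ to be the matrix whose first column is the standard basis vector $e_p$ and whose second column is $e_q$. A direct calculation shows that
\[
\alpha^* x \alpha \; = \; \begin{bmatrix} x_{pp} & x_{pq} \\ x_{qp} & x_{qq} \end{bmatrix} \in M_2(V).
\]

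By the compression axiom $\alpha^* C_n \alpha \subseteq C_2$, applied to both $x$ and $-x$, this $2\times 2$ matrix and its negative both lie in $C_2$. Since $C_2$ is assumed proper, this principal $2\times 2$ submatrix must be zero, so $x_{pp} = x_{pq} = x_{qp} = x_{qq} = 0$. Ranging over all pairs $p,q$ forces every entry of $x$ to vanish, hence $x = 0$, as desired.

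There is really no main obstacle here: the only thing one has to notice is that $C_2$-properness controls all off-diagonal entries as well as diagonal entries because each $2 \times 2$ principal submatrix of a selfadjoint matrix is itself selfadjoint and is accessed by a scalar (in fact $\{0,1\}$-valued) compression. In particular the argument works verbatim in both the real and complex settings, because the chosen $\alpha$ has real entries, so one does not have to worry about the distinction between $\alpha^T$ and $\alpha^*$ in the statement of the compression axiom.
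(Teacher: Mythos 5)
Your proof is correct and is essentially the same as the paper's: the paper also extracts each $2\times 2$ principal submatrix by conjugating with a real scalar matrix and invokes properness of $C_2$ (handling the $n=1$ case by embedding $\pm x\oplus 0$ into $C_2$, which your choice of $\alpha$ with $p=q$ accomplishes equally well).
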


\begin{proof}
    If $\pm x \in C_1$, then $\pm x \oplus 0  \in C_2$ and  thus $x=0$. So $C_1$ is proper.

    Next we check that $C_n$ is proper for any $n > 2$. Suppose that $n > 2$ and $\pm \begin{bmatrix} x_{ij} \end{bmatrix} \in C_n$. Conjugating by an appropriate real matrix, we see that \[ \pm \begin{bmatrix} x_{ii} & x_{ij} \\ x_{ji} & x_{jj} \end{bmatrix} \in C_2. \] It follows that $x_{ii}=x_{ij}=x_{ji}=x_{jj}=0$. So $\begin{bmatrix} x_{ij} \end{bmatrix} = 0$. Hence $C_n$ is proper.
\end{proof}

In the complex case, it suffices for $C_1$ to be proper. 

\begin{proposition}
    Let $V$ be a complex $*$-vector space and $\{C_n\}$ a matrix ordering. Suppose that $C_k$ is proper for some $k$. Then $C_n$ is proper for all $n$.
\end{proposition}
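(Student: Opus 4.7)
My plan is to reduce this to the preceding proposition, which says that properness of $C_2$ is already enough to force properness of every $C_n$. Thus it suffices to show that if some $C_k$ is proper, then $C_2$ is proper. I will split into two cases according to whether $k \geq 2$ or $k = 1$.

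The case $k \geq 2$ is essentially formal and does not use the complex structure. Given $\pm y \in C_2$, let $\alpha \in M_{2,k}(\Rdb)$ be the matrix $[I_2 \mid 0]$, viewed as a $2 \times k$ block matrix. Then $\alpha^* y \alpha = y \oplus 0_{k-2} \in M_k(V)$, and both $\pm (y \oplus 0_{k-2})$ lie in $C_k$ by the defining compatibility of a matrix ordering with scalar conjugation. Since $C_k$ is proper, $y \oplus 0_{k-2} = 0$, hence $y = 0$, so $C_2$ is proper. Combined with the previous proposition this finishes this case.

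The case $k = 1$ is where the complex scalars really enter. Suppose $\pm y \in C_2$ with $y = \begin{bmatrix} a & b \\ b^* & c \end{bmatrix}$, where necessarily $a = a^*$ and $c = c^*$. Conjugating by the real column vectors $\begin{bmatrix} 1 \\ 0 \end{bmatrix}$ and $\begin{bmatrix} 0 \\ 1 \end{bmatrix}$ gives $\pm a, \pm c \in C_1$, so by properness of $C_1$ we have $a = c = 0$. Now conjugate the resulting $y = \begin{bmatrix} 0 & b \\ b^* & 0 \end{bmatrix}$ by $\alpha_1 = \begin{bmatrix} 1 \\ 1 \end{bmatrix}$ to obtain $\alpha_1^* y \alpha_1 = b + b^*$, and by $\alpha_2 = \begin{bmatrix} 1 \\ i \end{bmatrix}$ to obtain $\alpha_2^* y \alpha_2 = i(b - b^*)$. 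Both belong to $C_1$ together with their negatives, so properness of $C_1$ forces $b + b^* = 0$ and $b - b^* = 0$, hence $b = 0$ and $y = 0$. This shows $C_2$ is proper, and again the previous proposition finishes the argument.

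The only substantive point is the $k = 1$ step, and it is easy once one notes that $\alpha_2 = \begin{bmatrix} 1 \\ i \end{bmatrix}$ is only available over $\Cdb$. This is exactly what fails in the real case, and indeed the next section will exhibit examples (such as $\bH$) showing that in the real setting $C_1$ can be proper while $C_2$ is not.
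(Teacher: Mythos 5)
Your proof is correct and takes essentially the same approach as the paper's: in both, the decisive step is compressing to diagonal entries and then conjugating the off-diagonal $2\times 2$ block by the vectors $(1,1)^T$ and $(1,i)^T$ to force the real and imaginary parts of $b$ to vanish, with padding by zeros used to move between matrix levels. The only organizational difference is that you reduce to properness of $C_2$ and invoke the preceding proposition, whereas the paper reduces to properness of $C_1$ and then treats general $n$ directly; this is cosmetic.
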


\begin{proof}
    We will show that $C_k$ proper implies $C_1$ is proper, and that $C_1$ proper implies that $C_n$ is proper for all $n$. 
    
    If $\pm x \in C_1$ then $\pm x \oplus 0_{k-1} \in C_k$. So $x = 0$, since $C_k$ is proper.

    Now assume $C_1$ is proper. Suppose that $\pm [a_{ij}] \in C_n$. By compressing to the diagonal entries, we see that $a_{ii} = 0$ for each $i$. Compressing to a $2 \times 2$ submatrix, we see that \[ \pm \begin{bmatrix} 0 & a_{ij} \\ a_{ij}^* & 0 \end{bmatrix} \geq 0. \] Conjugating this matrix by $2^{-1/2} \begin{bmatrix} 1 & 1 \end{bmatrix}$ and by $2^{-1/2} \begin{bmatrix} 1 & i \end{bmatrix}$, we see that $\pm \Re(a_{ij}) \geq 0$ and $\pm \Im(a_{ij}) \geq 0$ and therefore $a_{ij} = 0$. So $C_n$ is proper.
\end{proof}

Suppose that $\{D_n\}$ is a (real or complex) matrix ordering on a (real or complex) $*$-vector space $\cS$, with matrix order unit 
 $e \in \cS$. The \textit{archimedean closure} of $\{D_n\}$ is defined to be the sequence of cones $C_n := \{x \in M_n(\cS)_{\rm sa} : x + \epsilon e_n \in D_n \; \forall \epsilon > 0 \}$. In the complex case, $\{C_n\}$ is known to be the smallest matrix ordering such that $D_n \subseteq C_n$ and such that the unit is 
 archimedean (see e.g.\ \cite{PTT}).  The same holds in the real case.

\begin{proposition} \label{Prop: Arch closure of matrix ordering}
    Let $\{D_n\}$ be a real matrix ordering on a $*$-vector space $V$ and let $e$ be an 
        order unit (resp.\ matrix order unit) for $V$. Then $C_n := \{x \in M_n(\cS)_{\rm sa} : x + \epsilon \, e_n \in D_n  \; \forall \epsilon > 0 \}$ is the smallest matrix ordering for which $e$ is a matricially archimedean unit 
             (resp.\ 
             matricially archimedean  
             matrix order unit) such that $D_n \subseteq C_n$. 
\end{proposition}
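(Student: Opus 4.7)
The plan is to verify, in order: (i) that $\{C_n\}$ is a real matrix ordering; (ii) that $D_n \subseteq C_n$; (iii) that $e$ has the claimed matricially archimedean property (and remains a matrix order unit in the second case); and (iv) minimality among such orderings. A preliminary observation used throughout is that $e \in D_1$: taking $x = 0$ in the (matrix) order-unit axiom gives $te \in D_1$ for some $t > 0$, and rescaling via the scalar compression $\alpha = 1/\sqrt{t}$ yields $e \in D_1$. Iterating the direct sum axiom $D_j \oplus D_k \subseteq D_{j+k}$ then gives $e_n \in D_n$ for every $n$. Also, each $D_n$ is closed under addition, by compressing $x \oplus y \in D_{2n}$ via $\begin{bmatrix} I_n & I_n \end{bmatrix}$.

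Steps (ii), (iii) and (iv) are short. For (ii): given $x \in D_n$ and $\epsilon > 0$, both summands of $x + \epsilon e_n$ lie in $D_n$, so their sum does too. For (iii), the archimedean property at level $n$ is pure unwinding: if $x + t e_n \in C_n$ for every $t > 0$, then $x + (t+\epsilon) e_n \in D_n$ for all $t, \epsilon > 0$, i.e.\ $x + s e_n \in D_n$ for every $s > 0$, which is precisely $x \in C_n$. If in addition $e$ is a matrix order unit for $\{D_n\}$, then the inclusion $D_n \subseteq C_n$ immediately transports that property to $\{C_n\}$. For (iv), if $\{C'_n\}$ is any other matrix ordering satisfying $D_n \subseteq C'_n$ for which $e$ is matricially archimedean, then for $x \in C_n$ one has $x + \epsilon e_n \in D_n \subseteq C'_n$ for every $\epsilon > 0$, and matricial archimedean-ness forces $x \in C'_n$.

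The main obstacle is (i), specifically the compression axiom $\alpha^T C_n \alpha \subseteq C_m$ for $\alpha \in M_{n,m}(\bR)$; the direct sum axiom $C_n \oplus C_m \subseteq C_{n+m}$ is immediate from $(x + \epsilon e_n) \oplus (y + \epsilon e_m) = (x \oplus y) + \epsilon e_{n+m}$. For compression, given $x \in C_n$ and $\epsilon > 0$, I would split
\[
\alpha^T x \alpha \; + \; \|\alpha\|^2 \epsilon\, e_m \; = \; \alpha^T (x + \epsilon e_n) \alpha \; + \; \epsilon\bigl(\|\alpha\|^2\, e_m - \alpha^T e_n \alpha\bigr).
\]
The first summand lies in $D_m$ by the compression axiom for $\{D_n\}$. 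For the second, the key computation is $\alpha^T e_n \alpha = (\alpha^T \alpha) \otimes e$; applying the real spectral theorem in $M_m(\bR)$ one writes $\|\alpha\|^2 I_m - \alpha^T \alpha = \gamma^T \gamma$ for some $\gamma \in M_m(\bR)$, which yields $\|\alpha\|^2 e_m - \alpha^T e_n \alpha = \gamma^T e_m \gamma \in D_m$ since $e_m \in D_m$. Because $D_m$ is additive, the displayed identity then forces $\alpha^T x \alpha + \|\alpha\|^2 \epsilon\, e_m \in D_m$, and since $\epsilon > 0$ is arbitrary this gives $\alpha^T x \alpha \in C_m$, completing (i).
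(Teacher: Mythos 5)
Your proof is correct and follows essentially the same route as the paper's: the direct-sum axiom via $(x+\epsilon e_n)\oplus(y+\epsilon e_m)=(x\oplus y)+\epsilon e_{n+m}$, the compression axiom via the positivity of $\|\alpha\|^2 I_m-\alpha^T\alpha$ (written as $\gamma^T\gamma$ to conjugate $e_m$ into $D_m$), and the archimedean and minimality claims by direct unwinding. Your extra care in verifying $e_n\in D_n$ and the additivity of $D_n$ just makes explicit details the paper takes for granted.
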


\begin{proof}
    The proof that $\{C_n\}$ is a matrix ordering with matrix  archimedean  unit $e$ is the same as in the complex case:
        Suppose that $x \in C_n$ and $y \in C_m$. Let $t > 0$. Then $x + t e_n \in D_n$ and $y + t e_m \in D_m$. Since $\{D_k\}$ is  a matrix ordering, $x \oplus y + t e_{n+m} = (x + t e_n) \oplus (y + t e_m) \in D_{n+m}$. This holds for all $t > 0$ and hence $x \oplus y \in D_{n+m}$.
Now let $\alpha \in M_{n,k}$. WLOG, assume $\alpha \neq 0$, and let $\beta = \alpha^* \alpha$. Let $t > 0$. Then $x + (t/\|\alpha\|^2) e_n \in D_n$. Since $\{D_m\}$ is a matrix ordering, $\alpha^*(x + t/\|\alpha\|^2) e_n) \alpha = \alpha^* x \alpha + t (\beta / \|\alpha\|^2) \otimes e_k \in D_k$. Since $I_k - \beta / \|\beta\| \geq 0$ and $e_k \geq 0$, $e_k - (\beta / \|\alpha\|^2) \otimes e_k \in D_k$ and hence $\alpha^* x \alpha + t e_k \in D_k$. Therefore $\alpha^* x \alpha \in C_k$.  We conclude that $\{C_n\}$ is a matrix ordering.

That   $e$ is a matricially archimedean unit for 
 $\{C_n\}$  is obvious. If $e$ happens to be a matrix order unit for $\{D_n\}$, then it is necessarily a matrix order unit for $\{C_n\}$. That is because if $x=x^*$, there exists $R > 0$ such that $x + R e_n \in D_n \subseteq C_n$.
    Suppose $\{C_n'\}$ is another matrix ordering for which $e$ is a matricially archimedean unit 
    and such that $D_n \subseteq C_n'$ for every $n$. Let $x \in C_n$. Then for every $\epsilon > 0$, $x + \epsilon e_n \in D_n \subseteq C_n'$. It follows that $x \in C_n'$ since  $e$ is a matricially archimedean unit for $C_n$. Thus $C_n \subseteq C_n'
    $ for all $n$.
\end{proof}

{\bf Remarks.}\ 1)\  If the $D_n$ are proper cones and $D_k$ is archimedean for some $k \geq 2$, then $C_n$ is proper for all $n$.
Indeed in this case $D_2$ is archimedean, and so $D_2 = C_2,$ so that $C_n$ is proper by Proposition \ref{prop: C2 proper}.  

2)\  If $e$ is not an order unit but only a positive non-zero vector, the archimedean closure  with respect to $e$ in Proposition \ref{Prop: Arch closure of matrix ordering} is still a matrix ordering and is the smallest matrix ordering  for which $e$ is a matricially archimedean unit, by the same proof.

For future use we  define an element of $V$ to be {\em matricially archimedean}  if $x \in M_n(W)_{\rm sa}$ and $x + t Q_n \geq 0$ for all $t >0$, then $x \in M_n(W)^+$.

\bigskip

Suppose that $\{D_n\}$ is a real matrix ordering and let $\{C_n\}$ denote its archimedean closure. Let \[ N = \{x \in V : \begin{bmatrix} 0 & x \\ x^* & 0 \end{bmatrix} \in C_2 \} \] (see Section 17 of \cite{Oz} for motivation of this definition). If $C_2$ is proper, then $N = \{0\}$ and, by Proposition \ref{prop: C2 proper}, $\{C_n\}$ is a proper matrix ordering making $V$ into an operator system 
(if it or $\{D_n\}$ has a matrix order unit). Otherwise, we will show that $V/N$ can be made into an operator system, called the \textit{archimedeanization} of $V$, which the same universal properties as in the complex case.

\begin{lemma} \label{lemma: N=C cap -C}
    The subspace $N \subseteq V$ defined above is a self-adjoint real subspace. Moreover, $M_2(N)_{\rm sa} = C_2 \cap -C_2$ and consequently $M_n(N)_{\rm sa} = C_n \cap -C_n$ for every $n \in \mathbb{N}$.
\end{lemma}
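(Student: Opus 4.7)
The plan is to handle the three assertions in turn, with the identification $M_2(N)_{\rm sa}=C_2\cap -C_2$ being the core computation and the $n$-level identity following by a compression/decomposition argument.

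First I would verify that $N$ is a self-adjoint real subspace. Closure under addition and nonnegative scalars is immediate from $C_2$ being a cone closed under addition. For $x\in N$, conjugating $\begin{bmatrix}0&x\\x^*&0\end{bmatrix}\in C_2$ by the real matrix $\begin{bmatrix}0&1\\1&0\end{bmatrix}$ (an allowed matrix-ordering congruence) gives $\begin{bmatrix}0&x^*\\x&0\end{bmatrix}\in C_2$, so $x^*\in N$. Similarly, conjugation by $\begin{bmatrix}1&0\\0&-1\end{bmatrix}$ flips the off-diagonal sign, proving $-x\in N$, which together with the semigroup property upgrades $N$ to a real vector space.

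Next, for $M_2(N)_{\rm sa}=C_2\cap -C_2$, I would argue both inclusions by the standard matrix-ordering tricks. For $\supseteq$, given $X=\begin{bmatrix}a&b\\b^*&d\end{bmatrix}\in C_2\cap -C_2$, compressing by $e_1$ and $e_2$ yields $\pm a,\pm d\in C_1$, whence $\pm a\oplus 0, \pm 0\oplus d\in C_2$; subtracting these from $\pm X$ leaves $\pm\begin{bmatrix}0&b\\b^*&0\end{bmatrix}\in C_2$, so $b\in N$. For $a\in N$, write $\begin{bmatrix}0&1\\1&0\end{bmatrix}=P-Q$ with $P,Q\in M_2(\Rdb)^+$ (e.g.\ the rank-one spectral pieces); then $a\otimes P, a\otimes Q\in C_2\cap -C_2$ (each is an $\alpha^\top(\pm a)\alpha$ with $\alpha$ real), and taking the difference gives $\begin{bmatrix}0&a\\a&0\end{bmatrix}\in C_2$, so $a\in N$, and similarly $d\in N$. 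For the reverse inclusion, decompose $X=a\oplus 0+0\oplus d+\begin{bmatrix}0&b\\b^*&0\end{bmatrix}$; the third summand lies in $\pm C_2$ because $b\in N$, while $\pm a,\pm d\in C_1$ (obtained by compressing $\pm\begin{bmatrix}0&a\\a&0\end{bmatrix}\in C_2$ by $[1\ 1]$) give the first two summands in $\pm C_2$, and adding proves $\pm X\in C_2$.

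Finally, for $M_n(N)_{\rm sa}=C_n\cap -C_n$, the inclusion $\subseteq$ comes from writing a self-adjoint matrix $[x_{ij}]$ with entries in $N$ as $\sum_i x_{ii}\otimes E_{ii}+\sum_{i<j}(x_{ij}\otimes E_{ij}+x_{ij}^*\otimes E_{ji})$. Each diagonal term lies in $\pm C_n$ via conjugation of $\pm x_{ii}\in C_1$ by $e_i$, and each off-diagonal pair equals $\alpha_{ij}^\top\begin{bmatrix}0&x_{ij}\\x_{ij}^*&0\end{bmatrix}\alpha_{ij}$ with $\alpha_{ij}\in M_{2,n}(\Rdb)$ whose rows are $e_i^\top,e_j^\top$; this lies in $C_n\cap -C_n$ by the previously established $2\times 2$ identity. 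The inclusion $\supseteq$ is the easier direction: for $X=[x_{ij}]\in C_n\cap -C_n$, compressing by the $2\times n$ matrix with rows $e_i^\top,e_j^\top$ yields $\pm\begin{bmatrix}x_{ii}&x_{ij}\\x_{ji}&x_{jj}\end{bmatrix}\in C_2$, hence by the $2\times 2$ case each $x_{ij}\in N$. The main obstacle is the promotion step showing that a diagonal entry $a$ of an element of $C_2\cap -C_2$ actually lies in $N$ (not merely that $\pm a\in C_1$), which is handled by the $P-Q$ decomposition trick above.
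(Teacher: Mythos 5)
Your proof is correct and follows essentially the same route as the paper's: the same congruences by $\mathrm{diag}(1,-1)$ and the flip matrix for the subspace claims, the same $[1\;1]$-compression to put diagonal entries of $N$ into $C_1$, and the same rank-one decomposition of $\begin{bmatrix}0&1\\1&0\end{bmatrix}$ to promote diagonal entries of an element of $C_2\cap -C_2$ into $N$. Your level-$n$ decomposition into $E_{ii}$ and $E_{ij}+E_{ji}$ pieces is a slightly cleaner bookkeeping than the paper's sum over principal $2\times 2$ submatrices (which must cancel overcounted diagonal terms), and you also supply the converse inclusion at level $n$ that the paper leaves as an exercise.
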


\begin{proof}
    Since $C_2$ is a cone, it is easy to see that if $x,y \in N$ and $t > 0$, then $tx \in N$ and $x+y \in N$. Also, because
    \[ \begin{bmatrix} 0 & x \\ x^* & 0 \end{bmatrix} \in C_2 \quad \text{implies} \begin{bmatrix} 1 & 0 \\ 0 & -1 \end{bmatrix} \begin{bmatrix} 0 & x \\ x^* & 0 \end{bmatrix} \begin{bmatrix} 1 & 0 \\ 0 & -1 \end{bmatrix} = \begin{bmatrix} 0 & -x \\ -x^* & 0 \end{bmatrix} \in C_2 \] we see that $x \in N$ implies $-x \in N$. It now follows that $N$ is a real subspace of $V$. To see it is self-adjoint, notice \[ \begin{bmatrix} 0 & x \\ x^* & 0 \end{bmatrix} \in C_2 \quad \text{implies} \begin{bmatrix} 0 & 1 \\ 1 & 0 \end{bmatrix} \begin{bmatrix} 0 & x \\ x^* & 0 \end{bmatrix} \begin{bmatrix} 0 & 1 \\ 1 & 0 \end{bmatrix} = \begin{bmatrix} 0 & x^* \\ x & 0 \end{bmatrix} \in C_2. \] So $x \in N$ implies $x^* \in N$ and $N$ is self-adjoint.

    For the next claim, assume $[x_{ij}] \in M_2(N)_{\rm sa}$. Then since $\pm x_{ii} \in N$, we see that $\pm x_{ii} \in C_1$, since \[ \pm x_{ii} = \frac{1}{2} \begin{bmatrix} 1 & 1 \end{bmatrix} \begin{bmatrix} 0 & \pm x_{ii} \\ \pm x_{ii} & 0 \end{bmatrix} \begin{bmatrix} 1 \\ 1 \end{bmatrix} \in C_1. \] Together with the fact that $\pm x_{ij} \in N$, we see that \[ \pm \begin{bmatrix} x_{11} & 0 \\ 0 & x_{22} \end{bmatrix} \pm \begin{bmatrix} 0 & x_{ij} \\ x_{ij}^* & 0 \end{bmatrix} = \pm [x_{ij}] \in C_2. \] On the other hand, suppose $\pm [x_{ij}] \in C_2$. Repeating tricks from the first paragraph, we see \[ \begin{bmatrix} x_{11} & x_{12} \\ x_{21} & x_{22} \end{bmatrix} + \begin{bmatrix} -x_{11} & x_{12} \\ x_{21} & -x_{22} \end{bmatrix} = \begin{bmatrix} 0 & x_{12} \\ x_{21} & 0 \end{bmatrix} \in C_2. \] Therefore $x_{12} \in N$ and $x_{21}=x_{12}^* \in N$. Also, compressing $[x_{ij}]$ gives $\pm x_{ii} \in C_1$. Hence
    \[ \frac{1}{2} \begin{bmatrix} 1 & 1 \\ 1 & 1 \end{bmatrix} \otimes x_{ii} + \frac{1}{2} \begin{bmatrix} 1 & -1 \\ -1 & 1 \end{bmatrix} \otimes (-x_{ii}) = \begin{bmatrix} 0 & x_{ii} \\ x_{ii} & 0 \end{bmatrix} \in C_2. \] Thus $x_{ii} \in N$, so $[x_{ij}] \in M_2(N)_{\rm sa}$. Therefore $M_2(N)_{\rm sa} = C_2 \cap -C_2$.

    For the final claim, let $[x_{ij}] \in M_n(N)_{\rm sa}$. Then each $2 \times 2$ principal submatrix is positive and negative, i.e. for $i \neq j$, \[ X(i,j) := \begin{bmatrix} x_{ii} & x_{ij} \\ x_{ji} & x_{jj} \end{bmatrix} \in C_2 \cap -C_2. \] Thus for each fixed $i<j$, $\sum_{k,l \in \{i,j\}} E_{kl} \otimes x_{kl} \in C_n \cap -C_n$. Summing over all such matrices, we see that $[x_{ij}] + \text{diag}(r_1 x_{11}, \dots, r_n x_{nn}) \in C_n \cap -C_n$ where $r_1, \dots, r_n \geq 0$ account for repeated occurrences of $x_{ii}$ in the sum. But $x_{ii} \in C_1 \cap -C_1$ for each $i$ and thus $\text{diag}(-r_1 x_{11}, \dots, -r_n x_{nn}) \in C_n \cap -C_n$. Adding this to the sum above shows that $[x_{ij}] \in C_n \cap -C_n$. 
    We leave the converse inclusion as an exercise, looking at $2 \times 2$ principal submatrices. 
\end{proof}

As in  the complex case, we define the archimedeanization of a real operator system to be the $*$-vector space $V/N$ with order unit $e + N$ and matrix ordering $\{C_n'\}$ to be the image of $\{ C_n \}$.
Note that the preimage of $C_n'$ in $M_n(V)_{\rm sa}$ is
$C_n + M_n(N)_{\rm sa} = \{ p + x : p \in C_n, x \in M_n(N)_{\rm sa}\}$.

\begin{theorem} \label{archr}
    Let $V$ be a real $*$-vector space with matrix ordering $\{D_n\}$ and matrix order unit $e$, and let $\{C_n\}$ denote the archimedean closure  of $\{ D_n \}$. With $N$ and $\{C_n'\}$ defined as above. Then $(V/N, \{C_n'\}, e+N)$ is an operator system. Moreover, if $W$ is an operator system and $\varphi: V \to W$ satisfies $\varphi^{(n)}(D_n) \subseteq M_n(W)^+$ for every $n \in \mathbb{N}$, then there exists a unique ucp map $\phi': V/N \to W$ such that $\varphi'(x + N) = \varphi(x)$ for every $x \in V$.
\end{theorem}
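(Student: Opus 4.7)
The plan is to verify the operator system axioms on the quotient directly, then exploit the archimedean closure to descend $\varphi$. The key preliminary observation, which keeps all the bookkeeping clean, is that by Lemma \ref{lemma: N=C cap -C} we have $M_n(N)_{\rm sa} = C_n \cap -C_n \subseteq C_n$; since matrix orderings are closed under addition (apply $\alpha^T(\cdot)\alpha$ with $\alpha = \begin{bmatrix} I_n & I_n \end{bmatrix}^T$ to $C_n \oplus C_n$), it follows that $C_n + M_n(N)_{\rm sa} = C_n$. Consequently, the preimage of $C_n'$ under $q_n : M_n(V) \to M_n(V/N)$ is exactly $C_n$, and the involution on $V$ descends to $V/N$ because $N$ is self-adjoint.

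Next I would verify the three axioms for $(V/N, \{C_n'\}, e+N)$. That $\{C_n'\}$ is a matrix ordering is immediate from the corresponding property for $\{C_n\}$ and the identification of preimages above. For properness: if $x + M_n(N)_{\rm sa} \in C_n' \cap -C_n'$, pick $p \in C_n$ with $x - p \in M_n(N)_{\rm sa}$ and $q \in C_n$ with $x + q \in M_n(N)_{\rm sa}$. Then $p + q \in M_n(N)_{\rm sa} + C_n = C_n$, so both $p,q \in C_n \cap -C_n = M_n(N)_{\rm sa}$, forcing $x \in M_n(N)_{\rm sa}$, i.e.\ $x + M_n(N)_{\rm sa} = 0$. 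For the archimedean property: suppose $x + t(e+N)_n \in C_n'$ for every $t > 0$. Then for each $t$ there is $m_t \in M_n(N)_{\rm sa}$ with $x + t e_n + m_t \in C_n$. Since $-m_t \in C_n$ and $C_n + C_n \subseteq C_n$, we get $x + t e_n \in C_n$ for every $t>0$. The archimedean property of $e$ for $\{C_n\}$ yields $x \in C_n$, hence $x + M_n(N)_{\rm sa} \in C_n'$. Finally, $e + N$ is clearly a matrix order unit since $e$ is. Appeal to Theorem \ref{ce} gives representability as a concrete real operator system.

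For the universal property, I would proceed in three steps. Step 1: positivity lifts from $\{D_n\}$ to $\{C_n\}$. If $x \in C_n$, then $x + t e_n \in D_n$ for every $t>0$, so $\varphi^{(n)}(x) + t\, 1_W \otimes I_n \in M_n(W)^+$ for every $t>0$ (assuming $\varphi(e) = 1_W$, which is the natural hypothesis making the conclusion ``ucp'' meaningful); the archimedean property of $M_n(W)^+$ then forces $\varphi^{(n)}(x) \in M_n(W)^+$. Step 2: $\varphi$ vanishes on $N$. If $x \in N$ then $\begin{bmatrix} 0 & x \\ x^* & 0 \end{bmatrix} \in C_2$ so by Step 1 its image $\begin{bmatrix} 0 & \varphi(x) \\ \varphi(x)^* & 0 \end{bmatrix}$ lies in $M_2(W)^+$; conjugating by $\mathrm{diag}(1,-1)$ shows its negative is also positive, and properness of $M_2(W)^+$ forces $\varphi(x) = 0$. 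Step 3: define $\varphi'(x+N) = \varphi(x)$, which is well-defined by Step 2, unital, linear, and completely positive by Step 1 combined with $(\varphi')^{(n)}(C_n') = \varphi^{(n)}(C_n) \subseteq M_n(W)^+$. Uniqueness is forced by surjectivity of the quotient.

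The main obstacle is conceptually just the identification $q_n^{-1}(C_n') = C_n$ together with the subtle fact that $C_n$ is closed under addition (so adding or subtracting elements of $M_n(N)_{\rm sa}$ into or out of an element of $C_n$ preserves membership in $C_n$). Once this cone-arithmetic point is pinned down, both the properness/archimedean verifications and the well-definedness of $\varphi'$ become essentially formal, and there is no subtlety analogous to the complex case requiring new techniques beyond those in Proposition \ref{Prop: Arch closure of matrix ordering} and Lemma \ref{lemma: N=C cap -C}.
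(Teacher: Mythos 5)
Your proof is correct and follows essentially the same route as the paper: the same cone arithmetic (adding $-y \in C_n$ for $y \in M_n(N)_{\rm sa}$) drives the archimedean and properness checks, and the universal property is handled identically via $\varphi(N)=0$. Your explicit observation that $C_n + M_n(N)_{\rm sa} = C_n$, so that the preimage of $C_n'$ is exactly $C_n$, is a clean packaging of what the paper does implicitly, but it is not a different argument.
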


The universal property of the archimedeanization described in the theorem can be summarized in the following diagram, where $q: V \to V/N$ is the map $q(x) = x+N$:

\[
\begin{tikzcd}
V/N \arrow[dashrightarrow, rd, "\varphi'" black] \\
 V \arrow[u, "q"] \arrow[r, "\varphi" black] & W
\end{tikzcd}
\]

\begin{proof}
    Proving that $(V/N, \{C_n'\}, e+N)$ is an operator system  is as in the complex case except for showing that the ordering $\{C_n'\}$ is proper (since $N$ is defined differently in the real case).
    For instance, to show that the matrix ordering is archimedean, suppose that $x \in M_n(V)_{\rm sa}$ and $x + \epsilon I_n + M_n(N) \in C_n'$ for every $\epsilon > 0$. So for every $\epsilon > 0$ there exists $y \in M_n(N)_{\rm sa}$ such that $x + \epsilon I_n + y \in C_n$. Since $M_n(N)_{\rm sa} = C_n \cap -C_n$ by Lemma \ref{lemma: N=C cap -C}, it follows that $-y \in C_n$ and hence $x + \epsilon I_n \in C_n$. Since $e$ is a matricially archimedean unit 
for $C_n$  by Proposition \ref{Prop: Arch closure of matrix ordering}, $x \in C_n$ and thus $x+M_n(N) \in C_n'$.  
    
    To see that $C_n'$ is proper, suppose that $\pm X + M_2(N) \in C_2'$. Then there exist $P, Q \in C_2$ and $Y, Z \in M_2(N)_{\rm sa}$ such that $X = P + Y$ and $-X = Q + Z$. Adding these, we see $0=P+Q + Y + Z$, so $P+Q \in M_2(N)_{\rm sa}$. By Lemma \ref{lemma: N=C cap -C}, $P+Q \in C_2 \cap -C_2$. Thus $-P-Q \geq 0$, so $-P = (-P-Q) + Q \geq 0$ and $-Q = (-P-Q) + P \geq 0$. Since $\pm P, \pm Q \in C_2$, by Lemma \ref{lemma: N=C cap -C} again $P,Q \in M_2(N)_{\rm sa}$. So $X \in M_2(N)$ and therefore $C_n'$ is proper. The universal properties follow as in the complex case, after checking that if $x \in N$, then $x \in \ker(\varphi)$ for every ucp map $\varphi$ on $V$.
\end{proof}

We conclude by relating the real archimedeanization to the complex one described in \cite{PTT} and \cite{PT}. The construction of the archimedeanization of a complex matrix-ordered $*$-vector space is essentially the same as the real case described above, except that in the complex case $N = \text{span} \; C_1 \cap -C_1$. (Alternatively $N$ is the intersection of the kernels of all states on $V$. The equivalence of these notions is shown in Proposition 2.34 of \cite{PT}.) This is equivalent to our definition. To see this, first let $x \in C_1 \cap -C_1$. Then \[ \frac{1}{2} \begin{bmatrix} 1 & 1 \\ 1 & 1 \end{bmatrix} \otimes x + \frac{1}{2}\begin{bmatrix} 1 & -1 \\ -1 & 1 \end{bmatrix} \otimes (-x) = \begin{bmatrix} 0 & x \\ x & 0 \end{bmatrix} \in C_2 \cap -C_2 = M_2(N)_{\rm sa} \] since the proof of Lemma \ref{lemma: N=C cap -C} is valid in the complex case, 
with the additional observation that $x \in N$ implies $ix \in N$, since 
\[ \begin{bmatrix} 0 & x \\ x^* & 0 \end{bmatrix} \in C_2 \quad \text{implies} \begin{bmatrix} 1 & 0 \\ 0 & -i \end{bmatrix} \begin{bmatrix} 0 & x \\ x^* & 0 \end{bmatrix} \begin{bmatrix} 1 & 0 \\ 0 & i \end{bmatrix} = \begin{bmatrix} 0 & ix \\ -ix^* & 0 \end{bmatrix} \in C_2 \] 
so that $N$ is a complex subspace. Thus $i x \in N$. On the other hand, if $x \in N$, then \[ \Re(x) = \frac{1}{2} \begin{bmatrix} 1 & 1 \end{bmatrix} \begin{bmatrix} 0 & x \\ x^* & 0 \end{bmatrix} \begin{bmatrix} 1 \\ 1 \end{bmatrix}, \quad \Im(x) = \frac{1}{2} \begin{bmatrix} 1 & -i \end{bmatrix} \begin{bmatrix} 0 & x \\ x^* & 0 \end{bmatrix} \begin{bmatrix} 1 \\ i \end{bmatrix} \in C_1. \] Conjugating by $1 \oplus -1$ and repeating the above, we also have $-\Re(x), -\Im(x) \in C_1$. So $x \in {\rm span} \; C_1 \cap -C_1$.  

The complex archimedeanization of a complex operator system $V$ is then constructed by equipping the vector space $V/N$ with the matrix ordering $C_n + M_n(N)$, where $C_n$ is again the archimedean closure of the given matrix ordering.

\begin{proposition}
Let $V$ be a real $*$-vector space with matrix ordering $\{D_n\}$ and matrix order unit $e$, and let $V_c$ denote its complexification. Let $N_c$ denote the complex span of $N$ in $V_c$. Then $V_c / N_c$ is the complex archimedeanization of $V_c$, and $V_c / N_c \cong (V/N)_c$.
\end{proposition}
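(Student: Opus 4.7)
The strategy is to identify $(V/N)_c$ with the complex archimedeanization of $V_c$ via the universal property, after computing the archimedean closure and null space in $V_c$. First I would show that the archimedean closure $\{\tilde{C}_n\}$ of $\{\tilde{D}_n\}$ on $V_c$ consists exactly of those $x+iy$ (with $x \in M_n(V)_{\rm sa}$, $y \in M_n(V)_{\rm as}$) for which $c(x,y) \in C_{2n}$. This follows immediately from the definition $\tilde{D}_n = \{x+iy : c(x,y) \in D_{2n}\}$ together with the identity $c(x+\epsilon e_n, y) = c(x,y) + \epsilon e_{2n}$.

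Next I would identify the complex null space $\tilde{N}$ (the complex span of $\tilde{C}_1 \cap -\tilde{C}_1$, equivalently $\{z \in V_c : \begin{bmatrix} 0 & z \\ z^* & 0 \end{bmatrix} \in \tilde{C}_2\}$) with $N_c = N + iN$. For $\tilde{N} \subseteq N_c$, if $z = x+iy \in \tilde{C}_1 \cap -\tilde{C}_1$ with $x \in V_{\rm sa}$ and $y \in V_{\rm as}$, then $\pm c(x,y) \in C_2$, so $c(x,y) \in M_2(N)_{\rm sa}$ by Lemma \ref{lemma: N=C cap -C}, forcing $x, y \in N$ and $z \in N_c$; the result for general $z \in \tilde{N}$ follows since $N_c$ is a complex subspace. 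For the reverse inclusion, given $x, y \in N$ I would write $\begin{bmatrix} 0 & z \\ z^* & 0 \end{bmatrix} = A + iB$ with $A = \begin{bmatrix} 0 & x \\ x^* & 0 \end{bmatrix} \in M_2(V)_{\rm sa}$ and $B = \begin{bmatrix} 0 & y \\ -y^* & 0 \end{bmatrix} \in M_2(V)_{\rm as}$; the selfadjoint $4 \times 4$ matrix $c(A,B)$ then has all entries in $N$, hence lies in $M_4(N)_{\rm sa} \subseteq C_4$ by another application of Lemma \ref{lemma: N=C cap -C}, proving $z \in \tilde{N}$.

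Finally I would verify that $(V/N)_c$, together with the natural complex linear map $\pi: V_c \to (V/N)_c$ defined by $x+iy \mapsto (x+N) + i(y+N)$, satisfies the universal property of the complex archimedeanization of $V_c$. This map is unital, surjective with kernel $N+iN = N_c$, and (using step 1) sends $\tilde{D}_n$ into $M_n((V/N)_c)^+$. Given any complex linear $\phi: V_c \to W$ into a complex operator system $W$ with $\phi^{(n)}(\tilde{D}_n) \subseteq M_n(W)^+$, its real restriction $\phi|_V: V \to W$ (viewing $W$ as a real operator system via Proposition \ref{concon}) satisfies $\phi|_V^{(n)}(D_n) \subseteq M_n(W)^+$, hence by Theorem \ref{archr} factors uniquely through $V \to V/N$ as a real ucp map, which by Lemma \ref{ncomma} extends uniquely to a complex ucp map $(V/N)_c \to W$; complex linearity forces this extension to agree with $\phi$ after composing with $\pi$. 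This establishes the universal property, so $(V/N)_c$ coincides with the complex archimedeanization of $V_c$, and the resulting vector-space isomorphism $V_c/N_c \cong (V/N)_c$ is an isomorphism of complex operator systems.

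The main technical obstacle lies in step 2, where the identification of $\tilde{N}$ with $N_c$ requires careful bookkeeping with the selfadjoint/antisymmetric decomposition in $M_2(V_c)$ and verifying that all entries of the $4 \times 4$ block matrix $c(A, B)$ land in $N$ so that Lemma \ref{lemma: N=C cap -C} applies at level $4$.
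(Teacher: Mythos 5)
Your proof is correct and follows essentially the same route as the paper: compute the archimedean closure of $\{\widetilde{D}_n\}$, identify the complex null space with $N_c$, and then match $(V/N)_c$ with $V_c/N_c$ using the universal property together with Lemma \ref{ncomma}. Your step 2 is in fact more explicit than the paper's (which only computes $\hat{N}\cap V=N$ and then asserts $\hat{N}=N_c$), and your final step packages the two ucp maps of the paper's argument into a single verification of the universal property for $(V/N)_c$; both are harmless reorganizations of the same ideas.
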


\begin{proof}
    If one begins with a real matrix ordered space $(V,\{D_n\}, e)$, the complexification $(V_c, \{\widetilde{D}_n\}, e)$ is a complex matrix ordered space. The archimedean closure of the cones is given by $\widetilde{C}_n = \{ x : x + \epsilon e \otimes I_n \in \widetilde{D}_n \text{ for all } \epsilon > 0\}$. It is clear then that the archimedean closure of $C_n$ is precisely $M_n(V) \cap \widetilde{C}_n$. Hence, if we let $\hat{N} = \widetilde{C}_1 \cap - \widetilde{C}_1$, then \[ \hat{N} \cap V = \{ x \in V : \begin{bmatrix} 0 & x \\ x^* & 0 \end{bmatrix} \in \widetilde{C}_2 \cap -\widetilde{C}_2  \} = N. \] So $\hat{N} =  N + iN = N_c$ inside $V_c$. So $V_c / N_c$ equipped with the matrix ordering $\widetilde{C}_n + N_c$ and unit $e + N_c$ is the archimedeanization of the complexification $V_c$. Therefore $V_c / N_c$ is the complex archimedeanization described in \cite{PTT}. Let $q: V \to V/N$ denote the map $x \mapsto x + N$. Then $q_c: V_c \to (V/N)_c$ is ucp. Since $V/N$ is an operator system, so is $(V/N)_c$. Let $q'_c$ be the induced map $V_c/N_c \to (V/N)_c$.  By the universal property of the complex archimedianization, the diagram 
    \[
    \begin{tikzcd}
    V_c/N_c \arrow[dashrightarrow, rd, "q_c'" black] \\
    V \arrow[u, "q"] \arrow[r, "q_c" black] & (V/N)_c
    \end{tikzcd}
    \]
    commutes. 
    Thus the map from $V_c/N_c$ to $(V/N)_c$ is completely positive.

    Conversely, let $W = V_c/N_c$, a complex operator system, and let $\varphi: V_c \to W$ be the canonical ucp quotient map. Let $\varphi_r: V \to W$ denote the restriction to $V$. Then viewing $W$ as a real operator system, $\varphi_r$ is ucp. It follows that $\varphi_r': x+N \mapsto \varphi(x)$ is ucp.  This extends uniquely by Lemma \ref{ncomma} (or rather, by the proof of that result, which works even if the domain is just a matrix ordered space) to a ucp complex linear map $(V/N)_c \to W$. 
    Thus the identity map from $(V/N)_c$ to $V_c/N_c$ is ucp. 
\end{proof}

\section{Quotients} \label{quot}

Let $J \subseteq V$ be the kernel of a ucp map on an real operator system $V$. Then a matrix ordering $\{Q_n\}$ on $V/J$ makes $(V/J, \{Q_n\}, e+J)$ into a \textit{quotient operator system} if whenever $\varphi:V \to W$ is ucp with $J \subseteq \ker(\varphi)$ then there exists a unique ucp map $\pi_{\varphi}: V/J \to W$ such that $\varphi = \pi_{\varphi} \circ q$, where $q: V \to V/J$ is the quotient map $q(x)=x+J$. Kernels are of course order ideals. Kavruk 
et al.\ show that whenever $J$ is the kernel of a ucp map, then the quotient operator system exists and is unique. The same argument holds for real operator spaces.

\begin{proposition} \label{kuq} 
    Let $J \subseteq V$ be the kernel of a ucp map. Then there exists a unique matrix ordering $\{Q_n\}$ on $V/J$ making $(V/J, \{Q_n\}, e+J)$ a quotient operator system.
\end{proposition}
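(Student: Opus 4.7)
The plan is to define the quotient cones directly and invoke Theorem \ref{archr}, mirroring the complex construction of Kavruk--Paulsen--Todorov--Tomforde. Fix a ucp map $\varphi : V \to W$ with $\ker \varphi = J$, and let $q : V \to V/J$ be the canonical quotient. First I would define $D_n := q^{(n)}(M_n(V)^+) \subseteq M_n(V/J)_{\rm sa}$ and verify that $\{D_n\}$ is a real matrix ordering on $V/J$ with matrix order unit $e + J$: closure under direct sums and real scalar conjugation transfers from the corresponding properties of $\{M_n(V)^+\}$ since $q^{(n)}$ intertwines those operations, and the matrix order unit property follows by lifting any $y \in M_n(V/J)_{\rm sa}$ to some $x \in M_n(V)_{\rm sa}$ (using selfadjointness of $J$, which in turn follows from selfadjointness of $\varphi$) and invoking the matrix order unit property of $e$ in $V$.

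Next I would let $\{C_n\}$ denote the archimedean closure of $\{D_n\}$ and apply Theorem \ref{archr} to $(V/J, \{D_n\}, e+J)$, producing a real operator system $(V/J)/N$ with matrix ordering given by the image of $\{C_n\}$, where $N$ is the collapsing subspace from Section \ref{archs}. The crux is to show $N = 0$, so that the archimedeanization may be identified with $(V/J, \{C_n\}, e+J)$ itself; I would then set $Q_n := C_n$. For this I would use the factor map $\bar\varphi : V/J \to W$, which is a real linear unital injection with $\bar\varphi^{(n)}(D_n) \subseteq M_n(W)^+$; since $M_n(W)^+$ is archimedean, $\bar\varphi^{(n)}$ also carries $C_n$ into $M_n(W)^+$. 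If $y + J \in N$, then $\bigl[\begin{smallmatrix} 0 & y+J \\ y^*+J & 0 \end{smallmatrix}\bigr] \in C_2$, so its image $\bigl[\begin{smallmatrix} 0 & \varphi(y) \\ \varphi(y)^* & 0 \end{smallmatrix}\bigr]$ under $\bar\varphi^{(2)}$ is positive in $M_2(W)$; conjugating by the real diagonal matrix $\mathrm{diag}(1,-1)$ shows its negative is also positive, and properness of $M_2(W)^+$ then forces $\varphi(y) = 0$, whence $y \in J$ and $y + J = 0$.

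For the universal property, given any ucp $\psi : V \to U$ with $J \subseteq \ker \psi$, the factored map $\bar\psi : V/J \to U$ is well-defined, unital, and selfadjoint, and satisfies $\bar\psi^{(n)}(D_n) \subseteq M_n(U)^+$; hence $\bar\psi$ is ucp with respect to $\{Q_n\}$ by the universal property of archimedeanization in Theorem \ref{archr}, and its uniqueness is immediate from the surjectivity of $q$. Uniqueness of the matrix ordering $\{Q_n\}$ itself will follow by applying the universal property of each candidate structure to the quotient map viewed as a ucp map into the other candidate, forcing the identity map on $V/J$ to be ucp in both directions and hence the cones to coincide. The main obstacle is the step $N = 0$: this is precisely where the hypothesis that $J$ is a kernel of a ucp map (rather than merely a selfadjoint subspace or an arbitrary order ideal) plays its essential role, distinguishing kernels from general order ideals in this quotient theory.
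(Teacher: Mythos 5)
Your proof is correct, but it takes the opposite route from the paper's. The paper defines $Q_n$ \emph{extrinsically and maximally}: a coset $x + M_n(J)$ lies in $Q_n$ iff $\psi^{(n)}(x) \geq 0$ for \emph{every} ucp $\psi$ on $V$ killing $J$. With that definition the matrix-ordering axioms and the universal property are nearly automatic, and the work goes into properness (using the specific $\varphi$ with $\ker\varphi = J$) and archimedeanity. You instead build the cones \emph{intrinsically and minimally}, as the archimedean closure of the pushforward cones $q^{(n)}(M_n(V)^+)$, and route everything through Theorem \ref{archr}; your essential new step is showing that the collapsing subspace $N$ vanishes, which you do correctly via the injective factor map $\bar\varphi$ and the $\mathrm{diag}(1,-1)$ conjugation trick. (One small point worth making explicit: your claim that $\bar\varphi^{(n)}$ carries the archimedean closure $C_n$ into $M_n(W)^+$ uses that $\bar\varphi$ is unital and that $M_n(W)^+$ is archimedean with respect to $1_W$ --- both hold since $\varphi$ is ucp into an operator system, but the justification should be spelled out.) A bonus of your approach is that it delivers, as a byproduct, the concrete description of $Q_n$ that the paper only establishes in the Lemma immediately following this Proposition (namely, $x + M_n(J) \in Q_n$ iff for every $\epsilon > 0$ there is $y \in M_n(J)$ with $x + y + \epsilon I_n \otimes e \geq 0$); the paper's approach, conversely, makes the universal property trivial at the cost of a less explicit cone. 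The two constructions agree by the uniqueness argument you both invoke.
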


\begin{proof}
    Define $Q_n$ to be the set of all cosets $x + M_n(J)$ such that whenever $\psi: V \to W$ is ucp and $J \subseteq \ker(\psi)$, then $\psi^{(n)}(x) \geq 0$. Then $\{Q_n\}$ is a matrix ordering since $x + M_n(J) \in Q_n$, $y + M_k(J) \in Q_k$, and $\alpha \in M_{n,k}(\mathbb{R})$ implies $\psi^{(n+m)}(x \oplus y) \geq 0$ and $\psi^{(k)}(\alpha^* x \alpha) = \alpha^* \psi^{(n)}(x) \alpha \geq 0$. Choose ucp $\varphi: V \to W$ so that $\ker(\varphi)=J$. Then if $\pm x \in Q_1$, it follows that $\pm\varphi(x) \geq 0$ and hence $x \in J$. By such considerations we see that
     $Q_n$ is proper. To verify that $e + J$ is an archimedean matrix order unit, assume $x=x^* \in M_n(V)$. Then there exists $t > 0$ such that $x + tI_n \otimes e \geq 0$ and hence $\psi^{(n)}(x + tI_n) \geq 0$ for any ucp $\psi$. If $\psi^{(n)}(x + \epsilon I_n) = \psi^{(n)}(x) + \epsilon I_n \geq 0$ for every ucp $\psi$, then $\psi^{(n)}(x) \geq 0$ since each $\psi$ ranges in an operator system. Hence $x + M_n(J) \in Q_n$. So $e + J$ is archimedean. The uniqueness of $\{Q_n\}$ follows from the universal property. 
\end{proof}

Kavruk et al.\ show that when $V$ is a complex operator space, then (selfadjoint) $x + M_n(J) \in Q_n$ if and only if for every $\epsilon > 0$ there exists $y \in M_n(J)_{\rm sa}$ such that $x+y + \epsilon I_n \geq 0$. The same proof applies in the real case, so the matrix ordering is defined the same way for real operator systems.

\begin{lemma}
    Let $V$ be a real operator system and $J \subseteq V$ the kernel of a ucp map. Suppose that $\{Q_n\}$ makes $(V/J, \{Q_n\}, e+J)$ into a quotient operator system. Then $x + M_n(J) \in Q_n$ if and only if for every $\epsilon > 0$ there exists $y \in M_n(J)$ such that $x+y + \epsilon I_n \geq 0$.
\end{lemma}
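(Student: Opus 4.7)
The plan is to introduce the cone defined by the right-hand side of the stated equivalence, verify that it gives $V/J$ the structure of a quotient operator system in the sense of the definition preceding Proposition \ref{kuq}, and then invoke the uniqueness part of that proposition. Concretely, let $q: V \to V/J$ be the quotient map and set $D_n := q^{(n)}(M_n(V)^+) \subseteq M_n(V/J)_{\rm sa}$. Since $J$ is selfadjoint and $\{M_n(V)^+\}$ is a matrix ordering, $\{D_n\}$ is easily checked to satisfy closure under direct sums and under real scalar congruence. Define
\[
\tilde{C}_n := \{z \in M_n(V/J)_{\rm sa} : z + \epsilon (e+J)_n \in D_n \text{ for every } \epsilon > 0\}.
\]
Unwinding $D_n$ (and averaging with the adjoint, using that $J = J^*$) shows that $\tilde{C}_n$ is exactly the cone described on the right-hand side of the lemma. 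By Proposition \ref{Prop: Arch closure of matrix ordering}, $\{\tilde{C}_n\}$ is a matrix ordering with $e+J$ a matricially archimedean unit. It is also straightforward that $e+J$ is a matrix order unit: any $z = x + M_n(J) \in M_n(V/J)_{\rm sa}$ lifts to a selfadjoint $x \in M_n(V)$, for which $x + t e_n \in M_n(V)^+$ for some $t>0$, so $z + t(e+J)_n \in D_n \subseteq \tilde{C}_n$.

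The main obstacle is properness of $\{\tilde{C}_n\}$, which is where the hypothesis that $J$ is the kernel of a ucp map gets used. Fix a ucp map $\psi : V \to W$ with $\ker\psi = J$; the crucial observation is that $\ker \psi^{(n)} = M_n(J)$, since $\psi^{(n)}([x_{ij}]) = [\psi(x_{ij})]$ vanishes precisely when every $x_{ij}\in J$. If $\pm z \in \tilde{C}_n$ with $z = x + M_n(J)$, then for every $\epsilon>0$ there exist $y_\pm \in M_n(J)_{\rm sa}$ with $\pm x + y_\pm + \epsilon e_n \in M_n(V)^+$. Applying $\psi^{(n)}$ (which annihilates $y_\pm$ and sends $e_n$ to the unit of $M_n(W)$) yields $\pm \psi^{(n)}(x) + \epsilon I \geq 0$ in $M_n(W)$; letting $\epsilon \to 0$ and using that $M_n(W)^+$ is proper and archimedean forces $\psi^{(n)}(x) = 0$, i.e.\ $x \in M_n(J)$, so $z = 0$.

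To verify the universal property, let $\varphi : V \to W$ be any ucp map with $J \subseteq \ker\varphi$, and define $\pi : V/J \to W$ by $\pi(x+J) = \varphi(x)$; this is well-defined, linear, unital and selfadjoint. If $z = x + M_n(J) \in \tilde{C}_n$ and $\epsilon > 0$, choose $y \in M_n(J)_{\rm sa}$ with $x + y + \epsilon e_n \in M_n(V)^+$. Then $\varphi^{(n)}(y) = 0$ (each entry of $y$ lies in $J \subseteq \ker\varphi$), so $\pi^{(n)}(z) + \epsilon I = \varphi^{(n)}(x + y + \epsilon e_n) \geq 0$. Archimedeanness of $M_n(W)^+$ gives $\pi^{(n)}(z) \geq 0$, so $\pi$ is ucp. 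Hence $(V/J,\{\tilde{C}_n\}, e+J)$ satisfies the defining universal property of the quotient operator system, and by the uniqueness clause of Proposition \ref{kuq} we conclude $\tilde{C}_n = Q_n$ for all $n$, which is exactly the lemma.
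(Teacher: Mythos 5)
Your proof is correct and follows essentially the same route as the paper: define the cone by the $\epsilon$-condition (you package it as the archimedean closure of $q^{(n)}(M_n(V)^+)$), prove properness by applying a ucp map with kernel exactly $J$, and then identify it with $Q_n$. The only cosmetic difference is at the last step, where you verify the universal property and invoke the uniqueness clause of Proposition \ref{kuq}, while the paper obtains the two inclusions directly (one from the universal property of $Q_n$, the other from the explicit description of $Q_n$ in terms of ucp maps annihilating $J$); these amount to the same argument.
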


\begin{proof}
     Let $\{Q_n'\}$ be defined as the set of cosets $x + M_n(J)$ such that for every $\epsilon > 0$ there exists $y \in M_n(J)$ such that $x+y + \epsilon I_n \geq 0$. Then $\{Q_n'\}$ is easily seen to be a matrix ordering on $V/J$ with archimedean matrix order unit $e + J$. Now suppose that $\pm x \in Q_n'$. Then if $J = \ker(\varphi)$, we have that for every $\epsilon > 0$ there exists $y,z \in M_n(J)$ such that $x + y + \epsilon I_n \otimes e \geq 0$ and $-x + z + \epsilon I_n \otimes e \geq 0$. Hence $\varphi^{(n)}(x) + \epsilon I_n, -\varphi^{(n)}(x) + \epsilon I_n \geq 0$. It follows that 
      $x \in M_n(J)$, so $\{Q_n'\}$ is proper and hence $(V/J, \{Q_n'\}, e+J)$ is an operator system. By the universal property of the quotient, we see that $Q_n \subseteq Q_n'$ since the identity map from $(V/J, \{Q_n\}, e+J)$ to $(V/J, \{Q'_n\}, e+J)$ is completely positive.

    Now let $x \in Q_n'$. Let $\psi: V \to B(H)$ be ucp with $J \subseteq \ker(\psi)$ and let $\epsilon > 0$. Then there exists $y \in M_n(J)$ such that $x + y + \epsilon I_n \otimes e \geq 0$. Since $\psi$ is ucp, $\psi^{(n)}(x + y + I_n \otimes e) = \psi^{(n)}(x) + \epsilon I_n \geq 0$. It follows that $\psi^{(n)}(x) \geq 0$. Since this holds for every $\psi$ with $J \subseteq \ker(\psi)$, $x \in Q_n$. Hence $Q_n' = Q_n$.
\end{proof}

{\bf Remark.} Kavruk 
et al.\ prove that every kernel is an intersection of hyperplanes, i.e. $J = \cap_i \ker(\varphi_i)$ where $\varphi_i: V \to \mathbb{C}$ are states with $J \subseteq \ker(\varphi_i)$. This does not hold in the real case. Often there are not enough states into $\mathbb{R}$ to distinguish kernels.

Indeed let $\Phi : \bH \to \bH$ be the identity map.  This is ucp.  Then Ker $\Phi =0$. 
But any state on $\bH$ contain the span of $i,j,k$ in their kernel, since any positive selfadjoint functional on $\bH$ annihilates this span. So the trivial kernel $\{0\}$ cannot be expressed as an intersection of kernels of states $\varphi: \bH \to \bR$.  (This example may be easily modified by taking a direct sum if one prefers a nontrivial kernel.)

\bigskip

It is non-trivial to decide if a given subspace $J \subseteq V$ is the kernel of a ucp map. Using the characterization of $Q_n$ above, we can give the following intrinsic characterization for kernels in operator systems.

\begin{proposition}
    Let $V$ be a complex operator system and $J \subseteq V$ be a subspace. Then $J$ is the kernel of a ucp map if and only if whenever  there exist sequences $\{y_n\}, \{z_n\} \subseteq J$ and $x=x^* \in V$ such that $x + y_n + \frac{1}{n} I \geq 0$ and $-x + z_n + \frac{1}{n} I \geq 0$
    for each $n \in \bN$, then $x \in J$. If $V$ is instead a real operator system, then $J$ is the kernel of a ucp map if and only if whenever  there exist sequences $\{y_n\}, \{z_n\} \subseteq M_2(J)$ and $x=x^* \in M_2(V)$ such that $x + y_n + \frac{1}{n} I_2 \otimes e \geq 0$ and $-x + z_n + \frac{1}{n} I_n \otimes e \geq 0$, then $x \in M_2(J)$.
\end{proposition}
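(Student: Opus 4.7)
My plan is to handle the two implications separately; the forward direction is a short application of the defining ucp map, while the converse requires constructing the quotient operator system. For the forward direction, suppose $J = \ker(\varphi)$ for some ucp map $\varphi : V \to W$. In the complex case, applying $\varphi$ to the inequality $x + y_n + \tfrac{1}{n} 1 \geq 0$ gives $\varphi(x) + \tfrac{1}{n} 1 \geq 0$ since $\varphi$ vanishes on $J$; letting $n \to \infty$ and using the archimedean property of $W$ yields $\varphi(x) \geq 0$, and the symmetric inequality gives $-\varphi(x) \geq 0$, so $\varphi(x) = 0$ and $x \in J$. The real case is identical after replacing $\varphi$ by its amplification $\varphi^{(2)} : M_2(V) \to M_2(W)$ throughout.

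For the converse, my strategy is to equip $V/J$ with the canonical quotient matrix ordering and verify it defines an operator system; the quotient map $q : V \to V/J$ will then automatically be a ucp map with kernel exactly $J$. Following the preceding lemma, I define
\[ Q_n := \bigl\{ x + M_n(J) \; : \; \forall \, \epsilon > 0 \; \exists\, y \in M_n(J) \text{ with } x + y + \epsilon\, I_n \otimes e \geq 0 \bigr\}. \]
The argument from the previous lemma showing that $\{Q_n\}$ is a matrix ordering for which $e + J$ is an archimedean matrix order unit does not actually use the hypothesis that $J$ is a kernel, so it applies verbatim here. The only operator system axiom remaining to verify is properness of $\{Q_n\}$.

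The main obstacle is precisely this properness check, and this is where the stated hypothesis is used. If $\pm(x + M_n(J)) \in Q_n$ for selfadjoint $x \in M_n(V)$, then unpacking the definition produces, for each $k \in \bN$, elements $y_k, z_k \in M_n(J)$ with $x + y_k + \tfrac{1}{k} I_n \otimes e \geq 0$ and $-x + z_k + \tfrac{1}{k} I_n \otimes e \geq 0$. In the complex case I take $n = 1$ and the hypothesis immediately yields $x \in J$, so $Q_1$ is proper; the earlier proposition asserting that in the complex setting $C_1$ proper forces $C_n$ proper for all $n$ then extends properness to every level. In the real case the hypothesis is stated at matrix level two, so I take $n = 2$ to conclude $Q_2$ is proper, and Proposition \ref{prop: C2 proper} promotes this to all $Q_n$. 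Thus $(V/J, \{Q_n\}, e + J)$ is an operator system, $q$ is a ucp map, and $\ker q = J$, completing the proof.
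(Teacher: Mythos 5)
Your proof is correct and follows essentially the same route as the paper: the forward direction by applying $\varphi$ (or $\varphi^{(2)}$) to the two inequalities, and the converse by forming the quotient matrix ordering $\{Q_n'\}$ and using the hypothesis to verify properness at level $1$ (complex) or level $2$ (real), then invoking the earlier propositions that properness at those levels propagates to all levels. No gaps.
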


\begin{proof}
    It is easily checked that $Q_n'$ above is a matrix ordering with an archimedean matrix order unit. It remains to check that $Q_n'$ is proper. In the real case, it is sufficient to check that $Q_2'$ is proper, while in the complex case it is sufficient to check that $Q_1'$ is proper. In the complex case, suppose $\pm x \in Q_1'$. Then by the definition of $Q_1'$, for every $n \in \mathbb{N}$  there exist $y_n, z_n \in J$ such that $x + y_n + \frac{1}{n} I, -x + z_n + \frac{1}{n}I \geq 0$. Assuming the condition in the statement above holds, then $x \in J$ and therefore $x + J = J$, so $Q_1'$ is proper and hence $\{Q_n'\}$ is proper. It follows that the map $\pi: V \to (V/J, \{Q_n'\}, e + J)$ given by $x \mapsto x + J$ is ucp with kernel $J$. The case for real operator systems is similar.

    On the other hand, suppose $J$ is the kernel of a map $\varphi$. Then if there exist sequences $y_n, z_n \in J$ (or $M_2(J)$) such that $x + y_n + \frac{1}{n} I, -x + z_n + \frac{1}{n}I \geq 0$, then applying $\varphi$ (or $\varphi^{(2)}$ in the real case) we see that $\varphi(x)=0$ (or $\varphi^{(2)}(x)=0$ in the real case). It follows that $x \in J$ (respectively, $x \in M_2(J)$), so $\{Q_n\}$ is proper.
\end{proof}

\begin{proposition}
    Let $V$ be a real operator system. Then $J \subseteq V$ is a kernel in $V$ if and only if $J_c$ is a kernel in $V_c$.
\end{proposition}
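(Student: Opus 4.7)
The plan is to prove both implications by transferring ucp maps between $V$ and $V_c$, using complexification in one direction and restriction in the other.

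For the ``only if'' direction, suppose $J = \ker\varphi$ for a ucp map $\varphi : V \to W$. I would apply Lemma \ref{ncomma} to the composition of $\varphi$ with the canonical embedding $W \hookrightarrow W_c$; this yields a unique complex linear ucp extension $\varphi_c : V_c \to W_c$ satisfying $\varphi_c(x+iy) = \varphi(x) + i\varphi(y)$ for $x, y \in V$. Using the uniqueness of the decomposition in $V_c = V \oplus iV$ and in $W_c = W \oplus iW$, we see that $\varphi_c(x+iy) = 0$ precisely when $\varphi(x) = 0 = \varphi(y)$, i.e.\ precisely when $x, y \in J$, i.e.\ precisely when $x+iy \in J + iJ = J_c$. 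Hence $\ker \varphi_c = J_c$, exhibiting $J_c$ as a kernel in $V_c$.

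For the ``if'' direction, suppose $J_c = \ker\psi$ for some (complex linear) ucp map $\psi : V_c \to W$ with $W$ a complex operator system. Composing $\psi$ with the unital complete order embedding $V \hookrightarrow V_c$ supplied by Proposition \ref{inco}, and viewing $W$ as a real operator system via Corollary \ref{coreisab}, I obtain a real ucp map $\psi|_V : V \to W$. Its kernel is $V \cap \ker \psi = V \cap J_c = V \cap (J + iJ)$, and by the uniqueness of the decomposition $V_c = V \oplus iV$ this intersection equals $J$. Thus $J$ is the kernel of a real ucp map on $V$.

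The only technical point to verify is that ucp-ness survives these passages: for the complexification step it is exactly Lemma \ref{ncomma}, while for the restriction step it follows from Proposition \ref{inco} since $V \hookrightarrow V_c$ is itself a unital complete order embedding of real operator systems. Neither direction appears to present a genuine obstacle; the identifications $\ker\varphi_c = J_c$ and $V \cap J_c = J$ are immediate consequences of the direct sum structure of the complexification.
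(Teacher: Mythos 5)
Your proof is correct, and the forward direction (complexifying $\varphi$ to $\varphi_c$ and checking $\ker\varphi_c = J_c$ via the direct sum decomposition) is exactly what the paper does. The reverse direction, however, takes a genuinely different route. The paper does \emph{not} restrict the complex ucp map; instead it invokes its intrinsic characterization of kernels (the proposition immediately preceding this one): given sequences $\{y_n\},\{z_n\}\subseteq M_2(J)$ and $x=x^*\in M_2(V)$ with $x+y_n+\tfrac1n I\ge 0$ and $-x+z_n+\tfrac1n I\ge 0$, it views everything inside $M_2(V_c)$, applies $\varphi^{(2)}$ for the map with $\ker\varphi=J_c$ to conclude $x\in M_2(J_c)\cap M_2(V)=M_2(J)$, and then cites the intrinsic criterion to conclude $J$ is a kernel. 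Your argument is more direct: you simply restrict $\psi$ to $V$, note via Proposition \ref{inco} and Corollary \ref{coreisab} that $\psi|_V$ is a real ucp map into the realification of $W$, and compute $\ker(\psi|_V)=V\cap J_c=J$ from the decomposition $V_c=V\oplus iV$. This exhibits $J$ explicitly as the kernel of a concrete ucp map, bypassing the intrinsic characterization entirely; what the paper's route buys is an illustration of that characterization in action (which it has just established and uses elsewhere), while yours buys brevity and avoids any dependence on the preceding proposition. One small bookkeeping point: Lemma \ref{ncomma} is stated for targets that are complex C$^*$-algebras, so to invoke it literally in the forward direction you should first embed $W_c$ into some $B(H)$ and then observe the extension has range in $W_c$; alternatively, just define $\varphi_c(x+iy)=\varphi(x)+i\varphi(y)$ and verify complete positivity directly via the $c(x,y)$ matrix picture, which is how the paper treats such complexifications.
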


\begin{proof}
    First assume $J$ is the kernel of some ucp map $\varphi: V \to W$. Then $J_c$ is the kernel of $\varphi_c: V_c \to W_c$. So $J_c$ is a kernel.

    Now assume $J_c$ is the kernel of a ucp map on $V_c$. Suppose there exist sequences $\{y_n, z_n\} \subseteq M_2(J)$ and $x=x^* \in M_2(V)$ such that $x + y_n + \frac{1}{n} I \geq 0$ and $-x + z_n + \frac{1}{n} I \geq 0$. Then, since $x=x^* \in M_2(V_c)$ and $\{y_n\}, \{z_n\} \subseteq M_2(J_c)$, we conclude that $x \in M_2(J_c)$ (as in the proof of the previous proposition, this is checked by applying $\varphi^{(2)}$ where $J_c = \ker(\varphi)$). But since $x \in M_2(V)$ and $V \cap J_c = J$, we have $x \in M_2(J)$. So $J$ is a kernel by the previous proposition. 
\end{proof}

\begin{proposition} \label{coker}
    If $V$ is a real operator system with kernel $J$, then $(V/J)_c \cong V_c / J_c$. 
\end{proposition}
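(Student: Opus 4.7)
The plan is to construct mutually inverse ucp maps between $(V/J)_c$ and $V_c/J_c$ by combining the universal property of the operator system quotient (Proposition \ref{kuq}) with the universal property of the complexification (Lemma \ref{ncomma} and the remark after Theorem \ref{Thm: Complexification is an op sys}). Note that $J_c$ is a kernel in $V_c$ by the preceding proposition, so the right-hand quotient $V_c/J_c$ exists as a complex operator system.

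For the first direction, the quotient map $q : V \to V/J$ is ucp, and the canonical real ucp inclusion $V/J \hookrightarrow (V/J)_c$ composed with $q$ extends, by Lemma \ref{ncomma}, to a complex linear ucp map $q_c : V_c \to (V/J)_c$. A direct check shows $J_c = J + iJ \subseteq \ker(q_c)$, so the universal property of the complex quotient $V_c/J_c$ (Proposition \ref{kuq} applied in the complex case) yields a complex linear ucp map
\[ \alpha : V_c/J_c \to (V/J)_c, \qquad \alpha(x + iy + J_c) = q(x) + i\, q(y). \]

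For the reverse direction, let $Q : V_c \to V_c/J_c$ be the complex ucp quotient map. Its restriction $Q_{|V} : V \to V_c/J_c$ is a real ucp map whose kernel contains $J$, so the universal property of the real quotient $V/J$ gives a real ucp map $V/J \to V_c/J_c$. Applying Lemma \ref{ncomma} (after embedding $V_c/J_c$ inside a complex C$^*$-algebra, so that the lemma applies as stated; the image of the extension lies in $V_c/J_c$ by linearity and the fact that $V_c/J_c = (V/J) + i(V/J)$ as a real linear span of the image), we obtain a complex linear ucp map
\[ \beta : (V/J)_c \to V_c/J_c, \qquad \beta(\overline{x} + i\,\overline{y}) = Q(x) + i\, Q(y), \quad x, y \in V, \]
where $\overline{x}$ denotes the image of $x \in V$ in $V/J \subseteq (V/J)_c$.

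Finally, $\alpha$ and $\beta$ are mutual inverses: on the dense (real) subspace $V \subseteq V_c$ one computes directly that $\alpha \circ \beta$ and $\beta \circ \alpha$ agree with the identity, and the equalities extend to all of $V_c/J_c$ and $(V/J)_c$ by complex linearity and continuity (or, more cleanly, by the uniqueness clauses in both universal properties). Since a bijective ucp map with ucp inverse is a complete order isomorphism, this gives the desired identification $(V/J)_c \cong V_c/J_c$. The main subtlety, and the only place requiring care, is the second extension step: applying Lemma \ref{ncomma} to a target that is an operator system rather than a C$^*$-algebra requires one to factor through a containing C$^*$-cover and then verify that the complex linear extension still lands in $V_c/J_c$, which follows because $V + iV$ already spans $V_c/J_c$ modulo $J_c$.
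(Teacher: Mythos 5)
Your proof is correct, and while your map $\alpha : V_c/J_c \to (V/J)_c$ is essentially the paper's argument (the paper also complexifies the quotient map $\pi: V \to V/J$ to get $\pi_c$ ucp with kernel $J_c$ and then invokes the universal property), your construction of $\beta$ takes a genuinely different route for the harder direction. The paper shows the identity map $(V/J)_c \to V_c/J_c$ is ucp by a direct matrix-cone computation: it unwinds the definition of the complexified cone, invokes the $\epsilon$-characterization of quotient positivity (the lemma stating $x + M_n(J) \geq 0$ iff for every $\epsilon > 0$ there is $y \in M_n(J)$ with $x + y + \epsilon I_n \geq 0$), and then conjugates by $\begin{bmatrix} 1 & i \end{bmatrix}$ and applies an arbitrary ucp map annihilating $J_c$. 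You instead restrict the complex quotient map $Q$ to $V$, note $\ker(Q_{|V}) = V \cap J_c = J$, factor through $V/J$ by the universal property of the real quotient, and then extend complex-linearly via Lemma \ref{ncomma}. Your route is more categorical and avoids the explicit cone manipulation entirely; its only cost is the extra care you correctly flag in applying Lemma \ref{ncomma} to an operator-system target, which requires factoring through a C$^*$-cover of $V_c/J_c$ and observing that the extension $\overline{x} + i\overline{y} \mapsto Q(x) + iQ(y)$ lands in the complex subspace $V_c/J_c$, which is completely order embedded in that cover. This is a legitimate maneuver that the paper itself uses elsewhere (in the archimedeanization section). One cosmetic remark: no density or continuity is needed to check that $\alpha$ and $\beta$ are mutually inverse, since $V_c = V + iV$ and $(V/J)_c = (V/J) + i(V/J)$ algebraically, so complex linearity alone settles it.
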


\begin{proof}
    Assume $V$ is a real operator system with kernel $J$. Then $J_c$ is a kernel in $V_c$. Suppose that $(x+iy) + M_n(J_c) \in M_n((V/J)_c)^+$, where $x,y \in M_n(V)$. Without loss of generality, we may assume $x=x^*$ and $y=-y^*$ by fixing an appropriate representative for $x+iy + M_n(J_c)$. By the definition of complexification, \[ \begin{bmatrix} x & -y \\ y & x \end{bmatrix} + M_{2n}(J) \in M_{2n}(V/J)^+. \] By the Lemma, for every $\epsilon > 0$ there exists $\begin{bmatrix} z_{ij} \end{bmatrix} \in M_{2n}(J)$ such that \[ \begin{bmatrix} x & -y \\ y & x \end{bmatrix} + \begin{bmatrix} z_{ij} \end{bmatrix} + \epsilon I_{2n} \in M_{2n}(V)^+ \subseteq M_{2n}(V_c)^+. \] Conjugating by the matrix $\begin{bmatrix} 1 & i \end{bmatrix}$ and applying $\varphi^{(n)}$, where $\varphi: V_c \to B(H)$ is ucp with $J_c \subseteq \ker(\varphi)$, we see that $\varphi^{(n)}(x + iy) \geq 0$. It follows that $x + iy \in M_{n}(V_c/J_c)^+$ by the definition of the operator system quotient. So the identity map from $(V/J)_c$ to $V_c/J_c$ is ucp.

    For the other direction, let $\pi: V \to V/J$ denote the quotient map. Then $\pi_c: V_c \to (V/J)_c$ is ucp with kernel $J_c$. By the universal property of the quotient, the identity map from $V_c/J_c$ to $(V/J)_c$ is ucp. We conclude that $(V/J)_c \cong V_c/J_c$.
\end{proof}

Theorem 3.7 and Corollaries 3.8 and 3.9 of \cite{KPTT2} are valid in the real case with the same proofs.  In particular $\cS/J \subseteq C^*_u(\cS)/ \langle J \rangle$.  Also,
Proposition 3.6 (the universal property of the quotient):

\begin{proposition} \label{qup}  Let $V$ and $W$ be real operator systems, and let $J$ be a  kernel in $V$.  Every ucp map $u : V \to B(H)$, for a real Hilbert space $H$, with $J \subseteq {\rm Ker}(u)$, 
induces  a ucp map $\tilde{u} : V /J \to B(H)$ with $\tilde{u} \circ q_J = u$.  Moreover suppose that $\cS$ is a 
real operator system and $\eta : V \to \cS$ is a ucp map with the property: for every ucp
$u : V \to \cT$ into an operator system $\cT$ with $J \subseteq {\rm Ker}(u)$, there exists a unique  ucp  map $\hat{u} : \cS \to \cT$ such that $\hat{u} \circ \eta = u$, then there is a unital complete order isomorphism $\gamma : \cS \to V/J$ with $\gamma \circ \eta = q_J$. 
\end{proposition}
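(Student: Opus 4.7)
The first assertion is immediate from Proposition \ref{kuq}: since $B(H)$ for a real Hilbert space $H$ is itself a real operator system, the universal property of the quotient established there, applied to the ucp map $u : V \to B(H)$, directly yields the required $\tilde u$ with $\tilde u \circ q_J = u$.

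For the second assertion, the plan is to run the standard ``initial-object'' argument showing that two objects enjoying the same universal property are canonically isomorphic. First, I would apply the hypothesised universal property of $\eta$ to $u = q_J : V \to V/J$ (which kills $J$ by construction) to obtain a unique ucp map $\gamma : \cS \to V/J$ with $\gamma \circ \eta = q_J$; in particular $\gamma$ is surjective since $\gamma(\eta(V)) = q_J(V) = V/J$. Second, I would verify that $J \subseteq \ker \eta$ (the natural companion to the hypothesised universal property of $\eta$, without which the factorisation $\tilde\eta \circ q_J = \eta$ below is not even well-posed), and then apply Proposition \ref{kuq} to the ucp map $\eta : V \to \cS$ to produce a ucp map $\tilde\eta : V/J \to \cS$ with $\tilde\eta \circ q_J = \eta$.

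To finish, I would compose in both directions and invoke uniqueness. The ucp map $\gamma \circ \tilde\eta : V/J \to V/J$ satisfies $(\gamma \circ \tilde\eta) \circ q_J = \gamma \circ \eta = q_J$, so the uniqueness clause of Proposition \ref{kuq} applied to $q_J$ itself forces $\gamma \circ \tilde\eta = \mathrm{id}_{V/J}$. Symmetrically, $(\tilde\eta \circ \gamma) \circ \eta = \tilde\eta \circ q_J = \eta$, and the uniqueness clause in the hypothesised universal property of $\eta$ (now applicable, since $J \subseteq \ker \eta$, to $u = \eta$) gives $\tilde\eta \circ \gamma = \mathrm{id}_\cS$. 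Hence $\gamma$ and $\tilde\eta$ are mutually inverse ucp maps, so $\gamma$ is a unital bijection with ucp inverse, i.e.\ a unital complete order isomorphism, and $\gamma \circ \eta = q_J$ holds by construction.

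The one mildly delicate step is the verification $J \subseteq \ker \eta$. From $\gamma \circ \eta = q_J$ one immediately has $\eta(J) \subseteq \ker \gamma$, but concluding $\eta(J) = 0$ requires extracting a little more from the uniqueness clause in the universal property of $\eta$ (or treating $J \subseteq \ker \eta$ as an implicit companion hypothesis, as is customary in universal-property characterisations of quotients); this is the only step requiring care, since the rest of the argument is purely formal.
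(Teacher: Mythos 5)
Your proof is correct and is essentially the argument the paper intends: the paper's own proof is a one-line citation of \cite[Proposition 3.6]{KPTT2}, whose argument is exactly the two-way universal-property comparison you carry out (first assertion from Proposition \ref{kuq}, then $\gamma$ and $\tilde\eta$ shown mutually inverse via the two uniqueness clauses). Your concern about $J \subseteq \ker\eta$ is legitimate and not merely pedantic: as literally stated the hypothesis on $\eta$ is satisfied by $\eta = \mathrm{id}_V$ for any kernel $J$ (existence and uniqueness of $\hat u$ are then trivial), in which case the conclusion fails whenever $J \neq 0$, so $J \subseteq \ker\eta$ must indeed be read as an implicit companion hypothesis, exactly as you propose.
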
 

\begin{proof} The proof is the same as in \cite[Proposition 3.6]{KPTT2}.   \end{proof} 

Lemma 4.3 in  \cite{KPTT2} is valid in the real case, so has the same applications. 
If we complexify then there is a ucp map on $\cS_c$ with kernel $\bC y$. The restriction $u$ to
$\cS$ contains $\bR y$ in its kernel.  However if $u(x) = 0$ then $x \in \bC y \cap \cS = \bR y$.

Recall that for a (real or complex) operator space $V$ with subspace $J \subseteq V$, the \textit{operator space quotient} is the operator space structure on the algebraic quotient $V/J$ defined by the matrix norms \[ \|x + M_n(J)\|_n = \inf \{ \|x + y \| : y \in M_n(J) \}. \] For operator spaces, is is known that $(V/J)_c = V_c / J_c$ \cite{Sharma}.  A subspace $J \subseteq V$ is called {\em completely proximinal} if for every $x \in M_n(V)$ there exists $y \in M_n(J)$ such that $\|x + M_n(J)\| = \|x + y\|$.

Suppose that $V$ is an operator system and $J \subseteq V$ is a kernel. Let $(V/J)_{osp}$ and $(V/J)_{osy}$ denote the operator space and operator system quotients, respectively. In general, the identity map from $(V/J)_{osp}$ to $(V/J)_{osy}$ is completely contractive. This follows from the universal property of the operator space quotient: Since the operator system quotient map $\pi: V \to (V/J)_{osy}$ is completely contractive with kernel $J$, it factors completely contractively through the operator space quotient, and the induced complete contraction from $(V/J)_{osp}$ to $(V/J)_{osy}$ is the identity map on the vector space $V/J$. The identity is not completely isometric in general, but there are special cases where it is known to be completely isometric in the complex case.

\begin{definition} \label{cobip} 
    A kernel $J \subseteq V$ of a (real or complex) operator system is called {\em completely biproximinal} if
    \begin{enumerate}
        \item The identity map from $(V/J)_{osp}$ to $(V/J)_{osy}$ is completely isometric,
        \item $J$ is completely proximinal in the operator space $V$,
        \item $J$ is {\em completely order proximinal}, i.e. for every $x + M_n(J) \geq 0$ there exists $p \in M_n(V)^+$ such that $x+M_n(J) = p+M_n(J)$, and
        \item Whenever $x + M_n(J) \geq 0$, there exists $p \in M_n(V)^+$ such that $x+M_n(J) = p+M_n(J)$ and $\|x+M_n(J)\| = \|p\|$.
    \end{enumerate}
\end{definition}

The following is proven in the complex case in \cite{KPTT2} (Proposition 4.8). The proof in the real case is identical.

\begin{proposition} \label{neolp}
    Let $J$ be a completely proximinal kernel in a real operator system $V$ and assume that $(V/J)_{osp}$ and $(V/J)_{osy}$ are completely isometrically isomorphic. Then $J$ is completely biproximinal.
\end{proposition}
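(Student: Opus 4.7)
The plan is to verify conditions (3) and (4) of Definition \ref{cobip}, since (1) and (2) are given. As (4) implies (3), it suffices to establish (4). Fix $x+M_n(J)\ge 0$ in $(V/J)_{osy}$ and write $r=\|x+M_n(J)\|_{osy}$. For a positive element $w$ of any operator system one has $\|w\|\le r$ iff $0\le w\le rI$ (this follows by embedding into $B(H_r)$ via Theorem \ref{ce} and invoking the analogous fact for positive Hilbert space operators). Applied to $w=x+M_n(J)$, this gives $0\le x+M_n(J)\le rI_n$; equivalently the selfadjoint element $\tilde{x}:=2x-rI_n$ satisfies $\|\tilde{x}+M_n(J)\|_{osy}\le r$.

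Next I would transfer to the operator space picture. By hypothesis (1), $\|\tilde{x}+M_n(J)\|_{osp}\le r$, so by hypothesis (2) (complete proximinality) there exists $z\in M_n(J)$ with $\|\tilde{x}+z\|\le r$. Since $\tilde{x}=\tilde{x}^*$ and $M_n(J)$ is selfadjoint (as $J$ is the kernel of a ucp, hence selfadjoint, map), we may replace $z$ by $\tfrac12(z+z^*)\in M_n(J)$; this does not increase $\|\tilde{x}+z\|$ because $\|\tilde{x}+z^*\|=\|(\tilde{x}+z)^*\|=\|\tilde{x}+z\|$ and the norm is convex. Hence we may assume $z=z^*$, so $\tilde{x}+z$ is selfadjoint with norm at most $r$, which is equivalent to $-rI_n\le \tilde{x}+z\le rI_n$.

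Now set $p:=\tfrac12(\tilde{x}+z+rI_n)=x+\tfrac12 z\in M_n(V)$. The previous two-sided inequality gives $0\le p\le rI_n$, so $p\in M_n(V)^+$ and $\|p\|\le r$. Moreover $p+M_n(J)=x+M_n(J)$ because $z/2\in M_n(J)$, and contractivity of the quotient map forces $\|p\|\ge \|p+M_n(J)\|=r$. Therefore $\|p\|=r$, which establishes (4) and with it (3). There is no serious obstacle: the proof is essentially the one in \cite{KPTT2}, and the only point where the real versus complex dichotomy could have intervened is the symmetrization of $z$, which goes through verbatim since selfadjointness of $J$ is a real phenomenon. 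The main conceptual move is the standard dilation trick that rewrites the order condition $0\le x+M_n(J)\le rI_n$ as the selfadjoint norm bound $\|\tilde{x}+M_n(J)\|\le r$, which is precisely what converts the given operator space proximinality into operator system data.
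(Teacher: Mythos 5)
Your argument is correct and is essentially the proof of \cite[Proposition 4.8]{KPTT2} that the paper simply cites and declares to carry over verbatim to the real case: choose a selfadjoint representative $x$, shift to $\tilde{x}=2x-rI_n$, apply proximinality, symmetrize the kernel element, and dilate back to $p=x+\tfrac12 z$. The only genuinely real-specific point --- that $M_n(J)$ is selfadjoint so that $z$ may be replaced by $\tfrac12(z+z^*)$ --- is the one you correctly isolate and verify.
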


\begin{corollary} \label{proxiff}
    Let $J$ be a kernel in a real operator system $V$. Then $J$ is completely biproximinal if and only if $J_c$ is completely biproximinal in $V_c$.
\end{corollary}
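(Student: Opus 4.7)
The plan is to invoke Proposition \ref{neolp} in both directions. That proposition reduces complete biproximinality to the conjunction of (a) coincidence of the operator space and operator system quotient norms, and (b) complete proximinality of the kernel. Since $J$ is a kernel in $V$ if and only if $J_c$ is a kernel in $V_c$ (by the earlier proposition), both quotients exist throughout. For condition (a), I would combine $(V_c/J_c)_{osp} \cong ((V/J)_{osp})_c$ (Sharma's operator space identity $(X/Y)_c = X_c/Y_c$) with $(V_c/J_c)_{osy} \cong ((V/J)_{osy})_c$ (Proposition \ref{coker}). Since the reasonable operator space complexification is unique up to complete isometry, the identity of the osp and osy structures on $V/J$ holds if and only if the analogous identity holds on $V_c/J_c$.

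For the forward direction of (b), fix $z = x + iy \in M_n(V_c)$ and identify $V_c$ with $V_X \subseteq M_2(V)$, so that $M_n(V_c)$ embeds completely isometrically into $M_{2n}(V)$ and $z$ corresponds to the matrix $c(x,y)$. Complete proximinality of $J$ at level $2n$ supplies $\eta \in M_{2n}(J)$ with $\|c(x,y) - \eta\| = d(c(x,y), M_{2n}(J))$; this distance equals $d(z, M_n(J_c))$ because Sharma's identity makes the quotient norm on $V_c/J_c$ the restriction of the quotient norm on $M_2(V)/M_2(J) \cong M_2(V/J)$. In general $\eta$ will not lie in $M_n(J_c)$, so I would repair this by the averaging projection $\pi(\xi) = \tfrac{1}{2}(\xi + U^*\xi U)$ with $U = I_n \otimes u$ and $u = \left[\begin{smallmatrix} 0 & -1 \\ 1 & 0\end{smallmatrix}\right]$. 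This $\pi$ is a contraction (since $U$ is orthogonal), fixes $M_n(V_c)$ pointwise, and maps $M_{2n}(J) = M_n(M_2(J))$ into $M_n(V_c) \cap M_n(M_2(J)) = M_n(J_c)$; hence $\pi(\eta) \in M_n(J_c)$ satisfies $\|c(x,y) - \pi(\eta)\| = \|\pi(c(x,y) - \eta)\| \leq \|c(x,y) - \eta\|$, forcing $\pi(\eta)$ to realize the distance from $z$ to $M_n(J_c)$.

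For the reverse direction of (b), given $\xi \in M_n(V)$, view $\xi$ as an element of $M_n(V_c)$ and use complete proximinality of $J_c$ at level $n$ to find $\eta' = \eta_1 + i\eta_2 \in M_n(J_c)$ with $\|\xi - \eta'\|_{V_c} = d(\xi, M_n(J_c))_{V_c}$. Sharma's identity together with the isometric inclusions $M_n(J) \hookrightarrow M_n(V) \hookrightarrow M_n(V_c)$ yield $d(\xi, M_n(J))_V = d(\xi, M_n(J_c))_{V_c}$. Applying the completely contractive real-part projection $\mathrm{Re} \colon V_c \to V$ noted earlier in the paper gives $\mathrm{Re}(\eta') = \eta_1 \in M_n(J)$ and $\mathrm{Re}(\xi) = \xi$, so $\|\xi - \eta_1\|_V \leq \|\xi - \eta'\|_{V_c} = d(\xi, M_n(J))$; thus $\eta_1$ realizes the distance. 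The main obstacle I anticipate is the averaging-projection step in the forward direction, specifically verifying the inclusion $\pi(M_{2n}(J)) \subseteq M_n(J_c)$ together with the contractivity at every matrix level; once this bookkeeping is in place, both halves of the argument fall neatly into the framework of Proposition \ref{neolp}.
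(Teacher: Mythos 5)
Your proposal is correct and follows essentially the same route as the paper: reduce via Proposition \ref{neolp} (and Definition \ref{cobip}) to transferring complete proximinality and the coincidence of the osp and osy quotient structures, handle the latter by the complexification identities, and transfer proximinality by projecting an approximant from $M_{2n}(J)$ into $M_n(J_c)$ (resp.\ taking real parts in the converse). The only cosmetic difference is that where you average with the conjugation by $I_n\otimes u$, the paper compresses by $\tfrac{1}{2}\begin{bmatrix} I_n & iI_n\end{bmatrix}$; both are contractive maps fixing the copy of $M_n(V_c)$ and carrying $M_{2n}(J)$ into $M_n(J_c)$, so the two arguments are interchangeable.
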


\begin{proof}
    We have seen that $J$ is a kernel in $V$ if and only if $J_c$ is a kernel in $V_c$. If $J$ is completely proximinal in $V$, then for any $x + iy \in M_n(V_c)$ we can choose $\begin{bmatrix} z_{ij} \end{bmatrix} \in M_{2n}(J)$ such that
    \begin{eqnarray} 
        \| x + iy + M_n(J) \| & = & \| \begin{bmatrix} x & -y \\ y & x \end{bmatrix} + M_{2n}(J) \| \nonumber \\
        & = & \| \begin{bmatrix} x & -y \\ y & x \end{bmatrix} + \begin{bmatrix} z_{ij} \end{bmatrix} \| \nonumber \\
        & \geq & \| \frac{1}{2} \begin{bmatrix} I_n & iI_n \end{bmatrix} \left( \begin{bmatrix} x & -y \\ y & x \end{bmatrix} + \begin{bmatrix} z_{ij} \end{bmatrix} \right) \begin{bmatrix} I_n \\ -iI_n \end{bmatrix} \| \nonumber \\
        & = & \| x+iy + z \|, \text{ where } z = \frac{1}{2} \begin{bmatrix} I_n & iI_n \end{bmatrix} \begin{bmatrix} z_{ij} \end{bmatrix} \begin{bmatrix} I_n \\ -iI_n \end{bmatrix}. \nonumber
    \end{eqnarray}
    Therefore $J_c$ is completely proximinal in $V_c$.
    
    Conversely, suppose that $J_c$ is completely proximinal in $V_c$. Let $x \in M_n(V)$. Choose $y, z \in M_n(J)$ such that $\|x + M_n(J_c)\| = \|x + y + iz\|$.  The latter is $\geq \| x + y \|$ by a fact from the introduction.
    It follows that $\|x+ M_n(J_c)\| = \|x + y\|$. Since $(V/J)_{osp} \subseteq (V_c/J_c)_{osp}$ completely isometrically, we conclude that $J$ is completely proximinal in $V$.

    Since $((V/J)_{osp})_c = (V_c/J_c)_{osp}$ and $((V/J)_{osy})_c = (V_c/J_c)_{osy}$, we see that the operator space and operator system quotients $V/J$ are completely isometric if and only if the same is true of $V_c/J_c$. By the previous Proposition, we conclude that $J$ is completely biproximinal in $V$ if and only if $J_c$ is completely biproximinal in $V_c$.
\end{proof}

\section{Duality} \label{Duality}

   A certain care is needed when considering the matrix order structure on $\cS^d$.   Indeed in modern works the duality pairing $\sum_{i,j} \, \varphi_{ij}(x_{ij})$ between  $M_n(\cS)$ and $M_n(\cS^d)$ is
    used, which is different from the `trace pairing' $\sum_{i,j} \, \varphi_{ij}(x_{ji})$ considered in older works such as the seminal papers  \cite{CEcplp, CEinj} of Choi and Effros.

    Choi and Effros prove in \cite[Corollary 4.5]{CEinj} that if
    $\varphi$ is a faithful state on a finite dimensional operator system $\cS$ then $(\cS^d,\varphi)$ is an operator system.   This holds in the real case too.  
    That $\varphi$ is an order unit follows from the fact that $\varphi$ achieves its  minimum $\delta > 0$ on the set of norm 1 positive elements  in $\cS$.  
    If $x$ is such an element then for any state $\psi$ we have 
    $$\psi(x) \leq 1 \leq \frac{1}{\delta} \varphi(x).$$   
    Any selfadjoint functional $\psi$ is dominated by a positive multiple of a 
    state. Indeed as we said in Section \ref{rcast}, $\psi$ is the difference of two (completely) positive functionals, so is dominated by one (or the sum) of these. In particular we see that $\varphi$ is an order unit.  
    That it is a  matrix order unit is similar.  Indeed as we said in Section \ref{rcast}, a selfadjoint
    $\Psi : \cS \to B(H)$ is the difference of two completely positive maps, 
    so is dominated by one (or the sum) of these.  There exists a positive constant $C$ such that for $x$ a norm 1 positive element  in $M_n(\cS)$ we have $\varphi(\sum_k \, x_{kk}) \geq C$.  For another  positive constant $K$ we have 
$$\sum_{i,j} \, \psi_{ij}(x_{ij}) \leq K  \leq \frac{K}{C} \, \sum_k \, \varphi(x_{kk}) =  \frac{K}{C} \, \varphi(\sum_k \, x_{kk}).$$ 
Thus $\Psi + c \varphi_n \geq  0$ for a positive constant $c$, where $\varphi_n = {\rm diag} (\varphi , \cdots ,\varphi)$.   Thus $\cS^d$ is a real operator system by our earlier  real 
    Choi and Effros  characterization theorem \ref{ce}. 
    
    Conversely, 
    any  order unit for $\cS^d$ is 
    faithful (it dominates (a positive multiple of) any faithful state).
    
    It is even easier to see that a faithful state  $\varphi$ is (matricially) archimedean (hint: consider $t \varphi^{(n)}(x) + \psi(x) \geq 0$ for fixed positive $x \in C_n$).

    \begin{theorem} \label{dcom} Let  $\cS$ be a finite dimensional real operator system with faithful state $\varphi$.  Then $(\cS^d,\varphi)$ is a real operator system, $((\cS_c)^d,\varphi_c)$ is a complex operator system, and $$(\cS_c)^d \cong (\cS^d)_c$$ unitally complete order isomorphically.
\end{theorem}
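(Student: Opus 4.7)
The first two assertions essentially repeat the discussion in the paragraphs immediately preceding the theorem, applied to $\cS$ and then to $\cS_c$ (using that $\varphi_c$ is a faithful state on $\cS_c$, as noted in Section \ref{rcast}); so the main work is the third statement.

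My plan is to define $\Gamma : (\cS^d)_c \to (\cS_c)^d$ to be the complex linear bijection sending $\psi_1 + i\psi_2 \in (\cS^d)_c$ to the unique complex linear extension to $\cS_c$ of the real linear functional $\psi_1 + i\psi_2 : \cS \to \bC$; its inverse sends $\eta \in (\cS_c)^d$ to $(\operatorname{Re}\eta|_\cS) + i\,(\operatorname{Im}\eta|_\cS)$. Since the complex linear extension of the real valued $\varphi$ is by definition $\varphi_c$, we have $\Gamma(\varphi) = \varphi_c$, so $\Gamma$ is unital. To verify $\Gamma$ is a complete order isomorphism I would trace the following chain of equivalences at each matrix level $n$: positivity of $[\eta_{ij}] \in M_n((\cS_c)^d)$ is complete positivity of the induced complex linear map $\Phi : \cS_c \to M_n(\bC)$; by Lemma \ref{ncomma} together with Proposition \ref{inco} this is equivalent to complete positivity of the real linear restriction $\Phi|_\cS : \cS \to M_n(\bC)$; by Proposition \ref{concon} applied to $H = \bC^n$, which identifies the inclusion $M_n(\bC) \hookrightarrow M_{2n}(\bR)$ with the real complete order embedding $X \mapsto c(\operatorname{Re} X, \operatorname{Im} X)$, this in turn is equivalent to complete positivity of the composite real linear map $\cS \to M_{2n}(\bR)$; and writing $\Phi|_\cS = A + iB$ with real linear $A, B : \cS \to M_n(\bR)$, that composite is exactly $c(A, B)$, whose complete positivity is by the definition of the complexification of matrix orderings (Section \ref{CE}) the positivity of $A + iB = \Gamma^{-1}([\eta_{ij}])$ at level $n$ in $(\cS^d)_c$.

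The step I expect to require the most care is the middle reduction: the forward direction (complex linear cp implies real linear cp of the restriction) is immediate from Proposition \ref{inco}, but the reverse---that the complex linear extension of a real linear cp map $\cS \to M_n(\bC)$ is again cp---relies on Lemma \ref{ncomma}. Once this is in hand, the remainder is routine bookkeeping with the $c(\cdot,\cdot)$ convention, and delivers the desired unital complete order isomorphism. An alternative, more abstract route would be to invoke Ruan's operator space duality $(\cS_c)^* \cong (\cS^*)_c$ recalled in the introduction to produce a complex linear complete isometry $\Gamma$, verify it is unital as above, and then conclude by the general principle that a unital surjective complete isometry between operator systems is automatically a unital complete order isomorphism.
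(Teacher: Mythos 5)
Your main argument is correct, but it takes a genuinely different route from the paper. The paper first embeds $\cS^d$ into $(\cS_c)^d$ via $\psi \mapsto \psi_c$ (using that $\psi : \cS \to M_n(\bR)$ is completely positive if and only if $\psi_c : \cS_c \to M_n(\bC)$ is), then constructs the period~2 conjugate linear complete order automorphism $\rho(\eta) = \theta_{\bR} \circ \eta \circ \theta_{\cS}$ of $(\cS_c)^d$, verifies that its fixed point set is exactly the copy of $\cS^d$, and concludes by Proposition \ref{cco} (the conjugation characterization of reasonable complexifications). You instead build the isomorphism $\Gamma$ explicitly and verify complete order isomorphism by unwinding the duality correspondence at each matrix level, passing through Lemma \ref{ncomma}, Proposition \ref{concon} with $H = \bC^n$, and the $c(\cdot,\cdot)$ definition of the complexified cones. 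Your chain of equivalences is sound (the one point you gloss over --- that self-adjointness of $[\eta_{ij}]$ forces $[A_{ij}] \in M_n(\cS^d)_{\rm sa}$ and $[B_{ij}] \in M_n(\cS^d)_{\rm as}$, as required for membership in $\widetilde{C}_n$ --- is the routine decomposition established in Section \ref{CE}). The paper's route is shorter because Proposition \ref{cco} absorbs all the matrix-level bookkeeping; yours is more self-contained and makes the isomorphism completely explicit. Both are legitimate.

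However, your closing ``alternative, more abstract route'' via Ruan's operator space duality $(\cS_c)^* \cong (\cS^*)_c$ does not work as stated. That duality is a complete isometry for the \emph{operator space dual} matrix norms, whereas the operator system dual $(\cS^d,\varphi)$ carries the order-unit matrix norms, and these genuinely differ at higher matrix levels --- this is precisely the point of the quaternion example and Proposition \ref{hh} ($\bH$ is not selfdual as an operator space, yet $(\bH^d,\varphi)$ is unitally completely order isomorphic to $\bH$). So a unital map that is completely isometric for the operator space dual structures need not be completely isometric, hence need not be a complete order isomorphism, for the operator system dual structures. Stick with your direct argument.
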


\begin{proof} Choose any faithful state $\psi$ on $\cS_c$.   The real part $g$ of its restriction to $\cS$ is faithful: if $x \in \cS^+$ satisfies 
    $${\rm Re} \, \psi(x) =\psi(x) =0 $$
    then $x=0$.   Note that $g_c$ is a faithful state.   Indeed if $x+iy \geq 0$ for $x,y \in \cS$, and $g_c(x+iy) =0$, then $x=0$ since $g$  is faithful.   However since $x \pm iy \geq 0$ this implies $y=0$.   So $((\cS_c)^d,g_c)$ is an operator system.  
    
    Next, $(\cS^d, g)$ is embedded as an operator subsystem of $((\cS_c)^d,g_c)$.  The map $\varphi \mapsto \varphi_c$ maps $(\cS^d, g)$ into $((\cS_c)^d,g_c)$.   This is a complete order embedding.   
 Indeed $\varphi : \cS \to M_n(\bR)$ is completely positive 
 if and only if  $\varphi_c : \cS_c \to M_n(\bR)_c = M_n(\bC)$ is completely positive. So (one may also appeal to the facts above the theorem) 
 $\cS^d$ is a real operator system. 

The period 2 conjugate linear complete order automorphism  $\theta_{\cS}$ dualizes to a 
period 2 conjugate linear complete order automorphism 
of the operator system $(\cS_c)^d$. 
We have to adjust this map slightly. 
Define $\rho(\psi) = \theta_{\bR} \circ \psi \circ \theta_{\cS}$.
 We have $$\rho(\varphi_c) (x+iy)
 = \theta_{\bR} (\varphi_c (x-iy))
 = \varphi_c (x+iy).$$ So 
 $\rho$ 
 fixes 
 the canonical copy of $\cS^d$.   
 Conversely if $\rho(\psi) = \psi$
 then $$\psi(x+iy) = \theta_{\bR} (\psi (x-iy)), \qquad x,y \in \cS.$$  Setting $y = 0$ shows that $\psi(\cS) \subseteq \bR$.
 If $\varphi = \psi_{| \cS}$ then $$\psi(i y) 
 = - \theta_{\bR} (i \varphi(y))=  i \varphi(y).$$
 So $\psi = \varphi_c$. It follows from Proposition \ref{cco} that $(\cS_c)^d \cong (\cS^d)_c$ complete order isomorphically. 
 \end{proof}

{\bf Remark.} Regarding the duality of subspaces and quotients: By the same proof as in the complex case (see \cite{FP} or \cite{Kavrukthesis}) if $u : \cS \to \cT$ is an operator system quotient map (that is, it induces a unital complete order isomorphism on $\cS / {\rm Ker} \, u$), then $u^d$ is a complete order embedding.
    Thus if $J$ is a kernel in $\cS$ then $(\cS/ J)^d$ is  completely order isomorphic to the subspace $J^\perp$ of $\cS^d$.
    If $\cS$ is finite dimensional then  $(\cS/  J)^d$ is an operator system.  However to show that $(\cS/  J)^d$ is an operator subsystem of  $\cS^d$ requires more, namely that $J$ is a {\em null subspace}, that is it contains no positive elements 
    other than 0.  See the proof of Proposition 2.7 in \cite{Kavruk}, which works in the real case. 
    
    For the converse, if $\cS$ is finite dimensional and $\cS_0 \subseteq \cS$ is a subsystem then $(\cS_0)^d \cong \cS^d/ \cS_0^\perp$ as operator systems. See  \cite{FP} or \cite[Theorem 2.8]{Kavruk} for a proof which works in the real case.

\bigskip

{\bf Example.} The quaternions $\bH$ are certainly not selfdual as an operator space.  (Indeed $M_2^* =(H_c)^* = (H^*)_c$. This would be $M_2$ completely isometrically if $\bH$ were selfdual as an operator space.  This is a contradiction.)  But the cone (the matrix ordering) is selfdual.   Thus the new  matrix order norm on the dual gives the quaternions back again. 

To see this let $\varphi$ be the unique state on $\bH$, namely the functional picking out the coefficient of $1$.   Now $\bH_c = M_2(\bC)$, and it is easy to 
 see that $\varphi_c$ becomes the (normalized) trace on $M_2(\bC)$.

\begin{proposition} \label{hh}  On the  operator system dual $(\bH^d, \varphi)$ of the quaternions, the order unit norm on $\bH^d$ induced by the order unit $\varphi$ 
coincides with the Banach space dual norm (that is, 
the usual norm of $\bH^* = B(\bH,\bR)$). 
Also $(\bH^d, \varphi)$
is unitally isometrically completely order isomorphic to  $(\bH , 1)$.
    \end{proposition}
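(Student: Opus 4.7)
The strategy is to construct an explicit unital $*$-linear bijection $T: \bH \to \bH^d$ and verify via complexification that it is a complete order isomorphism; the norm identity then follows at once. Define $T(y)(x) = \varphi(yx)$ for $x,y \in \bH$, with product in $\bH$. Then $T(1) = \varphi$, so $T$ is unital, and $T$ is $*$-linear since
$$T(y)^*(x) = T(y)(x^*) = \varphi(yx^*) = \varphi(x^*y) = \varphi(y^*x) = T(y^*)(x),$$
using the trace identity $\varphi(zw) = \varphi(wz)$ on $\bH$ (which follows for instance from $\varphi = \tfrac{1}{2}\mathrm{tr}$ after embedding $\bH \subset M_2(\bC)$) and $\varphi(z) = \varphi(z^*)$. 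Injectivity follows from $T(y)(y^*) = \varphi(yy^*) = \|y\|^2$, so $T$ is bijective by dimension count.

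To show $T$ is a complete order isomorphism, I pass to the complexification. By Theorem \ref{dcom} we have $(\bH^d)_c \cong (\bH_c)^d = M_2(\bC)^d$ unitally completely order isomorphically, so $T_c$ may be regarded as a map $M_2(\bC) \to M_2(\bC)^d$. Writing $z = y_1 + iy_2$ and $w = x_1 + ix_2$ with $y_i, x_j \in \bH$, and using that the quaternion product on $\bH$ extends to matrix multiplication in $M_2(\bC)$ together with $\varphi_c = \tfrac{1}{2}\mathrm{tr}$, a short calculation gives
\[
    T_c(z)(w) \;=\; \tfrac{1}{2}\mathrm{tr}(zw), \qquad z, w \in M_2(\bC).
\]
Thus $T_c$ coincides with the classical self-duality map $A \mapsto \Phi_A$, $\Phi_A(X) = \tfrac{1}{2}\mathrm{tr}(AX)$, which is the well-known unital complete order isomorphism $M_2(\bC) \to M_2(\bC)^d$ sending $I$ to $\varphi_c$. (For the order claim one checks via Choi's theorem that the Choi matrix of $\Phi_A$ is $\tfrac{1}{2}A^T$, so $\Phi_A$ is completely positive iff $A \geq 0$.) Since positivity in $M_n(\bH)$ (resp.\ $M_n(\bH^d)$) coincides with positivity in $M_n(\bH_c)$ (resp.\ $M_n((\bH^d)_c)$), as noted at the start of Section \ref{rcast}, $T_c$ being a unital complete order isomorphism forces $T$ to be one as well.

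Finally, since unital surjective complete order isomorphisms between operator systems are complete isometries, the order unit norm on $\bH^d$ corresponds via $T^{-1}$ to the operator norm on $\bH$, namely the Euclidean norm $|a+bi+cj+dk| = \sqrt{a^2+b^2+c^2+d^2}$. To compare with the Banach dual norm, let $\varphi_1, \varphi_2, \varphi_3 \in \bH^d$ pick off the coefficients of $i, j, k$, respectively. For $\psi = \alpha\varphi + \beta_1\varphi_1 + \beta_2\varphi_2 + \beta_3\varphi_3$ we have $\psi(a+bi+cj+dk) = \alpha a + \beta_1 b + \beta_2 c + \beta_3 d$, and Cauchy--Schwarz on $\bR^4$ gives $\|\psi\|_* = \sqrt{\alpha^2+\beta_1^2+\beta_2^2+\beta_3^2}$. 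A direct calculation yields $T^{-1}(\psi) = \alpha - \beta_1 i - \beta_2 j - \beta_3 k$, which has the same Euclidean norm, so the two norms coincide. The main computational step is the identity $T_c(z)(w) = \tfrac{1}{2}\mathrm{tr}(zw)$ identifying the complexification of $T$ with the $M_2(\bC)$ self-duality pairing; the remaining steps are direct algebra or standard facts.
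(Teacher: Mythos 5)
There is a genuine gap, and it is exactly the convention issue that Section \ref{Duality} opens by warning about. Your pairing $T(y)(x)=\varphi(yx)$ complexifies to the \emph{trace} pairing $T_c(z)(w)=\tfrac12\mathrm{tr}(zw)$, but with the modern matrix ordering on the dual (the one used throughout the paper, where $[f_{kl}]\in M_m(\cS^d)^+$ iff $x\mapsto[f_{kl}(x)]$ is completely positive, i.e.\ the pairing $\sum_{ij}\varphi_{ij}(x_{ij})$), the complete order isomorphism $M_2(\bC)\to M_2(\bC)^d$ of \cite[Theorem 6.2]{PTT} is $A\mapsto(X\mapsto\sum_{ij}a_{ij}x_{ij})=\mathrm{tr}(A^TX)$, not $A\mapsto\mathrm{tr}(AX)$. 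Your map is the correct one precomposed with the transpose: $T_c=\tfrac12\gamma\circ\tau$ where $\tau(z)=z^T$. Since $\gamma$ is a complete order isomorphism and $\tau$ is positive but not completely positive, $T_c$ is not completely positive (at level $m\geq 2$ the Choi matrix of $X\mapsto[\mathrm{tr}(A_{kl}X)]$ is a partial transpose of $[A_{kl}]$, and partial transposition does not preserve positivity on $M_2\otimes M_2$). Your Choi-matrix check only verifies the \emph{first} level of the dual ordering — that $\Phi_A$ is a positive functional iff $A\geq 0$ — which is where the two conventions happen to agree; it says nothing about $M_m(M_2^d)^+$ for $m\geq 2$, which is where they diverge. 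Moreover this defect survives restriction to $\bH$: a real selfadjoint map is completely positive iff its complexification is, and $\tau_c$ is the full transpose on $\bH_c=M_2(\bC)$, so $\tau|_{\bH}$ is not completely positive and neither is $T$. Hence $T$ is not a complete order isomorphism onto $\bH^d$ as the paper defines it, and the second assertion of the proposition (and with it your derivation of the norm identity) is not established.

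The repair is small and lands you essentially on the paper's proof: replace $\varphi(yx)$ by the entrywise pairing $y\mapsto\bigl(x\mapsto\tfrac12\sum_{ij}y_{ij}x_{ij}\bigr)$, equivalently $\varphi(y^Tx)$, whose complexification \emph{is} the map $\gamma$ of \cite[Theorem 6.2]{PTT}; one then checks that this carries $\bH$ onto the copy of $\bH^d$ inside $(\bH_c)^d$ and restricts the complete order isomorphism. Your concluding Banach-dual-norm computation via Cauchy--Schwarz is fine and matches the paper's observation that the correct pairing is the Euclidean inner product on $\bR^4$ up to signs, so that part carries over once the right map is in place.
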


\begin{proof} We use the standard embedding $\bH \subseteq M_2(\bC)$, and we will deduce the result from the analogous result for 
    $M_2(\Cdb)$ from Theorem 6.2 of \cite{PTT}.  (There ought to be a more direct argument that explicitly computes the completely positive maps from $\bH$ to $M_n$.) 
    
    Note that $$(\bH^d)_c \cong (\bH_c)^d \cong (M_2(\Cdb))^d,$$ and with respect to this correspondence we claim that the
    complete order isomorphism $\gamma : M_2(\bC) \to (M_2(\Cdb))^d$ from Theorem 6.2 of \cite{PTT} maps the real subspace $\bH$ onto
    the copy $E$ of $\bH^d$ in $(M_2(\Cdb))^d$.    Note that $\gamma(e_{ij})$
    is the `evaluation at the $i$-$j$-entry' functional, thus $\gamma$ is simply implementing the  duality pairing  mentioned at the start of Section \ref{Duality} on $M_2$.  This duality pairing on $\bH$ (the latter viewed as matrices) gives a map $\bH \to \bH^*$.  It is an exercise to check that the  complexification of this map is identifiable with $\gamma$.

    To see that $\bH$ is isometrically order isomorphic to its dual, notice that the map $\bH \to \bH^*$ in the proof above, divided by 2, is unital, and is given by the usual pairing $(a,b) \mapsto \frac{1}{2} \, \sum_{i,j} a_{i,j} b_{i,j}$.
However this is essentially the real Hilbert space inner product on $\bH$ (with some minus signs). Of course real Hilbert space is selfdual, and the  minus signs do not effect the conclusion.  
\end{proof}

The isometric phenomena here does not happen in the complex theory.   This has been observed by several authors (e.g. see \cite[Proposition 3.1]{PaulsenNg}), but for the readers convenience we supply a quick and simple  proof. 

\begin{theorem}
    Let $V$ be a complex finite-dimensional operator system with $\dim(V) > 1$. Then there does not exist an isometric order embedding from $V$ to $V^d$, the latter with its Banach dual norm.
\end{theorem}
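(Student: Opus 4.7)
The plan is to assume such an isometric order embedding $\Phi : V \to V^d$ exists and extract from it a state on $V$ whose properties force $\dim V = 1$. The one ingredient needed is the standard fact that a positive linear functional $\psi$ on a unital operator system satisfies $\|\psi\|_{V^*} = \psi(1_V)$ in the Banach dual norm; this follows by extending $\psi$ to a positive functional of the same norm on a containing $C^*$-algebra via Arveson's extension theorem.

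Given $\Phi$, I would set $f(v) = \Phi(v)(1_V)$ for $v \in V$. For $v \in V^+$ the functional $\Phi(v)$ is positive, so the norm identity above yields $\|\Phi(v)\|_{V^*} = \Phi(v)(1_V) = f(v)$; combining with $\|\Phi(v)\|_{V^*} = \|v\|$ from the isometry of $\Phi$, I obtain the key identity
\[
f(v) \;=\; \|v\| \qquad \text{for all } v \in V^+.
\]
In particular $f(1_V) = 1$, and since $f$ is linear (as $\Phi$ is) and positive, $f$ is a state on $V$.

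To finish, I would use that $\dim_{\bC} V > 1$ implies $\dim_{\bR} V_{\rm sa} > 1$ (since $V = V_{\rm sa} \oplus i V_{\rm sa}$), whence $V^+$ cannot be contained in the ray $\bR^+ \cdot 1_V$. One can therefore select $v \in V^+$ with $\|v\| = 1$ and $v \neq 1_V$. By the Archimedean property $1_V - v \in V^+$, so applying $f$ linearly to $1_V = v + (1_V - v)$ gives
\[
1 \;=\; f(1_V) \;=\; f(v) + f(1_V - v) \;=\; \|v\| + \|1_V - v\| \;=\; 1 + \|1_V - v\|,
\]
forcing $\|1_V - v\| = 0$ and hence $v = 1_V$, a contradiction.

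There is no substantial obstacle; the essence is that positive functionals on operator systems evaluate to the norm on the positive cone, which makes the norm behave like a linear functional on $V^+$, something that can only be sustained when $V_{\rm sa}$ is one-dimensional. The analogous argument breaks down for $V = \bH$ since $\bH_{\rm sa} = \bR \cdot 1$, so the positive cone is already a ray and no nonscalar positive element is available; this is precisely what allows Proposition \ref{hh} to hold in the real case.
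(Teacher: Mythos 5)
Your proof is correct, but it takes a genuinely different route from the paper's. The paper picks a norm-one selfadjoint $x$ that is neither positive nor negative, writes $\pi(x)=\psi_+-\psi_-$ using the Jordan-type decomposition of a selfadjoint functional with $\|\psi\|=\|\psi_+\|+\|\psi_-\|$, pulls $\psi_\pm$ back through $\pi^{-1}$ (this uses both finite-dimensionality, to get surjectivity of $\pi$, and the order-embedding hypothesis, to get positivity of the preimages), and squeezes $x$ between $-\|x_-\|1$ and $\|x_+\|1$ to force $\|x\|<1$. You instead evaluate at the unit: $f=\Phi(\cdot)(1_V)$ is a state with $f(v)=\|v\|$ on $V^+$ (via the standard identity $\|\psi\|=\psi(1)$ for positive functionals), so the norm is additive on the positive cone, which forces $V^+$ to be the ray $\bR^+1_V$ and hence $\dim V_{\rm sa}=1$ since $V_{\rm sa}=V^+-V^+$. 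Your argument is more economical in its hypotheses: it uses only that $\Phi$ is linear, positive and isometric — neither surjectivity nor positivity of $\Phi^{-1}$ is needed — and it avoids the decomposition of selfadjoint functionals entirely. It also isolates cleanly why the quaternions evade the conclusion in Proposition \ref{hh}: there $\bH^+$ already is a ray. Two small presentational points: the spanning fact $V_{\rm sa}=V^+-V^+$ (which is what turns $\dim_{\bR}V_{\rm sa}>1$ into the existence of a positive $v\notin\bR^+1_V$) deserves an explicit word, and for the norm identity on positive functionals the relevant extension result is Krein's theorem for positive functionals rather than Arveson's extension theorem, though either suffices.
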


\begin{proof}
    Suppose $\pi: V \to V^d$ is isometric and an order embedding. Now choose $x \in V_{\rm sa}$ neither positive nor `negative' with $\|x\|=1$. This is possible if $\dim(V)>1$. Indeed, if we represent $V$ as operators on $B(H)$ and if we choose $a \in V$ such that $a = a^*$ but $a$ is not a multiple of the identity, then the spectrum of $a$ contains at least two real numbers, say $r_1, r_2 \in \sigma(a)$. Then $a - \frac{1}{2}(r_1 + r_2)I$ has spectrum including both negative and positive values. Therefore $x=a - \frac{1}{2}(r_1 + r_2)I$ is neither positive nor negative. Rescaling, we may assume $\|x\|=1$.
    
    Let $\pi(x) = \psi \in V^d$. Then $\psi$ is self-adjoint but not positive nor `negative', since $\pi$ is a complete order embedding. As we said in the Introduction, there exist positive selfadjoint linear functionals $\psi_+, \psi_-$ such that $\psi = \psi_+ - \psi_-$ and $\|\psi\| = \|\psi_+\| + \|\psi_-\|$. If $\pi$ is isometric, then $\|\psi\| = 1$ and thus $\|\psi_+\|, \|\psi_-\| < 1$. Now $x = x_+ - x_-$ where $x_+ = \pi^{-1}(\psi_+)$ and $x_- = \pi^{-1}(\psi_-)$. So \[ -\|x_-\|I \leq -x_- \leq x \leq x_+ \leq \|x_+\|I, \] and thus $\|x\| \leq \max \{ \|x_+\|, \|x_-\| \} < 1$. This contradicts $\|x\|=1$.
\end{proof}

(See also e.g.\ \cite[Proposition 1.1]{Bknw} for a simpler related result.)

The bidual of a real operator system is always a real operator system.  Indeed if $i : \cS \to A$ is
the inclusion into a real unital C$^*$-algebra, then $i^{**} : \cS^{**} \to A^{**}$ is a 
unital complete isometry, hence a complete order embedding, into a real $W^*$-algebra.

\section{Minimal and maximal operator system structures} \label{nmin}

A \textit{complex AOU space} is a triple $(V,V^+, e)$ where $V$ is a $*$-vector space, $V^+$ is a proper cone in $V_{\rm sa}$, and $e$ is an archimedean order unit. In \cite{PTT}, it was shown that for any complex AOU space $(V,V^+,e)$, there exists a matrix ordering $\{C_n\}$ making $(V,\{C_n\},e)$ a complex operator system with $C_1 = V^+$. Moreover, there are \textit{maximal} and \textit{minimal} examples. The maximal operator system structure, denoted $\text{OMAX}(V)$, has the smallest matrix ordering (and hence the largest norm) making $V$ into an operator system, while the minimal operator system structure, denoted $\text{OMIN}(V)$, has the largest matrix ordering (and hence the smallest norm) making $V$ into an operator system. Similarly, if one begins with an $*$-vector space and a cone $M_k(V)^+$ in $M_k(V)_{\rm sa}$, one can define minimal and maximal operator system structures $\text{OMIN}_k(V)$ and $\text{OMAX}_k(V)$ whose matrix ordering agree at the $k$-th cone (c.f. \cite{ART23}).

In this section, we consider the real version of these constructions. While the constructions are very similar to the complex constructions, we will see that the results are quite different and do not follow from the obvious complexifications. We will avoid developing an explicit theory of AOU spaces (and, more generally, $k$-AOU spaces as in \cite{ART23}). Instead, we will start with an operator system $V$ and define new matrix-ordered vector spaces $\text{OMIN}_k(V)$ and $\text{OMAX}_k(V)$, keeping in mind that only the positive cone at the $k$-th level is needed for these constructions.

\subsection{Maximal operator system structure} 
\label{maosys}

Let $(V,\{C_n\},e)$ be a real operator system. For $k \in \mathbb{N}$, we define $\text{OMAX}_k(V)$ to be the triple $(V, \{C_n^{k-max}\}, e)$ where $\{C_n^{k-max}\}$ is the archimedean closure of the matrix ordering $\{D_n^{k-max}\}$ defined by \[ D_n^{k-max} := \{ \alpha^T \text{diag}(P_1, P_2, \dots, P_m) \alpha : \alpha \in M_{mk,n}(\mathbb{R}); P_1, \dots, P_m \in C_k \}. \] 

\medskip

{\bf Remark.}\ If $V$ is finite dimensional then it can be shown (see Theorem \ref{fdom})
that the archimedean closure is unnecessary, and indeed that $D_n^{k-max}$ is actually closed.

\begin{proposition} \label{Prop91} 
    Suppose that $(V,\{C_n\},e)$ is a real operator system and let $k \in \mathbb{N}$. Then $\{C_n^{k-max}\}$ is an matrix ordering 
    on $V$  for which $e$ is a matricially archimedean unit.  \end{proposition}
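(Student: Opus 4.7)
The plan is to verify directly that $\{D_n^{k-max}\}$ is itself a matrix ordering and that $e_n \in D_n^{k-max}$ for every $n$, then invoke Remark~2 following Proposition~\ref{Prop: Arch closure of matrix ordering}, which gives that the archimedean closure of a matrix ordering with respect to any positive nonzero vector $e$ is again a matrix ordering for which $e$ is matricially archimedean. This reduces everything to a handful of explicit manipulations with the defining expressions $\alpha^T {\rm diag}(P_1,\ldots,P_m) \alpha$.

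First I would check the cone properties of $D_n^{k-max}$. Self-adjointness holds since each $P_i \in C_k$ is self-adjoint and conjugation $\alpha^T(\cdot)\alpha$ by a real matrix preserves self-adjointness. Closure under positive scalar multiplication comes from rescaling $\alpha$ by $\sqrt{c}$. Closure under addition uses the concatenation identity
\[
\alpha^T {\rm diag}(P_1,\ldots,P_m) \alpha + \beta^T {\rm diag}(Q_1,\ldots,Q_\ell) \beta = \begin{bmatrix} \alpha \\ \beta \end{bmatrix}^T {\rm diag}(P_1,\ldots,P_m,Q_1,\ldots,Q_\ell) \begin{bmatrix} \alpha \\ \beta \end{bmatrix},
\]
which presents the sum as an element of $D_n^{k-max}$ with $m+\ell$ blocks.

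Next I would verify the two matrix-ordering axioms. The direct-sum axiom $D_n^{k-max} \oplus D_p^{k-max} \subseteq D_{n+p}^{k-max}$ follows from the same trick with $\alpha \oplus \beta$ replacing the vertical stacking above. For compressions, given $x = \alpha^T {\rm diag}(P_1,\ldots,P_m) \alpha \in D_n^{k-max}$ and $\gamma \in M_{n,p}(\bR)$, the associativity identity $\gamma^T x \gamma = (\alpha\gamma)^T {\rm diag}(P_1,\ldots,P_m) (\alpha\gamma)$ places $\gamma^T x \gamma$ in $D_p^{k-max}$, since $\alpha\gamma \in M_{mk,p}(\bR)$.

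Finally I would show $e_n \in D_n^{k-max}$ for every $n$. Since $e$ is a matrix order unit for $\{C_n\}$, applying the order unit property to $0 \in M_k(V)_{\rm sa}$ yields $t \, e_k \in C_k$ for some $t > 0$, and then $e_k \in C_k$ since $C_k$ is a cone. Choose $m$ with $mk \geq n$, and let $\alpha = \begin{bmatrix} I_n \\ 0 \end{bmatrix} \in M_{mk,n}(\bR)$, $P_1 = \cdots = P_m = e_k$. A direct computation yields $\alpha^T {\rm diag}(e_k,\ldots,e_k) \alpha = (\alpha^T \alpha) \otimes e = I_n \otimes e = e_n$, so $e_n \in D_n^{k-max}$. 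In particular $e = e_1 \in D_1^{k-max}$ is a positive nonzero vector, and applying Remark~2 after Proposition~\ref{Prop: Arch closure of matrix ordering} finishes the proof. The whole argument is routine; the only mild subtlety is keeping track of block sizes and choosing $m \geq \lceil n/k \rceil$ to accommodate the case $n > k$.
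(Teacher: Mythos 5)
Your proof is correct and follows essentially the same route as the paper's: verify that $\{D_n^{k-max}\}$ is a matrix ordering via the identities $x \oplus y = (\alpha\oplus\beta)^T\mathrm{diag}(P_1,\dots,P_m,Q_1,\dots,Q_\ell)(\alpha\oplus\beta)$ and $\gamma^T x\gamma = (\alpha\gamma)^T\mathrm{diag}(P_1,\dots,P_m)(\alpha\gamma)$, then pass to the archimedean closure using Proposition~\ref{Prop: Arch closure of matrix ordering} and its Remark~2. Your explicit check that $e_n\in D_n^{k-max}$ (so that $e$ is a positive nonzero element, as Remark~2 requires) is a detail the paper leaves implicit, and is a worthwhile addition.
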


\begin{proof}
    To see that $\{C_n^{k-max}\}$ is a matrix ordering, it suffices to check that $\{D_n^{k-max}\}$ is a matrix ordering, since $e$ is 
    a  matricially archimedean unit for its archimedean closure. Suppose $x \in D_n^{k-max}$ and $y \in D_m^{k-max}$. Then $x = \alpha^T \text{diag}(P_1, \dots, P_n) \alpha$ and $y = \beta^T \text{diag}(Q_1, \dots, Q_m) \beta$ for scalar matrices $\alpha$ and $\beta$ and $P_1, \dots P_n, Q_1, \dots, Q_m \in C_k$. Then $$x \oplus y = (\alpha \oplus \beta)^T \text{diag}(P_1, \dots, P_n, Q_1, \dots, Q_m) (\alpha \oplus \beta).$$ So $x \oplus y \in D_{n+m}^{k-max}$. Now suppose $\gamma \in M_{n,j}(\bR)$. Then $$\gamma^T x \gamma = (\alpha \gamma)^T \text{diag}(P_1, \dots, P_n) (\alpha \gamma) \in D_j^{k-max}.$$ Hence $\{D_n^{k-max}\}$ is a matrix ordering.
\end{proof}

The resulting matrix-ordered $*$-vector space $\text{OMAX}_k(V)$ enjoys the following universal properties.  In this section all $k$-positive maps are assumed to be selfadjoint. 

\begin{proposition} \label{Prop: OMAX properties}
    Suppose that $(V,\{C_n\},e)$ is a real operator system and let $k \in \mathbb{N}$.
    \begin{enumerate}
        \item Suppose that $\varphi: V \to W$ is a $k$-positive
        map into a matrix-ordered  $*$-vector space $W$.  Suppose that either $W$ has an archimedean matrix order unit $f$, or that
        $Q = \varphi(e)$ is `matricially archimedean'  (see Remark 2 after Proposition {\rm \ref{Prop: Arch closure of matrix ordering}}).
        Then $\varphi: \text{OMAX}_k(V) \to W$ is completely positive. 
        \item Suppose $\{C_n'\}$ is a matrix ordering on $V$ for which $e$ is a matricially archimedean unit, 
        such that $C_k' = C_k$.  Then $C_n^{k-max} \subseteq C_n'$ for all $n \in \mathbb{N}$ (i.e.\ $\text{OMAX}_k$ is the smallest matrix ordering 
        on $V$ with matricially archimedean unit $e$  which agrees with $\{C_n\}$ at level $k$). 
    \end{enumerate}
    Consequently, $\{C_n^{k-max}\}$ is a proper matrix ordering, since $\{C_n\}$ is a proper matrix ordering.
\end{proposition}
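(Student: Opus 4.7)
My plan is to unwind the definition of $C_n^{k-\max}$ (first the raw cone $D_n^{k-\max}$, then its archimedean closure with respect to $e$) and then verify both universal properties by direct computation, deducing propriety at the end as a corollary of (2) applied to $\{C_n'\} = \{C_n\}$.

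\smallskip

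For part (1), I would start with $x \in C_n^{k-\max}$, so that for every $\varepsilon > 0$ we have $x + \varepsilon e_n \in D_n^{k-\max}$, meaning $x + \varepsilon e_n = \alpha^T \mathrm{diag}(P_1, \dots, P_m) \alpha$ for some $\alpha \in M_{mk,n}(\Rdb)$ and $P_i \in C_k$. Applying $\varphi^{(n)}$ and using that scalar conjugation and block-diagonal sums commute with $\varphi^{(n)}$, I get
\[ \varphi^{(n)}(x) + \varepsilon\, \varphi(e)_n = \alpha^T \mathrm{diag}(\varphi^{(k)}(P_1), \dots, \varphi^{(k)}(P_m))\, \alpha. \]
Since $\varphi$ is $k$-positive each $\varphi^{(k)}(P_i)$ lies in $M_k(W)^+$, and since $W$ carries a matrix ordering the right-hand side lies in $M_n(W)^+$. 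Thus $\varphi^{(n)}(x) + \varepsilon\, \varphi(e)_n \in M_n(W)^+$ for every $\varepsilon > 0$. If $\varphi(e)$ is matricially archimedean this immediately forces $\varphi^{(n)}(x) \in M_n(W)^+$. If instead $W$ has an archimedean matrix order unit $f$, then $f$ is in particular an order unit, so there exists $t > 0$ with $\varphi(e) \leq t f$; substituting $\varepsilon / t$ for $\varepsilon$ and adding the positive element $\varepsilon f_n - (\varepsilon/t)\varphi(e)_n$ gives $\varphi^{(n)}(x) + \varepsilon f_n \in M_n(W)^+$ for all $\varepsilon > 0$, and then the archimedean property of $f$ finishes the argument.

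\smallskip

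For part (2), let $\{C_n'\}$ be any matrix ordering on $V$ with matricially archimedean unit $e$ satisfying $C_k' = C_k$. A direct computation shows $D_n^{k-\max} \subseteq C_n'$: any element $\alpha^T \mathrm{diag}(P_1, \dots, P_m) \alpha$ with $P_i \in C_k = C_k'$ lies in $C_n'$ because $\{C_n'\}$ is closed under direct sums and under scalar congruence $\alpha^T (\cdot) \alpha$. If $x \in C_n^{k-\max}$, then $x + \varepsilon e_n \in D_n^{k-\max} \subseteq C_n'$ for every $\varepsilon > 0$, and since $e$ is matricially archimedean for $\{C_n'\}$ we conclude $x \in C_n'$. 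Thus $C_n^{k-\max} \subseteq C_n'$.

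\smallskip

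Finally, to deduce propriety of $\{C_n^{k-\max}\}$, I would apply (2) with $\{C_n'\} = \{C_n\}$ (the original operator system structure, which trivially satisfies the hypotheses). This gives $C_n^{k-\max} \subseteq C_n$ for every $n$, and since $\{C_n\}$ is proper so is $\{C_n^{k-\max}\}$. The only subtle step is the reduction from $\varphi^{(n)}(x) + \varepsilon \varphi(e)_n \geq 0$ to the actual archimedean hypothesis on $W$, but this is handled cleanly by the dichotomy in the statement.
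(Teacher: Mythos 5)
Your proof is correct and follows essentially the same route as the paper: for (1) you push the approximate decomposition $x+\varepsilon e_n = \alpha^T\mathrm{diag}(P_1,\dots,P_m)\alpha$ through $\varphi^{(n)}$ and then handle the two archimedean hypotheses by exactly the paper's dichotomy (comparing $\varphi(e)$ to $tf$ when $W$ has an archimedean matrix order unit), and the deduction of propriety via (2) applied to $\{C_n\}$ itself is the paper's argument as well. The only cosmetic difference is in (2), where you verify $D_n^{k-\max}\subseteq C_n'$ directly and then invoke matricial archimedeanity of $e$ for $\{C_n'\}$, whereas the paper deduces (2) by applying (1) to the $k$-positive identity map $V\to (V,\{C_n'\})$; these are the same computation in different clothing.
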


\begin{proof} (1)\ 
    First suppose that $\varphi: V \to W$ is $k$-positive. Suppose $x \in M_n(\text{OMAX}_k(V))^+$ and let $\epsilon > 0$. Let $Q = \varphi(e) \in W^+$.  If $W$ is as in the hypothesis fix $t > 0$ such that $t f \otimes I_n - Q \in W^+$, where $f =1_W$. If $x + \epsilon t^{-1} e \otimes I_n = \alpha \text{diag}(P_1, \dots, P_n) \alpha$ for $\alpha$ a scalar matrix and $P_1, \dots, P_n \in M_k(V)^+$, then $$\varphi^{(n)}(x) + \epsilon t^{-1} Q \otimes I_n = \varphi^{(n)}(x + \epsilon t^{-1} e \otimes I_n) = \alpha^* \text{diag}(\varphi^{(k)}(P_1), \dots, \varphi^{(k)}(P_n)) \alpha \geq 0.$$ Thus $$\varphi^{(n)}(x) + \epsilon f \otimes I_n = \varphi^{(n)}(x) + \epsilon t^{-1} Q \otimes I_n + \epsilon f \otimes I_n - \epsilon t^{-1} Q \geq 0$$ for any $\epsilon > 0.$ It follows that $\varphi^{(n)}(x) \geq 0$, so $\varphi$ is completely positive.  The other case is similar: as before  
    $$\varphi^{(n)}(x) + \epsilon \, Q \otimes I_n = \varphi^{(n)}(x + \epsilon  e \otimes I_n)  \geq 0, \qquad \epsilon > 0,$$ and the hypothesis gives 
    $\varphi^{(n)}(x) \geq 0$. 
    This proves (1).

    (2)\ 
    Since $M_k(\text{OMAX}_k(V))^+ = M_k(V)^+$, 
    then because the identity map on $V$ is $k$-positive we will have by (1) that $M_n(\text{OMAX}_k(V))^+ \subseteq C_n'$ for any matrix ordering $\{C_n'\}$ with matricially archimedean unit $e$ such that $C_k' = M_k(V)^+$. 
    
    The final statement follows from $C_n^{k-max} \cap -C_n^{k-max} \subseteq C_n \cap -C_n = \{0\}$.
\end{proof}

\begin{corollary} If $V$ is a real operator system and $k \geq 2$ then $\text{OMAX}_k(V)$ is a real operator system.   \end{corollary}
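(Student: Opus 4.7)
The plan is to verify the three axioms in the definition of an abstract real operator system: a proper matrix ordering, an archimedean matrix order unit, and the involution compatibility (which is inherited from $V$). Two of these come essentially for free from the preceding results. Proposition \ref{Prop91} already gives that $\{C_n^{k-max}\}$ is a matrix ordering on $V$ for which $e$ is matricially archimedean, and the final sentence of Proposition \ref{Prop: OMAX properties} gives that $\{C_n^{k-max}\}$ is proper. So the only axiom left to check is that $e$ is a matrix order unit (not merely an archimedean element) for $\{C_n^{k-max}\}$.

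To handle this remaining point, I would invoke the earlier Proposition \ref{2enf}, which reduces the matrix order unit property at all levels to the single statement that $I_2 \otimes e$ is an order unit for $M_2(V)_{\rm sa}$ with respect to the cone $C_2^{k-max}$. This is precisely where the hypothesis $k \geq 2$ enters. Given $x \in M_2(V)_{\rm sa}$, I would pad with zeros up to size $k$ to form $x \oplus 0_{k-2} \in M_k(V)_{\rm sa}$, then use the original operator system structure on $V$ to pick $t > 0$ with
\[
t e_k + x \oplus 0_{k-2} = (t e_2 + x) \oplus t e_{k-2} \in C_k = M_k(V)^+,
\]
and then compress back to size $2$ via $\alpha = \begin{bmatrix} I_2 \\ 0 \end{bmatrix} \in M_{k,2}(\mathbb{R})$ in the definition of $D_2^{k-max}$ (taking $m = 1$). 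The compression yields $t e_2 + x \in D_2^{k-max} \subseteq C_2^{k-max}$, as required.

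The step I anticipate as the main obstacle is spotting that Proposition \ref{2enf} is the right tool. A direct attempt to establish the matrix order unit property at every level $n$ would stall for $n > k$: one can realize a generic selfadjoint element of $M_n(V)$ inside a generator $\alpha^T \mathrm{diag}(P_1,\ldots,P_m)\alpha$ of $D_n^{k-max}$ by the pad-and-compress trick only when $n \leq k$, and it is not obvious how to recover higher levels directly from the definition. Routing through Proposition \ref{2enf} bypasses this entirely by pushing the problem down to level $2$, which is precisely the smallest level where the hypothesis $k \geq 2$ supplies the needed ambient control from $V$. Once this last axiom is secured, the three conditions are satisfied and the corollary follows.
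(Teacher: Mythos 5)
Your proposal is correct and follows essentially the same route as the paper: the paper also observes that properness and the archimedean property are already established, notes that $C_2 = C_2^{k\text{-}max}$ (your pad-and-compress argument is exactly the content of the nontrivial inclusion needed here), and then applies Proposition \ref{2enf} to upgrade the level-$2$ order unit to a matrix order unit. Your write-up just supplies the details the paper leaves as ``easy to see.''
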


\begin{proof} Indeed we saw above that it has a proper matrix ordering and is (matricially) archimedean.    We already saw that 
$C_k = C_k^{k-max}$, but  it is easy to see that similarly 
 $C_2 = C_2^{k-max}$.  From Proposition \ref{2enf} we deduce that $e$ is a matrix order unit
for  $\text{OMAX}_k(V)$.   So $\text{OMAX}_k(V)$ is a real operator system.  \end{proof}

\begin{corollary} \label{ifn}
    Let $(V, \{C_n\}, e)$ be a real operator system and let $k \in \mathbb{N}, k \geq 2$. Then $V 
    = \text{OMAX}_k(V)$ if and only if for every operator system $W$, every $k$-positive map $\varphi: V \to W$ is completely positive.  
    A similar statement holds  for $k = 1$, but now $W$ should be a  matrix ordered $*$-vector space with a specified matricially archimedean unit $f$, and we also assume that $\varphi(e) = f$. 
\end{corollary}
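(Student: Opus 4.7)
The plan is to deduce both directions directly from Proposition \ref{Prop: OMAX properties}, using the identity map $\iota: V \to \text{OMAX}_k(V)$ as the main test object. First I would verify that $C_k = C_k^{k-max}$, so that $\iota$ is automatically $k$-positive. The inclusion $C_k \subseteq D_k^{k-max}$ is immediate by taking $m = 1$ and $\alpha = I_k$ in the definition of $D_k^{k-max}$, while the reverse inclusion $D_k^{k-max} \subseteq C_k$ holds because $\{C_n\}$ is already closed under direct sums and conjugation by real scalar matrices. Since $C_k$ is already archimedean (as part of the given operator system structure), passing to the archimedean closure changes nothing at level $k$.

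For the forward direction, suppose $V = \text{OMAX}_k(V)$ and let $\varphi: V \to W$ be $k$-positive. I would simply cite Proposition \ref{Prop: OMAX properties}(1): when $k \geq 2$, the operator system $W$ has an archimedean matrix order unit and the first case of that proposition applies; when $k = 1$, the hypothesis $\varphi(e) = f$ with $f$ matricially archimedean places us in the second case.

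For the reverse direction, I would apply the assumed implication to $\iota$ itself. By the corollary preceding the statement, $\text{OMAX}_k(V)$ is a real operator system whenever $k \geq 2$, and for $k = 1$ it is a matrix-ordered $*$-vector space whose unit $e = \iota(e)$ is matricially archimedean by construction, so either way the target qualifies under the hypothesis. Since $\iota$ is $k$-positive by the first step, the hypothesis forces it to be completely positive, i.e., $C_n \subseteq C_n^{k-max}$ for every $n$. The reverse containment $C_n^{k-max} \subseteq C_n$ is immediate from Proposition \ref{Prop: OMAX properties}(2) applied with the comparison ordering $\{C_n'\} = \{C_n\}$ itself (which has matricially archimedean unit $e$ and trivially agrees with $\{C_n\}$ at level $k$). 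Combining gives $C_n = C_n^{k-max}$ for all $n$, as desired.

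The only genuine subtlety is the $k = 1$ case of the reverse direction: $\text{OMAX}_1(V)$ need not be a full operator system, since there is no guarantee that $e$ remains a matrix order unit at the higher levels of $\{C_n^{1-max}\}$. This is precisely why the statement of the corollary widens the admissible class of targets $W$ in the $k = 1$ case, so that $\iota$ still qualifies as a legitimate test map; once that point is acknowledged, the rest is bookkeeping on top of Proposition \ref{Prop: OMAX properties}.
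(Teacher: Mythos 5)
Your proof is correct and follows essentially the same route as the paper: both directions are read off from Proposition \ref{Prop: OMAX properties}, with the identity map $V \to \text{OMAX}_k(V)$ (which is $k$-positive because $C_k = C_k^{k-max}$) serving as the test map for the converse. Your extra verification that the cones agree at level $k$ and your remark on why the $k=1$ case needs the wider class of targets are details the paper leaves implicit, but the argument is the same.
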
 

\begin{proof} 
    If $V \cong \text{OMAX}_k(V)$, then every $k$-positive map $\varphi: V \to W$ is 
    completely positive by Proposition \ref{Prop: OMAX properties} part (1).  
    If $k=1$ we also take $\varphi$ to be unital. 
        For this we only need $W$ to be a  matrix ordered $*$-vector space with matricially archimedean unit $f$. 
    On the other hand, if every $k$-positive map $\varphi: V \to W$ is completely positive for any $W$, then this holds for the identity map from $V$ to $\text{OMAX}_k(V)$. Since $M_k(V)^+ = M_k(\text{OMAX}_k(V))^+$, this implies the matrix ordering on $V$ is contained in the matrix ordering $\{C_n^{k-max}\}$. By Proposition \ref{Prop: OMAX properties} part (2), $V \cong \text{OMAX}_k(V)$.  
\end{proof}

{\bf Remark.}  In the last corollary if $k \geq 2$, or if $k = 1$ and $\text{OMAX}(V)$ is
an operator system, then one may take $W = M_m$ for all $m \in \bN$.   To see this recall that any 
 $u : V \to B(H)$ is $k$-positive or completely positive if and only if $P_F u(\cdot)_{|F}$ is $k$-positive or completely positive for every finite dimensional $F \subseteq H$, and that $B(F) \cong M_m$ where $m = {\rm dim} \, F$.  See also \cite[Theorem 6.6]{Pnbook}. 
 
\begin{corollary} \label{isc}  Let $V$ be a real operator system, and suppose that either  $k  \geq 2$, or that $k = 1$ and $\text{OMAX}(V)$ is an 
operator system.  If  $x \in M_n(V)_{\rm sa}$ then $x \in M_n(\text{OMAX}_k(V))^+$ if and only if $\Psi^{(n)}(x) \geq 0$ for every 
$k$-positive map $\Psi : V \to W$ into an operator system.   This is also true if we replace $W$ by $M_r$, insisting on  all $r \in \bN$.
These are both also true if we further require that $\Psi$ is unital. 
  \end{corollary}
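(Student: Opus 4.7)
The plan is to combine the universal factorization property of $\mathrm{OMAX}_k(V)$ from Proposition \ref{Prop: OMAX properties}(1) with the ``matrix states determine the order'' principle of Corollary \ref{plem}. The standing hypothesis that either $k \geq 2$, or ($k=1$ and $\mathrm{OMAX}(V)$ is an operator system), together with the corollary immediately preceding this one, ensures that $\mathrm{OMAX}_k(V)$ is itself a real operator system; this will be what lets us apply Corollary \ref{plem} to it.

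For the forward direction in all three variants, I would take $x \in M_n(\mathrm{OMAX}_k(V))^+$ and let $\Psi : V \to W$ be any $k$-positive map into an operator system $W$ (possibly $W = M_r$, possibly unital). Since $W$ has an archimedean matrix order unit, Proposition \ref{Prop: OMAX properties}(1) makes $\Psi$ automatically completely positive when viewed as a map out of $\mathrm{OMAX}_k(V)$, and so $\Psi^{(n)}(x) \geq 0$. The reverse direction in the first (general $W$) variant is then immediate: apply the hypothesis to $\mathrm{id} : V \to \mathrm{OMAX}_k(V)$, which is $k$-positive because $M_k(V)^+ = M_k(\mathrm{OMAX}_k(V))^+$ by construction, is unital, and has operator-system codomain. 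Then $\mathrm{id}^{(n)}(x) = x$ lies in $M_n(\mathrm{OMAX}_k(V))^+$ as required.

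The only case needing more than one line is the reverse direction for $W = M_r$. Here I would argue by contrapositive: suppose $x \notin M_n(\mathrm{OMAX}_k(V))^+$. Since $\mathrm{OMAX}_k(V)$ is an operator system and $x = x^*$, the second assertion of Corollary \ref{plem} supplies some $r$ (in fact $r = n$) and a ucp map $u : \mathrm{OMAX}_k(V) \to M_r$ with $u^{(n)}(x) \not\geq 0$. Precomposing with the $k$-positive unital identity $V \to \mathrm{OMAX}_k(V)$ yields a map $V \to M_r$ which is ucp, hence in particular $k$-positive and unital, and which witnesses the failure of the hypothesis. This single construction simultaneously handles both the $M_r$ variant and the unital strengthening.

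I do not expect a real obstacle; the corollary is essentially a clean synthesis of Proposition \ref{Prop: OMAX properties}(1) and Corollary \ref{plem}. The only point requiring any care is the verification that $\mathrm{OMAX}_k(V)$ is genuinely an operator system so that Corollary \ref{plem} applies, which is precisely the reason the extra hypothesis on $\mathrm{OMAX}(V)$ is imposed in the $k = 1$ case.
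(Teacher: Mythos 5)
Your proof is correct, and the forward direction and the converse for general $W$ (via the identity map $V \to \mathrm{OMAX}_k(V)$) are exactly the paper's argument. The one place you diverge is the converse for the $W = M_r$ variant: the paper reduces it to the general case by the compression trick recorded in the Remark after Corollary \ref{ifn} (any $k$-positive map into $B(H)$ is controlled by its compressions $P_F u(\cdot)|_F$ to finite-dimensional subspaces, each landing in some $M_m$), whereas you argue by contrapositive, applying the second assertion of Corollary \ref{plem} to the operator system $\mathrm{OMAX}_k(V)$ to produce a separating ucp map into $M_n$ and then restricting it to a unital $k$-positive map on $V$. Both routes are valid and rest on the same standing hypothesis that $\mathrm{OMAX}_k(V)$ is genuinely an operator system; your version has the small bonus of exhibiting an explicit witness with $r = n$ and of handling the unital strengthening in the same stroke, while the paper's version avoids invoking the separation result and instead leans on the already-stated Remark. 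The ingredient you use is essentially the one the paper itself deploys in the proof of Theorem \ref{fdom} to identify $D_n$ with $C_n'$, so nothing here is out of step with the surrounding development.
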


\begin{proof}
  The `forward direction' of these  follows from Proposition \ref{Prop: OMAX properties} part (1) again.  
  For the converse, take $W = \text{OMAX}_k(V)$ and $\Psi = I_V : V \to W$, which is  $k$-positive and unital, to obtain $x \in M_n(\text{OMAX}_k(V))^+$.
 The other cases follow by replacing $W$ first by $B(H)$, and then by $M_r$ as in the Remark after Corollary \ref{ifn}. \end{proof}

 Let $C_n'$ be the cone implicit in the Corollary, namely the $x \in M_n(V)_{\rm sa}$ with $\Psi^{(n)}(x) \geq 0$ for all  $k$-positive $\Psi \to B(H)$.
 
In the above, we have refrained from stating that $\text{OMAX}_k(V)$ is a real operator system when $k=1$. In particular, we have not claimed that it has a matrix order unit. The following example shows that when $k=1$, a matrix order unit is not guaranteed.

\begin{example} \label{ex. OMAX(bH)}
\emph{ 
    Consider  $\bH = \text{span}\{I, X, Y, Z \}$,  the quaternions with its natural matrix ordering inherited from $M_2(\bC)$. We claim that 
    \[ M_n(\text{OMAX}(\bH))^+ = \{ P \otimes I : P \in M_n(\mathbb{R})^+ \}. \]
    Indeed, if $T = A \otimes I + B \otimes X + C \otimes Y + D \otimes Z \in M_n(\text{OMAX}(\bH))^+$, then for every $\epsilon > 0$, \[ T + \epsilon I_n \otimes I = \alpha^T \text{diag}( r_1 I, \dots, r_n I) \alpha \] for some positive $r_i$,
    and thus $T + \epsilon I_n \otimes I$ is a sum of matrices of the form $P_i \otimes I$ with $P_i \in M_n(\mathbb{R})^+$. It follows that $T$ itself is a positive real matrix tensored with the identity. }
    
  \emph{  This matrix ordering does not make $\bH$ into an operator system. While it is archimedean, the unit fails to be an order unit: for example, there is no $t > 0$ such that \[ \begin{bmatrix} tI & -X \\ X & tI \end{bmatrix} \in  M_2(\text{OMAX}(\bH))^+ \] where $X$ is the canonical skew-hermitian basis element.}

   \emph{ Furthermore, we claim that for $\bH$ there is no smallest matrix ordering $\{C_n\}$ making $\bH$ into an operator system and such that $C_1 = \bH^+ = \mathbb{R}^+ \otimes I$. To see this notice that the mapping $\varphi_n: \bH \to \bH$ given by $\varphi_n(aI + bX + cY + dZ) = aI + n(aX + cY + dZ)$ is a unital order embedding. The matrix ordering $C_k(n) = \phi_n^{-1}(M_n(\bH)^+)$ is a matrix ordering on  $\bH$ with $C_1(n) = \mathbb{R}$ and making $\bH$ into an operator system. But it is easy to argue by looking at the principal $2 \times 2$ submatrices that $\cap_n C_k(n) = M_k(\text{OMAX}(\bH))^+$ for all $k$. }
\end{example}

{\bf Remark:} The only important property used in the example above is that $\bH$ has skew-adjoint elements. Repeating these arguments for any real operator system with non-zero skew adjoint elements, $\text{OMAX}(V)$ will lack an order unit and hence $V$ has no maximal order structure agreeing with $V^+$. In particular, $M_n(\bR)$ for $n > 1$ lacks a maximal operator system structure that agrees with the ``base'' cone $M_n(\bR)^+$.

To see this, suppose that  $X \in V$ is a skew element but \[ a(t)  = \begin{bmatrix} tI & -X \\ X & tI \end{bmatrix} \in  M_2(\text{OMAX}(V))^+ ,\] 
for some $t > 0$.   Define $u : \text{OMAX}(V) \to \text{OMAX}(V)$ to be the identity on $V_{\rm sa}$ but $C$ times the identity on $V_{\rm as}$
for $C > 0$.  This is positive, so ucp.   Thus \[ \begin{bmatrix} tI & -C X \\ C X & tI \end{bmatrix} \in  M_2(\text{OMAX}(V))^+ \] for all $C > 0$.
Dividing by $C$ and using the archimedean property we see that $a(0) \geq 0$.  But then $-a(0) \geq 0$, so that $X = 0$.

One consequence of these examples is that $\text{OMAX}_k(V)_c$ may not be the same as $\text{OMAX}_k(V_c)$ in general.  (Here and in the rest of this section $W_c$ is the complexification considered in 
Section 
2, which works for matrix ordered spaces).  The relationship between these spaces is summarized in the following.

\begin{proposition} \label{whdl}
    Let $V$ be a real operator system and $k \in \mathbb{N}$ an integer. Then the identity map $id: \text{OMAX}_k(V)_c \to \text{OMAX}_k(V_c)$ is ucp.
\end{proposition}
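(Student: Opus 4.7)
The plan is to invoke the universal property of $\text{OMAX}_k$ (Proposition \ref{Prop: OMAX properties}(1)) on the inclusion $\iota : V \hookrightarrow V_c$ to obtain a real completely positive map $\iota : \text{OMAX}_k(V) \to \text{OMAX}_k(V_c)$, and then to extend complex-linearly to recover the identity map on $V_c$.

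First I would observe, using Proposition \ref{inco}, that $\iota : V \to V_c$ is a real unital complete order embedding into the complex operator system $V_c$; in particular $\iota^{(k)}$ carries $M_k(V)^+$ into $M_k(V_c)^+ = M_k(\text{OMAX}_k(V_c))^+$, so $\iota : V \to \text{OMAX}_k(V_c)$ is $k$-positive when the target is viewed as a real matrix ordered space. The unit $e$ is matricially archimedean in $\text{OMAX}_k(V_c)$, by the complex analogue of Proposition \ref{Prop91} that underlies the $\text{OMAX}_k$ construction of \cite{ART23,PTT}, and this property is preserved when we regard the target as real. Proposition \ref{Prop: OMAX properties}(1) then yields that $\iota : \text{OMAX}_k(V) \to \text{OMAX}_k(V_c)$ is completely positive as a map of real matrix ordered spaces.

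Next I would extend $\iota$ to the complex-linear map $\tilde\iota : \text{OMAX}_k(V)_c \to \text{OMAX}_k(V_c)$ defined by $\tilde\iota(x+iy) = x + iy$, which is literally the identity map on $V_c$. To check complete positivity, let $u = x+iy \in M_n(\text{OMAX}_k(V)_c)^+$ with $x \in M_n(V)_{\rm sa}$ and $y \in M_n(V)_{\rm as}$. The Remark after Proposition \ref{inco} gives $c(x,y) \in M_{2n}(\text{OMAX}_k(V))^+$, and the cp map from the previous step gives $c(x,y) \in M_{2n}(\text{OMAX}_k(V_c))^+$. Conjugating by the complex scalar matrix $v = \tfrac{1}{\sqrt{2}}\begin{bmatrix} I_n \\ -iI_n \end{bmatrix}$ produces $v^* c(x,y) v = x+iy$, and since the complex $\text{OMAX}_k$ cone is invariant under complex scalar conjugation (by its definition), we conclude $x + iy \in M_n(\text{OMAX}_k(V_c))^+$, as required.

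The main obstacle will be this last step, which hinges on the complex version of the $\text{OMAX}_k$ construction being closed under conjugation by complex scalar matrices. This is the natural extension of Proposition \ref{Prop91} with $\mathbb{R}$ replaced by $\mathbb{C}$ in the definition of $D_n^{k\text{-}\max}$, and is how the complex OMAX is built in \cite{ART23,PTT}; granted this, conjugation by $v$ transports the real-level positivity of $c(x,y)$ into the complex-level positivity of $x+iy$ and completes the proof.
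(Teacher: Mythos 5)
Your proof is correct, but it takes a somewhat different route from the paper's. The paper argues directly at the level of generators: given $x+iy \in M_n(\mathrm{OMAX}_k(V)_c)^+$, it takes the $\epsilon$-perturbed decomposition $c(x,y)+\epsilon I_{2n} = C^T\mathrm{diag}(P_1,\dots,P_m)C$ with $C$ a real scalar matrix and $P_i \in M_k(V)^+$, splits $C$ into two blocks $A, B$, and assembles the complex scalar matrix $\alpha = A+iB$ to exhibit $x+iy+\epsilon I_n$ explicitly as an element of the complex generating cone $D_n^{k\text{-}\max}$ of $\mathrm{OMAX}_k(V_c)$. You instead factor the argument through the universal property: Proposition \ref{Prop: OMAX properties}(1) makes the inclusion $\mathrm{OMAX}_k(V) \to \mathrm{OMAX}_k(V_c)$ real completely positive, and then a single conjugation of $c(x,y)$ by the fixed complex isometry $\tfrac{1}{\sqrt 2}\bigl[\begin{smallmatrix} I_n \\ -iI_n \end{smallmatrix}\bigr]$ lands you in $M_n(\mathrm{OMAX}_k(V_c))^+$. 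Both proofs ultimately rest on the same fact — that the complex $\mathrm{OMAX}_k$ cones absorb conjugation by complex scalar matrices, which you correctly identify as the crux and which holds by the complex analogue of Proposition \ref{Prop91} together with the archimedean-closure construction. What your version buys is modularity and the avoidance of the block-matrix bookkeeping (matching $A^TPA$, $B^TPB$, $A^TPB$ against the entries of $c(x,y)$); what the paper's version buys is an explicit generating decomposition of $x+iy+\epsilon I_n$ over $M_k(V)^+$, which is mildly more informative. Your appeal to the hypotheses of Proposition \ref{Prop: OMAX properties}(1) is also in order, since $\mathrm{OMAX}_k(V_c)$ carries a matricially archimedean unit by construction and this property, being quantified only over selfadjoint matrices and positive reals, is unaffected by restricting to real scalars.
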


\begin{proof}
    Let $x+iy \in 
        M_n(\text{OMAX}_k(V)_c)^+$. Then $c(x,y) \in M_{2n}(\text{OMAX}_k(V))^+$. So for each $\epsilon > 0$, there exists a real scalar matrix $C$ and $P_1, \dots, P_n \in M_k(V)^+$ such that $c(x,y) + \epsilon I_{2n} = C^T \text{diag}(P_1, \dots, P_n) C$. Write $C = \begin{bmatrix} A \\ B \end{bmatrix}$ where $A,B$ are real scalar matrices, each with $n$ rows, and write $P = \text{diag}(P_1, \dots, P_n)$. Since $c(x,y) + \epsilon I_{2n} = C^T \text{diag}(P_1, \dots, P_n) C$, We get $x + \epsilon I_n = A^T P A = B^T P B$ and $y = B^T P A = -A^T P B$. Thus
    \[ c(x,y) + \epsilon I_n = \begin{bmatrix} A & -B \\ B & A \end{bmatrix}^T \begin{bmatrix} P & 0 \\ 0 & P \end{bmatrix} \begin{bmatrix} A & -B \\ B & A \end{bmatrix}. \] Therefore $x + iy + \epsilon I_n = \alpha^* \text{diag}(P_1, \dots, P_n) \alpha$ for $\alpha = A + i B$. It follows that $x+iy \in M_n(\text{OMAX}_k(V_c))^+$. So the identity map from $\text{OMAX}_k(V)_c$ to $\text{OMAX}_k(V_c)$ is ucp.
\end{proof}

The obvious example of a case where $\text{OMAX}_k(V)_c \neq \text{OMAX}_k(V_c)$ is already discussed above. When $k = 1$ and $V = \bH$, we know that $\text{OMAX}(V)$ lacks a matrix order unit and hence $\text{OMAX}(V)_c$ (defined as matrix-ordered $*$-vector space) is not an operator system. On the other hand, $\bH_c = M_2(\bC)$ and so $\text{OMAX}(V_c) = \text{OMAX}(M_2(\bC))$ is an operator system by \cite{PTT}.

We next (and also in the discussion after Proposition
\ref{mr})  relate $\text{OMAX}_k(V)_c \neq \text{OMAX}_k(V_c)$ to the notion of entanglement in quantum physics.
  Example 7.1 in \cite{CDPR2} provides explicit examples of matrices $A \in M_{4}(\bR) = M_{2}(M_{2}(\bR))$ which are not separable as real matrices (that is, are entangled) but are separable as complex matrices.   We claim that any such matrix in $M_{n^2} (\bR) =  M_{n}(\bR) \otimes M_{n}(\bR)$ gives an example 
 demonstrating that $\text{OMAX}(V_c) \neq \text{OMAX}(V)_c$  where $V = M_{n}(\bR)$.  (This follows from the above as well, just as in the 
 Example \ref{ex. OMAX(bH)} and by the reasoning there and in the last paragraph, since this algebra  has nontrivial skew-adjoint elements).  See \cite{CDPR2} and \cite{JKPP} for consequences of this in quantum physics.  
Indeed, separability of a matrix $A$ in their sense 
is the same as  Schmidt number SN$(A) = 1$ \cite{CDPR2,  JKPP}.  By the calculations in \cite[Theorem 5b]{JKPP},
 if  $V = M_n(\bF)$ then the archimedean closure 
in the definition of OMAX$(V)$ is unnecessary, and every separable state of $M_n \otimes M_m$ is in 
$M_n({\rm OMAX}(M_m))^+$.
Indeed  for $\xi \in \bF^n, \eta \in \bF^m$ we
have $| \xi \rangle \langle \xi | \otimes | \eta \rangle \langle \eta | = \alpha^* P  \alpha \in M_n({\rm OMAX}(M_m))^+$, 
where $\alpha = \xi \in M_{n,1}$ and $P = | \eta \rangle \langle \eta | \in M_m^+$. 
Thus applying Theorem 5 of \cite{JKPP} (which is stated in the complex case, but whose proof is also valid in the real case), we see that $A \in M_{n}(\text{OMAX}(M_{n}(\bC)))^+ = M_{n}(\text{OMAX}(M_{n}(\bR)_c))^+$ but $A \notin M_{n}(\text{OMAX}(M_{n}(\bR)))^+$ and hence is not an element of $M_{n}(\text{OMAX}(M_{n}(\bR))_c)^+$.

\subsection{Minimal operator system structure}

Let $V$ be a real operator system and let $k \in \mathbb{N}$. We define
\[ M_n(\text{OMIN}_k(V))^+ := \{ x \in M_n(V)_{\rm sa} : \alpha^T x \alpha \geq 0 \text{ for every } \alpha \in M_{n,k}(\bR) \}. \]
We let $\text{OMIN}_k(V)$ denote the resulting matrix ordered $*$-vector space. This definition is based on the one in \cite{ART23}. We will relate this definition to the one from \cite{BXhabli12} below.

\begin{proposition}
    The sequence $\{M_n(\text{OMIN}_k(V))\}$ is a matrix ordering on $V$ and $e$ is an archimedean matrix order unit. If this matrix ordering is proper, then $\text{OMIN}_k(V)$ is an operator system.  In particular $\text{OMIN}_k(V)$ is an operator system if $k \geq 2$.
\end{proposition}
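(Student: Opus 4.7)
My plan is to verify in turn that the given cones define a matrix ordering, that $e$ is an archimedean matrix order unit, and (for $k \geq 2$) that the ordering is proper, from which the claim that $\text{OMIN}_k(V)$ is an operator system will follow.

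The matrix-ordering axioms should follow immediately from the definition: conjugation by $\alpha \in M_{n,m}(\bR)$ preserves the cone because for any test $\beta \in M_{m,k}(\bR)$ we have $\beta^T(\alpha^T x\alpha)\beta = (\alpha\beta)^T x(\alpha\beta) \in M_k(V)^+$, and direct sums can be handled by splitting a test matrix $\alpha \in M_{n+m,k}(\bR)$ into block rows $\alpha_1, \alpha_2$ so that $\alpha^T(x\oplus y)\alpha = \alpha_1^T x\alpha_1 + \alpha_2^T y\alpha_2 \in M_k(V)^+$. That $e$ is a matrix order unit will be inherited directly from $V$: any $t$ with $te_n + x \in M_n(V)^+$ automatically satisfies $\alpha^T(te_n + x)\alpha \in M_k(V)^+$ for every $\alpha \in M_{n,k}(\bR)$. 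The archimedean step is the only point requiring slight care: assuming $x + \epsilon e_n$ lies in the OMIN$_k$ cone for every $\epsilon > 0$, I would fix $\alpha \in M_{n,k}(\bR)$, rewrite $\alpha^T e_n\alpha = (\alpha^T\alpha)\otimes e$, bound $(\alpha^T\alpha)\otimes e \leq \|\alpha\|^2 e_k$ (using positivity of $\|\alpha\|^2 I_k - \alpha^T\alpha$ tensored with $e \geq 0$), and conclude $\alpha^T x\alpha + \epsilon\|\alpha\|^2 e_k \in M_k(V)^+$ for every $\epsilon > 0$; invoking archimedeanness of $e_k$ in the original matrix ordering of $V$ will then yield $\alpha^T x\alpha \in M_k(V)^+$.

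At this stage the second sentence of the proposition is automatic. For properness when $k \geq 2$, my plan is as follows: given $\pm x \in M_n(\text{OMIN}_k(V))^+$, for each pair of indices $p,q \in \{1,\ldots,n\}$ I will pick $\alpha \in M_{n,k}(\bR)$ whose first two columns are the standard basis vectors $e_p, e_q$ and whose remaining columns are zero. Conjugation by this $\alpha$ produces an element of $M_k(V)$ whose upper-left $2\times 2$ block is the $(p,q)$-principal submatrix of $x$ and whose other entries vanish. Since both this element and its negative lie in $M_k(V)^+$, and $M_k(V)^+$ is proper, each such principal submatrix of $x$ must vanish; hence $x = 0$.

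The main obstacle, and the reason this argument only works for $k \geq 2$, is that with only one column available in $\alpha$ one can only detect the symmetric part of off-diagonal entries — one recovers only $x_{pp} = 0$ and $x_{pq} + x_{qp} = 0$, which is compatible with nonzero skew-adjoint off-diagonal entries. This is precisely the OMIN$_1$ pathology for $\bH$ highlighted earlier in Example~\ref{ex. OMAX(bH)}, so the restriction $k \geq 2$ is essential to my proof strategy.
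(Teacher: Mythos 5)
Your proposal is correct and follows essentially the same route as the paper: identical handling of the matrix-ordering axioms and the order-unit/archimedean properties via conjugation and the bound $\alpha^T\alpha \leq \|\alpha\|^2 I_k$. The only cosmetic difference is in the properness step, where you compress directly to $2\times 2$ principal submatrices inside $M_k(V)$, whereas the paper observes that the level-$2$ cone of $\text{OMIN}_k(V)$ coincides with $M_2(V)^+$ for $k\geq 2$ and then cites Proposition~\ref{prop: C2 proper} --- whose proof is the same compression argument you wrote out.
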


\begin{proof}
    Suppose $x \in M_n(\text{OMIN}_k(V))^+$ and $y \in M_m(\text{OMIN}_k(V))^+$. Let $\alpha \in M_{n+m,k}(\bR)$. We may write $\alpha = \begin{bmatrix} \alpha_1 & \alpha_2 \end{bmatrix}$ with $\alpha_1$ $n \times k$ and $\alpha_2$ $m \times k$. Then
    \[ \alpha^* (x \oplus y) \alpha = \alpha_1^* x \alpha_1 + \alpha_2^* y \alpha_2 \in M_k(V)^+. \] Therefore $x \oplus y \in M_{n+m}(\text{OMIN}_k(V))^+$. Now let $\beta \in M_{n,m}(\mathbb{R})$ and consider $\beta^* x \beta \in M_m(V)$. For every $\alpha \in M_{m,k}(\bR)$ we have $\alpha^*(\beta^* x \beta) \alpha = (\beta \alpha)^* x (\beta \alpha) \geq 0$. Therefore $\beta^* x \beta \in M_m(\text{OMIN}_k(V))^+$. We conclude that $(M_n(\text{OMIN}_k(V))^+$ is a matrix ordering.

    To see that $e$ is a matrix order unit, let $x \in M_n(V)_{\rm sa}$. Then there exists $t > 0$ such that $x + te \geq 0$. Hence for every $\alpha \in M_{n,k}(\bR)$, $ \alpha(x + te) \alpha \geq 0$. So $x + te \in M_n(\text{OMIN}_k(V))^+$ and hence $e$ is a matrix order unit. If $x + \epsilon t \in M_n(\text{OMIN}_k(V))^+$ for every $\epsilon > 0$, then for every $\alpha \in M_{n,k}(\bR)$ with $\|\alpha\| \leq 1$ we have $\alpha^* x \alpha + \epsilon \alpha^* \alpha \otimes e \geq 0$. Since $\alpha^* \alpha \leq I_k$, this implies that $\alpha^* x \alpha + \epsilon I_k \otimes e \geq 0$. Hence $\alpha^* x \alpha \geq 0$ for every $\alpha \in M_{k,n}(\bR)$ with $\|\alpha\| \leq 1$. By rescaling, we see that $x \in M_n(\text{OMIN}_k(V))^+$. Therefore $e$ is archimedean.
    
    As in the OMAX case if $k \geq 2$ then $M_2(V)^+ = M_2(\text{OMIN}_k(V))^+$ so that the latter cone is proper, hence all cones are proper.  Now appeal to the second last statement of the Proposition. 
\end{proof}

We will see a case where the matrix ordering on $\text{OMIN}_k(V)$ is not proper. The following relates our definition of $\text{OMIN}_k(V)$ to the one used by Xhabli \cite{BXhabli12} (in the complex case).

\begin{proposition} \label{prop: k-min universal property}
    Let $V$ be a real operator system and $W$ a matrix-ordered $*$-vector space. Suppose that $\varphi: W \to V$ is $k$-positive. Then $\varphi: W \to \text{OMIN}_k(V)$ is completely positive.
\end{proposition}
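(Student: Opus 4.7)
The plan is direct and essentially unwinds the definition of $\text{OMIN}_k(V)$. Fix $n \in \Ndb$ and let $x \in M_n(W)^+$; I need to show $\varphi^{(n)}(x) \in M_n(\text{OMIN}_k(V))^+$. By the defining description of $M_n(\text{OMIN}_k(V))^+$ recalled just above the proposition, this amounts to checking that $\alpha^T \varphi^{(n)}(x) \alpha \in M_k(V)^+$ for every $\alpha \in M_{n,k}(\bR)$.

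The key observation is that conjugation by real scalar matrices and the amplification $\varphi^{(\cdot)}$ commute: for any $\alpha \in M_{n,k}(\bR)$, one has
\[ \varphi^{(k)}(\alpha^T x \alpha) \;=\; \alpha^T \, \varphi^{(n)}(x) \, \alpha, \]
which is an entirely formal identity, valid because the entries of $\alpha$ are real scalars. Since $W$ is a matrix-ordered $*$-vector space, conjugation by $\alpha$ sends $M_n(W)^+$ into $M_k(W)^+$, so $\alpha^T x \alpha \in M_k(W)^+$. The hypothesis that $\varphi$ is $k$-positive then gives $\varphi^{(k)}(\alpha^T x \alpha) \in M_k(V)^+$, and combining these yields $\alpha^T \varphi^{(n)}(x) \alpha \in M_k(V)^+$, as needed.

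Since $\alpha \in M_{n,k}(\bR)$ was arbitrary, $\varphi^{(n)}(x) \in M_n(\text{OMIN}_k(V))^+$, so $\varphi : W \to \text{OMIN}_k(V)$ is completely positive. There is no real obstacle here; the one point to note is that $\varphi$ should be taken selfadjoint (as is our convention for positive maps in this section) so that $\varphi^{(n)}(x) \in M_n(V)_{\rm sa}$ and the membership in the $\text{OMIN}_k$ cone is meaningful. The proof does not use archimedeanness or properness of the cones of $W$, which is consistent with the fact that the universal property characterizes $\text{OMIN}_k(V)$ as the largest matrix ordering on $V$ agreeing with the given one at the $k$-th level.
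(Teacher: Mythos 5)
Your proof is correct and is essentially identical to the paper's: both reduce the claim to the formal identity $\varphi^{(k)}(\alpha^T x \alpha) = \alpha^T \varphi^{(n)}(x)\alpha$ together with matrix-ordering of $W$ and $k$-positivity of $\varphi$. Your added remarks about selfadjointness and about not needing properness or archimedeanness are consistent with the paper's usage and do not change the argument.
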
 

\begin{proof}
    Let $x \in M_n(W)^+$. Then for every $\alpha \in M_{n,k}(\bR)$, $\alpha^* \varphi^{(n)}(x) \alpha = \varphi^{(k)}(\alpha^* x \alpha) \geq 0$  since $\varphi$ is $k$-positive and $\alpha^* x \alpha \in M_k(V)^+$. Thus $\varphi^{(n)}(x) \in M_n(\text{OMIN}_k(V))^+$.
\end{proof}

Note that the above result does not assume that $\text{OMIN}_k(V)$ is an operator system, only a matrix ordered $*$-vector space. 

\begin{corollary}
    The matrix ordering $\{M_n(\text{OMIN}_k(V))^+\}$ contains any other matrix ordering $\{D_n\}$ making
    $V$ an operator system and satisfying $D_k = M_k(V)^+$.
\end{corollary}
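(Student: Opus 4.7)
The plan is to deduce the corollary directly from the universal property established in Proposition \ref{prop: k-min universal property}. Let $\{D_n\}$ be any matrix ordering on $V$ making it an operator system with $D_k = M_k(V)^+$. Consider the identity map $\iota : (V, \{D_n\}) \to (V, \{M_n(V)^+\})$, viewing the domain as a matrix-ordered $*$-vector space (in fact an operator system) and the codomain as the original operator system $V$. At level $k$ we have $D_k = M_k(V)^+$, so $\iota^{(k)}$ sends $D_k$ into $M_k(V)^+$; that is, $\iota$ is $k$-positive.

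By Proposition \ref{prop: k-min universal property}, the map $\iota : (V, \{D_n\}) \to \mathrm{OMIN}_k(V)$ is then completely positive. Unpacking this, for every $n$ and every $x \in D_n$ we have $\iota^{(n)}(x) = x \in M_n(\mathrm{OMIN}_k(V))^+$. Hence $D_n \subseteq M_n(\mathrm{OMIN}_k(V))^+$ for all $n$, which is precisely the claim.

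I anticipate no real obstacle here: the corollary is a formal consequence of the preceding proposition, and the only thing worth emphasizing in writing it up is that Proposition \ref{prop: k-min universal property} does not require its domain to be an operator system (only a matrix-ordered $*$-vector space), so it applies even though we have not a priori verified archimedeanness or properness of $\{D_n\}$ beyond what is given. One could also give a purely hands-on verification, observing that for $x \in D_n$ and any $\alpha \in M_{n,k}(\bR)$ the compression $\alpha^T x \alpha$ lies in $D_k = M_k(V)^+$, so $x \in M_n(\mathrm{OMIN}_k(V))^+$ by definition; but the one-line argument via the universal property is cleaner and makes the role of $\mathrm{OMIN}_k$ transparent.
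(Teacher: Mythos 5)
Your proof is correct and is essentially identical to the paper's: both apply Proposition \ref{prop: k-min universal property} to the identity map $(V,\{D_n\}) \to (V,\{M_n(V)^+\})$, which is $k$-positive since $D_k = M_k(V)^+$, to conclude $D_n \subseteq M_n(\text{OMIN}_k(V))^+$ for all $n$. Your added observations (the direct verification via compressions, and that the proposition needs only a matrix-ordered domain) are accurate but not needed.
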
 

\begin{proof}
    If $\{D_n\}$ is a matrix ordering on $V$ such that $D_k=M_k(V)^+$, then the identity map from $(V,\{D_n\})$ to $(V,\{M_n(V)^+\})$ is $k$-positive. Hence By Proposition \ref{prop: k-min universal property} the identity map from $(V,\{D_n\})$ to $\text{OMIN}_k(V)$ is ucp. Therefore $D_n \subseteq M_n(\text{OMIN}_k(V))^+$ for every $n$.
\end{proof} 

The following real analogue of a well-known fact for complex maps
may be proved in the same way (e.g. Proposition 2.2 of \cite{BXhabli12}).  
(Indeed this proof works even if $V$ is merely matrix ordered.) 

\begin{proposition} \label{prop: k-pos to M_k implies ucp}
    A linear map $\varphi: V \to M_k(\bR)$ on a real operator system 
    is completely positive if and only if it is $k$-positive. 
\end{proposition}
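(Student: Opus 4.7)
The plan is to reduce the nontrivial direction (``$k$-positive $\Rightarrow$ completely positive'') to the standard Choi-type trick with the maximally entangled vector, which happens to go through verbatim over $\bR$. The converse is immediate from the definition of complete positivity.

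First I would fix $n \in \bN$, $x \in M_n(V)^+$, and an arbitrary $\eta \in \bR^{nk}$, and aim to show $\langle \varphi^{(n)}(x)\eta,\eta\rangle \geq 0$; this forces $\varphi^{(n)}(x) \in M_{nk}(\bR)^+$ and so completes the argument. Writing $\eta = \sum_i e_i \otimes v_i$ under the identification $\bR^{nk} \cong \bR^n \otimes \bR^k$, I would form the real scalar matrix $\alpha \in M_{n,k}(\bR)$ with $\alpha_{ip} = (v_i)_p$, and appeal to the real matrix ordering axiom $\alpha^T C_n \alpha \subseteq C_k$ to obtain $y := \alpha^T x \alpha \in M_k(V)^+$, whose entries are $y_{pq} = \sum_{i,j}(v_i)_p x_{ij}(v_j)_q$. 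Applying the $k$-positivity hypothesis then gives $\varphi^{(k)}(y) \in M_k(M_k(\bR))^+ \cong M_{k^2}(\bR)^+$.

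The heart of the proof is then the short computation evaluating the quadratic form of $\varphi^{(k)}(y)$ at the canonical vector $\xi := \sum_{p=1}^k e_p \otimes e_p \in \bR^k \otimes \bR^k$: after expanding the definitions,
\[
\langle \varphi^{(k)}(y)\,\xi,\xi\rangle \;=\; \sum_{p,q=1}^k \bigl(\varphi(y_{pq})\bigr)_{pq} \;=\; \sum_{i,j,p,q} (v_i)_p \bigl(\varphi(x_{ij})\bigr)_{pq} (v_j)_q \;=\; \sum_{i,j}\langle \varphi(x_{ij})v_j,v_i\rangle \;=\; \langle \varphi^{(n)}(x)\,\eta,\eta\rangle ,
\]
so nonnegativity on the left (from $k$-positivity) transfers to nonnegativity on the right. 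This index-juggling is where all the combinatorial work sits, but it is routine bookkeeping.

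I do not anticipate a serious obstacle: the argument uses only the real matrix-ordering axioms and the real identification $M_k(M_k(\bR)) \cong M_{k^2}(\bR)$, with nothing sensitive to the underlying field. A less direct route would be to complexify, noting that $\varphi$ extends to a $k$-positive complex linear map $\varphi_c : V_c \to M_k(\bR)_c = M_k(\bC)$ by Lemma \ref{ncomma} (or the matrix-ordered-space variant in the remark after Proposition \ref{inco}), invoking the known complex version of the statement, and restricting back to $V \subseteq V_c$; but this merely trades the short calculation above for a black-box appeal.
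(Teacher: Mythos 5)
Your argument is correct and is exactly the proof the paper intends: the paper simply cites the complex case (Proposition 2.2 of Xhabli) and asserts the same proof works over $\bR$, and your Choi-type computation with $\xi=\sum_p e_p\otimes e_p$ is that proof spelled out. The only point worth making explicit in the real setting is that nonnegativity of the quadratic form $\langle \varphi^{(n)}(x)\eta,\eta\rangle$ only yields $\varphi^{(n)}(x)\in M_{nk}(\bR)^+$ once you know this matrix is symmetric, which follows from the paper's standing convention that $k$-positive maps are selfadjoint (so $\varphi(x_{ji})=\varphi(x_{ij})^T$).
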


We can now relate our definition of $\text{OMIN}_k(V)$ to the one from \cite{BXhabli12}.

\begin{proposition} \label{prop: OMIN-k matrix state}
    Let $V$ be a real operator system. Then $x \in M_n(\text{OMIN}_k(V))^+$ if and only if $\varphi^{(n)}(x) \geq 0$ for every ucp $\varphi: V \to M_k(\bR)$.
\end{proposition}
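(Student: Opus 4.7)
The plan is to verify the two implications directly: the reverse direction follows from the second (selfadjoint) half of Corollary \ref{plem}, while the forward direction reduces to a short bookkeeping computation relating $\varphi^{(n)}(x)$ to $\varphi^{(k)}(\alpha^T x \alpha)$ via a vector-matrix identification.

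For the reverse direction, assume $\varphi^{(n)}(x) \geq 0$ in $M_{nk}(\bR)$ for every ucp $\varphi : V \to M_k(\bR)$. Fix $\alpha \in M_{n,k}(\bR)$. Since $\alpha$ is a real scalar matrix, $\varphi^{(k)}(\alpha^T x \alpha) = \alpha^T \varphi^{(n)}(x) \alpha \geq 0$, being a real-scalar congruence of a PSD real matrix. Thus $\varphi^{(k)}(\alpha^T x \alpha) \geq 0$ for every ucp $\varphi : V \to M_k(\bR)$. Since $\alpha^T x \alpha \in M_k(V)_{\rm sa}$, the selfadjoint case of Corollary \ref{plem} (with both the matrix size and the target dimension taken equal to $k$) gives $\alpha^T x \alpha \in M_k(V)^+$. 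As $\alpha$ was arbitrary, $x \in M_n(\text{OMIN}_k(V))^+$.

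For the forward direction, let $x \in M_n(\text{OMIN}_k(V))^+$ and $\varphi : V \to M_k(\bR)$ be ucp. For every $\alpha \in M_{n,k}(\bR)$, $\alpha^T x \alpha \in M_k(V)^+$ by the definition of OMIN$_k$, and since ucp maps are $k$-positive we get $\varphi^{(k)}(\alpha^T x \alpha) \geq 0$ in $M_{k^2}(\bR)$. Given a vector $v \in \bR^{nk}$, decompose $v = \sum_{i,p} v_i^p \, e_i \otimes e_p$, and define $\alpha \in M_{n,k}(\bR)$ by $\alpha_{ip} = v_i^p$. Using $(\alpha^T x \alpha)_{pq} = \sum_{i,j} v_i^p v_j^q x_{ij}$, a direct computation yields
\[ \langle \varphi^{(n)}(x) v, v \rangle = \sum_{i,j,p,q} v_i^p v_j^q \, \varphi(x_{ij})_{pq} = \langle \varphi^{(k)}(\alpha^T x \alpha) \, \omega, \omega \rangle , \]
where $\omega = \sum_r e_r \otimes e_r \in \bR^{k^2}$. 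Since the right-hand side is $\geq 0$, so is $\langle \varphi^{(n)}(x) v, v\rangle$, and thus $\varphi^{(n)}(x) \geq 0$.

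The main technical step is the bookkeeping identity in the forward direction, which identifies $v \in \bR^{nk}$ with a real matrix $\alpha \in M_{n,k}(\bR)$ (so that rows of $\alpha$ are the blocks of $v$) and then routes the positivity test through the ``maximally entangled''-type vector $\omega$. Real scalars keep this clean: no complex conjugation intrudes when replacing $\alpha^*$ with $\alpha^T$, which is why testing against ucp maps into $M_k(\bR)$ alone suffices.
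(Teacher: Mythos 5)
Your proof is correct. The reverse direction is exactly the paper's argument: congruate $\varphi^{(n)}(x)$ by $\alpha$ to get $\varphi^{(k)}(\alpha^T x\alpha)\geq 0$ for all ucp $\varphi:V\to M_k(\bR)$, then invoke the selfadjoint half of Corollary \ref{plem} with target dimension $k$. The forward direction is where you diverge: the paper notes that $M_k(\mathrm{OMIN}_k(V))^+=M_k(V)^+$, so $\varphi$ is $k$-positive as a map out of $\mathrm{OMIN}_k(V)$, and then cites Proposition \ref{prop: k-pos to M_k implies ucp} (a $k$-positive map into $M_k$ is completely positive) to conclude $\varphi^{(n)}(x)\geq 0$. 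You instead carry out the underlying computation directly: identify each test vector $v\in\bR^{nk}$ with a matrix $\alpha\in M_{n,k}(\bR)$, use the $\mathrm{OMIN}_k$ hypothesis to get $\varphi^{(k)}(\alpha^T x\alpha)\geq 0$, and evaluate against the maximally entangled vector $\omega$ to recover $\langle\varphi^{(n)}(x)v,v\rangle$. I checked the bookkeeping identity; both sides equal $\sum_{i,j,p,q}v_i^p v_j^q\,\varphi(x_{ij})_{pq}$, so it is right. In effect you have inlined the proof of Proposition \ref{prop: k-pos to M_k implies ucp} (which the paper states with proof deferred to Xhabli), so your version is more self-contained, at the cost of a computation the paper's modular citation avoids. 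One shared caveat, not a defect relative to the paper: in the reverse direction both you and the paper implicitly take $x$ selfadjoint so that $\alpha^T x\alpha\in M_k(V)_{\rm sa}$ and the $k=n$ clause of Corollary \ref{plem} applies; this is harmless since the cones consist of selfadjoint elements by definition.
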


\begin{proof}
    First assume $x \in M_n(\text{OMIN}_k(V))^+$ and $\varphi: V \to M_k(\bR)$ is ucp. Clearly $M_k(\text{OMIN}_k(V))^+ = M_k(V)^+$. Hence $\varphi: \text{OMIN}_k(V) \to M_k$ is ucp by the last result, and thus $\varphi^{(n)}(x) \geq 0$.

    Next suppose that $\varphi^{(n)}(x) \geq 0$ for every ucp $\varphi: V \to M_k(\bR)$. Then for every $\alpha \in M_{n,k}(\bR)$, $(\alpha^* \otimes I_n) \varphi^{(n)}(x) (\alpha \otimes I_n) = \varphi^{(k)}(\alpha^* x \alpha) \geq 0$. Since this holds for all ucp $\varphi: V \to M_k$, by Corollary \ref{plem} we have  $\alpha^* x \alpha \geq 0$ for all such $\alpha$. So $x \in M_n(\text{OMIN}_k(V))^+$. 
\end{proof}

We say that a real operator system is $k$-minimal if $V \cong \text{OMIN}_k(V)$. The next statement summarizes the properties of $k$-minimal operator systems.

\begin{corollary} \label{cor: k-minimal characterization}
    Let $V$ be a real operator system. 
    The following statements are equivalent:
    \begin{enumerate}
        \item $V$ is $k$-minimal.
        \item For every operator system  $W$, every $k$-positive map $\varphi: W \to V$ is completely positive. 
         \item For every $m \in \bN$, every $k$-positive map $\varphi: M_m \to V$ is completely positive. 
          \item For every matrix-ordered $*$-vector space  $W$, every $k$-positive map $\varphi: W \to V$ is completely positive. 
        \item $V$ admits a complete order embedding into $C(X, M_k(\bR))$ for some compact Hausdoff space $X$.
    \end{enumerate} \end{corollary}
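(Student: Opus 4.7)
\medskip

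The plan is to prove the circle $(1) \Rightarrow (4) \Rightarrow (2) \Rightarrow (3) \Rightarrow (1)$, together with the separate equivalence $(1) \Leftrightarrow (5)$. The implications $(1) \Rightarrow (4) \Rightarrow (2) \Rightarrow (3)$ are essentially immediate: $(1) \Rightarrow (4)$ is Proposition \ref{prop: k-min universal property} applied to the identification $V = \text{OMIN}_k(V)$, while $(4) \Rightarrow (2)$ and $(2) \Rightarrow (3)$ follow because operator systems are matrix-ordered $*$-vector spaces and each $M_m(\bR)$ is a real operator system. The real work is in $(3) \Rightarrow (1)$ and in constructing the embedding for $(1) \Leftrightarrow (5)$.

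For $(3) \Rightarrow (1)$, I will use a real Choi-matrix correspondence. Given $x = x^* \in M_n(V)_{\rm sa}$, define a selfadjoint linear map $\phi_x : M_n(\bR) \to V$ by $\phi_x(E_{ij}) = x_{ij}$. For any $\xi \in \bR^{mn} = \bR^n \otimes \bR^m$, write $\xi = \sum_a v_a \otimes e_a$ and collect the $v_a \in \bR^n$ into a matrix $\alpha \in M_{n,m}(\bR)$ whose $a$-th column is $v_a$. A direct block computation gives
\[ \phi_x^{(m)}(\xi\xi^T) \;=\; \alpha^T x \alpha \qquad \text{in } M_m(V). \]
Since every positive element of $M_{mn}(\bR)$ is a sum of such $\xi\xi^T$, this shows $\phi_x$ is $m$-positive iff $\alpha^T x \alpha \in M_m(V)^+$ for every $\alpha \in M_{n,m}(\bR)$. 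Taking $m = k$, the definition of $\text{OMIN}_k$ yields: $\phi_x$ is $k$-positive iff $x \in M_n(\text{OMIN}_k(V))^+$. Now assume (3) and take $x \in M_n(\text{OMIN}_k(V))^+$. Then $\phi_x$ is $k$-positive, so by (3) it is completely positive; applying the computation again with arbitrary $m$ and then specializing $m = n, \alpha = I_n$ gives $x = I_n^T x I_n \in M_n(V)^+$. (One may equivalently invoke the real version of Choi's theorem, whose proof rests on the same decomposition.) Hence $V = \text{OMIN}_k(V)$.

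For $(1) \Leftrightarrow (5)$, I will set $X = \mathrm{UCP}(V, M_k(\bR))$ with the point-norm topology. Since each ucp map into $M_k(\bR)$ is contractive and $M_k(\bR)$ is finite-dimensional, $X$ is a closed subset of the unit ball of $B(V, M_k(\bR))$ in the product topology, hence compact Hausdorff; it is nonempty because any state $\sigma$ on $V$ produces the ucp map $v \mapsto \sigma(v)I_k$. Define $\Phi : V \to C(X, M_k(\bR))$ by $\Phi(v)(\varphi) = \varphi(v)$. Under the identification $M_n(C(X, M_k(\bR))) \cong C(X, M_{nk}(\bR))$ with pointwise positivity, $\Phi^{(n)}(x) \geq 0$ iff $\varphi^{(n)}(x) \geq 0$ for every $\varphi \in X$, which by Proposition \ref{prop: OMIN-k matrix state} is iff $x \in M_n(\text{OMIN}_k(V))^+$. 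Thus $\Phi$ is a complete order embedding exactly when (1) holds, proving $(1) \Rightarrow (5)$. Conversely, if $V$ embeds completely order into $C(X, M_k(\bR))$, point evaluations $\mathrm{ev}_p$ are ucp maps $V \to M_k(\bR)$, and for any $x \in M_n(\text{OMIN}_k(V))^+$ Proposition \ref{prop: OMIN-k matrix state} gives $\mathrm{ev}_p^{(n)}(x) \geq 0$ for every $p$; thus $x \geq 0$ pointwise in $M_n(C(X, M_k(\bR)))$ and consequently $x \in M_n(V)^+$, giving (1).

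The main obstacle is the Choi-matrix identity in $(3) \Rightarrow (1)$: one must carefully verify that the block identification $M_m(M_n) \cong M_{mn}$ combined with the real rank-one decomposition $P = \sum \xi \xi^T$ of positive real matrices yields the claimed bilinear expression $\alpha^T x \alpha$, and that this aligns exactly with the definition of the $\text{OMIN}_k$-cone. Everything else is either a formal consequence of Proposition \ref{prop: k-min universal property} and Proposition \ref{prop: OMIN-k matrix state} or a standard BW/point-norm compactness argument.
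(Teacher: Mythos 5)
Your proof is correct and follows essentially the same route as the paper: the cycle $(1)\Rightarrow(4)\Rightarrow(2)\Rightarrow(3)\Rightarrow(1)$ via Propositions \ref{prop: k-min universal property} and \ref{prop: OMIN-k matrix state}, the correspondence between ($k$-)positive maps on $M_m$ and elements of the relevant matrix cones, and the evaluation-map embedding into $C(X,M_k(\bR))$. The only difference is presentational: you write out the Choi-matrix identity $\phi_x^{(m)}(\xi\xi^T)=\alpha^T x\alpha$ explicitly (which the paper relegates to a remark/exercise), and your $(5)\Rightarrow(1)$ goes directly through point evaluations rather than the paper's $(5)\Rightarrow(2)$ — both are sound, though in the non-unital-embedding case you should justify positivity of $\mathrm{ev}_p^{(n)}(x)$ via Proposition \ref{prop: k-pos to M_k implies ucp} applied on $\text{OMIN}_k(V)$ rather than the ucp-only statement of Proposition \ref{prop: OMIN-k matrix state}.
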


\begin{proof}  That (1) implies  (4) 
follows from Proposition \ref{prop: k-min universal property}.   That (4)  implies (2) implies (3) is trivial. 
That  (3)  implies  (1) follows because the $k$-positive maps $M_m \to V$ are the completely positive maps $M_m \to \text{OMIN}_k(V)$, 
 and correspond to $M_m(\text{OMIN}_k(V))^+$;  similarly the  completely positive maps $M_m \to V$ 
 correspond to $M_m(V)^+$. 

    To show (1) implies (5), assume $V$ is $k$-minimal. Let $X$ denote the set of ucp maps from $V$ to $M_k(\mathbb{R})$ equipped with the 
    weak* topology. Define $\pi: V \to C(X,M_k(\bR))$ by $\pi(x)(\varphi) = \varphi(x)$. By Proposition \ref{prop: OMIN-k matrix state}, $\pi$ is a unital complete order embedding.
    
    Conversely, suppose that $V \subseteq C(X,M_k(\bR))$. Suppose $W$ is a real 
    operator system or 
     matrix-ordered $*$-vector space and $\varphi: W \to V$ is 
     $k$-positive.   
     Then $\varphi$ is completely positive if and only if for every $x \in X$ and $w \in M_n(W)^+$, $\varphi^{(n)}(w)(x) \geq 0$. For each $x \in X$, define $\varphi_x: W \to M_k(\bR)$ by $\varphi_x(w) = \varphi(w)(x)$. Since $\varphi$ is $k$-positive, $\varphi_x$ is $k$-positive for every $x \in X$. By Proposition \ref{prop: k-pos to M_k implies ucp}, each $\varphi_x$ is completely positive. It follows that $\varphi$ is completely positive. So (5) implies (2). \end{proof}

{\bf Remark.}   In the last proof we used the fact that for a matrix ordered space $W$, CP$(M_m,W)$ is in bijective correspondence 
with $M_m(W)^+$.   One direction of this follows by applying the map to 
$[e_{ji}]$.  We leave the other direction as an exercise (see e.g.\ \cite{CEinj}). 

\medskip

We now give an example of a real operator system with no minimal matrix ordering.

\begin{example} \label{ex: OMIN(bH)}
    \emph{Consider the real operator system $\bH$. Recall that $\bH$ possess exactly one real state, namely $\varphi: tI + aX + bY + cZ \mapsto t$. Consider $\text{OMIN}(V) := \text{OMIN}_1(\bH)$. By Proposition \ref{prop: OMIN-k matrix state}, $\begin{bmatrix} x_{ij} \end{bmatrix} \in M_n(\text{OMIN}(V))^+$ if and only if $\begin{bmatrix} \varphi(x_{ij}) \end{bmatrix} \in M_n(\bR)^+$. However, this implies that $C_2$ is not proper. For example, \[ \pm \begin{bmatrix} 0 & -X \\ X & 0 \end{bmatrix} \in C_2 \] since its image under $\varphi^{(2)}$ is the zero matrix.}

    \emph{To see that $\bH$ has no minimal real matrix ordering, consider the mapping $\varphi_n: \bH \to \bH$ given by $\varphi_n(tI + aX + bY + xZ) = tI + n^{-1}(aX + bY + xZ)$. This map is positive and one-to-one, so $C_k(n) := \varphi_n^{-1}(M_k(\bH)^+)$ defines a matrix ordering on $\bH$. However
    \[ \pm \begin{bmatrix} I & -nX \\ nX & I \end{bmatrix}, \pm \begin{bmatrix} I & -nY \\ nY & I \end{bmatrix}, \pm \begin{bmatrix} I & -nZ \\ nZ & I \end{bmatrix} \in C_2(n) \] for every $n$. From this, we see that the archimedean closure of $\cup C_2(n)$ is non-proper, since for instance \[ \pm \begin{bmatrix} \frac{1}{n} I & -X \\ X & \frac{1}{n} I \end{bmatrix} \in C_2(n) \] for all $n \in \mathbb{N}$. 
    Thus $\pm \begin{bmatrix} 0 & -X \\ X & 0 \end{bmatrix}$ is in this archimedean closure.  So there is no proper largest matrix ordering on $\bH$ agreeing with $\bH^+$ and making $\bH$ into a real operator system.}
\end{example}

{\bf Remark:} As was the case for $\text{OMAX}(V)$, it is easy to see that the argument above showing that $\text{OMIN}(\bH)$ is a non-proper matrix ordering applies for any operator system $V$ with a non-trivial skew-adjoint operator.
To see this, notice that the 
argument in the example after Corollary \ref{isc}, applied to a skew element $X \in V$ shows  that $0 \neq \pm a(0) \geq 0$
(since the positive map  $u : \text{OMIN}(V) \to \text{OMIN}(V)$ there is again ucp).   So $M_2(\text{OMIN}(V))^+$ is non-proper.

\bigskip

We now consider the relationship between the complexification of $\text{OMIN}_k(V)$ and $\text{OMIN}_k$ of the complexification of $V$.

\begin{lemma}
    Let $V$ be a real operator system such that $\text{OMIN}_{k}(V)$ is also 
    an operator system (e.g.\ if $k \geq 2$). Then $\text{OMIN}_{k}(V)_c$ is a $k$-minimal complex operator system.
\end{lemma}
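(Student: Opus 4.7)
My approach is to realize $\text{OMIN}_k(V)$ as a concrete subsystem of $C(X, M_k(\bR))$, then complexify and compare with the complex analogue of Corollary \ref{cor: k-minimal characterization}(5), due to Xhabli \cite{BXhabli12}, which states that a complex operator system is $k$-minimal precisely when it embeds unitally complete-order-isomorphically into $C(Y, M_k(\bC))$ for some compact Hausdorff $Y$.

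Specifically, let $X$ be the weak-$*$ compact space of ucp maps $V \to M_k(\bR)$. Proposition \ref{prop: OMIN-k matrix state} directly yields that the evaluation map $\pi: \text{OMIN}_k(V) \to C(X, M_k(\bR))$ defined by $\pi(v)(\varphi) = \varphi(v)$ is a unital real complete order embedding: for selfadjoint $v \in M_n(V)$ we have $v \in M_n(\text{OMIN}_k(V))^+$ iff $\varphi^{(n)}(v) \geq 0$ for every $\varphi \in X$, which is precisely the statement that $\pi^{(n)}(v) \geq 0$ in $C(X, M_k(\bR))$ with its pointwise order. Complexifying $\pi$ (invoking the complexification functoriality for ucp embeddings implicit in Lemma \ref{ncomma}) produces a complex unital linear complete order embedding $\pi_c: \text{OMIN}_k(V)_c \hookrightarrow C(X, M_k(\bR))_c$.

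Next I would verify the canonical unital complete order isomorphism $C(X, M_k(\bR))_c \cong C(X, M_k(\bC))$ given by $f + ig \mapsto \bigl(x \mapsto f(x) + ig(x)\bigr)$. At matrix level $n$, positivity in $C(X, M_k(\bC))$ is by definition pointwise positivity in $M_n(M_k(\bC))$, and this is precisely what the Section 2 description of complexification (via the block matrices $c(f,g)$) reduces to pointwise, since $M_k(\bR)_c = M_k(\bC)$ unitally completely order isomorphically. Composing $\pi_c$ with this identification, one obtains a unital complete order embedding $\text{OMIN}_k(V)_c \hookrightarrow C(X, M_k(\bC))$.

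By \cite{BXhabli12}, $C(X, M_k(\bC))$ is $k$-minimal, and any complex operator subsystem inherits this property (since the subsystem then also embeds into the same $C(X, M_k(\bC))$). Hence $\text{OMIN}_k(V)_c$ is $k$-minimal. The only mildly technical point is the identification $C(X, M_k(\bR))_c \cong C(X, M_k(\bC))$ at the level of matrix cones, matching the Section 2 complexification procedure with the pointwise order on continuous $M_k(\bC)$-valued functions; everything else is a direct application of the characterizations of $k$-minimality.
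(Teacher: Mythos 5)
Your proposal is correct and follows essentially the same route as the paper: embed $\text{OMIN}_k(V)$ completely order isomorphically into $C(X,M_k(\bR))$ (via the evaluation map already established in the $k$-minimality characterization), complexify to land in $C(X,M_k(\bR))_c \cong C(X,M_k(\bC))$, and invoke the complex characterization of $k$-minimality for subsystems of $C(X,M_k(\bC))$. The extra detail you supply on matching the matrix cones of $C(X,M_k(\bR))_c$ with the pointwise order on $C(X,M_k(\bC))$ is exactly the point the paper leaves implicit.
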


\begin{proof}
    This is because $\text{OMIN}_{k}(V)$ admits a complete order embedding into $C(X,M_k(\mathbb{R}))$ and thus
     $\text{OMIN}_{k}(V)_c$ admits a complete order embedding into $C(X,M_k(\mathbb{C}))$.
\end{proof}

\begin{proposition} \label{mr} 
 Under the conditions of the last lemma, the identity map from $\text{OMIN}_k(V_c)$ to $(\text{OMIN}_k(V)_c$ is ucp.
\end{proposition}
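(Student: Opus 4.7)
The plan is to verify directly from the definitions that every positive matrix of $\text{OMIN}_k(V_c)$ is also positive in $\text{OMIN}_k(V)_c$; unitality is automatic. By the complex analogue of the definition of $\text{OMIN}_k$ (taking complex compression matrices), a selfadjoint $x + iy \in M_n(V_c)$ lies in $M_n(\text{OMIN}_k(V_c))^+$ if and only if $\beta^*(x+iy)\beta \in M_k(V_c)^+$ for every $\beta \in M_{n,k}(\Cdb)$. On the other hand, $x + iy \in M_n(\text{OMIN}_k(V)_c)^+$ unwinds, via the matrix-ordered-space complexification from Section~\ref{CE}, to $c(x,y) \in M_{2n}(\text{OMIN}_k(V))^+$, which by the defining formula for the real $\text{OMIN}_k$ means $\alpha^T c(x,y)\alpha \in M_k(V)^+$ for every $\alpha \in M_{2n,k}(\Rdb)$.

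To bridge these two conditions, I would exploit the standard correspondence $\alpha = \begin{bmatrix} A \\ B \end{bmatrix} \leftrightarrow \beta := A + iB$ between $M_{2n,k}(\Rdb)$ and $M_{n,k}(\Cdb)$ (with $A, B \in M_{n,k}(\Rdb)$). A direct expansion then gives $\beta^*(x+iy)\beta = p + iq$ with
\[ p = A^T xA + B^T xB + B^T yA - A^T yB, \qquad q = A^T xB - B^T xA + A^T yA + B^T yB, \]
and a routine check shows $p \in M_k(V)_{\rm sa}$, $q \in M_k(V)_{\rm as}$, and $p = \alpha^T c(x,y)\alpha$ (sandwich the block form of $c(x,y)$ between $\begin{bmatrix} A^T & B^T \end{bmatrix}$ and its transpose). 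Assuming $x+iy \in M_n(\text{OMIN}_k(V_c))^+$, this particular choice of $\beta$ gives $p + iq \in M_k(V_c)^+$, so by the Section~\ref{CE} definition $c(p,q) \in M_{2k}(V)^+$; compressing by the real scalar matrix $\begin{bmatrix} I_k \\ 0 \end{bmatrix}$ extracts $p \in M_k(V)^+$. Since $\alpha \in M_{2n,k}(\Rdb)$ was arbitrary, $c(x,y) \in M_{2n}(\text{OMIN}_k(V))^+$, i.e.\ $x+iy \in M_n(\text{OMIN}_k(V)_c)^+$, as required.

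The computation itself is elementary linear algebra; the only real subtlety is conceptual, namely the recognition that the family of complex compressions $\beta^*(\cdot)\beta$ for $\beta \in M_{n,k}(\Cdb)$ contains, through its real part, exactly the family of real compressions $\alpha^T c(\cdot,\cdot)\alpha$ for $\alpha \in M_{2n,k}(\Rdb)$ that detect positivity in $\text{OMIN}_k(V)_c$. Once one also invokes the standing fact that the real part projection $V_c \to V$ preserves positivity completely, the conclusion is forced. A matrix-state alternative would instead complexify a real ucp $\psi : V \to M_k(\Rdb)$ and apply Proposition~\ref{prop: OMIN-k matrix state} in both the real and complex settings to obtain the same conclusion, but the direct route above avoids invoking the complex analogue as a black box.
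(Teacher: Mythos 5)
Your proof is correct, but it takes a genuinely different route from the paper's. The paper leans on the preceding lemma: since $\text{OMIN}_k(V)_c$ is $k$-minimal (via its embedding into $C(X,M_k(\Cdb))$), it suffices to check that the identity map is $k$-positive, and at level $k$ both cones reduce to $M_k(V_c)^+$ versus $M_{2k}(V)^+$, so the check is a one-liner. You instead verify complete positivity directly at every level $n$, by observing that under the correspondence $\alpha = \begin{bmatrix} A \\ B \end{bmatrix} \leftrightarrow \beta = A+iB$ the real compression $\alpha^T c(x,y)\alpha$ is exactly the selfadjoint part of the complex compression $\beta^*(x+iy)\beta$, so that positivity of all complex compressions forces positivity of all real compressions of $c(x,y)$. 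Your computation of $p$ and $q$ matches the one in the proof of Theorem \ref{Thm: Complexification is an op sys}, the identity $p=\alpha^T c(x,y)\alpha$ checks out, and the extraction of $p\in M_k(V)^+$ by compressing $c(p,q)$ with $\begin{bmatrix} I_k & 0\end{bmatrix}^T$ is valid. What your approach buys: it bypasses the $k$-minimality lemma entirely (and hence the $C(X,M_k)$ representation), and in fact it never uses the standing hypothesis that $\text{OMIN}_k(V)$ is an operator system, so the conclusion holds at the level of matrix-ordered $*$-vector spaces. What the paper's approach buys is brevity: the reduction to $k$-positivity makes the cone comparison trivial. The only point to flag is that you must take the compression form (with complex scalar matrices) as the definition of the complex $\text{OMIN}_k$, which is indeed the convention of \cite{ART23} that the paper adopts; if one instead starts from the matrix-state description one needs the (known) equivalence of the two.
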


\begin{proof}
    Since $(\text{OMIN}_k(V))_c$ is $k$-minimal, it suffices to show that the identity map is $k$-positive. 
    Let $x + iy \in M_k(\text{OMIN}_k(V_c))^+ = M_k(V_c)+$. Then $c(x,y) \in M_{2k}(V)^+$. Hence for every scalar matrix $A \in M_{2k,k}(\bR)$, $A^T c(x,y) A \in M_k(V)^+$. It follows that $c(x,y) \in M_{2k}(\text{OMIN}_k(V))^+$. Therefore $x+iy \in M_k(\text{OMIN}_k(V)_c)^+$. So the identity map is $k$-positive.
\end{proof}

As in the case for $\text{OMAX}_k$, there are examples of real operator systems $V$ for which $\text{OMIN}_k(V)_c \neq \text{OMIN}_k(V_c)$. Once again, the quaternions $\bH$ provide such an example, since $\text{OMIN}(\bH)$ lacks a proper cone, as does its formal complexification, whereas $\text{OMIN}(\bH_c) = \text{OMIN}(M_2(\bC))$ is an operator system by \cite{PTT}. 

We next  relate the equality of $\text{OMIN}_k(V)_c$ and $\text{OMIN}_k(V_c)$, and the analogous equality for $\text{OMAX}_k$,
 to the notion of entanglement breaking in quantum physics: 
 
\subsection{Applications and complements} 
 
 {\bf Example (Entanglement-breaking channels)} 
Example 8.3 of \cite{CDPR2} provides, for any integer $p \in \mathbb{N}$, examples of maps on $\bR^{2p} \otimes \bR^{2p}$ which are $p$-entanglement breaking as complex maps 
(that is, their complexification is $p$-entanglement breaking) but are not $p$-entanglement breaking as real maps. 
We claim that  Corollary 6 of \cite{JKPP}  (originally
proved in \cite[Section 6]{BXhabli12}; see also \cite[Section 6]{PTT}), and Theorem 8.1 of \cite{CDPR2} in the real case, 
 then implies that  $\text{OMIN}_{p}(M_{2p}(\bR_c)) \neq \text{OMIN}_p(M_{2p}(\bR))_c$, and 
that  $\text{OMAX}_{p}(M_{2p}(\bR_c)) \neq \text{OMAX}_p(M_{2p}(\bR))_c$, the latter if $p > 1$ (the $p=1$ case was discussed earlier).

To see this we first argue that parts (6) and (5) respectively of  \cite[Theorem 8.1]{CDPR2} 
imply that a map $\Phi : M_{2p}(\bF) \to M_{2p}(\bF)$ is $p$-entanglement breaking if and only if $\Phi$ is completely positive on $\text{OMIN}_{p}$, and  if and only if $\Phi$
 is completely positive  when regarded as a map into $\text{OMAX}_p$ if $p \geq 2$.
  In the case of part (6) 
    this follows from the proof that  (3)  implies  (1) in Corollary \ref{cor: k-minimal characterization}. 
      For a matrix $x \in M_r(\text{OMIN}_{p}(M_n))^+$ corresponds to a $p$-positive map 
  $\Psi : M_r \to M_n$, and similarly $\Phi^{(r)}(x) \geq 0$ corresponds to $\Phi \circ \Psi$ being completely positive.
  In the case of part  (5)  this follows from Corollary \ref{isc}.
  
Recall from the Remark after Proposition \ref{inco} that  any matrix ordered space is completely order embedded in its complexification.
If $\varphi: M_{2p}(\bR) \to M_{2p}(\bR)$ is not completely positive on $\text{OMIN}(M_{2p}(\bR))^+$ but $\varphi_c: M_{2p}(\bC) \to M_{2p}(\bC)$ is completely positive on $\text{OMIN}_k(M_{2p}(\bC))^+$, then we must conclude that $\text{OMIN}_{p}(M_{2p}(\bR_c)) \neq \text{OMIN}_p(M_{2p}(\bR))_c$.  Similarly, if $\varphi: M_{2p}(\bR) \to M_{2p}(\bR)$ is not completely positive as a map into $\text{OMAX}_p(M_{2p}(\bR))^+$,
 but $\varphi_c$ is completely positive  as a map into $\text{OMAX}_p(M_{2p}(\bC))^+$, then $\text{OMAX}_{p}(M_{2p}(\bR_c)) \neq \text{OMAX}_p(M_{2p}(\bR))_c$. 
\bigskip

Some if not all of the next result seems to be known in the complex case, where the proof is similar, but we are not aware of a proof in the literature.

\begin{theorem} \label{fdom} Let $V$ be a finite-dimensional real operator system.  
 Then the cones $(D_n^{k-max})$ are closed, and agree with the cones
 $(C_n^{k-max})$ and  $(C_n')$  (defined after Corollary {\rm \ref{isc}}).  
Indeed if $k = 1$ then these cones are just $M_n^+ \otimes V^+$, or equivalently,  the cones and 
$M_n({\rm OMAX}_k(V))^+$ are exactly 
the nonentangled (i.e.\ separable) elements of $M_n(V)^+$.  \end{theorem}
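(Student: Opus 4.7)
My plan is to establish the chain $D_n^{k-max} = C_n^{k-max} = C_n'$ by combining a bipolar duality argument with a Carath\'eodory-type closure result, then treat the $k=1$ case separately.

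First I would show $\overline{D_n^{k-max}} = C_n'$ via bipolar duality in the finite-dimensional space $M_n(V)_{\rm sa}$. Under the natural pairing $\langle \Phi, y \rangle := \sum_{i,j=1}^n \Phi(y_{ij})_{ij}$ between selfadjoint linear maps $\Phi : V \to M_n(\bR)$ and elements $y \in M_n(V)_{\rm sa}$, a direct calculation yields $\langle \Phi, \alpha^T P \alpha \rangle = \gamma^T \Phi^{(k)}(P)\gamma$, where $\gamma \in \bR^{kn}$ is the vectorization of $\alpha \in M_{k,n}(\bR)$. Since $\gamma$ ranges freely over $\bR^{kn}$, the dual cone $(D_n^{k-max})^*$ consists precisely of the $k$-positive maps $\Phi: V \to M_n(\bR)$. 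For the bipolar, given any $k$-positive $\Psi : V \to M_m(\bR)$ and $\gamma \in \bR^{nm}$ written in blocks $\gamma = (\gamma_i)_{i=1}^n$ with $\gamma_i \in \bR^m$, let $B \in M_{m,n}(\bR)$ have columns $\gamma_i$; then $\widehat{\Psi}_\gamma(v) := B^T \Psi(v) B$ is $k$-positive into $M_n(\bR)$ and satisfies $\gamma^T \Psi^{(n)}(x)\gamma = \langle \widehat{\Psi}_\gamma, x \rangle$. Hence ``$\Psi^{(n)}(x) \geq 0$ for every $k$-positive $\Psi: V \to M_m(\bR)$ and every $m$'' is equivalent to ``$\langle \Phi, x\rangle \geq 0$ for every $\Phi \in (D_n^{k-max})^*$'', and the bipolar theorem gives $\overline{D_n^{k-max}} = C_n'$.

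Next I would show $D_n^{k-max}$ is closed. By Carath\'eodory for convex cones in $M_n(V)_{\rm sa}$ of dimension $d$, any $x \in D_n^{k-max}$ can be written as $x = \sum_{i=1}^d \alpha_i^T P_i \alpha_i$, with each summand lying in $M_n(V)^+$ and dominated there by $x$, so norm-bounded by $\|x\|$. Given $x_N \to x$ with $x_N = \sum_{i=1}^d \alpha_{N,i}^T P_{N,i}\alpha_{N,i}$, I would use the invariance $\alpha^T P \alpha = (\lambda\alpha)^T(P/\lambda^2)(\lambda\alpha)$ to arrange $\|\alpha_{N,i}\| = 1$ (or $\alpha_{N,i} = 0$), extract a subsequence so $\alpha_{N,i} \to \alpha_i$, and reduce to the case where $\alpha_i$ has full row rank via the polar-decomposition factorization $\alpha_{N,i} = U_{N,i}\,\widetilde{\alpha}_{N,i}$, with $U_{N,i}$ having orthonormal columns and $\widetilde{\alpha}_{N,i}$ of maximal rank. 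In the full-rank case $\alpha_i\alpha_i^T$ is invertible, and one recovers
$P_{N,i} = (\alpha_{N,i}\alpha_{N,i}^T)^{-1}\alpha_{N,i}(\alpha_{N,i}^T P_{N,i}\alpha_{N,i})\alpha_{N,i}^T (\alpha_{N,i}\alpha_{N,i}^T)^{-1}$, which converges to some $Q_i \in C_k$. Thus $x = \sum_i \alpha_i^T Q_i \alpha_i \in D_n^{k-max}$.

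Third, since $e_n \in D_n^{k-max}$ (take any isometric $\alpha \in M_{mk,n}(\bR)$ with $mk \geq n$ and each $P_i = I_k \otimes e$) and $D_n^{k-max}$ is closed under addition, it is stable under adding positive multiples of $e_n$, giving $D_n^{k-max} \subseteq C_n^{k-max}$; conversely, if $x + \epsilon e_n \in D_n^{k-max}$ for all $\epsilon > 0$ then $x \in \overline{D_n^{k-max}} = D_n^{k-max}$ by the previous step. This establishes $D_n^{k-max} = C_n^{k-max} = C_n'$. For $k=1$: a row vector $\alpha \in M_{1,n}(\bR)$ satisfies $\alpha^T p \alpha = (\alpha^T \alpha) \otimes p$, a rank-one positive real matrix tensored with $p \in V^+$; by spectral decomposition of elements of $M_n(\bR)^+$, finite sums of such terms exhaust $M_n^+ \otimes V^+$, which is precisely the set of separable (non-entangled) elements of $M_n(V)^+$. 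The main obstacle is the closure step: controlling $\|P_N\|$ when $\alpha_N$ approaches rank deficiency, which necessitates the polar-decomposition reduction to full row rank.
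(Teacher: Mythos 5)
Your bipolar-duality identification of $\overline{D_n^{k\text{-}max}}$ with $C_n'$ is a legitimate alternative to the paper's route (the paper separates a point of $C_n'\setminus D_n$ from the closed cone $D_n$ by a functional and converts it to a completely positive map into $M_n$ as in Corollary \ref{plem}; your argument packages the same Hahn--Banach content as a clean Choi-type duality, and the compression step reducing $k$-positive maps into $M_m$ to maps into $M_n$ is correct). The third step and the $k=1$ identification with $M_n^+\otimes V^+$ are also fine.

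The genuine gap is in the closedness step, and it sits exactly where you flagged the ``main obstacle.'' Polar decomposition of each $\alpha_{N,i}$ guarantees that $\widetilde{\alpha}_{N,i}$ has full row rank \emph{for each fixed $N$}, but it does nothing to prevent the rank of the \emph{limit} $\alpha_i=\lim_N \alpha_{N,i}$ from dropping; when it does, $(\alpha_{N,i}\alpha_{N,i}^T)^{-1}$ blows up and your recovery formula gives no bound on $P_{N,i}$. This is not a phantom worry: with $\alpha_N=\mathrm{diag}(1,1/N)$ and $P_N=\mathrm{diag}(p,N^2p)$ one has $\alpha_N^TP_N\alpha_N=I_2\otimes p$ constant while $\|P_N\|\to\infty$, so no subsequence of the $P_{N,i}$ converges and no limit $Q_i$ exists. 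The fix (which is what the paper does) is to normalize the other way around: write $\alpha_{N,i}=Q_{N,i}W_{N,i}$ with $Q_{N,i}\in M_k^+$ and $W_{N,i}$ a partial isometry satisfying $Q_{N,i}W_{N,i}W_{N,i}^*=Q_{N,i}$, then replace the pair $(P_{N,i},\alpha_{N,i})$ by $(Q_{N,i}P_{N,i}Q_{N,i},\,W_{N,i})$. This leaves each summand unchanged, forces $\|W_{N,i}\|\le 1$, and gives $\|Q_{N,i}P_{N,i}Q_{N,i}\|=\|\alpha_{N,i}^TP_{N,i}\alpha_{N,i}\|\le\|x_N\|$, so \emph{both} sequences are bounded. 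One then extracts convergent subsequences of both and passes to the limit in the product directly, using only that $C_k$ is closed (true since $V$ is a finite-dimensional operator system); no inversion or full-rank hypothesis is needed. With that replacement your argument closes up and agrees with the paper's.
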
 

\begin{proof}   Write $D_n = D_n^{k-max}$.  Note first that $D_n = C_n$ for all $n \leq k$, so that we only need to consider $n > k$. Suppose $x + \epsilon I_n \in D_n$ for every $\epsilon > 0$. Then for every $m \in \mathbb{N}$, there exist matrices $A_{1,n}, \dots, A_{r,m} \in M_{k,n}$ and positive $P_{1,m}, \dots, P_{r,m} \in D_k$ such that $x + \frac{1}{m} I_n = \sum A_{i,m}^* P_{i,m} A_{i,m}$. Since $M_n(V)$ is finite-dimensional, by the well known Caratheodory's theorem
for cones and the conical hull (a short proof of which 
may even be found on wikipedia!), we can assume the number of terms $r$ in the sum is no more than $\dim(M_n(V)) =: d$. In fact, we can take the number of terms to be exactly $d$ by adding terms with $P_{i,m} = 0$. Hence, we have
    \[ x + \frac{1}{m} I_n = \sum_i A_{i,m}^* P_{i,m} A_{i,m} = A_m^* P_m A_m \] where $A_m^* = \begin{bmatrix} A_{1,m}^* & \dots & A_{d,m}^* \end{bmatrix}$ and $P_m = \text{diag}(P_{1,m}, \dots, P_{d,m})$. Write each $A_{i,m}$ as $A_{i,m} = Q_{i,m} U_{i,m}$ where $Q_{i,m} \in M_k^+$ and $U_{i,m} \in M_{k,n}$ is a partial isometry, and such that $Q_i U_{i,m} U_{i,m}^* = Q_i$ (using polar decomposition). Notice \[ \| Q_{i,m} P_i Q_{i,m} \| = \| U_{i,m} A_{i,m}^* P_i A_{i,m} U_{i,m}^* \| \leq \| A_{i,m}^* P_i A_{i,m} \| \leq \| Q_{i,m} P_i Q_{i,m}\|. \] Then replacing $P_{i,m}$ with $Q_{i,m}^* P_{i,m} Q_{i,m} \in D_k$ and replacing $A_{i,m}$ with $U_{i,m}$, we may assume $\|A_{i,m}\|=1$ and $\|P_{i,m}\| = \|A_{i,m}^* P_{i,m} A_{i,m}\|$. Because
    \[ 0 \leq A_{i,m}^* P_{i,m} A_{i,m} \leq \sum_j A_{j,m}^* P_{j,m} A_{j,m} = x + \frac{1}{m} I_n \] 
    we have $\|P_{i,m}\| \leq \|x\| + \frac{1}{m}$ for each $i=1,\dots,d$. Thus we may assume $\|P_m\| \leq \|x\| + \frac{1}{m}$ and $\|A_m\| = \| \sum A_{i,m}^* A_{i,m} \|^{1/2} \leq d^{1/2}$ for each $m \in \mathbb{N}$.
    
    Since the ball of radius $d^{1/2}$ in $M_{k,n}$ is compact, there exists a subsequence of $\{A_m\}$ that converges to a matrix $A$ with $\|A\|\leq d^{1/2}$. Passing to a convergent subsequence, we observe that for any norm on $M_n(V)$ coming from an operator system structure on $V$ with matrix cones $(D_k)$, we have 
    \begin{eqnarray} \| A_m^* P_m A_m - A^* P_m A \| & = & \| A_m^* P_m A_m - A_m^* P_m A + A_m^* P_m A - A^* P_m A\| \nonumber \\ & \leq &  \| A_m^* P_m \| \|A_m - A\| + \|A_m^* - A^*  \| \|P_m A\| \nonumber \\
    & \leq & 2 d^{1/2} (\|x\| + \frac{1}{m}) \, \|A_m - A\| \to 0. \nonumber \end{eqnarray}
    Also, notice each $P_m$ is block diagonal with diagonal blocks $P_{i,m}$ satisfying $\|P_{i,m}\| \leq \|x\| + 1$ and $P_{i,m} \in D_k$. Because $D_k$ is closed in the norm induced by the archimedean order unit $I_k$ on $M_k(V)$, each bounded sequence $\{P_{i,m}\}$ has a convergent subsequence converging in $D_k$ (since $V$ is finite-dimensional). Hence, by passing to a subsequence, $P_m$ has a limit $P$ which is a block diagonal element with blocks in $D_k$, and such that $\|P\| \leq \|x\|$.

    We claim that $x = A^* P A$. With respect to any norm on $M_n(V)$ coming from an operator system structure on $V$ with $M_k(V)^+=D_k$ we have
    \begin{eqnarray}
        \|x - A^* P A\| & = & \|x + \frac{1}{m}I_n - \frac{1}{m} I_n - A_m^* P_m A_m + A_m^* P_m A_m - A^* P_m A + A^* P_m A - A^* P A \| \nonumber \\
        & \leq & \|x + \frac{1}{m} I_n - A_m^* P_m A_m\| + \|\frac{1}{m} I_n \| + \|A_m^* P_m A_m - A^* P_m A\| + \|A^* (P_m - P) A\| \nonumber \\
        & \leq & \frac{1}{m} + \|A_m^* P_m A_m - A^* P_m A\| + \|P_m - P\| \to 0 \nonumber
    \end{eqnarray}
    since $x + \frac{1}{m} I_n = A_m^* P_m A_m$. It follows that $x = A^*PA$.
        Thus $(D_k) = (C_n^{k-max})$, since they  are archimidean.   
    
    If $k = 1$ it  is easy to see that $D_n \subseteq M_n^+ \otimes V^+$, and the converse is an exercise.   
  One can prove $M_n^+ \otimes V^+$ is closed using a similar but simpler `Caratheodory's theorem
for cones' argument.  Given a (bounded) converging sequence $(z_n)$ from $M_n^+ \otimes V^+$, use that theorem 
to write $z_n = \sum_{k=1}^d \, t_k^n A_k^n \otimes v_k^n$ for $t_k^n \geq 0$.  Fix a faithful state $\varphi$ on $V$.  We may assume that
$\varphi(v_k^n) = {\rm tr}(A_k^n) = 1$.  Since $(({\rm tr} \otimes \varphi)(z_n))$ is bounded, so are the $t_k^n$.  Then extract a converging subsequence.
We leave the details to the reader.
     (We are not saying that $D_n$ is closed in the   order unit norm associated with $D_n$, indeed  $D_n$ may have no order unit.)  

That $D_n = C_n'$ follows by a variant of the proof of Corollary \ref{plem}.  If $v \in C_n' \setminus D_n$ and 
$D_n$ is  closed  there exists a linear functional $\psi : M_n(V) \to \bR$ with $\psi(K) \geq 0 > \psi(v)$.  
As in the cited proof we obtain cp  $\Psi : {\rm OMAX}_k(V) \to M_n$ with $\Psi^{(n)}(v) \geq 0$ by definition of $C_n'$ and this implies 
$0 \leq \psi(v),$  a contradiction. 
\end{proof}

{\bf Remark.}  1)\  By the last result, if $V$ is a real finite-dimensional operator system 
then Corollary \ref{isc} holds without the assumption that $V = {\rm OMAX}(V)$. 
(For the unital assertion
note that the real variant of \cite[Exercise 6.2 (ii)]{Pnbook} works even if $\cS$ is not an operator system.) 
We are also able to prove that $D_n$ is  the intersection of all the $n$th level
cones for operator system structures
on $V$ with the
 same $V^+$.
Of course $V = {\rm OMAX}(V)$ if and only if every unital 
positive map $\Psi : V \to M_m$ is completely positive for all $m \in \bN$, and this implies that
$V$ has trivial (i.e.\ identity)  involution.  Thus if the involution on a  real finite-dimensional operator system
is nontrivial then there exists a selfadjoint positive but not completely positive map 
on $V$.  Indeed the latter map may be chosen to be unital.
Also, $\Phi : W \to V$ is completely positive as a map into  ${\rm OMAX}(V)$ if and only if 
$\Psi \circ \Phi $ is completely positive  for  every positive map $\Psi : V \to M_m$ and  for all $m \in \bN$.
Indeed if every positive map $\Psi : V \to M_m$ is completely positive then $C_n' = M_n(V)^+$.
And  if 
$\Psi \circ \Phi $ is completely positive  for  every positive map $\Psi : V \to M_m$ and $w \in 
M_n(W)^+$ then $(\Psi \circ \Phi)^{(n)}(w) \geq 0$ and so $\Phi^{(n)}(w) \in C_n' = M_n({\rm OMAX}(V))^+$. 

\medskip

2)\ If $V$ is infinite dimensional and $k = 1$ then $D_n = M_n^+ \otimes V^+$ as before, but 
$D_n \neq C_n$ necessarily, and $D_n$ is not necessarily closed in $M_n(V)$.   See \cite[Remark 3.12]{PTT}, where 
the example of $C([0,1])$ is considered. 
Indeed if $V = {\rm OMAX}(V)$  then $D_n$ is not necessarily closed in $M_n(V)$ because it is not even 
archimedean.

\medskip

3)\ If $\cS$ is a finite dimensional 
real or complex operator system the fact above that $M_n^+ \otimes \cS$ is closed allows one to improve \cite[Corollary 6.7(iii)]{Pnbk} to: $M_n(\cS)^+ = M_n^+ \otimes \cS$.  We do not recall seeing this fact in the literature in this generality. 

\medskip

Next, we check that when $V$ is finite-dimensional, $\text{OMIN}_k$ and $\text{OMAX}_k$ are dual 
structures. Note that if $V$ is a real operator system then either both   OMAX$(V)$ and  OMIN$(V)$ are operator systems, or both are not (since we saw above and in Proposition \ref{isos} that these are operator systems  if and only if  the involution is trivial, i.e.\ the identity). 

\begin{theorem} \label{OMINMAXduality}
    Suppose that $V$ is a finite-dimensional operator system. Then $\text{OMAX}_k(V)^d \cong \text{OMIN}_k(V^d)$ and $\text{OMIN}_k(V)^d \cong \text{OMAX}_k(V^d)$ for all $k \in \bN$.  
\end{theorem}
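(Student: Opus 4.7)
The plan is to identify both sides of each claimed isomorphism with the same set of linear maps $V \to M_n(\bR)$. Fix a faithful state $\phi_0$ on $V$, which by Theorem \ref{dcom} serves as an archimedean matrix order unit for $V^d$. The same $\phi_0$ is a faithful state on $\text{OMAX}_k(V)$ (since its positive cone is contained in $V^+$), providing the matching order unit on the left of the first isomorphism. Throughout I use the standard identification $M_n(V^d) \cong L(V, M_n(\bR))$, under which $M_n(V^d)^+$ corresponds to $CP(V, M_n(\bR))$ in finite dimensions.

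For the first isomorphism $\text{OMAX}_k(V)^d \cong \text{OMIN}_k(V^d)$, I would compute both matrix cones in terms of maps. The left side $M_n(\text{OMAX}_k(V)^d)^+$ corresponds to $CP(\text{OMAX}_k(V), M_n(\bR))$, and by the universal property in Proposition \ref{Prop: OMAX properties}(1) a map $\Psi : V \to M_n(\bR)$ is cp out of $\text{OMAX}_k(V)$ if and only if $\Psi$ is $k$-positive out of $V$. On the right, $\psi \in M_n(\text{OMIN}_k(V^d))^+$ if and only if $\alpha^T \psi \alpha \in M_k(V^d)^+$ for every $\alpha \in M_{n,k}(\bR)$; under the identification, $\alpha^T \psi \alpha$ corresponds to $\Phi_\alpha : v \mapsto \alpha^T \Psi(v) \alpha$, so the condition becomes that each $\Phi_\alpha$ is cp.

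The crux is then the equivalence: $\Psi$ is $k$-positive if and only if $\Phi_\alpha$ is cp for every $\alpha \in M_{n,k}(\bR)$. The forward direction uses that conjugation $A \mapsto \alpha^T A \alpha$ is cp from $M_n$ to $M_k$, so $\Phi_\alpha$ is at least $k$-positive, and then cp by Proposition \ref{prop: k-pos to M_k implies ucp}. For the converse, given $x \in M_k(V)^+$ and vectors $w_1, \ldots, w_k \in \bR^n$, assemble the $w_i$ as the columns of some $\alpha \in M_{n,k}(\bR)$; a short computation expresses $\sum_{i,j} w_i^T \Psi(x_{ij}) w_j$ as $\langle \Phi_\alpha^{(k)}(x)\xi, \xi \rangle$ for the fixed vector $\xi = (e_1,\dots,e_k) \in \bR^{k^2}$, which is $\geq 0$ by complete positivity of $\Phi_\alpha$; varying the $w_i$ yields $\Psi^{(k)}(x) \geq 0$. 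This establishes the first isomorphism, and the second follows formally: apply the first to $V^d$ to get $\text{OMAX}_k(V^d)^d \cong \text{OMIN}_k(V^{dd}) \cong \text{OMIN}_k(V)$ (using $V^{dd} \cong V$ in finite dimensions from Section \ref{Duality}), then dualize once more and invoke $\text{OMAX}_k(V^d)^{dd} \cong \text{OMAX}_k(V^d)$.

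The main obstacle I anticipate is the $k=1$ case: for real $V$ with nontrivial involution such as $\bH$, neither $\text{OMAX}(V)$ nor $\text{OMIN}(V)$ need be an operator system (Examples \ref{ex. OMAX(bH)}--\ref{ex: OMIN(bH)}), so the statement must be interpreted at the level of matrix-ordered $*$-vector spaces with (matricially) archimedean unit. Working through the definitions at this level should still give the duality, since the description via maps $V \to M_n(\bR)$ is intrinsic and Proposition \ref{Prop: OMAX properties}(1) is explicitly formulated to cover matrix-ordered range spaces; one just needs to check that $V^d$ and its bidual still behave as expected at this level of generality.
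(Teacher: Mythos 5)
Your proposal is correct in substance but takes a genuinely different route from the paper. You compute both matrix cones explicitly under the identification $M_n(\cS^d)^+ \cong CP(\cS,M_n)$ and reduce the first isomorphism to the elementary lemma that $\Psi : V \to M_n$ is $k$-positive if and only if every compression $\alpha^T\Psi(\cdot)\alpha$, $\alpha \in M_{n,k}(\bR)$, is completely positive into $M_k$ (your Choi-type computation with $\xi = (e_1,\dots,e_k)$ is the standard one and is correct); the second isomorphism then falls out formally from biduality. The paper instead proves the \emph{other} identity first, $\text{OMIN}_k(V)^d \cong \text{OMAX}_k(V^d)$, by a soft universal-property argument: a $k$-positive map $\varphi$ out of $\text{OMIN}_k(V)^d$ dualizes to a $k$-positive map into $\text{OMIN}_k(V)^{dd} \cong \text{OMIN}_k(V)$, which is completely positive by the universal property of $\text{OMIN}_k$, whence $\varphi$ is completely positive. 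Your approach buys an explicit, self-contained description of both cones and avoids biduality entirely for the first isomorphism; the paper's is shorter but leans on $\text{OMIN}_k(V)^{dd}\cong\text{OMIN}_k(V)$ (which it justifies via closedness of the OMIN cones and \cite[Lemma 4.3]{CEinj}) and on Proposition \ref{Prop: OMAX properties}(2). Two points where you are thinner than the paper: (i) your derivation of the second isomorphism uses $\text{OMAX}_k(V^d)^{dd}\cong\text{OMAX}_k(V^d)$, which requires the OMAX cones to be closed --- automatic for $k\geq 2$ since archimedean cones in a finite-dimensional order-unit space are closed, and supplied for $k=1$ by Theorem \ref{fdom}; you should cite this rather than wave at it. (ii) The $k=1$ case, which you correctly flag, is exactly where the paper has to work hardest (matricial archimedeanness of $\text{OMIN}(V)^d$ via the argument before Theorem \ref{dcom}, plus the identification of the OMAX cones with $C_n'$ from Theorem \ref{fdom}); your cone computation does go through at the level of matrix-ordered $*$-vector spaces, but you would need to verify that the chosen faithful state remains a matricially archimedean unit on $\text{OMAX}(V)^d$ when $\text{OMAX}(V)$ has no matrix order unit, so this part of your write-up is a sketch rather than a proof.
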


\begin{proof}   Let $(W,f)$ be an operator system or an archimedean matrix ordered  $*$-vector  space 
and suppose that $\varphi: \text{OMIN}_k(V)^d \to W$ is 
unital and $k$-positive.  Suppose that $k \geq 2$.
Without loss of generality, assume that $W$ is finite dimensional (for example, by restricting the range to $\varphi(V^d)$). Then $\varphi^d: W^d \to \text{OMIN}_k(V)$ is $k$-positive. By the universal property of $\text{OMIN}_k$, $\varphi^d$ is completely positive. It follows that $\varphi$ is ucp. Since $W$ was arbitrary, $\text{OMIN}_k(V)^d \cong \text{OMAX}_k(V^d)$. The other identity is similar.

The OMIN cones are closed if $V$ is a finite dimensional operator system. We use the original  norm topology' on $M_n(V)$, but of course all TVS topologies coincide.  If selfadjoint $x_n \to x$ in this topology, and $\alpha^T x_n \alpha \in V_+$, then since $V_+$ is closed we see that $\alpha^T x \alpha \in V_+$. So the OMIN cones are closed, and hence OMIN$(V) \cong {\rm OMIN}(V)^{dd}$ completely order isomorphically by \cite[Lemma 4.3]{CEinj}.  
Next note that the argument in the line before Theorem \ref{dcom}  shows that ${\rm OMIN}(V)^{d}$ is matricially archimedean.  Hence its cones contain those of ${\rm OMAX}(V^d)$ by Proposition 9.2 (2). On the other hand the converse containment follows from the argument
above, and the fact that the OMAX cones are $C_n'$.   So $\text{OMIN}(V)^d \cong \text{OMAX}(V^d)$.  Replacing $V$ by $V^d$ 
we have OMIN$(V^d) \cong \text{OMIN}(V^d)^{dd} \cong \text{OMAX}(V)^d$. 
\end{proof}

We conclude this section with some examples where real operator systems do behave as their complex counterparts.  We remarked after Example \ref{ex. OMAX(bH)} and Example \ref{ex: OMIN(bH)} that whenever a real operator system $V$ has non-trivial skew-adjoint elements, then $\text{OMAX}(V)$ and $\text{OMIN}(V)$ are not operator systems. Moreover, it is generally the case that $\text{OMAX}_k(V)_c \neq \text{OMAX}_k(V_c)$ and $\text{OMIN}_k(V)_c \neq \text{OMIN}_k(V_c)$. In the special case where $V_{\rm as} = \{0\}$, 
or equivalently that $V$ is an operator system in the class discussed at the end of Section 
2, it turns out that this bad behavior goes away:

\begin{proposition} \label{isos}
    Let $V$ be a real operator system, and suppose that $V_{\rm as} = \{0\}$. Then $\text{OMAX}(V)_c = \text{OMAX}(V_c)$ and $\text{OMIN}(V)_c = \text{OMIN}(V_c)$. Consequently $\text{OMAX}(V)$ and $\text{OMIN}(V)$ are operator systems.
\end{proposition}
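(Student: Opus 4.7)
The plan is to prove the cone equalities $M_n(\text{OMAX}(V)_c)^+ = M_n(\text{OMAX}(V_c))^+$ and $M_n(\text{OMIN}(V)_c)^+ = M_n(\text{OMIN}(V_c))^+$ at every matrix level, after which the ``consequently'' will follow by transferring the complex operator system structure back to $V$. By Proposition \ref{whdl}, the identity $\text{OMAX}(V)_c \to \text{OMAX}(V_c)$ is already ucp, and by Proposition \ref{mr} so is $\text{OMIN}(V_c) \to \text{OMIN}(V)_c$; thus only the reverse inclusions need work. Note also that $V_{\rm as} = \{0\}$ forces $V_c^+ = V^+$: the defining condition $c(x,y) \in M_2(V)^+$ with $y$ necessarily zero reduces to $x \oplus x \in M_2(V)^+$, i.e.\ $x \in V^+$.

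For OMIN I would work directly from the definition.  Let $z = s + it \in M_n(V_c)_{\rm sa}$ with $s \in M_n(V)_{\rm sa}$, $t \in M_n(V)_{\rm as}$; since $V = V_{\rm sa}$ the involution on $M_n(V)$ is just the matrix transpose, so $s^T = s$ and $t^T = -t$.  For $\beta = u + iv \in \bC^n$, set $\alpha = \begin{bmatrix} u \\ v \end{bmatrix} \in \bR^{2n}$.  Expanding $\beta^* z \beta$ and using the identities $u^T s v = v^T s u$, $u^T t u = v^T t v = 0$, and $u^T t v = -v^T t u$ (each a consequence of $s^T = s$ or $t^T = -t$ combined with the triviality of the transpose on $V$-valued $1 \times 1$ matrices), the imaginary part collapses, leaving
\[ \beta^* z \beta \; = \; u^T s u + v^T s v + 2 v^T t u \; = \; \alpha^T c(s,t) \alpha . \]
Since $\beta \mapsto \alpha$ is a bijection $\bC^n \to \bR^{2n}$, the condition $c(s,t) \in M_{2n}(\text{OMIN}(V))^+$ (i.e.\ $z \in M_n(\text{OMIN}(V)_c)^+$) is equivalent to $\beta^* z \beta \in V^+ = V_c^+$ for all $\beta \in \bC^n$ (i.e.\ $z \in M_n(\text{OMIN}(V_c))^+$).

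For OMAX I would argue on generators and use the archimedean closure.  Given $z = s + it \in M_n(\text{OMAX}(V_c))^+$ and $\epsilon > 0$, write $z + \epsilon I_n = \alpha^* D \alpha$ with $\alpha = A + iB \in M_{m,n}(\bC)$ and $D = \text{diag}(p_1, \dots, p_m)$, $p_j \in V^+$.  Taking real and imaginary parts yields $s + \epsilon I_n = A^T D A + B^T D B$ and $t = A^T D B - B^T D A$; here $V_{\rm as} = \{0\}$ is exactly what guarantees, via an entrywise check, that the first expression lies in $M_n(V)_{\rm sa}$ and the second in $M_n(V)_{\rm as}$.  The central matrix identity is
\[ \begin{bmatrix} A^T & -B^T \\ B^T & A^T \end{bmatrix} \begin{bmatrix} D & 0 \\ 0 & D \end{bmatrix} \begin{bmatrix} A & B \\ -B & A \end{bmatrix} \; = \; \begin{bmatrix} s + \epsilon I_n & t \\ -t & s + \epsilon I_n \end{bmatrix} .\]
Its left-hand side has the form $\hat\alpha^T \hat D \hat\alpha$ with $\hat\alpha$ real and $\hat D = D \oplus D$ block-diagonal in $V^+$, hence lies in $D_{2n}^{1-max}$; conjugating by the real matrix $I_n \oplus (-I_n)$ converts the right-hand side to $c(s,t) + \epsilon I_{2n}$.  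Since $D_{2n}^{1-max}$ is closed under real congruence, $c(s,t) + \epsilon I_{2n} \in D_{2n}^{1-max}$ for every $\epsilon > 0$, and the archimedean closure gives $c(s,t) \in M_{2n}(\text{OMAX}(V))^+$, i.e.\ $z \in M_n(\text{OMAX}(V)_c)^+$.

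Once both cone equalities are established, the ``consequently'' follows formally. Since $V_c$ is a complex operator system, $\text{OMAX}(V_c)$ and $\text{OMIN}(V_c)$ are complex operator systems by \cite{PTT}, so the equalities promote $\text{OMAX}(V)_c$ and $\text{OMIN}(V)_c$ to complex operator systems. The canonical real linear unital complete order embedding $V \hookrightarrow V_c$ from Proposition \ref{inco} (or, more precisely, the analogous embedding for matrix-ordered $*$-vector spaces recorded in the Remark following it) then transfers properness of the matrix ordering and the archimedean matrix order unit property from the complexification down to $V$, so that both $\text{OMAX}(V)$ and $\text{OMIN}(V)$ are real operator systems. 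The main obstacle is the OMAX step: one must ensure the decomposition $\alpha^* D \alpha = (A^T D A + B^T D B) + i(A^T D B - B^T D A)$ respects the splitting $M_n(V_c)_{\rm sa} = M_n(V)_{\rm sa} \oplus i \, M_n(V)_{\rm as}$, and this is precisely where $V_{\rm as} = \{0\}$ enters essentially; without it the cross-term would not sit in $M_n(V)_{\rm as}$ and the matrix identity above would no longer identify a $c(\cdot,\cdot)$-block.
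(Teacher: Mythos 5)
Your proof is correct, but it takes a genuinely different route from the paper's. The paper only treats OMAX explicitly (declaring OMIN ``nearly identical'') and avoids all matrix-level computation: it observes that $V_{\rm as}=\{0\}$ forces every selfadjoint element of $V_c$ to lie in $V$, so that the \emph{first-level} cones of $\text{OMAX}(V)_c$ and $\text{OMAX}(V_c)$ both equal $V^+$; since the identity $\text{OMAX}(V)_c \to \text{OMAX}(V_c)$ is already ucp by Proposition \ref{whdl}, the reverse inclusion at every level then follows in one line from the universal property of the complex OMAX as the \emph{smallest} archimedean matrix ordering with prescribed first cone (and dually, maximality for OMIN). You instead verify the cone equalities $M_n(\text{OMAX}(V)_c)^+ = M_n(\text{OMAX}(V_c))^+$ and $M_n(\text{OMIN}(V)_c)^+ = M_n(\text{OMIN}(V_c))^+$ directly at every matrix level: the identity $\beta^* z \beta = \alpha^T c(s,t)\alpha$ under $\beta = u+iv \leftrightarrow \alpha = [u;v]$ for OMIN, and the congruence carrying $\hat\alpha^T(D\oplus D)\hat\alpha$ to $c(s,t)+\epsilon e_{2n}$ for OMAX, both of which check out (the vanishing of the imaginary part, and the membership of the cross-term in $M_n(V)_{\rm as}$, are exactly where $V_{\rm as}=\{0\}$ enters, as you note). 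The trade-off: the paper's argument is shorter and reuses the universal properties already established, while yours is self-contained, makes the role of the hypothesis visible inside the algebra rather than only at the first level, and incidentally exhibits explicit real decompositions of the complex generating cones. Both handle the ``consequently'' the same way, by pulling the operator system structure back along the complete order embedding $\text{OMAX}(V) \hookrightarrow \text{OMAX}(V)_c = \text{OMAX}(V_c)$.
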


\begin{proof}   
    We will just prove the statement for $\text{OMAX}(V)$, the case of $\text{OMIN}(V)$ being nearly identical and left to the reader. 
    
    Note that since $V_{\rm as} = \{0\}$ it follows that
$(V_c)^+ = V^+$. 
Since the identity map $\text{OMAX}(V)_c \to \text{OMAX}(V_c)$ is ucp (see Proposition \ref{whdl}), it suffices to show that $\text{OMAX}(V)_c^+ = \text{OMAX}(V_c)^+ = V^+$ (i.e. the induced positive cones on $V_c$ agree). Once this is shown, it follows that the identity map is a complete order isomorphism since the matrix ordering on $\text{OMAX}(V_c)$ is the smallest 
archimedean  matrix ordering whose first positive cone agrees with $V^+$.

Let $z = x + iy \in \text{OMAX}(V)_c$ be self-adjoint, where $x, y \in V$. Since $V_{\rm as} = \{0\}$ we have  $y=0$, so $z \in \text{OMAX}(V)$.  However $\text{OMAX}(V)$  is completely order embedded in 
$\text{OMAX}(V)_c$ (see Proposition \ref{inco} and the Remark after it). 
    We conclude that $\text{OMAX}(V)_c^+ = V^+$. The final statement, that $\text{OMAX}(V)$ is an operator system, follows since $\text{OMAX}(V)$ is a real subsystem of the operator system $\text{OMAX}(V_c)$.
\end{proof}

{\bf Remark.} When $V_{\rm as} = \{0\}$ and $k > 1$, we do not know if $\text{OMAX}_k(V)_c = \text{OMAX}_k(V_c)$ or $\text{OMIN}_k(V)_c = \text{OMIN}_k(V_c)$.

\bigskip

Suppose that $V$ is a real $*$-vector space with positive cone $V^+$ and archimedean order unit $e$. If $V_{\rm as} = \{0\}$, then $\text{OMIN}(V)$ (or $\text{OMAX}(V)$) is an operator system, and thus $V$ admits a unital order embedding into an operator system. When $V_{\rm as} \neq \{0\}$, $\text{OMAX}(V)$ and $\text{OMIN}(V)$ are not operator systems. However, we have the following characterization.

\begin{theorem} \label{hasn}
    Let $V$ be a $*$-vector space with proper positive cone $V^+ \subseteq V_{\rm sa}$ and archimedean order unit. Then there exists a real operator system $\mathcal{S}$ and a unital order embedding $V \to \mathcal{S}$.   Conversely, 
    any real operator system is
    a $*$-vector space with positive cone $V^+ \subseteq V_{\rm sa}$ and archimedean order unit.
\end{theorem}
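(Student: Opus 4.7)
The converse direction is immediate from Section \ref{CE}: by definition a real operator system comes equipped with a proper matrix ordering $\{C_n\}$ and archimedean matrix order unit $e$, so restricting to $n=1$ yields the proper cone $V^+ = C_1 \subseteq V_{\rm sa}$ with archimedean order unit $e$.

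For the forward direction, my plan is to split $V = V_{\rm sa} \oplus V_{\rm as}$ via $v_{\rm sa} = \tfrac{1}{2}(v+v^*)$ and $v_{\rm as} = \tfrac{1}{2}(v-v^*)$, and handle the two summands entirely separately, gluing them back together inside a real C$^*$-algebra. The triple $(V_{\rm sa}, V^+, e)$ is a real archimedean ordered unit space (with trivial involution on $V_{\rm sa}$). Letting $X$ denote its state space in the weak$^*$-topology, which is compact Hausdorff, the evaluation map $\iota : V_{\rm sa} \to C(X,\mathbb{R})$, $\iota(a)(f) = f(a)$, is a unital real-linear order embedding. This is the real form of Kadison's functional representation; it may be proved directly by using Hahn-Banach to separate $V^+$ from points outside it (with the archimedean property ensuring $V^+$ is closed in the order-unit norm), or alternatively by complexifying $V_{\rm sa}$ to the complex AOU space $V_{\rm sa} + i V_{\rm sa}$ with $(V_{\rm sa}+iV_{\rm sa})^+ := V^+$, applying the complex Kadison theorem, and restricting to the selfadjoint part.

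For the skew part, $V_{\rm as}$ is just a real vector space, so there is a real-linear injection $\phi : V_{\rm as} \to B(H)_{\rm as}$ into the skew-adjoint part of $B(H)$ for a real Hilbert space $H$ of sufficiently large dimension. I will then set $\mathcal{S} := C(X, B(H))$, a real C$^*$-algebra (hence a real operator system), and define
\[ \rho : V \to \mathcal{S}, \qquad \rho(v)(x) := \iota(v_{\rm sa})(x)\, I_H + \phi(v_{\rm as}). \]
Routine checks show that $\rho$ is real-linear, unital, and selfadjoint (using $\phi(v_{\rm as})^* = -\phi(v_{\rm as})$). The order embedding property is the key point: if $v \in V^+$ then $v_{\rm as} = 0$ and $\rho(v)(x) = \iota(v)(x)\, I_H \geq 0$; conversely, if $\rho(v) \in \mathcal{S}^+$ then each $\rho(v)(x)$ is selfadjoint in $B(H)$, which forces the skew-adjoint element $\phi(v_{\rm as})$ to be selfadjoint, hence zero, and thus $v_{\rm as} = 0$ by injectivity of $\phi$. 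The remaining condition $\iota(v_{\rm sa})(x)\, I_H \geq 0$ for every $x \in X$ yields $\iota(v_{\rm sa}) \in C(X,\mathbb{R})^+$, whence $v_{\rm sa} = v \in V^+$ by the order-reflection property of $\iota$.

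The conceptual obstacle that this result has to overcome, highlighted throughout Section \ref{nmin} and in Examples \ref{ex. OMAX(bH)} and \ref{ex: OMIN(bH)}, is that when $V_{\rm as} \neq 0$ neither $\text{OMAX}(V)$ nor $\text{OMIN}(V)$ need be a real operator system: the unit may fail to be a matrix order unit, or the higher matrix cones may fail to be proper. The construction above sidesteps this by completely decoupling the order and skew data: the order information is carried by the commutative factor $C(X,\mathbb{R})$ via Kadison, while the skew subspace is absorbed freely into the noncommutative factor $B(H)$, with no compatibility demands placed between them.
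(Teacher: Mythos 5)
Your proof is correct, but it takes a genuinely different route from the paper's. The paper works intrinsically: it fixes a norm on $V_{\rm as}$ (via a Hamel basis), generates matrix cones $D_n$ on $V$ itself from $V^+$ together with the $2\times 2$ blocks $\begin{bmatrix} e & -x \\ x & e\end{bmatrix}$ with $\|x\|\le 1$, passes to the archimedean closure, and then proves that $C_1=V^+$ and that $C_2$ is proper by a careful Cauchy--Schwarz estimate showing $\|z\|\le 2\epsilon$; the embedding is then the identity map into $(V,\{C_n\},e)$. You instead decouple the data: Kadison's representation handles $(V_{\rm sa},V^+,e)$ inside $C(X,\mathbb{R})$, while $V_{\rm as}$ is absorbed by an arbitrary linear injection into the skew part of some $B(H)$ (routine: send a Hamel basis to skew blocks supported on pairwise orthogonal two-dimensional subspaces), and the two pieces are glued inside the real C$^*$-algebra $C(X,B(H))$. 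Your verification of the order embedding is sound --- the key observation that positivity of $\rho(v)(x)$ forces its skew part $\phi(v_{\rm as})$ to vanish, reducing everything to the Kadison part, is exactly right, and $\rho$ is visibly unital, selfadjoint and injective. What your approach buys is brevity and conceptual transparency (no properness computation at level $2$ is needed because the target is already a C$^*$-algebra). What the paper's approach buys is the intrinsic description exploited in the remarks that follow its proof: the induced order-unit norm on $V_{\rm as}$ is identified with the chosen norm, and the intersection of the cones over all such operator system structures is identified with the ${\rm OMAX}$ cones. Note that your construction also implicitly chooses a norm on $V_{\rm as}$ (the pullback of the operator norm under $\phi$) and admits the same rescaling $\phi\mapsto c\phi$ that the paper uses to reprove the nonexistence of minimal and maximal structures when $V_{\rm as}\neq\{0\}$, so nothing essential is lost for the applications in that section.
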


\begin{proof}
    Notice that if there exists a real operator system $\mathcal{S}$ and a unital order embedding $V \to \mathcal{S}$, then $V$ has an archimedean order unit, and 
    the norm on $\mathcal{S}$ restricted to $V_{\rm as}$ makes the latter a normed space. 
    
    Conversely, choose any norm $\| \cdot \|$ on $V_{\rm as}$ (we recall that any vector space has a norm via choosing a Hamel basis and using this to define a norm).  Define $D_n \subseteq M_n(V)_{\rm sa}$ by
    \[ D_n = \{ x = \alpha^* \text{diag}(p_1, \dots, p_n, q_1, \dots, q_m) \alpha \} \] where $p_1, \dots, p_n \in V^+$, $\alpha$ is a scalar matrix of appropriate size, and the $q_i$'s have the form \[ q_i = \begin{bmatrix} e & -x \\ x & e \end{bmatrix} \] where $x \in V_{\rm as}$ and $\|x\| \leq 1$. That $\{D_n\}$ is a matrix ordering can be seen by mimicking the proof of Proposition \ref{Prop91}.  To see that the order unit $e$ is a matrix order unit, it suffices to check that it is an order unit for $D_2$. If $[x_{ij}] \in M_2(V)_{\rm sa}$, then we may write 
    \[ [x_{ij}] = \begin{bmatrix} x_{11} & y - z \\ y + z & x_{22} \end{bmatrix} \] where $y \in V_{\rm sa}$ and $z \in V_{\rm as}$. Choose $t_1, t_2, t_3 > 0$ such that $t_1 e + x_{ii} + y, t_2 e - y \in V^+$ and $t_3 > \|z\|$. Then for $t = t_1 + 2t_2 + t_3$,
    \[ t I_2 \otimes e + [x_{ij}] = \begin{bmatrix} t_1 e + x_{11} + y & 0 \\ 0 & t_1 e + x_{22}+y \end{bmatrix} + \begin{bmatrix} 1 & -1 \\ -1 & 1 \end{bmatrix} \otimes (t_2 e - y) + t_2 \begin{bmatrix} 1 & 1 \\ 1 & 1 \end{bmatrix} \otimes e + \begin{bmatrix} t_3 e & -z \\ z & t_e e \end{bmatrix} \] is an element of $D_2$. So $e$ is a matrix order unit for $\{D_n\}$.

    Let $\{C_n\}$ denote the archimedean closure of $\{D_n\}$. First, we claim that $C_1 = V^+$. To see this, first notice that $V^+ \subseteq C_1$ trivially. Let $x \in C_1$. Then for each $\epsilon > 0$, $x + \epsilon e = \alpha^* \text{diag}(p_1, \dots, p_n, q_1, \dots, q_m) \alpha$ where $p_1, \dots, p_n \in V^+$, $\alpha$ is a scalar column matrix of appropriate size, and the $q_i$'s have the form \[ q_i = \begin{bmatrix} e & -x_i \\ x_i & e \end{bmatrix} \] with $x_i \in V_{\rm as}$ and $\|x_i\| \leq 1$, and $\alpha \in M_{N,1}(\bR)$ for $N = n+2m$. Since the compression of any $q_i$ by a real vector is a scalar multiple of $e$, we may assume without loss of generality that $x_i = 0$ for each $i$. It is now clear that $x + \epsilon e \in V^+$. Since $e$ is archimedean, $x \in V^+$.

    Finally, we claim that $\{C_n\}$ is proper, 
    so that $\mathcal{S} = (V,\{C_n\},e)$ is an abstract operator system.
     It suffices to show that $C_2$ is proper. Suppose that $\pm [x_{ij}] \in C_2$. As before, we may assume \[ [x_{ij}] = \begin{bmatrix} x_{11} & y - z \\ y + z & x_{22} \end{bmatrix} \] where $y \in V_{\rm sa}$ and $z \in V_{\rm as}$. By compressing to the 1-1 and 2-2 corners, we see that $\pm x_{11}, \pm x_{22} \in C_1 = V^+$. Since $V^+$ is proper, $x_{11}=x_{22} = 0$. Now compressing by $\alpha = \begin{bmatrix} 1 & 1 \end{bmatrix}$, we see that $\pm y \in V^+$, so $y = 0$. Now by the definition of $C_2$, \[ [x_{ij}] + \epsilon I_2 \otimes e = \begin{bmatrix} \epsilon e & - z \\ z & \epsilon e \end{bmatrix} = \alpha^* \text{diag}(p_1, \dots, p_n, q_1, \dots, q_m) \alpha \] where the $p_i$'s and $q_i$'s are as above and $\alpha$ has 2 columns. Write the columns of $\alpha$ as $(a_i)$ and $(b_i)$. Then $$\epsilon e = \sum_{i=1}^n a_i^2 p_i + \sum_{i=n+1}^{n+2m} a_i^2 e = \sum_{i=1}^n b_i^2 p_i + \sum_{i=n+1}^{n+2m} b_i^2 e,$$ and 
    \[ z = \sum_{i=1}^{n} a_i b_i p_i + \sum_{i=n+1}^{n+2m} a_i b_i e + \sum_{i=1}^{m} (a_{2i-1+n}b_{2i+n} - a_{2i + n}b_{2i-1+n})x_i \] where $\|x_i\| \leq 1$. Since each $p_i$ is positive,
    \[ 0 \leq a_i^2 p_i \leq \sum_{i=1}^{n} a_i^2 p_i = (\epsilon - \sum_{i=n+1}^{n+2m} a_i^2) e \leq \epsilon e \quad \text{and} \quad 0 \leq b_i^2 p_i \leq \sum_{i=1}^{n} b_i^2 p_i = (\epsilon - \sum_{i=n+1}^{n+2m} b_i^2) e \leq \epsilon e. \]
    It follows that $\sum_{i=n+1}^{n+2m} a_i^2, \sum_{i=n+1}^{n+2m} b_i^2 \leq \epsilon$. Also, since $z \in V_{\rm as}$ while $p_i, e \in V_{\rm sa}$, we have $$\sum_{i=1}^{n} a_i b_i p_i + \sum_{i=n+1}^{n+2m} a_i b_i e = 0$$ and thus $z = \sum_{i=1}^{m} (a_{2i-1+n}b_{2i+n} - a_{2i + n}b_{2i-1+n})x_i$. This shows that    \begin{eqnarray}
        \|z\| & = & \| \sum_{i=1}^{m} (a_{2i-1+n}b_{2i+n} - a_{2i + n}b_{2i-1+n})x_i \| \nonumber \\
        & \leq & \sum_{i=1}^m |a_{2i-1+n}b_{2i+n}| + |a_{2i + n}b_{2i-1+n}| \nonumber \\
        & = & \langle (|a_{n+1}|,\dots,|a_{n+2m-1}|),(|b_{n+2}|,\dots,|b_{n+2m}|) \rangle + \nonumber \\ & & + \langle (|a_{n+2}|,\dots,|a_{n+2m}|),(|b_{n+1}|,\dots,|b_{n+2m-1}|) \rangle \nonumber \\
        & \leq & 2 (\sum_{i=n+1}^{n+2m} a_i^2)^{1/2} (\sum_{i=n+1}^{n+2m} b_i^2)^{1/2} \nonumber \\
        & \leq & 2 \epsilon. \nonumber
    \end{eqnarray}
    Since $\|\cdot\|$ is a norm on $V_{\rm as}$ we have $z=0$. So $C_2$ is proper.
\end{proof}

In the proof, the matrix ordering can be adjusted by rescaling the norm on $V_{\rm as}$. This can be used to give another proof that there is no minimal or maximal operator system structure when $V_{\rm as} \neq \{0\}$.  
 We also remark that with more work one can show (a)\ that the norm $\| z \|$ for  $z \in V_{\rm as}$  which we chose very early in the proof,
 actually equals the (order unit) norm of $z$ in the operator system $\mathcal{S}$ which we found.  Moreover, (b)\ the intersection
 $\cap \, M_n(\mathcal{S})^+$ over all real operator system structures  $\mathcal{S}$ on $V$ with $\mathcal{S}^+ = V^+$ as in the 
 last theorem, exactly equals the cone $M_n({\rm OMAX}(V))^+$; that is, equals the cone $C_n^{1-max}$ defined at the start of Subsection  \ref{maosys}. 

\begin{corollary}
  Let $V$ be a real vector space with trivial (i.e.\ identity)  involution and proper positive cone $V^+ \subseteq V_{\rm sa}$ and archimedean order unit. 
  Then $\text{OMAX}(V)$ and $\text{OMIN}(V)$ are real operator systems.
\end{corollary}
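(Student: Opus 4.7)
The plan is to reduce to Proposition~\ref{isos} via Theorem~\ref{hasn}. The first observation is that trivial involution forces $V_{\rm as} = \{0\}$, since $x = x^* = -x$ gives $x = 0$; equivalently $V = V_{\rm sa}$.

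Next I would invoke Theorem~\ref{hasn} to upgrade the bare data $(V, V^+, e)$ to a genuine real operator system $\mathcal{S}$ living on the underlying $*$-vector space $V$, with first-level positive cone equal to $V^+$ and archimedean order unit $e$. The proof of that theorem constructs the matrix ordering directly on $V$ itself (no enlargement of the ambient vector space), and in the trivial-involution case the auxiliary norm on $V_{\rm as}$ chosen there is vacuous. Since $\mathcal{S}_{\rm as} = V_{\rm as} = \{0\}$, Proposition~\ref{isos} applied to $\mathcal{S}$ immediately yields that $\text{OMAX}(\mathcal{S})$ and $\text{OMIN}(\mathcal{S})$ are real operator systems.

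The final step is to identify $\text{OMAX}(V)$ with $\text{OMAX}(\mathcal{S})$ and $\text{OMIN}(V)$ with $\text{OMIN}(\mathcal{S})$. Inspection of Subsection~\ref{maosys} shows that the cones $D_n^{1-\max}$ are built solely from $V^+$, and their archimedean closure depends only on $(V^+, e)$; similarly $M_n(\text{OMIN}(V))^+$ is defined purely in terms of $V^+$ via the conditions $\alpha^T x \alpha \in V^+$ for $\alpha \in M_{n,1}(\mathbb{R})$. Hence neither construction uses any matrix-level data beyond $V^+$ and $e$, the two claimed identifications hold, and the conclusion follows.

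The only point that requires care is this last identification: one must verify that the $\text{OMAX}$ and $\text{OMIN}$ recipes truly use no input beyond $(V^+, e)$, so that passing through the auxiliary operator system $\mathcal{S}$ produced by Theorem~\ref{hasn} does not secretly depend on the particular higher-level cones chosen there. This is, however, transparent from the defining formulas and constitutes no genuine obstacle — the real work sits in Theorem~\ref{hasn} and Proposition~\ref{isos}, both already established.
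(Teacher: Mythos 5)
Your proposal is correct and follows exactly the paper's route: Theorem \ref{hasn} upgrades $(V,V^+,e)$ to a real operator system, and since the trivial involution forces $V_{\rm as}=\{0\}$, Proposition \ref{isos} applies. Your extra care about the OMAX/OMIN constructions depending only on $(V^+,e)$ is a worthwhile observation but does not change the argument, which is the one the paper gives.
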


\begin{proof}  By Theorem \ref{hasn}  we see that $V$ is an operator system, and then we may appeal to Proposition \ref{isos}.
\end{proof}

\section{Tensor products} \label{tens} 

Let $V$ and $W$ be real operator systems with units $e$ and $f$, respectively. A \textit{tensor product structure} (of real operator systems) consists of a 
proper matrix ordering $\{D_n\}_{n=1}^\infty$ on $V \otimes W$ such that
\begin{enumerate}
    \item $e \otimes f$ is an archimedean matrix order unit
    \item if $P \in M_n(V)^+$ and $Q \in M_k(W)^+$, then $P \otimes Q \in D_{nk}$, and
    \item if $\varphi: V \to M_n(\mathbb{R})$ and $\psi: W \to M_k(\mathbb{R})$ are ucp, then $\varphi \otimes \psi: V \otimes W \to M_{nk}(\mathbb{R})$ is ucp.
\end{enumerate}
The definition of tensor product structure for complex operator systems is similar, except that for (3) the maps range in complex matrix algebras.

Given real vector spaces $V$ and $W$, we have the identification $(V \otimes_{\bR} W)_c = V_c \otimes_{\bC} W_c$ 
as vector spaces.  For example, 
\[ (a+ib) \otimes (c+id) = (a \otimes c - b \otimes d) + i(a \otimes d + b \otimes c) \in (V \otimes W)_c. \]
Thus we may identify $V \otimes W$ with the real part of $V_c \otimes W_c$ with respect the induced conjugation.

Suppose $(V \otimes W, \{D_n\}, e \otimes f)$ is an operator system tensor product for real operator systems $V$ and $W$. Then Definition 1.2 defines a matrix ordering $\{\widetilde{D}_n\}$ on the complexification $(V \otimes W)_c$. Since $(V \otimes W)_c = V_c \otimes W_c$ as vector spaces, one can ask if the induced matrix ordering $\{\widetilde{D}_c\}$ defines a complex operator system tensor product structure on $V_c \otimes W_c$. The next Proposition says that it does.

\begin{proposition} \label{prop: complexify tens prod structure}
    Let $V$ and $W$ be real operator systems. Then any real tensor product structure on $V \otimes W$ induces a complex tensor product structure on $V_c \otimes W_c$. 
\end{proposition}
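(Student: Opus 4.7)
The plan is to verify the three axioms of a complex tensor product structure on $V_c \otimes_{\bC} W_c = (V \otimes W)_c$ equipped with the complexified matrix ordering $\{\widetilde{D}_n\}$. Properness of the cones and the archimedean (matrix) order unit property of $e \otimes f$ are essentially automatic from general complexification facts: the Remark after Proposition \ref{inco} preserves archimedean matrix order units under complexification, and Theorem \ref{Thm: Complexification is an op sys} (or its proof) preserves properness. So axiom (1) comes for free.

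For axiom (2), given $P = X + iY \in M_n(V_c)^+$ and $Q = U + iZ \in M_k(W_c)^+$, the definition of $\widetilde{C}_n$ in Section \ref{CE} gives $c(X,Y) \in M_{2n}(V)^+ \subseteq D_{2n}$ and $c(U,Z) \in M_{2k}(W)^+ \subseteq D_{2k}$, hence $c(X,Y) \otimes c(U,Z) \in D_{4nk}$ by axiom (2) of the real tensor product structure. I would then exhibit the explicit real scalar matrix
\[ \alpha \; = \; \tfrac{1}{\sqrt 2}\begin{pmatrix} I_{nk} & 0 \\ 0 & I_{nk} \\ 0 & I_{nk} \\ -I_{nk} & 0 \end{pmatrix} \; \in \; M_{4nk,2nk}(\bR) \]
and verify by a $4\times 4$ block calculation that $\alpha^T \bigl( c(X,Y) \otimes c(U,Z) \bigr) \alpha = c(P \otimes Q)$, where $P \otimes Q = (X \otimes U - Y \otimes Z) + i(X \otimes Z + Y \otimes U)$. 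The matrix-ordering axiom for $\{D_n\}$ then forces $c(P \otimes Q) \in D_{2nk}$, i.e.\ $P \otimes Q \in \widetilde{D}_{nk}$.

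For axiom (3), given complex linear ucp maps $\varphi : V_c \to M_n(\bC)$ and $\psi : W_c \to M_k(\bC)$, I would use Proposition \ref{concon}: the canonical embedding $c : M_n(\bC) \hookrightarrow M_{2n}(\bR)$ (and likewise for $k$ and for $nk$) is a real complete order embedding. Thus $c\circ\varphi|_V : V \to M_{2n}(\bR)$ and $c\circ\psi|_W : W \to M_{2k}(\bR)$ are real ucp, so by axiom (3) of the real tensor product their tensor $V \otimes W \to M_{4nk}(\bR)$ is real ucp. The same identity as in axiom (2), now applied with $X,Y,U,Z$ playing the role of scalar real and imaginary parts of $\varphi(v)$ and $\psi(w)$, gives $c(\varphi(v) \otimes_{\bC} \psi(w)) = \alpha^T \bigl( c(\varphi(v)) \otimes c(\psi(w)) \bigr) \alpha$. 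Extending by linearity and amplifying, the restriction $\tau := (\varphi \otimes \psi)|_{V \otimes W} : V \otimes W \to M_{nk}(\bC)$ satisfies $(c \circ \tau)^{(l)} = (I_l \otimes \alpha^T) \cdot [(c\circ\varphi|_V)\otimes(c\circ\psi|_W)]^{(l)} \cdot (I_l \otimes \alpha)$ at every matrix level. Since congruence by a real scalar matrix preserves positivity, $c \circ \tau$ sends $D_l$ to positive matrices; as $c$ is a complete order embedding, $\tau$ itself is real ucp. Finally Lemma \ref{ncomma} extends $\tau$ uniquely to a complex ucp map on $(V \otimes W)_c = V_c \otimes_{\bC} W_c$, and by uniqueness this extension must equal $\varphi \otimes \psi$, giving axiom (3).

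The main technical obstacle is finding and verifying the compression identity $c(A \otimes_{\bC} B) = \alpha^T (c(A) \otimes c(B)) \alpha$, which is used in two guises: with matrix entries from $V$ and $W$ for axiom (2), and entrywise with scalar entries from $\bC$ for axiom (3). It is a finite but somewhat intricate $4 \times 4$ block manipulation; the conceptual content is that the same real linear compression $\alpha$ simultaneously realizes the complexification of the $V\otimes W$ tensor product inside the real tensor product of the $c$-realifications of the complexifications. Once this identity is in hand everything else follows from the universal property of complexification and the axioms of the real tensor product structure.
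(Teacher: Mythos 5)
Your proof is correct and takes essentially the same approach as the paper's: the compression of the $4\times 4$ block tensor by $\tfrac{1}{\sqrt{2}}\begin{bmatrix} I & 0 & 0 & -I \\ 0 & I & I & 0 \end{bmatrix}$ is exactly the paper's central computation for axiom (2), and your treatment of axiom (3) (realify the matrix targets via Proposition \ref{concon}, compress by the same $\alpha$, then extend by Lemma \ref{ncomma}) is only a cosmetic variant of the paper's route of complexifying the real ucp tensor map and compressing by $2^{-1/2}\begin{bmatrix} I_n & iI_n\end{bmatrix}$. One trivial slip: $c(X,Y)$ lies in $M_{2n}(V)^+$, not in ``$D_{2n}$'' (those cones live in $M_{2n}(V\otimes W)$), but the deduction you actually use --- axiom (2) of the real structure applied to $c(X,Y)\in M_{2n}(V)^+$ and $c(U,Z)\in M_{2k}(W)^+$ --- is the right one.
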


\begin{proof}
    We first check (1) that $e \otimes f$ is an archimedian order unit for $(V \otimes W)_c$ whenever it is for $V \otimes W$. Suppose that $x + iy + \epsilon I_n \otimes e \otimes f \in (D_n)_c$ for every $\epsilon > 0$. Then
    \[ \begin{bmatrix} x & -y \\ y & x \end{bmatrix} + \epsilon \begin{bmatrix} I_n & 0 \\ 0 & I_n \end{bmatrix} \otimes e \otimes f \in D_{2n} \]
    for every $\epsilon > 0$ by the definition of $(D_n)_c$. Since $e \otimes f$ is archimedean, it follows that \[ \begin{bmatrix} x & -y \\ y & x \end{bmatrix} \in D_{2n} \] and therefore $x+iy \in (D_n)_c$. So $e \otimes f$ satisfies the archimedean property. Similarly, if $x + iy$ is self-adjoint then we can choose $\epsilon > 0$ such that \[ \begin{bmatrix} x & -y \\ y & x \end{bmatrix} + \epsilon \begin{bmatrix} I_n & 0 \\ 0 & I_n \end{bmatrix} \otimes e \otimes f \in D_{2n} \] and therefere $x + iy + \epsilon I_n \otimes e \otimes f \in (D_n)_c$. So $e \otimes f$ is a matrix order unit.

    Next, we check (2) that whenever $P \in M_n(V_c)^+$ and $Q \in M_k(W_c)^+$ we have $P \otimes Q \geq 0$. Suppose $P+iQ \in M_n(V_c)^+$ and $S+iT \in M_k(W_c)^+$. Then $P=P^*, Q=-Q^*, S=S^*, T=-T^*$, and we have
    \[ \begin{bmatrix} P & -Q \\ Q & P \end{bmatrix} \in M_{2n}(V)^+, \quad \begin{bmatrix} S & -T \\ T & S \end{bmatrix} \in M_{2k}(W)^+.  \]
    We wish to show that $(P+iQ) \otimes (S + iT) = (P \otimes S - Q \otimes T) + i(Q \otimes S + P \otimes T) \in M_{nk}((V \otimes W)_c)^+$. Now
    \[ \begin{bmatrix} P & -Q \\ Q & P \end{bmatrix} \otimes \begin{bmatrix} S & -T \\ T & S \end{bmatrix} = \begin{bmatrix} P \otimes S & - P \otimes T & -Q \otimes S & Q \otimes T \\ 
    P \otimes T & P \otimes S & -Q \otimes T & -Q \otimes S \\
    Q \otimes S & - Q \otimes T & P \otimes S & -P \otimes T \\
    Q \otimes T & Q \otimes S & P \otimes T & P \otimes S \end{bmatrix} \]
    and conjugating this $4 \times 4$ block matrix by the scalar block matrix
    \begin{equation} \label{24mat} \frac{1}{\sqrt{2}} \begin{bmatrix} I & 0 & 0 & -I \\ 0 & I & I & 0 \end{bmatrix}  \end{equation} 
    (where $I = I_{nk}$) yields the matrix
    \[ \begin{bmatrix} P \otimes S - Q \otimes T & -(Q \otimes S + P \otimes T) \\ Q \otimes S + P \otimes T & P \otimes S - Q \otimes T \end{bmatrix} \in M_{2nk}(V \otimes W)^+. \]
    Hence $(P+iQ) \otimes (S + iT) \geq 0$.

    Finally we check (3). Suppose that $\varphi: V_c \to M_n(\mathbb{C})$ and $\psi: W_c \to M_k(\mathbb{C})$ are ucp. Composing with $\rho: \mathbb{C} \to M_2(\mathbb{R})$ given by $x + iy \mapsto \begin{bmatrix} x & -y \\ y & x \end{bmatrix}$, we obtain real ucp maps $\varphi' = \rho^{(n)} \circ \varphi$ and $\psi' = \rho^{(n)} \circ \psi$. The restrictions of $\varphi'$ and $\psi'$ to $V$ are still ucp, and hence $\varphi' \otimes \psi': V \otimes W \to M_{4nk}(\mathbb{R})$ is ucp. It follows that the complexification $\pi = (\varphi' \otimes \psi')_c: (V \otimes W)_c \to M_{4nk}(\mathbb{C})$ is ucp. Note that for any $x = a+ib$ and $y = c+id$,
    \[ \pi(x \otimes y) = \varphi' \otimes \psi' (a \otimes c - b \otimes d) + i \varphi' \otimes \psi' (a \otimes d + b \otimes c) = \varphi'_c(x) \otimes \varphi'(y). \]
    Let $T = 2^{-1/2} \begin{bmatrix} I_n & i I_n \end{bmatrix} \in M_{n, 2n}(\mathbb{C})$. Then $\varphi(x) = T \varphi'(x) T^*$ and $\psi(y) = T \psi'(y) T$ for every $x \in V_c$ and $y \in W_c$. Thus $\varphi \otimes \psi(x \otimes y) = (T \otimes T) \pi (x \otimes y) (T \otimes T)^*$ for every $x \in V_c$ and $y \in W_c$. It follows that $\varphi \otimes \psi$ is ucp, since $\pi$ is ucp.
\end{proof}

\bigskip

 {\bf Remark.} The converse of the last result is true and straightforward: any  complex tensor product structure on $V_c \otimes W_c$ induces a  real tensor product structure on $V \otimes W$. Indeed, the real axioms (1) and (2) are obvious by applying the complex axioms, and (3) follows since $\varphi: V \to M_n(\bR)$ and $\psi: W \to M_k(\bR)$ ucp implies that $\varphi_c: V_c \to M_n(\bC)$ and $\psi_c: W_c \to M_k(\bC)$ are ucp. So the restriction of $\varphi_c \otimes \psi_c$ to $V \otimes W$ is equal to $\varphi \otimes \psi$ and is ucp. 

\bigskip

Let $\mathcal{R}$ denote the category of real operator systems. A {\em real operator system tensor product} is map $\tau: \mathcal{R} \times \mathcal{R} \to \mathcal{R}$ which associates to each pair $(S,T)$ of real operator systems a real operator system tensor product structure $S \otimes_{\tau} T$ (i.e.\ a 
proper matrix ordering satisfying points (1), (2), and (3) above). A tensor product $\tau$ is {\em functorial} if it is a functor, i.e. whenever $\varphi: S_1 \to S_2$ and $\psi: T_1 \to T_2$ are ucp, then $\varphi \otimes \psi: S_1 \otimes_{\tau} T_1 \to S_2 \otimes_{\tau} T_2$ is ucp. 

In the next few subsections we consider several particular tensor products structures.   For each of these, $\alpha$ say, we will usually write 
$V \otimes_\alpha W$
for the uncompleted tensor product.  However later we will also often write 
$V \otimes_\alpha W$
for the completion, the completed tensor product.
We leave it to the reader to determine what is meant.   There is no ultimate danger of confusion since we are almost always henceforth simply finding the real versions of complex results already in the literature, which we reference.  Any confusion in the readers mind can be immediately dispelled by consulting that literature. At some points though  we will write e.g.\ $E \bar{\otimes} F$ for the closure of $E \otimes F$ in $V \otimes_\alpha W$, if $E,F$ are subspaces of $V, W$ respectively. 

\subsection{Minimal tensor product}

We define the {\em minimal tensor product} as follows: for real operator systems $V$ and $W$, we say $x \in M_n(V \minten W)^+$ if and only if $(\varphi \otimes \psi)^{(n)}(x) \geq 0$ for every pair of integers $m,k$ and ucp maps $\varphi: V \to M_m$ and $\psi: W \to M_k$.

\begin{proposition} \label{prop: min real into complex}
    Let $V$ and $W$ be real operator systems. Then the inclusion $V \minten W \to V_c \minten W_c$ (where the latter space is endowed with the complex minimal tensor product structure) is a real complete order embedding. 
\end{proposition}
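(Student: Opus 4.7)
The plan is to verify the complete order embedding by showing both containments $M_n(V \minten W)^+ \subseteq M_n(V_c \minten W_c)^+$ and (its restriction to matrices over $V \otimes W$) the reverse, for every $n$. The whole argument hinges on translating between the testing families in the two definitions---real ucp maps to $M_m(\bR)$ on one side, complex ucp maps to $M_m(\bC)$ on the other---using Lemma \ref{ncomma} (a real ucp into a complex C$^*$-algebra complexifies uniquely to complex ucp) and Proposition \ref{concon} (the inclusion $M_m(\bC) \hookrightarrow M_{2m}(\bR)$ is a real complete order embedding).

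For the reverse containment, I would fix real ucp maps $\varphi : V \to M_m(\bR)$ and $\psi : W \to M_k(\bR)$ and complexify them to $\varphi_c : V_c \to M_m(\bC)$ and $\psi_c : W_c \to M_k(\bC)$ via Lemma \ref{ncomma}. The hypothesis that the image of $x$ is positive in $V_c \minten W_c$ gives $(\varphi_c \otimes \psi_c)^{(n)}(x) \geq 0$ in $M_n(M_{mk}(\bC))$. Because $x \in M_n(V \otimes W)$, this value actually lies in $M_n(M_{mk}(\bR))$ and coincides with $(\varphi \otimes \psi)^{(n)}(x)$; Proposition \ref{concon} then transfers positivity from $M_{mk}(\bC)$ back down to $M_{mk}(\bR)$, so $x \in M_n(V \minten W)^+$.

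For the forward containment, I would take arbitrary complex ucp maps $\varphi : V_c \to M_m(\bC)$ and $\psi : W_c \to M_k(\bC)$, restrict them to $V$ and $W$, and then compose with the Proposition \ref{concon} embedding to obtain real ucp maps $\tilde\varphi : V \to M_{2m}(\bR)$ and $\tilde\psi : W \to M_{2k}(\bR)$. The assumption $x \in M_n(V \minten W)^+$ yields $(\tilde\varphi \otimes \tilde\psi)^{(n)}(x) \geq 0$ in $M_n(M_{2m}(\bR) \otimes M_{2k}(\bR))$, and one recognizes this element as the image of $(\varphi \otimes \psi)^{(n)}(x)$ under the tensored embedding $M_m(\bC) \otimes M_k(\bC) \hookrightarrow M_{2m}(\bR) \otimes M_{2k}(\bR)$. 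Pulling positivity back is legitimate because this tensored embedding is an injective real $*$-homomorphism of C$^*$-algebras and hence a complete order embedding. The only point of friction I foresee is this last tensor-embedding bookkeeping, and it is resolved by the standard fact that injective $*$-homomorphisms of C$^*$-algebras are completely isometric and completely order preserving.
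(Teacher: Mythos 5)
Your overall strategy is the same as the paper's: test positivity against matrix states and transfer between the real and complex testing families. Your reverse containment (complex positive $\Rightarrow$ real positive) is exactly the paper's argument and is fine. The problem is in the last step of your forward containment.

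The ``tensored embedding'' $M_m(\bC)\otimes M_k(\bC)\hookrightarrow M_{2m}(\bR)\otimes M_{2k}(\bR)$, $a\otimes b\mapsto \iota_m(a)\otimes\iota_k(b)$ (where $\iota_m$ is the realification of Proposition \ref{concon}), does not exist as a map out of the \emph{complex} tensor product $M_m(\bC)\otimes_{\bC}M_k(\bC)=M_{mk}(\bC)$, which is the codomain of $\varphi\otimes\psi$. The assignment is not $\bC$-balanced: already for $m=k=1$ one has $i\otimes 1=1\otimes i$ in $\bC\otimes_{\bC}\bC$, but $\iota(i)\otimes\iota(1)=u\otimes I_2\neq I_2\otimes u=\iota(1)\otimes\iota(i)$, where $u$ is the $2\times 2$ rotation matrix implementing $i$. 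So the element $(\tilde\varphi\otimes\tilde\psi)^{(n)}(x)$ is not the image of $(\varphi\otimes\psi)^{(n)}(x)$ under any injective $*$-homomorphism of this kind, and ``pulling positivity back'' has nothing to pull back along. The standard fact you invoke (injective $*$-homomorphisms are complete order embeddings) is correct but is not the point of friction here.

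Two repairs are available. (a) Your map \emph{is} a well-defined injective $*$-homomorphism out of the \emph{real} tensor product $M_m(\bC)\otimes_{\bR}M_k(\bC)$; factor $\tilde\varphi\otimes\tilde\psi$ through it, pull positivity back to $(\varphi|_V\otimes_{\bR}\psi|_W)^{(n)}(x)$ there, and then push forward through the canonical $*$-epimorphism $M_m(\bC)\otimes_{\bR}M_k(\bC)\to M_{mk}(\bC)$ to land on $(\varphi\otimes\psi)^{(n)}(x)$. (b) More directly (this is what the paper does): with $T=2^{-1/2}\begin{bmatrix} I_m & iI_m\end{bmatrix}$ one has $\varphi(z)=T\,\tilde\varphi(z)\,T^*$, hence $(\varphi\otimes\psi)^{(n)}(x)=(I_n\otimes T\otimes T')\,(\tilde\varphi\otimes\tilde\psi)^{(n)}(x)\,(I_n\otimes T\otimes T')^*$ is a compression of a positive element, so positive. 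Either route closes the gap.
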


\begin{proof} 
    Let $x \in M_n(V \otimes W)$. First suppose that $x \in M_n(V_c \minten W_c)^+$. Let $\varphi$ and $\psi$ be real matrix states. Then $0 \leq (\varphi_c \otimes \psi_c )^{(n)}(x) = (\varphi \otimes \psi)_c^{(n)}(x) = (\varphi \otimes \psi)^{(n)}(x)$. Hence $x \in M_n(V \otimes_{\rm min} W)^+$.

    Now suppose that $x \in M_n(V \minten W)^+$. Let $\varphi: V_c \to M_n(\mathbb{C})$ and $\psi: W_c \to M_k(\mathbb{C})$ be complex matrix states. By extending the ranges, we obtain $\varphi': V_c \to M_{2n}(\mathbb{R})$ and $\psi' : W_c \to M_{2n}(\mathbb{R})$. Hence $(\varphi' \otimes \psi')^{(n)}(x) \geq 0$. Conjugating by a scalar matrix (as in the proof of (3) for Proposition \ref{prop: complexify tens prod structure}), we see that $(\varphi \otimes \psi)^{(n)}(x) \geq 0$. So $x \in M_n(V_c \minten W_c)^+$.
\end{proof}

\begin{corollary}
    The minimal tensor product is a real tensor product structure. Moreover, if $\tau$ is another real tensor product, then the identity map $i: V \otimes_{\tau} W \to V \minten W$ is ucp for every pair of real operator systems $V, W$.
\end{corollary}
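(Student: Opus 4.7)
The plan is to piggyback on Proposition \ref{prop: min real into complex} together with the fact that the complex minimal tensor product is known to give a complex operator system tensor product structure, and to observe that most of the work for the ``moreover'' clause is already encoded in axiom (3) of the definition of a tensor product structure.

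First I would verify the three defining axioms for $V \minten W$ as a real operator system tensor product structure. That $\{M_n(V \minten W)^+\}$ is a matrix ordering (closed under $\oplus$ and under conjugation by real scalar matrices) is immediate from the defining formula, since each slice $\varphi \otimes \psi$ is linear, respects direct sums, and commutes with real scalar conjugation. Properness, archimedeanness, and the matrix order unit property for $e \otimes f$ are most cleanly transferred from the complex case: by Proposition \ref{prop: min real into complex} the canonical map $V \minten W \to V_c \minten W_c$ is a real complete order embedding, and the complex minimal tensor product is already known (from the complex theory as summarized in \cite{KPTT1}, the real version of which we are building) to be a proper matrix ordering with $e \otimes f$ as archimedean matrix order unit. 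Since these properties pass down under a unital complete order embedding, they hold for $V \minten W$ as well. Axiom (2), that $P \otimes Q \in M_{nk}(V \minten W)^+$ whenever $P \in M_n(V)^+$ and $Q \in M_k(W)^+$, follows from $(\varphi \otimes \psi)^{(nk)}(P \otimes Q) = \varphi^{(n)}(P) \otimes \psi^{(k)}(Q) \geq 0$ for all real ucp $\varphi, \psi$ ranging in real matrix algebras. Axiom (3) is built into the definition: if $\varphi : V \to M_n(\bR)$ and $\psi : W \to M_k(\bR)$ are ucp, then for any $x \in M_r(V \minten W)^+$ the definition of the min cones forces $(\varphi \otimes \psi)^{(r)}(x) \geq 0$, and unitality is obvious.

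For the ``moreover'' statement, suppose $\tau$ is any real operator system tensor product and let $x \in M_n(V \otimes_\tau W)^+$. To show $i^{(n)}(x) \in M_n(V \minten W)^+$ I must verify $(\varphi \otimes \psi)^{(n)}(x) \geq 0$ for every pair of ucp maps $\varphi : V \to M_m(\bR)$, $\psi : W \to M_k(\bR)$ with $m,k \in \bN$. But this is exactly axiom (3) applied to $\tau$: such $\varphi \otimes \psi$ is ucp as a map $V \otimes_\tau W \to M_{mk}(\bR)$, hence carries $M_n(V \otimes_\tau W)^+$ into $M_{mn \cdot k}(\bR)^+$. Unitality of $i$ is immediate since both tensor products share the unit $e \otimes f$.

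The only real obstacle I foresee is the verification of properness and the archimedean matrix order unit property; rather than producing enough real matrix states directly (which would duplicate arguments from the complex theory), the cleanest route is to invoke Proposition \ref{prop: min real into complex} and transfer these properties from the complex minimal tensor product, which is a standard input. Everything else is essentially a formal unpacking of the definition of the minimal cones.
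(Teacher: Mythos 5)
Your proof is correct and follows essentially the same route as the paper: the paper likewise verifies that $V \minten W$ is a tensor product structure by transferring the properties from $V_c \minten W_c$ via Proposition \ref{prop: min real into complex} (together with Proposition \ref{prop: complexify tens prod structure}), and obtains the ``moreover'' clause directly from axiom (3) of the definition of a tensor product structure. Your write-up simply makes explicit a few verifications (the matrix-ordering axioms and axiom (2)) that the paper leaves implicit.
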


\begin{proof}
    For every $V$ and $W$, $V \minten W$ is a tensor product structure since its complexification is, by Proposition \ref{prop: complexify tens prod structure} and Proposition \ref{prop: min real into complex}. If $\tau$ is another real tensor product, then by condition (3) of the operator system tensor product structure $x \in M_n(V \otimes_{\tau} W)^+$ implies $x \in M_n(V \minten W)^+$. 
\end{proof}

It follows from Proposition \ref{prop: min real into complex} and the complex case that the real minimal tensor product of operator systems is exactly what one would expect to be, and it coincides on the operator systems with their real minimal operator space tensor product.  It follows immediately that it has all the expected properties: it is  {\em functorial, symmetric, associative,} and {\em complete order injective}.  Moreover, 
if $V$ and $W$ are real operator subsystems of $B(H)$ and $B(K)$ respectively then $V \otimes_{\rm min} W$ is canonically completely order embedded in the bounded operators on the  Hilbert space tensor product $H \otimes K$.

\begin{corollary} \label{comin} 
    Let $V$ and $W$ be real operator systems. Then  $(V \otimes_{\rm min} W)_c \cong  V_c \otimes_{\rm min} W_c$ as complex operator systems ($V_c \otimes_{\rm min} W_c$ is an (the) operator system complexification of $V \otimes_{\rm  min} W$). 
\end{corollary}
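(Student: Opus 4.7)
The plan is to recognize $V_c \otimes_{\min} W_c$ as a reasonable operator system complexification of $V \otimes_{\min} W$ and then appeal to Ruan's uniqueness theorem (Theorem \ref{rcth}) together with Proposition \ref{cco}. Recall from the discussion preceding Proposition \ref{prop: complexify tens prod structure} that, purely as vector spaces, $V_c \otimes_{\bC} W_c = (V \otimes_{\bR} W)_c$, with $V \otimes W$ sitting inside as the real part under the conjugation induced by $\theta_V \otimes \theta_W$. Proposition \ref{prop: min real into complex} already tells us that the inclusion $V \otimes_{\min} W \to V_c \otimes_{\min} W_c$ is a unital real linear complete order embedding; this supplies the canonical map $\kappa$ required for an operator system complexification.

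Next I would verify reasonableness. Consider the map $\theta := \theta_V \otimes \theta_W$ on $V_c \otimes_{\min} W_c$; since $\theta_V$ and $\theta_W$ are period $2$ conjugate linear unital complete order automorphisms of $V_c$ and $W_c$ respectively, the functoriality of $\otimes_{\min}$ (which follows from its definition via matrix states) guarantees that $\theta$ is a conjugate linear unital complete order automorphism of period $2$ on $V_c \otimes_{\min} W_c$. Its fixed point set is precisely the image of $V \otimes W$ under $\kappa$, and the induced conjugation on $V_c \otimes_{\min} W_c$ is exactly $x + iy \mapsto x - iy$ for $x, y \in V \otimes W$. Thus the complexification is reasonable.

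Applying Proposition \ref{cco}, $V_c \otimes_{\min} W_c$ is a reasonable operator system complexification of its fixed points. Combining this with Proposition \ref{prop: min real into complex}, the fixed points carry exactly the matrix order structure of $V \otimes_{\min} W$. By the uniqueness portion of Ruan's theorem (Theorem \ref{rcth}), there is a unique unital complete order isomorphism between $V_c \otimes_{\min} W_c$ and the canonical complexification $(V \otimes_{\min} W)_c$ extending the identity on $V \otimes_{\min} W$, which is the desired isomorphism.

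The main obstacle I anticipate is purely bookkeeping: one must be careful that the vector space identification $V_c \otimes W_c = (V \otimes W)_c$ is compatible with the particular conjugations in play, and that the functoriality of $\otimes_{\min}$ really extends to the conjugate linear maps $\theta_V, \theta_W$ (not just complex linear ucp maps). This last point is easy once one observes that $\otimes_{\min}$ is defined by matrix states and that $\bar{V_c} \otimes_{\min} \bar{W_c} = \overline{V_c \otimes_{\min} W_c}$ canonically, so precomposing and postcomposing with the conjugation operations preserves complete positivity.
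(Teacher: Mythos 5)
Your proposal is correct and follows essentially the same route as the paper: the paper's proof also takes $\theta = \theta_V \otimes \theta_W$ as the operator system conjugation on $V_c \otimes_{\rm min} W_c$ (citing \cite[Lemma 5.4]{BReal} for the fact that it is a period $2$ conjugate linear completely isometric automorphism with fixed point space $V \otimes_{\rm min} W$), notes that it is unital and hence a complete order isomorphism, and concludes via Proposition \ref{cco}. Your additional remarks on functoriality with respect to conjugate linear maps and on Proposition \ref{prop: min real into complex} simply make explicit what the paper delegates to the cited lemma.
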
 

\begin{proof}  It is proved in \cite[Lemma 5.4]{BReal} that the map  $\theta = \theta_V \otimes \theta_W$ is a period 2 conjugate linear completely isometric automorphism of $V_c \otimes_{\rm min} W_c$ with fixed point space the copy of  $V \otimes_{\rm min} W$.   However $\theta$ is unital, hence is a complete order isomorphism.   Hence the result follows from Proposition \ref{cco}. \end{proof}

\begin{corollary} \label{cocpmin}
    Let $V$ and $W$ be real operator systems with $V$ finite dimensional. Then  $(V \minten W)^+ = CP(V^d,W)$ via the canonical isomorphism $L : V \otimes W \cong {\rm Lin}(V^* ,W)$.  Indeed $[u_{i,j}] \in 
    M_n(V \otimes_{\rm min} W)^+$  if and only if the map 
    $V^d \to M_n(W): f \mapsto [L(u_{i,j})(f)]$ is completely positive.
\end{corollary}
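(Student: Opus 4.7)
The plan is to reduce to the complex version of this statement, which is a standard consequence of operator system duality (Choi's theorem together with the Paulsen--Todorov--Tomforde duality theory applied to finite-dimensional complex operator systems), via the complexification machinery. The three key identifications established earlier in the paper are: Corollary \ref{comin} together with Proposition \ref{prop: min real into complex} give $(V \minten W)_c \cong V_c \minten W_c$ with $V \minten W$ real completely order embedded therein; Theorem \ref{dcom} gives $(V^d)_c \cong (V_c)^d$ unitally complete order isomorphically (using $V$ finite-dimensional); and $M_n(W)_c \cong M_n(W_c)$ canonically.

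Given $[u_{ij}] \in M_n(V \otimes W) \subseteq M_n(V_c \otimes W_c)$, the first step is to observe, using the real complete order embedding above, that $[u_{ij}] \in M_n(V \minten W)^+$ if and only if $[u_{ij}] \in M_n(V_c \minten W_c)^+$. The second step is to invoke the complex version of the corollary applied to $V_c$ and $W_c$: this gives the equivalence with the complex linear map $\Phi : (V_c)^d \to M_n(W_c)$, $f \mapsto [L(u_{ij})(f)]$, being completely positive.

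The third step is to identify $\Phi$ with the complex linear extension of the real linear map $\phi : V^d \to M_n(W)$, $f \mapsto [L(u_{ij})(f)]$, under the identification $(V^d)_c \cong (V_c)^d$ of Theorem \ref{dcom}. Because $[u_{ij}]$ has entries in $V \otimes W$, the map $L(u_{ij})$ sends real functionals in $V^d$ into $M_n(W)$, so $\phi$ is well defined and $\Phi$ restricts to $\phi$ on the real subspace $V^d \subset (V_c)^d$, hence $\Phi = \phi_c$ by complex linearity. Finally, I would invoke the general principle that a selfadjoint real linear map between matrix-ordered real spaces is completely positive if and only if its complex linear extension is: the forward direction is immediate from complexifying cones using the $c(x,y)$ formalism (as in the proof of Theorem \ref{Thm: Complexification is an op sys}), and the reverse direction uses that $M_n(Y)^+ = M_n(Y_c)^+ \cap M_n(Y)$, which is a consequence of Proposition \ref{inco}.

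The main obstacle will lie in the third step: checking the naturality of $L$ under complexification, i.e.\ that the canonical complex isomorphism $V_c \otimes W_c \cong \mathrm{Lin}_{\bC}((V_c)^d, W_c)$ restricts correctly to the canonical real isomorphism $V \otimes W \cong \mathrm{Lin}_{\bR}(V^d, W)$ under the identifications $(V^d)_c \cong (V_c)^d$ and $M_n(W)_c = M_n(W_c)$. This is essentially a diagram chase, but care is needed to ensure all complexifications are compatibly oriented, in particular that complex conjugation in $V_c \otimes W_c$ (coming from $\theta_V \otimes \theta_W$) corresponds under $L_c$ to the appropriate conjugation on $\mathrm{Lin}_{\bC}((V_c)^d, W_c)$, so that fixed points indeed match up as $V \otimes W$ on one side and $\mathrm{Lin}_{\bR}(V^d, W)$ on the other.
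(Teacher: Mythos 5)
Your argument is correct, but it is not the route the paper takes: the paper's entire proof is the citation ``As in [KPTT2, Lemma 8.4]'', i.e.\ it asserts that the complex argument (which rests on the matrix-state description of $\minten$ and the duality $CP(V,M_n)\leftrightarrow M_n(V^d)^+$, both of which have been verified in the real setting earlier in the paper, e.g.\ Corollary \ref{plem}) goes through verbatim with real scalars. You instead reduce to the complex statement by complexifying, using Proposition \ref{prop: min real into complex} to pass positivity of $[u_{ij}]$ back and forth between $M_n(V\minten W)$ and $M_n(V_c\minten W_c)$, Theorem \ref{dcom} to identify $(V^d)_c$ with $(V_c)^d$, and the standard fact that a selfadjoint real map is completely positive iff its complexification is. The naturality check you flag as the main obstacle is in fact routine: under the identification $f\mapsto f_c$ of Theorem \ref{dcom}, for $u=\sum v_k\otimes w_k\in V\otimes W$ one has $L_{\bC}(u)(f_c)=\sum f(v_k)w_k=L_{\bR}(u)(f)$, so $\Phi$ agrees with $\phi$ on the real points and equals $\phi_c$ by complex linearity; no orientation/conjugation subtlety actually arises because you only ever evaluate on elements of the real subspaces. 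The trade-off is the usual one in this paper: your complexification route is self-contained given the earlier identifications but longer to write out, while the paper's route is a one-liner that silently asks the reader to re-run the KPTT2 proof over $\bR$. Both are sound; yours arguably documents more carefully which previously established real/complex compatibilities are being used.
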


\begin{proof}  As in \cite[Lemma 8.4]{KPTT2}. \end{proof}

We will need later the real case of the  technical result \cite[Theorem 5.1]{KPTT2}:

\begin{theorem} \label{tenp51}  Let $A$ and $B$ be unital C$^*$-algebras,  $\cS$ be an operator subsystem of $A$, and let $I$ be an ideal in $B$.
Then $I \bar{\otimes} \cS$ is a kernel in  $B \minten \cS$, and the operator system quotient $(B \minten \cS)/(I \bar{\otimes} \cS)$ coincides with the 
same operator space quotient.   Also the induced map from $(B \minten \cS)/(I \bar{\otimes} \cS)$ into $(B \minten A)/(I \bar{\otimes} A)$ is a unital complete order isomorphism onto its range. 
\end{theorem}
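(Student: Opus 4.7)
The plan is to reduce everything to the complex case via complexification, using Corollary \ref{comin} and the quotient results from Sections \ref{quot} and \ref{tens}. The complex version is exactly \cite[Theorem 5.1]{KPTT2}, so the main task is to verify that each of the three assertions is inherited from the complexified statement.

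First, I would establish the identification $(I \bar\otimes \cS)_c = I_c \bar\otimes \cS_c$ as closed subspaces of $(B \minten \cS)_c = B_c \minten \cS_c$ (using Corollary \ref{comin}). Since $I_c$ is an ideal of $B_c$ and $\cS_c$ is a subsystem of $A_c$, the complex version \cite[Theorem 5.1]{KPTT2} then gives that $I_c \bar\otimes \cS_c$ is a kernel in $B_c \minten \cS_c$. By the Proposition in Section \ref{quot} characterizing kernels through complexification ($J$ is a kernel in $V$ iff $J_c$ is a kernel in $V_c$), it follows that $I \bar\otimes \cS$ is a kernel in $B \minten \cS$.

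Next, for the coincidence of operator space and operator system quotients: by Proposition \ref{coker} (and its operator space analogue from \cite{Sharma}), complexification commutes with both types of quotient. The complex KPTT2 result gives that the two quotients $(B_c \minten \cS_c)/(I_c \bar\otimes \cS_c)$ coincide completely isometrically. Both real quotients embed into the corresponding complex quotient (as the real part of a conjugation-invariant subspace), so the real operator space and operator system quotients must agree completely isometrically as well. I would spell this out by noting that the identity map from $(B \minten \cS)/(I \bar\otimes \cS)_{osp}$ to the operator system quotient is always completely contractive, and the reverse complete contraction is then forced by the commuting diagram with the complexifications.

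Finally, for the assertion that $(B \minten \cS)/(I \bar\otimes \cS) \to (B \minten A)/(I \bar\otimes A)$ is a unital complete order isomorphism onto its range: I would complexify this map and observe (using $(V/J)_c \cong V_c/J_c$) that it becomes the corresponding complex map, which is a complete order embedding by \cite[Theorem 5.1]{KPTT2}. Then, since a real unital selfadjoint map is a complete order embedding if and only if its complexification is (this follows from the fact that $M_n(V)^+ = M_n(V_c)^+ \cap M_n(V)$ for $V$ completely order embedded in $V_c$; see Section 2 and the Remark after Proposition \ref{inco}), the real map is also a complete order embedding.

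The main subtlety to verify is the identification $(I \bar\otimes \cS)_c = I_c \bar\otimes \cS_c$ inside $B_c \minten \cS_c$; once this is in hand, everything else is essentially bookkeeping. This identification follows because $I_c \otimes \cS_c = I \otimes \cS + i(I \otimes \cS)$ algebraically, and the conjugation $\theta_B \otimes \theta_\cS$ on $B_c \minten \cS_c$ (a complete isometry fixing $B \minten \cS$) preserves the closure, so the closure of $I_c \otimes \cS_c$ decomposes as $(I \bar\otimes \cS) + i(I \bar\otimes \cS)$. I expect no essential new obstacle beyond this point.
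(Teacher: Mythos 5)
Your proposal is correct, and it is essentially the route the paper itself endorses in the opening sentence of its proof (``This follows by complexification'') but then declines to write out; the paper's detailed argument is the alternative it offers, namely re-running the proof of \cite[Theorem 5.1]{KPTT2} in the real setting and checking three of its ingredients in the real case --- Pisier's operator space quotient lemma \cite[Lemma 2.4.8]{Pisbk}, the identification $\ker \phi = I \bar{\otimes} \cS$ for the relevant map $\phi$, and \cite[Corollary 4.2]{KPTT2} (handled via the discussion before Definition \ref{cobip}). Your route instead complexifies the finished complex theorem and pulls each conclusion back: the kernel assertion via the kernel--complexification proposition of Section \ref{quot}, the coincidence of the operator space and operator system quotients via $((V/J)_{osp})_c \cong (V_c/J_c)_{osp}$ together with Proposition \ref{coker}, and the complete order embedding by restriction, using that $M_n(V)^+ = M_n(V_c)^+ \cap M_n(V)$. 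Both approaches converge on the same technical pressure point: one must identify $(I \bar{\otimes} \cS)_c$ with $I_c \bar{\otimes} \cS_c$ inside $B_c \minten \cS_c$ and check that the induced quotient maps commute with complexification --- exactly the ``laborious exercise'' the paper flags. Your conjugation-invariance argument for the first of these (that $\theta_B \otimes \theta_{\cS}$ is an isometric conjugation fixing $B \minten \cS$, so the closed $\theta$-invariant subspace $I_c \bar{\otimes} \cS_c$ splits as $(I \bar{\otimes} \cS) + i (I \bar{\otimes} \cS)$) is a clean way to discharge that point. What your route buys is that no step of the KPTT2 proof needs to be reopened; what the paper's detailed route buys is that the real-case versions of the intermediate lemmas (notably the real Pisier quotient lemma and the real biproximinality input) are recorded for reuse elsewhere in the paper.
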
 

\begin{proof} This follows by complexification.   Alternatively, the proof of \cite[Theorem 5.1]{KPTT2} works in the real case, subject to checking the real case  of three of the results used in that proof.  
The first of these is the operator space result \cite[Lemma 2.4.8]{Pisbk} (which is 7.43 in \cite{P}).  This follows 
by the same proof, or by complexification: one needs to prove that $(u_c)_Q = (u_Q)_c$ in Pisier's notation 
of \cite[Theorem 5.1]{KPTT2}.  We leave this as a laborious exercise for the reader.  

Next, the assertion ker $\phi = I \bar{\otimes} \cS$ in the proof. Write $\phi$ as $\phi_{\cS}$.  
Let $q_A : B \minten A \to Q(A) = (B \minten A )/(I \minten A )$  be the quotient map, in Pisier's notation. 
Then $\phi = q_A \circ (I_B \otimes i_{\cS})$, where $i_{\cS} : {\cS} \to A$ is the inclusion map.  
By a part of the  laborious complexification in the last paragraph, we may identify $(q_A)_c$ with $q_{A_c} : B_c \minten A_c 
\to Q(A_c).$     Then $$(\phi_{\cS})_c =  (q_A)_c  \circ (I_{B_c} \otimes i_{\cS_c})= (\phi_c)_{\cS_c} . $$  By the complex case of our present task
we have $$({\rm ker} \; \phi)_c = {\rm ker} \; \phi_c = I_c \bar{\otimes} \cS_c = (I  \bar{\otimes} \cS)_c .$$
 It follows that ker $\phi = I \bar{\otimes} \cS$. 
 
Finally there is an appeal to Corollary 4.2 
of \cite{KPTT2}, but this is essentially the same, and was discussed above in the real case  before our Definition
\ref{cobip}. \end{proof}

\subsection{Maximal tensor product} \label{maxtens}

We define the {\em maximal operator system tensor product} as follows: for real operator systems $V$ and $W$, we say $x \in M_n(V \maxten W)^+$ if and only if for every $\epsilon > 0$ there exist $A_1,\dots,A_k \in M_{n_i}(V)^+$, $B_1,\dots,B_k \in M_{m_i}(B)^+$, and an appropriate sized real scalar matrix $\alpha$ such that $x + \epsilon I_n \otimes e \otimes f = \alpha^T {\rm diag}(A_1 \otimes B_1, \dots, A_k \otimes B_k) \alpha$. 
As in the complex case this is a matrix ordering which is proper (since it is contained in the cones for $V \minten W$).  
If $e$ and $f$ are the identities of $V$ and $W$ then $e \otimes f$ is  a matrix order unit.   To see this it is enough to show $e \otimes f$ is an order unit in the complex case, however we need a 2-order unit in the real case. Suppose $x = \sum_i \, a_i \otimes b_i$ is self-adjoint in $V \otimes W$.  Without loss of generality, we may rewrite this sum as $\sum_i \, 
 a_i \otimes b_i + \sum a_i^* \otimes b_i^*$ (after rescaling). Choose scalars $t_i > 0$ such that $\|a_i\|, \|b_i\| \leq t_i$. Then $A_i := \begin{bmatrix} t_i e & a_i \\ a_i^* & t_i e \end{bmatrix}$ and $B_i := \begin{bmatrix} t_i f & b_i \\ b_i^* & t_i f \end{bmatrix}$ are positive. Notice that the corners of the positive matrix $A_i \otimes B_i$ have the entries $t_i^2 e \otimes f, a_i \otimes b_i$, and $a_i^* \otimes b_i^*$. Compressing by an appropriate scalar matrix, we get $2t_i^2 e \otimes f + a_i \otimes b_i + a_i^* \otimes b_i^* \geq 0$. Thus $x + \sum_i \,  t_i^2 e \otimes f$ is positive.

We now check that $I_2 \otimes (e \otimes f) = (e \otimes f)_{2}$ is an order unit for $M_2(V \maxten W)$.  
If $z \in M_2(V \otimes W)_{\rm sa}$ then $z_{ii}$ and 
$z_{12} + z_{21}$ are self-adjoint.
Indeed $$z = z_{11} \otimes e_{11} + z_{22}  \otimes e_{11} + (z_{12} \otimes e_{12} + z_{12}^*  
\otimes e_{21})$$ is a sum of three self-adjoint matrices.
 Thus it is enough to show that each of these three matrices is dominated by a positive multiple of $(e \otimes f)_{2}$.
It is easy to see that the first two are, by the last paragraph.    For the third matrix, since $z_{12}$ is a sum of rank 1 tensors, we may treat each of the latter seperately.  Thus we may assume that $z_{12} = a \otimes b$ for $a \in V, b \in W$. To see that $a \otimes b \otimes e_{12} + a^* \otimes b^*  \otimes e_{21}$ is dominated by a positive multiple of $(e \otimes f)_{2}$, we make a slight 
modification of the trick in the last paragraph.  Let $c = a \otimes e_{12} + a^*  \otimes e_{21}, d = b \otimes e_{12} + b^*  \otimes e_{21},$ and choose $t > 0$ such that
$$A = \begin{bmatrix} t e_2 & c \\ c^* & t e \end{bmatrix} \geq 0 , \; \; \; \; B = \begin{bmatrix} t f & d \\ d^* & t f \end{bmatrix}  \geq 0 .$$ 
Compressing, i.e.\ deleting  the 2nd and 3rd rows and columns of the positive matrix $A \otimes B$, 
we get $t^2 (e \otimes f)_2 + a \otimes b  \otimes e_{12}+ a^* \otimes b^*  \otimes e_{21} \geq 0$ as desired.
It follows that $e \otimes f$ is a matrix order unit for $V \maxten W$. 

We say that a bilinear map $u : V \times W \to B(H)$ is jointly completely positive if $u^{(n)}(x,y) = [u(x_{ij},y_{kl})]_{(i,k),(j,l)} \geq 0$ whenever 
$x = [x_{ij}] \in M_n(V)^+, y = [y_{kl}] \in M_n(W)^+$, and $n \in \bN$.   

\begin{lemma} \label{cjcp} Let $V$ and $W$ be real operator systems. If $u : V \times W \to B(H)$ is jointly completely positive, then 
its complexification $u_c : V_c \times W_c \to B(H)_c$ is jointly completely positive. 
\end{lemma}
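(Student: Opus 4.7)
The plan is to reduce the claim to the joint complete positivity of $u$ applied to the ``real realizations'' $c(A,B)$ and $c(C,D)$ of positive matrices over the complexifications, and then extract the desired complex positivity by the same scalar-matrix conjugation trick used in the proof of Proposition \ref{prop: complexify tens prod structure}.

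First I would fix $X \in M_n(V_c)^+$ and $Y \in M_n(W_c)^+$, and write $X = A + iB$, $Y = C + iD$ with $A \in M_n(V)_{\rm sa}$, $B \in M_n(V)_{\rm as}$, $C \in M_n(W)_{\rm sa}$, $D \in M_n(W)_{\rm as}$. By the definition of the complexification of a matrix ordered space from Section \ref{CE}, positivity of $X$ and $Y$ is equivalent to $c(A,B) \in M_{2n}(V)^+$ and $c(C,D) \in M_{2n}(W)^+$. The joint complete positivity of $u$ at level $2n$ then yields
\[ u^{(2n)}(c(A,B), c(C,D)) \geq 0 \quad \text{in } M_{4n^2}(B(H)). \]

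Next, after a permutation of row/column indices (which does not affect positivity), this $4n^2 \times 4n^2$ matrix takes precisely the $4\times 4$ block form appearing in the proof of Proposition \ref{prop: complexify tens prod structure}, with the entries $P \otimes S$, $P \otimes T$, $Q \otimes S$, $Q \otimes T$ there replaced by $u^{(n)}(A,C)$, $u^{(n)}(A,D)$, $u^{(n)}(B,C)$, $u^{(n)}(B,D)$ respectively. Conjugating by the same scalar matrix
\[
R = \frac{1}{\sqrt{2}} \begin{bmatrix} I & 0 & 0 & -I \\ 0 & I & I & 0 \end{bmatrix}, \qquad I = I_{n^2},
\]
then produces $c(P,Q) \in M_{2n^2}(B(H))^+$, where $P := u^{(n)}(A,C) - u^{(n)}(B,D)$ is selfadjoint and $Q := u^{(n)}(A,D) + u^{(n)}(B,C)$ is skew.

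Finally, the complex bilinear extension $u_c$ is determined by $u_c(a+ib, c+id) = (u(a,c) - u(b,d)) + i(u(a,d) + u(b,c))$, so a direct entrywise computation gives $u_c^{(n)}(X, Y) = P + iQ$. The positivity $c(P,Q) \geq 0$ is exactly the criterion from Section \ref{CE} for $P + iQ \geq 0$ in $M_{n^2}(B(H)_c)$, finishing the proof. The main obstacle is the bookkeeping of the index rearrangement matching $u^{(2n)}(c(A,B), c(C,D))$ with the $4\times 4$ block pattern of $c(\cdot,\cdot) \otimes c(\cdot,\cdot)$ from Proposition \ref{prop: complexify tens prod structure}; once that identification is made explicit, the conjugation step is verbatim the one already performed there. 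An alternative (and essentially equivalent) route that sidesteps this bookkeeping is to linearize: joint complete positivity of $u$ corresponds to complete positivity of an induced linear map $\tilde{u}: V \otimes_{\rm max} W \to B(H)$, which by Lemma \ref{ncomma} extends complex-linearly to a completely positive $\tilde{u}_c$, and then composing with the canonical ucp map $V_c \otimes_{\rm max} W_c \to (V \otimes_{\rm max} W)_c$ (the real analogue of point~(3) in Proposition \ref{prop: complexify tens prod structure}) yields the joint complete positivity of $u_c$.
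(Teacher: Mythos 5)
Your main argument is correct and is essentially identical to the paper's proof: both apply the joint complete positivity of $u$ to $c(A,B)$ and $c(C,D)$, recognize the resulting $4\times 4$ block pattern from Proposition \ref{prop: complexify tens prod structure} with $\otimes$ replaced by $u(\cdot,\cdot)$, and conjugate by the scalar matrix in (\ref{24mat}). One caution on your alternative route: the canonical ucp map $V_c \otimes_{\rm max} W_c \to (V \otimes_{\rm max} W)_c$ and the universal property of the real maximal tensor product are established in the paper only after (and partly by means of) this lemma, so that sketch would be circular as the development stands.
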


\begin{proof} By the correspondence in (\ref{ofr}), $u_c(x + iy, w + iz)$ may be viewed as 
 $$u_c \left( \begin{bmatrix} x & -y \\ y & x \end{bmatrix} , \begin{bmatrix} w & -z \\ z & w \end{bmatrix} \right) = \begin{bmatrix} u(x , w) -u( y,z) & -u(x , z )- u(y,  w ) \\ u(x, z) +  u(y,  w ) & u(x , w) -u( y,z)  \end{bmatrix}.$$
  Let $x,y \in M_n(V), w,z \in M_n(W)$, and
$x + i y \geq 0$ and $w + i z \geq 0$, or equivalently $$\begin{bmatrix} x & -y \\ y &  x \end{bmatrix} \geq 0, \; \; \; \; \begin{bmatrix} w & -z \\ z &  w \end{bmatrix} \geq 0.$$ It follows that the displayed $4 \times 4$ matrix a line above equation (\ref{24mat}) is positive when we replace the 16 $\otimes$ symbols by $u(\cdot,\cdot)$.  As in that argument,  conjugating this 4 × 4 block matrix by the scalar block matrix in (\ref{24mat})  is positive.  However this 
is $(u_c)_n(x + iy, w + iz)$ by the equation at the start of the proof.  Thus  $u_c$ is jointly completely positive.  \end{proof} 
  
{\bf Remark.} Unlike the situation for linear maps, jointly completely contractive bilinear maps with  $u(1,1) = 1$ need not be  jointly completely positive. 
For example the product on $B(H)$ fails to be jointly positive. 

\bigskip 

By Subsection \ref{archs}, we see that the matrix ordering on the maximal tensor product $V \maxten W$ is precisely the 
archimedeanization of the matrix ordering generated by the set of elementary tensors $\{P \otimes Q\}$ where $P \in M_n(V)^+$ and $Q \in 
M_m(W)^+$. Recall that we saw that the real
archimedeanization of a space V is the smallest set of cones turning V into an operator system as
in the complex case.  Indeed  Lemmas 2.5, 2.6 and 5.1
of \cite{KPTT1} hold in  the real case, and also Theorem 5.8 there (the universal property of 
$V \maxten W$).   The latter in our case is:

\begin{theorem} \label{mtenup}  Let $V$ and $W$ be real operator systems.  Every 
jointly completely positive map $u : V \times W \to B(H)$, for a real Hilbert space $H$,
linearizes 
to a completely positive map $V \maxten W \to B(H)$.  Moreover if $\tau$ is a 
real operator system structure on $V \otimes W$ such that every 
jointly completely positive map $u : V \times W \to B(H)$  linearizes 
to a completely positive map $V \otimes_{\tau} W \to B(H)$, then $V \otimes_{\tau} W = V \maxten W$.
\end{theorem}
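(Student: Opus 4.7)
The plan is to prove the first assertion by direct verification, and then derive the universal property from it via a standard two-map argument.

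For the first assertion, let $u : V \times W \to B(H)$ be jointly completely positive, and let $\tilde{u} : V \otimes W \to B(H)$ be its (algebraic) linearization. Let $D_n \subseteq M_n(V \otimes W)_{\rm sa}$ denote the raw cone consisting of matrices of the form $\alpha^T \,\mathrm{diag}(A_1 \otimes B_1,\dots,A_k \otimes B_k)\,\alpha$ with $A_i \in M_{n_i}(V)^+$, $B_i \in M_{m_i}(W)^+$, and $\alpha$ a real scalar matrix. By Subsection \ref{maxtens}, the cone $M_n(V \maxten W)^+$ is the archimedean closure of $D_n$ with respect to $e \otimes f$. First I would show $\tilde u^{(n)}(D_n) \subseteq M_n(B(H))^+$: joint complete positivity gives $\tilde u^{(n_i m_i)}(A_i \otimes B_i) = u^{(n_i,m_i)}(A_i,B_i) \geq 0$, direct sums of positives are positive, and conjugation by the scalar matrix $\alpha$ preserves positivity. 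Then I would handle the archimedean closure: if $x \in M_n(V \maxten W)^+$, then $x + \varepsilon I_n \otimes e \otimes f \in D_n$ for all $\varepsilon > 0$, so $\tilde u^{(n)}(x) + \varepsilon I_n \otimes u(e,f) \geq 0$ in $M_n(B(H))$. Since $u(e,f) \in B(H)^+$ is bounded, letting $\varepsilon \to 0$ (using that $M_n(B(H))^+$ is norm closed) yields $\tilde u^{(n)}(x) \geq 0$. Unitality is automatic if $u(e,f) = I$, but is not required for complete positivity.

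For the universal property (the second assertion), suppose $\tau$ is a real operator system structure on $V \otimes W$ such that every jointly completely positive $V \times W \to B(H)$ linearizes to a completely positive map on $V \otimes_\tau W$. The bilinear inclusion $\iota_\tau : V \times W \to V \otimes_\tau W$, $(x,y) \mapsto x \otimes y$, is jointly completely positive by axiom (2) of a real tensor product structure. By the first assertion (applied after embedding $V \otimes_\tau W \subseteq B(H)$ via its abstract characterization), $\iota_\tau$ linearizes to a unital completely positive map $V \maxten W \to V \otimes_\tau W$; on elementary tensors this is the identity. Conversely, the bilinear map $\iota_{\max} : V \times W \to V \maxten W$ is jointly completely positive by the very definition of the max cone, and so by the hypothesis on $\tau$ it linearizes to a unital completely positive map $V \otimes_\tau W \to V \maxten W$. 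Composing in either order gives the identity on the algebraic tensor product, so the two identity maps are mutually inverse unital complete order isomorphisms, proving $V \otimes_\tau W = V \maxten W$.

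The main obstacle I anticipate is purely bookkeeping in the first assertion: making sure the archimedeanization step passes correctly from positivity on the raw cone $D_n$ to positivity on the archimedean closure in $M_n(B(H))$, since $u(e,f)$ need not equal $I_H$ and need not be a matrix order unit in $B(H)$. This is handled by the observation above that $\tilde u^{(n)}(x) + \varepsilon I_n \otimes u(e,f) \geq 0$ for every $\varepsilon > 0$ forces $\tilde u^{(n)}(x) \geq 0$, because $I_n \otimes u(e,f)$ is a bounded positive operator and $M_n(B(H))^+$ is norm closed. Everything else is a direct transcription of the complex arguments, aided if desired by Lemma \ref{cjcp} (which would permit an alternative route via complexification, once one knows, as with the minimal tensor product in Corollary \ref{comin}, that $V_c \maxten W_c$ is the complexification of $V \maxten W$).
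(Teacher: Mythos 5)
Your proof is correct and follows essentially the same route as the paper, which simply defers to the complex argument of Kavruk--Paulsen--Todorov--Tomforde (Theorem 5.8 of their tensor products paper): positivity of the linearization on the raw cone $D_n$, passage through the archimedean closure using norm-closedness of $M_n(B(H))^+$, and the standard two-map argument for the universal property. The only point worth noting is that the paper's definition of joint complete positivity uses matrices of equal size in both arguments, so your step $\tilde u^{(n_i m_i)}(A_i\otimes B_i)\geq 0$ for $A_i\in M_{n_i}(V)^+$, $B_i\in M_{m_i}(W)^+$ with $n_i\neq m_i$ needs the routine padding-by-zeros reduction to the equal-size case, which is standard and not a gap.
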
 

\begin{proof} The proof is the same as in \cite[Theorem 5.8]{KPTT1}.   This result appeals to Lemmas 2.5, 2.6 and 5.1
of \cite{KPTT1}, but these are valid in the real case too as we just said.  \end{proof} 

{\bf Remark.}  Because of this universal property, one could have defined $V \maxten W$ in terms of a supremum  over unital jointly completely positive maps. 

\smallskip

It is then easy to see  that $V \maxten W$ is completely order  embedded in 
$V_c \maxten W_c$.    
Indeed we now show that  $$E = {\rm Span} \, \{ x \otimes y : x \in V, y \in W \} \subseteq V_c \maxten W_c$$ 
has the desired universal property of 
$V \maxten W$.   If $u : V \times W \to B(H)$ is jointly completely positive, then by the lemma 
its complexification $u_c : V_c \times W_c \to B(H)_c$ is jointly completely positive.   The latter linearizes 
to a completely positive map $V_c \maxten W_c \to B(H)_c$.
This in turn restricts to a completely positive map 
$E \to B(H)$.  Thus by the real case of  \cite[Theorem 5.8]{KPTT1} mentioned above,  $E$  has the desired universal property of 
$V \maxten W$.   That is,  $V \maxten W$ is completely order  embedded in $V_c \maxten W_c$.  This also follows with a different proof from the next result. 
 
\begin{theorem} \label{mtenf}
Let $V$ and $W$ be real operator systems. Then  $(V \otimes_{\rm max} W)_c \cong  V_c \otimes_{\rm max} W_c$ as complex operator systems ($V_c \otimes_{\rm max} W_c$ is an (the) operator system complexification of $V \otimes_{\rm  max} W$). \end{theorem}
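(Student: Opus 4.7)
The plan is to mirror the minimal-tensor argument used in Corollary \ref{comin}: apply Proposition \ref{cco} by producing an operator system conjugation $\theta$ on $V_c \otimes_{\rm max} W_c$ whose real fixed-point space is the image of $V \otimes_{\rm max} W$ under the real complete order embedding $\iota : V \otimes_{\rm max} W \hookrightarrow V_c \otimes_{\rm max} W_c$ established in the paragraph immediately preceding the theorem. Since that paragraph already handles the embedding direction (using Lemma \ref{cjcp} and the universal property Theorem \ref{mtenup}), all that remains is reasonableness of the complexification.

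The natural candidate is $\theta := \theta_V \otimes \theta_W$, the conjugate-linear extension to $V_c \otimes W_c$ of the tensor product of the canonical operator system conjugations on each factor. To see that $\theta$ is a unital complete order isomorphism of $V_c \otimes_{\rm max} W_c$, I would recast each $\theta_V$ as a complex-linear unital complete order isomorphism $V_c \to \overline{V_c}$ onto the conjugate complex operator system (and likewise for $\theta_W$), invoke functoriality of the complex maximal tensor product (the complex case of Theorem \ref{mtenup}, cf.\ \cite[Theorem 5.8]{KPTT1}) to get a complex-linear unital complete order isomorphism
\[ V_c \otimes_{\rm max} W_c \;\longrightarrow\; \overline{V_c} \otimes_{\rm max} \overline{W_c}, \]
and then identify the right side with $\overline{V_c \otimes_{\rm max} W_c}$ (a routine consequence of the universal property). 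Reinterpreted as a self-map, this is exactly $\theta$, which is period $2$ since $\theta_V^2 = \mathrm{id}$ and $\theta_W^2 = \mathrm{id}$ on elementary tensors.

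Finally, $\theta$ visibly fixes each elementary tensor $x \otimes y$ with $x \in V$, $y \in W$, hence fixes all of $\iota(V \otimes_{\rm max} W)$ by linearity and continuity; conversely if $\theta(z) = z$ and one writes $z = \iota(a) + i\,\iota(b)$ using the vector-space identity $V_c \otimes W_c = (V\otimes W) + i(V\otimes W)$ and density, then $z = \theta(z) = \iota(a) - i\,\iota(b)$ forces $b = 0$. By Proposition \ref{cco}, $V_c \otimes_{\rm max} W_c$ is then the operator system complexification of $\iota(V \otimes_{\rm max} W) \cong V \otimes_{\rm max} W$, which is exactly the assertion. The principal obstacle is the functoriality-plus-conjugate-system step: namely, that $\theta_V \otimes \theta_W$ really does descend to a complete order isomorphism at the level of the completed max tensor product, and that $\overline{V_c} \otimes_{\rm max} \overline{W_c} \cong \overline{V_c \otimes_{\rm max} W_c}$. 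Both can be verified directly from Theorem \ref{mtenup} by checking that jointly completely positive maps $\overline{V_c} \times \overline{W_c} \to B(H)$ correspond bijectively (via entrywise conjugation) to jointly completely positive maps $V_c \times W_c \to \overline{B(H)}$, so that the two max tensor products carry the same universal property.
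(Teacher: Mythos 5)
Your proposal is correct, but it follows a genuinely different route from the paper's. The paper proves Theorem \ref{mtenf} by exhibiting mutually inverse ucp maps directly: it checks (via the $4\times 4$ matrix computation from Proposition \ref{prop: complexify tens prod structure}) that the canonical bilinear map $V_c \times W_c \to (V \maxten W)_c$ is jointly completely positive, linearizes it through the complex universal property of $\maxten$ to get $\kappa : V_c \maxten W_c \to (V \maxten W)_c$, extends the embedding $\iota : V \maxten W \to V_c \maxten W_c$ to a complex ucp map $\hat\iota$ on $(V\maxten W)_c$ using Lemma \ref{ncomma}, and observes $\kappa = \hat\iota^{-1}$. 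You instead transplant the strategy of Corollary \ref{comin}: build the conjugation $\theta_V \otimes \theta_W$, verify it is a unital complete order automorphism of $V_c \maxten W_c$, identify its fixed points with $\iota(V \maxten W)$, and invoke Proposition \ref{cco}. The step you correctly flag as the principal obstacle --- that $\theta_V \otimes \theta_W$ descends to the max tensor product, via recasting $\theta_V$ as a complex-linear isomorphism $V_c \to \overline{V_c}$, functoriality of $\maxten$, and the identification $\overline{V_c} \maxten \overline{W_c} \cong \overline{V_c \maxten W_c}$ --- is exactly the device the paper itself uses for universal C$^*$-covers in Lemma \ref{ucom}, and your sketch of the bijection between jointly completely positive maps under entrywise conjugation closes it; in the min case the paper could simply cite an external reference for the analogous fact, which is why it avoids this route for max. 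What the paper's argument buys is self-containment (everything reduces to one joint-cp computation plus Lemma \ref{ncomma}) and an explicit inverse; what yours buys is uniformity with the min-tensor proof and a reusable conjugate-tensor identity. Two small points to tighten: your converse fixed-point argument for the completed tensor product should be run as in Lemma \ref{ucom} (apply $\theta$ to an approximating sequence $\iota(a_n) + i\,\iota(b_n) \to z$ and use $z + \theta(z) = 2z$), rather than asserting a decomposition of $z$ itself; and note that your reliance on the complete order embedding $\iota$ from the paragraph preceding the theorem is not circular, since that paragraph proves the embedding independently via Lemma \ref{cjcp} and Theorem \ref{mtenup}.
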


\begin{proof}
We apply the (complex) universal property of 
 $\maxten$, to the canonical 
 unital complex bilinear map $u : V_c \times W_c \to (V \maxten W)_c$. 
 Note  that $u$ is jointly completely positive.  Indeed 
 $u$ may be viewed as the map 
 $$m \left( \begin{bmatrix} x & -y \\ y & x \end{bmatrix} , \begin{bmatrix} w & -z \\ z & w \end{bmatrix} \right) = \begin{bmatrix} x \otimes w - y \otimes z & -x \otimes z - y \otimes w  \\ x \otimes z + y \otimes w & x  \otimes w - y \otimes z \end{bmatrix}.$$
Suppose that  $x,y \in M_n(V), w,z \in M_n(W)$, and
$x + i y \geq 0$ and $w + i z \geq 0$, or equivalently $$\begin{bmatrix} x & -y \\ y &  x \end{bmatrix} \geq 0, \; \; \; \; \begin{bmatrix} w & -z \\ z &  w \end{bmatrix} \geq 0.$$  In the proof of Proposition \ref{prop: complexify tens prod structure} we gave a trick that shows that
  $$\begin{bmatrix} x \otimes w - y \otimes z & -x \otimes z - y \otimes w  \\ x \otimes z + y \otimes w & x  \otimes w - y \otimes z \end{bmatrix}  \geq 0.$$    Thus $u$ is  jointly completely positive, so linearizes to a ucp map
  $\kappa : V_c \maxten W_c \to (V \maxten W)_c$. 
  
  Similarly, the canonical map  $V \times W \to V_c \maxten W_c$ is easily seen to be jointly completely positive, so linearizes to a ucp map $\iota : V \maxten W \to V_c \maxten W_c$.
  By Lemma \ref{ncomma} the map $\iota : V \maxten W \to V_c \maxten W_c$
 extends to a complex ucp map $\hat{\iota} : (V \maxten W)_c \to V_c \maxten W_c$. 
 It is easy to see that $\kappa$ is the inverse 
  of $\hat{\iota}$.   Thus the result is proved.
  \end{proof}

{\bf Remarks.}  1)\ It is obvious as in the complex case that there are canonical bijective correspondences between the  jointly completely positive functionals  $\psi : \cS \times \cT \to \bR$, and the completely positive real linear maps
$\cS \to \cT^d$, and the completely positive real linear maps
$\cT \to \cS^d$
(See Lemma 3.2 of \cite{Lan}, \cite[Lemma 5.7]{KPTT1}).

\smallskip

  2)\ If $A, B$ are real unital C$^*$-algebras then (the completion of) $A \maxten B$ with the usual product is a real C$^*$-algebra (whose canonical operator space structure is compatible with its 
  operator system structure) which has the following universal property: for all unital $*$-representations $\pi : A \to B(H)$ and 
  $\rho : B \to B(H)$ with commuting ranges, there exists a unique $*$-homomorphism 
  $\theta : A \maxten B \to B(H)$ such that $\theta(a \otimes b) = \pi(a) \rho(b)$.
  This follows fairly straightforwardly from the facts that  $A \maxten B \subseteq A_c \maxten B_c$ as operator systems, and that the latter is the (complex) C$^*$-algebraic maximal tensor product \cite[Theorem 5.12]{KPTT1}.  It follows that this embedding is a real faithful $*$-homomorphism.   We leave the remaining arguments, which are standard by now,  to the reader. 

  In the light of this, if $A, B$ are real unital C$^*$-algebras then  we call $A \maxten B$ the {\em maximal real C$^*$-algebra tensor product}.

\begin{lemma} \label{maxex} {\rm (Max-exactness \cite[Corollary 5.17]{KPTT2})}\  
For a real operator system $\cS$, and 
 $I$  an ideal in  a real  unital C$^*$-algebra $A$, we have $$(\cS \maxten A)/(\cS \bar{\otimes} I) \cong \cS \maxten (A/I)$$ unitally complete order isomorphically. 
\end{lemma}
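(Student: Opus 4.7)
The plan is to deduce the real statement from the corresponding complex result (\cite[Corollary 5.17]{KPTT2}) via complexification. First I would identify the candidate map: by the universal property of the maximal tensor product (Theorem \ref{mtenup}), the bilinear map $(s,a) \mapsto s \otimes q_I(a)$ from $\cS \times A$ into $\cS \maxten (A/I)$, where $q_I : A \to A/I$ is the quotient $*$-homomorphism, is jointly completely positive and so induces a ucp map $\Phi : \cS \maxten A \to \cS \maxten (A/I)$. It is clear that $\Phi$ is surjective with dense image from algebraic tensors, and that $\cS \bar{\otimes} I \subseteq \ker \Phi$. What needs to be shown is that $\cS \bar{\otimes} I$ is a kernel (in the operator system sense), that $\ker \Phi = \cS \bar{\otimes} I$, and that the induced map $\bar\Phi$ on $(\cS \maxten A)/(\cS \bar{\otimes} I)$ is a unital complete order isomorphism.

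Next I would complexify everything in sight. By Theorem \ref{mtenf}, $(\cS \maxten A)_c \cong \cS_c \maxten A_c$ and $(\cS \maxten (A/I))_c \cong \cS_c \maxten (A/I)_c$; and standardly $(A/I)_c \cong A_c/I_c$ as complex C$^*$-algebras (with $I_c$ the closed complex ideal $I + iI$). A short computation shows that $\cS_c \otimes I_c = (\cS \otimes I) + i(\cS \otimes I)$ algebraically inside $\cS_c \otimes A_c$, and taking the closure inside $\cS_c \maxten A_c = (\cS \maxten A)_c$ then yields $(\cS \bar{\otimes} I)_c = \cS_c \bar{\otimes} I_c$ (the closed complex linear span of a real subspace equals the complexification of its real closure). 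Combined with the fact that $J$ is a kernel in $\cS \maxten A$ if and only if $J_c$ is a kernel in $\cS_c \maxten A_c$ (Proposition 7.3) and with Proposition \ref{coker}, this transfers the entire situation to $\cS_c \maxten A_c$.

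Now I would invoke the complex max-exactness result \cite[Corollary 5.17]{KPTT2} to obtain a unital complete order isomorphism $\Psi : (\cS_c \maxten A_c)/(\cS_c \bar{\otimes} I_c) \to \cS_c \maxten (A_c/I_c)$. Since $\Psi$ is constructed from $\id_{\cS_c} \otimes q_{I_c}$ exactly as $\bar\Phi$ is constructed from $\id_\cS \otimes q_I$, the map $\Psi$ is the complexification of $\bar\Phi$; equivalently, $\Psi$ intertwines the canonical operator system conjugations on both sides. Taking fixed points of those conjugations (using Proposition \ref{cco} and the uniqueness of the reasonable complexification, Theorem \ref{rcth}) yields the desired real unital complete order isomorphism $\bar\Phi : (\cS \maxten A)/(\cS \bar{\otimes} I) \to \cS \maxten (A/I)$; in particular $\ker \Phi = \cS \bar{\otimes} I$ and this subspace is indeed a kernel.

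The hard part will be the bookkeeping in the second paragraph: verifying cleanly that $(\cS \bar{\otimes} I)_c = \cS_c \bar{\otimes} I_c$ as closed subspaces of $\cS_c \maxten A_c$. This rests on Theorem \ref{mtenf} (so that the inclusion $\cS \maxten A \hookrightarrow \cS_c \maxten A_c$ is a real completely isometric complete order embedding and closures are detected correctly), together with the algebraic identity $\cS_c \otimes_{\bC} I_c = \cS \otimes_{\bR} I \,+\, i(\cS \otimes_{\bR} I)$ and continuity of the conjugation $\theta$ on $(\cS \maxten A)_c$. Once this identification is in hand, the complex-to-real descent of the isomorphism is routine.
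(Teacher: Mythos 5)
Your proposal is correct and follows essentially the same route as the paper: obtain the induced ucp map $(\cS \maxten A)/(\cS \bar{\otimes} I) \to \cS \maxten (A/I)$ from functoriality (equivalently, the universal property) of $\maxten$, verify the kernel statement by complexification via Proposition \ref{coker}, and then conclude it is a complete order isomorphism because its complexification is the complex isomorphism of \cite[Corollary 5.17]{KPTT2}. Your second paragraph simply spells out the identification $(\cS \bar{\otimes} I)_c = \cS_c \bar{\otimes} I_c$ that the paper compresses into ``easily argued''.
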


\begin{proof} By complexification and Proposition \ref{coker} 
it follows that $\cS \bar{\otimes} I$ is a kernel in $\cS \minten A$. 
 By the functoriality of the maximal tensor product we get a ucp map 
 $\cS \maxten A \to \cS \maxten A/I$  with kernel containing $\cS \bar{\otimes} I$, and we get an induced  ucp map
$$(\cS \maxten A)/(\cS \bar{\otimes} I) \to \cS \maxten (A/I).$$
This is a complete order isomorphism since its complexification is easily argued to correspond to the 
complex complete order isomorphism  
$$(\cS_c \maxten A_c)/(\cS_c \bar{\otimes} I_c) \to \cS_c \maxten (A_c/I_c)$$
from \cite[Corollary 5.17]{KPTT2}. 
\end{proof}

 {\bf Remarks.} 1)\ As expected, in the real case $\maxten$ is still  {\em functorial}: whenever $\varphi: \cS_1 \to\cS_2$ and $\psi: \cT_1 \to \cT_2$ are ucp, then $\varphi \otimes \psi: \cS_1 \maxten \cT_1 \to \cS_2 \maxten \cT_2$ is ucp.   This can be seen by the usual proof or by complexifying. 

Similarly by complexifying and using Theorem \ref{mtenf} and the complex analogues, 
one sees that $\maxten$ is a real tensor product structure, and is  {\em symmetric, associative,} and {\em complete order  projective}. 
By the latter we mean that whenever $\varphi: \cS_1 \to\cS_2$ and $\psi: \cT_1 \to \cT_2$ are (ucp) complete order quotient maps (that is, they induce a unital complete order isomorphism after quotienting by the kernel), then $\varphi \otimes \psi: \cS_1 \maxten \cT_1 \to \cS_2 \maxten \cT_2$ is a complete order quotient map.   This can be seen e.g.\ by complexifying
\cite[Proposition 3.2]{Htp} (see also \cite[Proposition 1.12]{FP}).

\medskip

2)\ In \cite{Oz} Ozawa checked the real case of Farenick and Paulsen's duality relation $$(V \otimes_{\rm max} W)^d  \cong V^d \otimes_{\rm min} W^d$$ for finite dimensional real operator systems.   Indeed it is easy to argue this by complexifying both sides using Theorems \ref{mtenf},  \ref{dcom} and Corollary \ref{comin}.

\medskip

3)\ It is checked in \cite{KPTT1} that for complex operator spaces $X, Y$, the 
 operator space structure inherited on $X \otimes Y$ by its embedding into 
the  maximal operator system tensor product of the Paulsen systems $\cS(X)$ and $\cS(Y)$, agrees with their maximal operator space tensor product, sometimes called the  operator space projective tensor product \cite{BP}.  The same will be true in the real case by complexification.  Indeed since we have a complete order isomorphism $\cS_{\bR}(X)_c = \cS_{\bC}(X_c)$ (see 
\cite{Sharma,BReal}), and similarly for $Y$, we will have from Theorem \ref{mtenf}
and \cite[Lemma 5.4]{BReal} that 
$$(\cS_{\bR}(X) \maxten \cS_{\bR}(Y))_c \cong \cS_{\bR}(X)_c \maxten  \cS_{\bR}(Y)_c = \cS_{\bR}(X)_c \hat{\otimes}    \cS_{\bR}(Y)_c
= (\cS_{\bR}(X) \hat{\otimes} \cS_{\bR}(Y))_c.$$
It is not true of course that the  maximal operator system tensor product of 
operator systems agrees with their operator space projective tensor product.
For example we know that $M_n \maxten M_n \cong M_{n^2}$, which is very different to $M_n \hat{\otimes} M_n$.
 
\subsection{Commuting tensor product} 

Let $V$ and $W$ be real operator systems. If $\Phi :   V \to B(H)$ and 
  $\Psi : W \to B(H)$ are completely positive maps with  commuting ranges, then the bilinear map 
  $\Phi(x) \Psi(y)$ for $x \in V, y \in W$ is easily seen to be jointly completely positive.  This follows since the multiplication on $B(H)$ is  jointly completely positive on commuting operators.  Indeed we leave it as an exercise that if $A$ and $B$ are commuting $C^*$-subalgebras of $B(H)$, and if $a$ and $b$ are positive matrices with entries in $A$ and $B$ respectively, then $[a_{ij} b_{kl}] \geq 0$.

  The bilinear map in the last paragraph linearizes to 
  a map $V \otimes W \to B(H)$ which we write as $\Phi \cdot \Psi$.
  As in the complex case for each integer $n$ we define a cone in $M_n(V \otimes W)$ to be those matrices $z$ for which $(\Phi \cdot \Psi)^{(n)}(z) \geq 0$ for all such $H, \Phi, \Psi$.
  Following the arguments in \cite[Section 6]{KPTT1}, with these cones $V \otimes W$ becomes an operator system 
  $V \cten W$, {\em the (maximal) commuting tensor product}.  It is easy to see (similarly to the minimal tensor product in an earlier section) that it is a real tensor product structure. 
  
Theorem 6.7 of \cite{KPTT1} states that $A \cten \cS = A \otimes_{\rm max} \cS$
for any unital C$^*$-algebra $A$.  Since Theorem 15.12 in \cite{Pnbook} was   checked in Section \ref{rcast} above, the proof of \cite[Theorem 6.7]{KPTT1}   works to give $A \cten \cS = A \otimes_{\rm max} \cS$ in the real case too.

\begin{corollary} \label{cocin}
    Let $V$ and $W$ be real operator systems. Then 
  $V \cten W   \subseteq C^*_{\rm u}(V) \maxten C^*_{\rm u}(W)$ as real operator systems (that is, unitally complete order embedded). 
  Indeed $V \cten W   \subseteq C^*_{\rm u}(V) \maxten W$.
\end{corollary}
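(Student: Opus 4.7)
The plan is to combine the real version of the identity $A \cten \cS = A \maxten \cS$ for a real unital C$^*$-algebra $A$ (recalled in the paragraph preceding this Corollary) with the universal property of $C^*_{\rm u}(\cdot)$, by extending ucp maps on $V$ and $W$ to $*$-homomorphisms on their maximal C$^*$-covers while preserving commutation.

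To build a canonical ucp map $\iota : V \cten W \to C^*_{\rm u}(V) \maxten C^*_{\rm u}(W)$, recall from Remark 2 after Theorem \ref{mtenf} that the real maximal tensor product $C^*_{\rm u}(V) \maxten C^*_{\rm u}(W)$ is a real C$^*$-algebra in which the canonical inclusions $a \mapsto a \otimes 1$ and $b \mapsto 1 \otimes b$ have commuting ranges. Restricting these to $V$ and $W$ gives a pair of ucp maps with commuting ranges, which by the defining universal property of $\cten$ linearize to $\iota$. Running the same argument with $C^*_{\rm u}(W)$ replaced by $W$ itself, together with the identity $C^*_{\rm u}(V) \cten W = C^*_{\rm u}(V) \maxten W$, produces a ucp factorization $V \cten W \to C^*_{\rm u}(V) \maxten W \hookrightarrow C^*_{\rm u}(V) \maxten C^*_{\rm u}(W)$.

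The main step is to show $\iota$ reflects positivity. Suppose $z \in M_n(V \otimes W)_{\rm sa}$ satisfies $\iota^{(n)}(z) \geq 0$. For a real Hilbert space $H$ and any ucp maps $\Phi : V \to B(H)$, $\Psi : W \to B(H)$ with commuting ranges, the universal property of the maximal C$^*$-cover extends $\Phi$ and $\Psi$ to $*$-homomorphisms $\tilde\Phi : C^*_{\rm u}(V) \to B(H)$ and $\tilde\Psi : C^*_{\rm u}(W) \to B(H)$. Their images are the real C$^*$-algebras generated by the selfadjoint sets $\Phi(V)$ and $\Psi(W)$, and since the commutant of a set is closed under sums, products, adjoints, and norm limits, these subalgebras still commute. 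The C$^*$-algebraic universal property of the real maximal tensor product (Remark 2 after Theorem \ref{mtenf}) then yields a $*$-homomorphism $\pi : C^*_{\rm u}(V) \maxten C^*_{\rm u}(W) \to B(H)$ with $\pi(a \otimes b) = \tilde\Phi(a)\tilde\Psi(b)$, so $\pi^{(n)}(\iota^{(n)}(z)) \geq 0$. On $M_n(V \otimes W)$ this evaluation coincides with $(\Phi \cdot \Psi)^{(n)}(z)$; ranging over all admissible pairs $(\Phi,\Psi)$ yields $z \in M_n(V \cten W)^+$. The identical argument with $\tilde\Psi$ replaced by $\Psi$ itself shows that the intermediate factor $V \cten W \to C^*_{\rm u}(V) \maxten W$ is already a unital complete order embedding, giving the stronger ``Indeed'' statement.

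The main (mild) obstacle is bookkeeping: one applies the universal property of $C^*_{\rm u}(\cdot)$ in each slot and verifies that commutation survives the generation of the C$^*$-algebras — routine but worth checking explicitly in the real setting, where one lacks the extra symmetry available in the complex case. Everything else is formal once the real analogues of $A \cten \cS = A \maxten \cS$ and of the C$^*$-algebraic max universal property are in hand, both of which were recorded earlier.
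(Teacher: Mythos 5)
Your proof is correct and takes essentially the same route as the paper, whose own proof is a two-line reference to the complex argument of KPTT1 (Theorem 6.4) combined with the identity $A \cten \cS = A \maxten \cS$ for unital C$^*$-algebras $A$ established just before the corollary; you have simply unpacked that argument (commuting ucp pairs extend to $*$-homomorphisms on the universal covers whose images, being generated by elementwise-commuting selfadjoint sets, still commute, and these factor through $\maxten$). The only detail left implicit is the standard reduction from arbitrary completely positive commuting pairs (as in the definition of the $\cten$ cones) to unital ones, a normalization the paper likewise delegates to the cited complex source.
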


\begin{proof} The first follows as in the complex case \cite[Theorem 6.4]{KPTT1} from the fact (immediate from the lines above) that $C^*_{\rm u}(V) \maxten C^*_{\rm u}(W)
= C^*_{\rm u}(V) \cten C^*_{\rm u}(W)$.  
Similarly for the second, or use the simple argument in 
\cite[Proposition 3.4]{Kavruk}. \end{proof}

\begin{corollary} \label{coc}
    Let $V$ and $W$ be real operator systems. Then  $(V \cten W)_c \cong  V_c \cten W_c$ as complex operator systems ($V_c \cten W_c$ is an (the) operator system complexification of $V \cten W$).
\end{corollary}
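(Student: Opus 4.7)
Following the pattern used for $\minten$ (Corollary \ref{comin}) and $\maxten$ (Theorem \ref{mtenf}), the plan is to identify both $(V \cten W)_c$ and $V_c \cten W_c$ as the same complex operator subsystem of $C^*_{\rm u}(V_c) \maxten W_c$, using the complete order embedding of $\cten$ into $C^*_{\rm u}(\cdot) \maxten (\cdot)$ provided by Corollary \ref{cocin}.

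First I would record the elementary fact that complexification preserves unital complete order embeddings between real operator systems: this follows directly from the characterization of positivity in the complexification via the $c(x,y)$-matrices in Section \ref{CE}. Applying this to the real embedding $\iota : V \cten W \hookrightarrow C^*_{\rm u}(V) \maxten W$ of Corollary \ref{cocin} yields a unital complex linear complete order embedding
\[ \iota_c : (V \cten W)_c \hookrightarrow (C^*_{\rm u}(V) \maxten W)_c . \]
Combining the canonical identifications from Theorem \ref{mtenf} and Lemma \ref{ucom} gives
\[ (C^*_{\rm u}(V) \maxten W)_c \;\cong\; C^*_{\rm u}(V)_c \maxten W_c \;=\; C^*_{\rm u}(V_c) \maxten W_c , \]
so I obtain a complete order embedding $(V \cten W)_c \hookrightarrow C^*_{\rm u}(V_c) \maxten W_c$. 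On the other hand, Corollary \ref{cocin} applied directly to the complex operator systems $V_c, W_c$ gives a complete order embedding $\iota' : V_c \cten W_c \hookrightarrow C^*_{\rm u}(V_c) \maxten W_c$.

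To conclude I would check that $\iota_c$ and $\iota'$ coincide as set maps on the common underlying space $V_c \otimes_{\bC} W_c = V \otimes_{\bR} W + i\, V \otimes_{\bR} W$. Both are complex linear unital maps sending $x \otimes w \in V \otimes_{\bR} W$ to the image of $x \otimes w$ under the canonical inclusions $V \subseteq V_c \subseteq C^*_{\rm u}(V_c)$ and $W \subseteq W_c$; hence by complex linearity they agree on all of $V_c \otimes_{\bC} W_c$. As two complete order embeddings into a common target sharing the same image, they induce the same complex operator system structure on the common underlying vector space, yielding $(V \cten W)_c \cong V_c \cten W_c$ as complex operator systems.

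The main (though not deep) obstacle is this final naturality check: one must verify that the canonical identification in Theorem \ref{mtenf} behaves correctly on elementary tensors so that the two embeddings into $C^*_{\rm u}(V_c) \maxten W_c$ really coincide, rather than merely differing by a complete order automorphism of the target. This is immediate from the explicit construction of the isomorphism $\kappa$ in the proof of Theorem \ref{mtenf}, which acts as the identity $x \otimes w \mapsto x \otimes w$ on elementary tensors.
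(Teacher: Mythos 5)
Your proof is correct and takes essentially the same route as the paper: both embed $(V \cten W)_c$ and $V_c \cten W_c$ into a common target via Corollary \ref{cocin}, identify that target's complexification using Theorem \ref{mtenf} and Lemma \ref{ucom}, and conclude the two systems coincide. The only (immaterial) difference is that you use the target $C^*_{\rm u}(V_c) \maxten W_c$ where the paper uses $C^*_{\rm u}(V_c) \maxten C^*_{\rm u}(W_c)$, and you spell out the naturality check on elementary tensors that the paper leaves implicit.
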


\begin{proof} This follows from Corollary \ref{cocin} and Theorem \ref{mtenf},
since 
$$(C^*_{\rm u}(V) \maxten C^*_{\rm u}(W))_c \cong C^*_{\rm u}(V)_c \maxten C^*_{\rm u}(W)_c
\cong C^*_{\rm u}(V_c) \maxten C^*_{\rm u}(W_c).$$
Since $(V \cten W)_c$ and $V_c  \cten W_c$ are complete order embedded
in the first and third of these, they are equal.  
\end{proof}

It is easy to see e.g.\ by complexification that $\cten$ is {\em functorial}: whenever $\varphi: \cS_1 \to\cS_2$ and $\psi: \cT_1 \to \cT_2$ are ucp, then $\varphi \otimes \psi: \cS_1 \cten \cT_1 \to \cS_2 \cten \cT_2$ is ucp.  

\begin{corollary} \label{codu}
    Let $V$ and $W$ be real operator systems. Then 
  $V \cten W   \subseteq V^{**} \cten W$ as real operator systems. 
\end{corollary}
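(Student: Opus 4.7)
The plan is to deduce this real result from its complex analogue via complexification, following the pattern used throughout the paper. By Corollary \ref{coc} we have $(V \cten W)_c \cong V_c \cten W_c$ unitally completely order isomorphically, and similarly $(V^{**} \cten W)_c \cong (V^{**})_c \cten W_c$. Combining this with the identification $(V^{**})_c \cong (V_c)^{**}$ noted at the end of Section \ref{rcast}, we obtain $(V^{**} \cten W)_c \cong (V_c)^{**} \cten W_c$ as complex operator systems.

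The complex version of this corollary (in the Kavruk--Paulsen--Todorov--Tomforde theory) asserts that the canonical inclusion $V_c \cten W_c \hookrightarrow (V_c)^{**} \cten W_c$ induced by $V_c \hookrightarrow (V_c)^{**}$ is a unital complete order embedding. Assuming this, what remains is to check that the real inclusion $j : V \cten W \to V^{**} \cten W$ corresponds, under the complexification identifications above, to the restriction of the complex embedding to real fixed-point subsystems of the canonical conjugations $\theta$ from Proposition \ref{cco}.

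First I would verify that the inclusion $V \hookrightarrow V^{**}$ complexifies to (is compatible with) the inclusion $V_c \hookrightarrow (V_c)^{**}$ under $(V^{**})_c \cong (V_c)^{**}$; this is routine from the functoriality of the bidual and the definition of the canonical conjugation on $(V_c)^{**}$. Then the complexification $j_c$ of $j$ is identified with the complex inclusion $V_c \cten W_c \hookrightarrow (V_c)^{**} \cten W_c$, which by the complex case is a unital complete order embedding. Since by Proposition \ref{inco} a real operator system is unitally completely order embedded in its complexification, and since a map between real operator systems is a complete order embedding if and only if its complexification is, we conclude that $j$ itself is a unital complete order embedding.

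The main obstacle, to the extent there is one, is simply keeping the identifications consistent: one must verify that the complexification identifications $(V \cten W)_c \cong V_c \cten W_c$ and $(V^{**} \cten W)_c \cong (V_c)^{**} \cten W_c$ intertwine $j_c$ with the complex canonical inclusion. This is a diagram chase using the universal property of $\cten$ (Theorem \ref{mtenup}) together with uniqueness of the reasonable complexification (Theorem \ref{rcth}), and no new technical input is required beyond results already established in the paper.
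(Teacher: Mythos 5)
Your proposal is correct and follows essentially the same route as the paper: complexify, invoke the complex case of the embedding $V_c \cten W_c \subseteq (V_c)^{**} \cten W_c$ together with the identification $(V_c)^{**} \cong (V^{**})_c$, and restrict to the real copies of $V \cten W$ and $V^{**} \cten W$. The extra compatibility checks you spell out are exactly what the paper's phrase ``restrict to the copies'' leaves implicit.
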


\begin{proof} This follows from  the complex case \cite[Corollary 6.6]{KPTT2}.
The latter yields a complete order embedding $$V_c \cten W_c   \subseteq (V_c)^{**} \cten W_c
\cong (V^{**})_c \cten W_c.$$  Then restrict to the copies of $V \cten W$
and $V^{**} \cten W$ in these spaces.
\end{proof}

 {\bf Remarks.} 1)\  By complexifying and using Corollary \ref{coc} and the complex analogue, 
one sees that $\cten$  is  a {\em symmetric} tensor product.  In \cite{Kavruk} on page 13, Kavruk states that it is unknown if the commuting tensor product is associative. We  do not know if there have been any updates on this.   

\medskip

2)\ It is checked in \cite{KPTT1} that for complex operator spaces $X, Y$, the 
 operator space structure inherited on $X \otimes Y$ by its embedding into 
the  maximal commuting  operator system tensor product of the Paulsen systems $\cS(X)$ and $\cS(Y)$, agrees with their $\mu^*$  operator space tensor product as defined above Proposition 6.9 in \cite{KPTT1}.  The same will be true in the real case by complexification.  Indeed as in Remark 3) at the end of the last subsection, we have from Corollary \ref{coc} and Proposition 6.9 in \cite{KPTT1} that  
$$(\cS_{\bR}(X) \cten \cS_{\bR}(Y))_c \cong \cS_{\bR}(X)_c \cten  \cS_{\bR}(Y)_c = \cS_{\bC}(X_c) \cten    \cS_{\bC}(Y_c),$$ which contains $X_c \otimes_{\mu^*} Y_c$.  
The latter of course has the universal property of the $\mu^*$  tensor product, and this may be used to show that the subspace $X \otimes Y$ has the matching universal property of the real version of the $\mu^*$  tensor product.  We leave the details to the reader.

\subsection{Injective tensor products}

We define $\elten$ and $\erten$ as in the complex case via the embeddings
$$V \otimes_{\rm el} W \subseteq I(V) \maxten W, \; \; \; V \otimes_{\rm er} W \subseteq V \maxten I(W).$$
That is we give $V \otimes W$ the operator system structure coming from these algebraic embeddings. 

\begin{corollary} \label{el}
    Let $V$ and $W$ be real operator systems. Then  $(V \otimes_{\rm el} W)_c \cong  V_c \otimes_{\rm el} W_c$ as complex operator systems ($V_c \otimes_{\rm el} W_c$ is the operator system complexification of $V \otimes_{\rm el} W$).
\end{corollary}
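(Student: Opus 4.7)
The plan is to mimic the proofs of Corollaries \ref{comin} and \ref{coc}: I would exhibit a canonical operator system conjugation on $V_c \otimes_{\rm el} W_c$ with fixed point space (identifiable with) $V \otimes_{\rm el} W$, and then invoke Proposition \ref{cco} together with Ruan's unique complexification Theorem \ref{rcth}.

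Specifically, I would chain the identifications
$$V_c \otimes_{\rm el} W_c \;\subseteq\; I(V_c) \maxten W_c \;\cong\; I(V)_c \maxten W_c \;\cong\; (I(V) \maxten W)_c,$$
using first the canonical identification $I(V_c) \cong I(V)_c$ from the real injective envelope theory (see \cite{Sharma, BCK} and Section 4 of \cite{BReal}), and then Theorem \ref{mtenf} applied to the pair $(I(V), W)$. On elementary tensors these identifications are the identity, so the image of $V_c \otimes W_c$ in $(I(V) \maxten W)_c$ is precisely the copy of $(V \otimes W) + i(V \otimes W)$. The canonical conjugation $\theta$ on $(I(V) \maxten W)_c$, whose fixed point set is $I(V) \maxten W$, then restricts to a conjugate linear period 2 unital complete order automorphism of $V_c \otimes_{\rm el} W_c$ whose fixed point set is exactly (the copy of) $V \otimes_{\rm el} W$. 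Hence by Proposition \ref{cco}, $V_c \otimes_{\rm el} W_c$ is a reasonable operator system complexification of $V \otimes_{\rm el} W$, and by the uniqueness statement in Theorem \ref{rcth} it is canonically unitally completely order isomorphic to $(V \otimes_{\rm el} W)_c$.

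The main obstacle will be verifying the naturality underlying the chain of isomorphisms, namely that both identifications $I(V_c) \cong I(V)_c$ and $(I(V) \maxten W)_c \cong I(V)_c \maxten W_c$ intertwine the canonical complex conjugations (so that the restriction of $\theta$ at the end really is the standard conjugation on $V_c \otimes W_c$). However, since both identifications are the identity on the natural dense subspaces $V_c \subseteq I(V_c)$ and $I(V) \otimes W \subseteq I(V) \maxten W$ respectively, tracking the conjugations reduces to a routine check on elementary tensors of the form $v \otimes w$ and $i(v \otimes w)$.
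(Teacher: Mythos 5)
Your proposal is correct and rests on exactly the same chain of identifications as the paper's proof, namely $V_c \otimes_{\rm el} W_c \subseteq I(V_c) \maxten W_c \cong I(V)_c \maxten W_c \cong (I(V) \maxten W)_c$ via the injective envelope complexification and Theorem \ref{mtenf}. The only (immaterial) difference is the wrap-up: the paper concludes by observing that $(V \otimes_{\rm el} W)_c$ and $V_c \otimes_{\rm el} W_c$ sit as the same complete order embedded subspace of these isomorphic systems (as in Corollary \ref{coc}), whereas you invoke the conjugation criterion of Proposition \ref{cco} and Theorem \ref{rcth}, which is precisely the variant used for Corollary \ref{comin}.
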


\begin{proof} This is similar to the proof of Corollary \ref{coc}, but using 
 $V \otimes_{\rm el} W   \subseteq I(V) \maxten W$ in place of 
Corollary \ref{cocin}.  We have 
$$(I(V) \maxten W)_c \cong I(V)_c \maxten W_c \cong I(V_c) \maxten W_c$$
   which contain both $(V \otimes_{\rm el} W)_c$ and $V_c  \otimes_{\rm el} W_c$.
\end{proof}

Similarly we have:

\begin{corollary} \label{er}
    Let $V$ and $W$ be real operator systems. Then  $(V \otimes_{\rm er} W)_c \cong  V_c \otimes_{\rm er} W_c$ as complex operator systems.
\end{corollary}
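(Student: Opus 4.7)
The plan is to mimic the proof of Corollary \ref{el} verbatim, exchanging the roles of left and right: instead of embedding $V \otimes_{\rm er} W$ into $I(V) \maxten W$, I embed it into $V \maxten I(W)$, which is the defining inclusion for $\erten$. The goal is then to identify both $(V \otimes_{\rm er} W)_c$ and $V_c \otimes_{\rm er} W_c$ as unital selfadjoint subspaces of the same complex operator system and conclude they coincide.

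First I would invoke Theorem \ref{mtenf} to get $(V \maxten I(W))_c \cong V_c \maxten I(W)_c$ as complex operator systems. Next I need the compatibility of the injective envelope with complexification, namely $I(W)_c \cong I(W_c)$; this is a basic property of the real injective envelope developed in \cite{Sharma,BCK,BReal} and used repeatedly elsewhere in the paper, so I would simply cite it. Combining these two isomorphisms yields
\[
(V \maxten I(W))_c \;\cong\; V_c \maxten I(W_c),
\]
which by the definition of $\erten$ in the complex setting is the ambient operator system for $V_c \otimes_{\rm er} W_c$.

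Finally, the algebraic copy of $V \otimes W$ inside $V \maxten I(W)$ carries, by definition, the $\erten$ operator system structure on $V \otimes W$; after complexifying, this copy of $V_c \otimes W_c$ sits inside $V_c \maxten I(W_c)$ as $V_c \otimes_{\rm er} W_c$. Since $(V \otimes_{\rm er} W)_c$ is the unique reasonable complexification (Theorem \ref{rcth}), the canonical identification gives the desired unital complete order isomorphism.

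I expect no serious obstacle: the only nontrivial ingredient is the identity $I(W)_c \cong I(W_c)$, and everything else is a direct transposition of the proof of Corollary \ref{el}. One might wish to briefly note that the operator system structure on the subspace $V \otimes W \subseteq V \maxten I(W)$ is independent of the particular choice of injective envelope (since $I(W)$ is unique up to unital complete order isomorphism), which is used tacitly when passing between $I(W)_c$ and $I(W_c)$.
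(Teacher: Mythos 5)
Your proposal is correct and follows essentially the same route as the paper, which proves Corollary \ref{er} by transposing the proof of Corollary \ref{el}: embed $V \otimes_{\rm er} W$ in $V \maxten I(W)$, complexify via Theorem \ref{mtenf} and the identification $I(W)_c \cong I(W_c)$, and observe that $(V \otimes_{\rm er} W)_c$ and $V_c \otimes_{\rm er} W_c$ sit inside the same ambient complex operator system as the same subsystem. Nothing further is needed.
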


It is easy to see by complexification that $\elten$ and $\erten$ are left and right injective respectively, and are both functorial tensor structures as we earlier defined these, but neither is symmetric. The latter follows from the complex case by the complexifications above. 

As in the complex case $\elten$ and $\erten$ are the largest functorial tensor products which are left and right injective respectively.
It is also clear that $V \otimes_{\rm er} W = W  \otimes_{\rm el} V$. 

\medskip

{\bf Remark.}\ It follows from  the complex case, and from the  complexification results above, that ${\rm min}$ is dominated by 
${\rm el}$ and ${\rm er}$, and each of the latter are dominated by 
${\mathfrak c}$. Of course  
${\rm max}$ is larger than all of these.

 \section{Stability of nuclear related properties under complexification} \label{stabil}

 It is often obvious that if $\cS_c$ is `nuclear' (that is, has the Completely Positive Factorization Property (CPFP)) / is exact / has the Local 
 Lifting Property (SLLP)/ has the weak expectation property (WEP) / has the double-commutant expectation property (DCEP), then $\cS$ has these properties, or vice versa, by simply complexifying. 
  (We will define all these terms precisely later, but they will be the real versions of the well known corresponding complex notions.)
  For example, 
 it will follows that the quaternions have all these properties, since $\bH_c = M_2(\bC)$ does.  
   It is clear by complexifying that if $\cS$ has the real WEP (resp.\  DCEP, CPFP) then $\cS_c$ has complex WEP (resp.\  DCEP, CPFP).    (For the DCEP
  one needs to check that $(\cS_c)'' = (\cS'')_c$, but this is an exercise.) 

We will see that each of the five properties above, denoted by (P), have a tensor product characterization just as in the complex theory, as does C$^*$-nuclearity and QWEP.
(We say that a real $C^*$-algebra 
$A$ has real QWEP if it is $*$-isomorphic to a quotient of a real WEP C$^*$-algebra.) 
  Suppose that $\cS_c$ satisfies such a criterion in the complex case, e.g.\ if $\cS_c \otimes_\alpha \cT_c = \cS_c \otimes_\beta \cT_c$
  for two of the tensor structures $\alpha, \beta$ studied in Section \ref{tens} (beginning with $\minten$).  Since we proved that these contain 
  $\cS \otimes_\alpha \cT$ and $\cS \otimes_\beta \cT$ respectively we see that $\cS \otimes_\alpha \cT = \cS \otimes_\beta \cT$. 
  This usually immediately implies  that if $\cS_c$ satisfies complex (P) then $\cS$ satisfies real (P), by the tensor product characterizations.
  
  Conversely suppose that   $\cS \otimes_\alpha \cT = \cS \otimes_\beta \cT$ for two of the tensor structures $\alpha, \beta$.
  Then it follows that $\cS_c \otimes_\alpha \cT_c = \cS_c \otimes_\beta \cT_c$.  However an issue presents itself:
  to use the complex tensor product characterization
  of complex (P), we really need $\cS_c \otimes_\alpha W = \cS_c \otimes_\beta W$ for {\rm all} complex $W$ in the appropriate class,
  not just the $W$ which are complexifications.   Fortunately there is a trick for this.  Namely consider the sequence
  $$\cS_c \otimes_\alpha W \to \cS_c \otimes_\alpha W_c \to \cS_c \otimes_\alpha W$$
  induced by a  
  certain canonical sequence 
  $W \to W_c \to W$.    The latter maps are the complex linear maps $w \mapsto (w,0) \in W \oplus \bar{W} \cong W_c$
   (see the second paragraph of Section \ref{rcast}), and the cp projection $W_c \to W: (w , \bar{z}) \mapsto w$.   By functoriality of the tensor product $\otimes_\alpha$ we deduce that
 the canonical map $\cS_c \otimes_\alpha W \subseteq \cS_c \otimes_\alpha W_c$ is a 
 complete order embedding.   Similarly for $\beta$.  Thus if $\cS_c \otimes_\alpha W_c = 
  \cS_c \otimes_\beta W_c$ then $\cS_c \otimes_\alpha W = \cS_c \otimes_\beta W$.  
  This trick fixes the issue above, and so we usually immediately have that if $\cS$ satisfies real (P) then $\cS_c$ satisfies complex (P) by the tensor product characterizations.

 \subsection{The CPFP} We say that a real operator system $\cS$ has the CPFP 
 (Completely Positive Factorization Property)
 if there is a net of integers $(n_t)$ and ucp maps $u_t : \cS \to M_{n_t}(\bR)$ and $v_t  : M_{n_t}(\bR)
 \to \cS$ such that $v_t \circ u_t \to I_{\cS}$ strongly (so  $v_t (u_t(x)) \to x$ for all $x \in \cS$). 
 \[
\begin{tikzcd}
& M_{n_t}(\bR) \arrow[rd, "v_t" black] \\
\cS \arrow[rr, "I_{\cS}"] \arrow[ru, "u_t"]  & & \cS
\end{tikzcd}
\]

\bigskip

 A real unital C$^*$-algebra which has the CPFP will be called {\em nuclear}. 
 
 \begin{theorem} \label{ComplCPFP}  A real operator system $\cS$ has the (real) CPFP
 if and only if $\cS_c$ has the (complex)  CPFP.  This is also equivalent to: 
 $$\cS \minten \cT = \cS \maxten \cT$$ for all real operator systems $\cT$, or for all finite dimensional real operator systems $\cT$.
\end{theorem}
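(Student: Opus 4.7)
The plan is to prove the equivalences cyclically, combining direct complexification of the factorization maps with the tensor product complexification identities from Corollary \ref{comin} and Theorem \ref{mtenf}, together with the known complex case of the theorem. Specifically, I will first establish that $\cS$ has real CPFP iff $\cS_c$ has complex CPFP, and then derive the tensor product characterizations from this.

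For real CPFP $\Rightarrow$ complex CPFP, I simply complexify: given ucp $u_t : \cS \to M_{n_t}(\bR)$ and $v_t : M_{n_t}(\bR) \to \cS$ with $v_t \circ u_t \to I_{\cS}$ pointwise, the maps $(u_t)_c : \cS_c \to M_{n_t}(\bC)$ and $(v_t)_c : M_{n_t}(\bC) \to \cS_c$ are ucp with $(v_t)_c \circ (u_t)_c \to I_{\cS_c}$ pointwise. For the converse I would use Proposition \ref{concon} to obtain a unital complete order embedding $j_n : M_n(\bC) \hookrightarrow M_{2n}(\bR)$, together with the ucp conditional expectation $p_n : M_{2n}(\bR) \to M_n(\bC)$ obtained by averaging against the orthogonal symmetry implementing multiplication by $i$ (so $p_n = \frac{1}{2}(\mathrm{id} + \mathrm{Ad}_{J_n})$ in the appropriate sense), which satisfies $p_n \circ j_n = \mathrm{id}$. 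Setting $\tilde u_t := j_{n_t} \circ U_t|_\cS$ and $\tilde v_t := \mathrm{Re} \circ V_t \circ p_{n_t}$, where $\mathrm{Re} : \cS_c \to \cS$ is the canonical ucp projection, produces real ucp factorizations with $\tilde v_t \circ \tilde u_t = \mathrm{Re} \circ V_t \circ U_t|_\cS \to I_\cS$ pointwise.

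Next I would handle the tensor product characterizations. A preliminary step: $M_n(\bR)$ is nuclear in our sense, since $M_n(\bC)$ is complex-nuclear and, by Corollary \ref{comin} together with Theorem \ref{mtenf}, $(M_n(\bR) \minten \cT)_c \cong M_n(\bC) \minten \cT_c = M_n(\bC) \maxten \cT_c \cong (M_n(\bR) \maxten \cT)_c$, forcing $M_n(\bR) \minten \cT = M_n(\bR) \maxten \cT$ as both embed completely order isomorphically in their common complexification. Granted this, the standard sandwich argument gives CPFP $\Rightarrow$ $\minten = \maxten$ on all real $\cT$: if $z \in M_k(\cS \otimes \cT)$ is min-positive, then $(u_t \otimes \mathrm{id})(z)$ is min-positive in $M_{n_t}(\bR) \otimes \cT$, hence max-positive by nuclearity of $M_{n_t}(\bR)$, so $((v_t \otimes \mathrm{id}) \circ (u_t \otimes \mathrm{id}))(z)$ is max-positive in $\cS \otimes \cT$; pointwise convergence to $z$ in the max topology combined with the archimedean closedness of the max-positive cone then gives max-positivity of $z$. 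Restricting this to finite-dimensional $\cT$ is trivial.

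The main obstacle is the final implication: $\minten = \maxten$ on all finite-dimensional real $\cT$ implies real CPFP. The plan is to reduce it to the complex case. Given any finite-dimensional complex operator system $W$, I regard it as a finite-dimensional real system $W_r$; by hypothesis $\cS \minten W_r = \cS \maxten W_r$, which complexifies via Corollary \ref{comin} and Theorem \ref{mtenf} to $\cS_c \minten (W_r)_c = \cS_c \maxten (W_r)_c$. Since $(W_r)_c \cong W \oplus \bar{W}$ as noted in Section \ref{rcast}, the ucp inclusion-projection pair $W \hookrightarrow (W_r)_c \twoheadrightarrow W$ (summand inclusion followed by summand projection) induces, by functoriality of both $\minten$ and $\maxten$, a ucp retraction realising $\cS_c \otimes_\alpha W$ as a completely order embedded subsystem of $\cS_c \otimes_\alpha (W_r)_c$ via the same algebraic embedding for both $\alpha \in \{\min, \max\}$. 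Hence $\cS_c \minten W = \cS_c \maxten W$ for every finite-dimensional complex $W$, so by the complex version of the theorem $\cS_c$ has the complex CPFP, and by the first step $\cS$ has real CPFP, closing the cycle.
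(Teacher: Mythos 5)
Your proposal is correct and follows essentially the same route as the paper: complexification in both directions for the CPFP equivalence (using that $M_n(\bC)$ is a complemented real subsystem of $M_{2n}(\bR)$), the complexification identities for $\minten$ and $\maxten$, and the retraction $W \hookrightarrow (W_r)_c \twoheadrightarrow W$ to upgrade the complexified tensor identity to arbitrary finite-dimensional complex $W$ before invoking the complex theorem. The only departure is that you prove CPFP $\Rightarrow$ ($\minten = \maxten$) directly at the real level by a sandwich/approximation argument through the nuclear systems $M_{n_t}(\bR)$, whereas the paper deduces it from the complex case by restriction; both are fine.
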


\begin{proof}  We already deemed it elementary,  by complexifying, that if $\cS$ has the real CPFP  then $\cS_c$ has complex CPFP.   Conversely, suppose that $\cS_c$ has the CPFP.  Compose the maps into $\cS_c$ (resp.\ from $\cS_c)$ in the diagram for the complex CPFP
 with the projection $\cS_c \to \cS$ (resp.\  the inclusion $\cS \to \cS_c$).  Also  
 recall that $M_n(\bC)$ may be viewed as a complemented real subsystem  of $M_{2n}(\bR)$.
 All of this then gives a factorization diagram for $i_{\cS} : \cS \to \cS$ through 
 $M_{2n_t}(\bR)$ showing that $\cS$ has the CPFP. 

 For the remaining assertions we can use the principles discussed in and after  the second paragraph of Section \ref{stabil}.  We have $\cS_c \minten \cT_c = (\cS \minten \cT)_c$. If $\cS_c$ has the CPFP
 then  $$\cS_c \minten \cT_c = \cS_c \maxten \cT_c$$ for real 
  operator systems $\cT$.  Hence `by restriction' we have $\cS \minten \cT = \cS \maxten \cT$.   
  
  Conversely, suppose that $\cS \minten \cT = \cS \maxten \cT$ for all finite dimensional real operator systems $\cT$.  We obtain  by complexifying as above that 
  $\cS_c \minten \cT_c = \cS_c \maxten \cT_c$.
  the principle just referred to implies $\cS_c$ has the CPFP.   An alternative proof: 
  in particular
  the last equality is true when $\cT$ is the dual of any finite dimensional subsystem $W$ of $\cS$.   We then have $\cT_c = (W^d)_c = (W_c)^d$.  The proof of \cite[Theorem 3.1]{HP} can then be slightly adjusted with the directed set of spaces $E$ there replaced by the directed set of the spaces $W_c$ above, to yield that $\cS_c$ has the CPFP. 
\end{proof}

A real operator system $\cS$ is called (real) $C^*$-nuclear if $\cS \minten B = \cS \maxten B$ for all real C$^*$-algebras $B$.

\begin{corollary} \label{csnd}  
A real operator system $\cS$ is real C$^*$-nuclear   if and only $\cS_c$ is complex C$^*$-nuclear.
This is also equivalent to  $\cS \minten \cT = \cS \cten \cT$ for all operator systems $\cT$.  A 
finite dimensional  real operator system $\cS$ is real C$^*$-nuclear   if and only 
if $\cS^d$ is real C$^*$-nuclear.
\end{corollary}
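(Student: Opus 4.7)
My plan for the first equivalence is to instantiate the general paradigm of Section \ref{stabil} with $\alpha = \mathrm{min}$ and $\beta = \mathrm{max}$. From Corollary \ref{comin} and Theorem \ref{mtenf}, $(\cS \otimes_\gamma B)_c \cong \cS_c \otimes_\gamma B_c$ for $\gamma \in \{\mathrm{min}, \mathrm{max}\}$ whenever $\cS$ and $B$ are real; so if $\cS_c$ is complex C$^*$-nuclear, then for every real C$^*$-algebra $B$ the equality $\cS_c \minten B_c = \cS_c \maxten B_c$ restricts to $\cS \minten B = \cS \maxten B$, giving real C$^*$-nuclearity. For the converse I will assume $\cS$ is real C$^*$-nuclear; then for any complex C$^*$-algebra $W$, viewing $W$ as a real C$^*$-algebra and complexifying yields $\cS_c \minten W_c = \cS_c \maxten W_c$, where $W_c \cong W \oplus \bar{W}$ as complex C$^*$-algebras. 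The canonical ucp sequence of complex linear maps $W \to W_c \to W$ (inclusion into the direct summand, and coordinate projection), together with functoriality of both tensor products, will produce complete order embeddings $\cS_c \otimes_\gamma W \hookrightarrow \cS_c \otimes_\gamma W_c$ for each $\gamma$, from which $\cS_c \minten W = \cS_c \maxten W$ descends.

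For the equivalence with the condition $\cS \minten \cT = \cS \cten \cT$ for all real operator systems $\cT$, one direction is immediate: restricting to $\cT = B$ a real C$^*$-algebra and using $\cS \cten B = \cS \maxten B$ (from the paragraph preceding Corollary \ref{cocin}) gives real C$^*$-nuclearity. For the converse I will bracket both tensor products inside a common operator system. By symmetry of $\cten$ (Remark 1 after Corollary \ref{codu}) and Corollary \ref{cocin}, the inclusion $\cT \hookrightarrow C^*_u(\cT)$ yields a complete order embedding
\[
\cS \cten \cT \;=\; \cT \cten \cS \;\subseteq\; C^*_u(\cT) \maxten \cS \;=\; \cS \maxten C^*_u(\cT),
\]
while injectivity of $\minten$ (recorded after Corollary \ref{comin}) gives $\cS \minten \cT \hookrightarrow \cS \minten C^*_u(\cT)$. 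Real C$^*$-nuclearity of $\cS$ applied to the real C$^*$-algebra $C^*_u(\cT)$ forces $\cS \minten C^*_u(\cT) = \cS \maxten C^*_u(\cT)$, so $\cS \minten \cT$ and $\cS \cten \cT$ both embed in a common operator system via maps agreeing on elementary tensors; combined with the general chain $\minten \leq \cten \leq \maxten$, equality follows.

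For the finite-dimensional duality, my plan is to reduce to the established complex statement via the first equivalence. Indeed $\cS$ is real C$^*$-nuclear iff $\cS_c$ is complex C$^*$-nuclear, and $\cS^d$ is real C$^*$-nuclear iff $(\cS^d)_c$ is complex C$^*$-nuclear. Theorem \ref{dcom} identifies $(\cS^d)_c \cong (\cS_c)^d$ unitally completely order isomorphically, so the real assertion reduces to the known complex finite-dimensional fact that a complex operator system is C$^*$-nuclear iff its dual is.

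The main obstacle throughout will be verifying that the various algebraic inclusions in play are genuine complete order embeddings rather than merely ucp maps. In part (1), the embedding $\cS_c \otimes_\gamma W \hookrightarrow \cS_c \otimes_\gamma W_c$ relies on the ucp retraction $W_c \to W$ furnished by the splitting $W_c \cong W \oplus \bar{W}$; in part (2), the embedding $\cS \cten \cT \subseteq \cS \maxten C^*_u(\cT)$ hinges on the full strength of Corollary \ref{cocin} (that $V \cten W$ is complete-order embedded in $C^*_u(V) \maxten W$) together with symmetry of $\cten$. These are the points that must be handled carefully, but all the needed machinery has been developed in the preceding sections.
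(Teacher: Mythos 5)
Your proposal is correct, and for the first and third equivalences it is exactly the paper's route: the complexification paradigm of Section \ref{stabil} (with the $W \to W_c \to W$ retraction trick for the converse), and then for the duality statement the reduction via Theorem \ref{dcom} to Kavruk's complex result that a finite-dimensional system is C$^*$-nuclear iff its dual is.

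For the middle equivalence the paper simply cites Kavruk's Proposition 4.11 and asserts it "is also valid in the real case by complexifying just as in similar previous arguments," whereas you run the underlying argument directly on the real objects: sandwiching $\cS \cten \cT$ inside $\cS \maxten C^*_u(\cT)$ via Corollary \ref{cocin} and symmetry of $\cten$, sandwiching $\cS \minten \cT$ inside $\cS \minten C^*_u(\cT)$ via injectivity of $\minten$, and invoking real C$^*$-nuclearity on $C^*_u(\cT)$ to identify the two ambient systems; the reverse direction uses $B \cten \cS = B \maxten \cS$ for real C$^*$-algebras $B$, which the paper establishes just before Corollary \ref{cocin}. This is the same mathematics as Kavruk's complex proof, but transplanted to the real category rather than deduced from it by complexification; it is slightly more self-contained since it avoids having to match up the real tensor products with their images inside the complexified ones, at the cost of relying on the full strength (complete order embedding, not just ucp) of the real versions of Corollary \ref{cocin} and injectivity of $\minten$ — both of which the paper does supply. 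Either route is sound.
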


\begin{proof} It follows from the principles discussed in and after  the second paragraph of Section \ref{stabil}  that $\cS$ is real C$^*$-nuclear if and only if $\cS_c$ is complex C$^*$-nuclear.

 The second assertion is Proposition  4.11 in \cite{Kavruk} in the complex case, and  is also valid in the real case
 by complexifying just as in similar previous arguments above.

The last `if and only 
if'  follows from the matching complex result of Kavruk \cite{Kav3}.  So 
$\cS$ is real C$^*$-nuclear   if and only if $\cS_c$ is complex C$^*$-nuclear, and  if and only 
if $(\cS_c)^d \cong (\cS^d)_c$ is complex C$^*$-nuclear.  So  if and only if $\cS^d$ is real C$^*$-nuclear. 
\end{proof} 

The famous Choi-Effros characterization of nuclear $C^*$-algebras in the real case follows exactly as Kavruk did in the complex 
case immediately from Theorem \ref{ComplCPFP}, and from Corollary \ref{cocin} and the line before it.  
For a real $C^*$-algebra $A$ has CPFP if and only 
if $A \minten \cT = A \maxten \cT$ for all real operator systems $\cT$, and the latter follows if it is true
for real $C^*$-algebras by a simple argument from the aforementioned corollary and line.

 \subsection{The WEP}

We say that an operator system $\cS$ has the {\em weak expectation property} or $\text{WEP}$ if the inclusion $i: \cS \to \cS^{dd}$ extends to a ucp map $\hat{i}: I(\cS) \to \cS^{dd}$, where $\cS^{dd}$ denotes the operator system double-dual and $I(\cS)$ denotes the injective envelope of $\cS$. 
\[
\begin{tikzcd}
I(\cS) \arrow[dashrightarrow, rd, "\hat{i}" black] \\
\mathcal \cS \arrow[u, hook] \arrow[r, "i" black, hook] & \cS^{dd}
\end{tikzcd}
\]
 
\bigskip

\begin{theorem} \label{Complwep}  A real operator system $\cS$ has the WEP
 if and only if $\cS_c$ has the WEP.  This is also equivalent to: 
 $$\cS \elten \cT = \cS \maxten \cT$$ for all real operator systems $\cT$, or for all finite dimensional real operator systems $\cT$.
\end{theorem}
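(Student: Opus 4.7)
\textbf{Proof proposal for Theorem \ref{Complwep}.}

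The plan is to follow the template laid out at the start of Section \ref{stabil}: first settle the equivalence between real and complex WEP by complexifying the relevant maps, then derive the tensor product characterization from its complex counterpart (Kavruk's theorem) together with Corollary \ref{el} and Theorem \ref{mtenf}, and finally use the ``inclusion-projection trick'' to replace an arbitrary complex test system by a complexification. First, I would record the two identifications that make everything work: $(\cS^{dd})_c \cong (\cS_c)^{dd}$ unitally completely order isomorphically (already noted at the end of Section \ref{rcast}) and $I(\cS)_c \cong I(\cS_c)$ as unital C$^*$-covers, which follows from the real case of the injective envelope theory developed in \cite{Sharma,BCK,BReal} together with the fact that $\theta_\cS$ extends (by rigidity of the real injective envelope) to a period-2 conjugate-linear $*$-automorphism of $I(\cS_c)$ whose fixed point algebra is $I(\cS)$.

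For the equivalence of real and complex WEP: if $\cS$ has WEP with ucp extension $\hat\imath : I(\cS) \to \cS^{dd}$, then its complexification $\hat\imath_c : I(\cS)_c \to (\cS^{dd})_c$ (Lemma \ref{ncomma}) is a ucp map $I(\cS_c) \to (\cS_c)^{dd}$ extending the canonical inclusion of $\cS_c$. Conversely, given a ucp extension $\widehat{i_c} : I(\cS_c) \to (\cS_c)^{dd}$, restrict to $I(\cS) \hookrightarrow I(\cS_c)$ (using $I(\cS) \subseteq I(\cS)_c = I(\cS_c)$) and post-compose with the canonical ucp projection $(\cS_c)^{dd} = (\cS^{dd})_c \to \cS^{dd}$. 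The resulting map is a ucp extension of $\cS \hookrightarrow \cS^{dd}$, so $\cS$ has WEP.

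For the tensor characterization: if $\cS$ has WEP then so does $\cS_c$, and by the complex version of the theorem (which in turn is a consequence of Kavruk's work) we have $\cS_c \elten \cT_c = \cS_c \maxten \cT_c$ for every real operator system $\cT$. By Corollary \ref{el} and Theorem \ref{mtenf} these two spaces are the complexifications of $\cS \elten \cT$ and $\cS \maxten \cT$ respectively, both of which sit as fixed points of the natural conjugation; since the conjugations agree, $\cS \elten \cT = \cS \maxten \cT$ as real operator systems (equivalently, one can restrict the identity map to the real parts). Restricting further to finite-dimensional $\cT$ is a triviality. Conversely, assume $\cS \elten \cT = \cS \maxten \cT$ for every finite-dimensional real $\cT$. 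Complexifying and using Corollary \ref{el} and Theorem \ref{mtenf} again gives $\cS_c \elten \cT_c = \cS_c \maxten \cT_c$ for all such $\cT$, and every finite-dimensional complex operator system $W$ arises as (and more than that, is canonically ucp-complete-order-embedded in) $W_c$ via the sequence $W \hookrightarrow W_c \twoheadrightarrow W$ (as explained in the second paragraph of Section \ref{stabil}). Functoriality of $\elten$ and $\maxten$ then forces $\cS_c \elten W = \cS_c \maxten W$ for every finite-dimensional complex $W$, so by the complex case of the theorem $\cS_c$ has WEP, and hence so does $\cS$ by the first half.

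The main obstacle is the clean identification $I(\cS)_c \cong I(\cS_c)$ with a compatible conjugation, since the WEP definition is stated in terms of the injective envelope; once that and $(\cS^{dd})_c \cong (\cS_c)^{dd}$ are in hand, the rest of the argument is bookkeeping driven by the tensor-product complexification formulas and the inclusion-projection trick. A minor secondary point is ensuring that $\elten$ really is functorial enough to use the trick (left injectivity and functoriality are recorded right after Corollary \ref{el}), but no further machinery is needed.
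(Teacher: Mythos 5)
Your proposal is correct, and for the tensor-product characterization it follows the same route as the paper: complexify, invoke the complex theorem, and use the complexification identities $(\cS \elten \cT)_c \cong \cS_c \elten \cT_c$ and $(\cS \maxten \cT)_c \cong \cS_c \maxten \cT_c$ together with the inclusion--projection trick $W \hookrightarrow W_c \twoheadrightarrow W$ to pass from test objects of the form $\cT_c$ to arbitrary finite-dimensional complex $W$. Where you genuinely diverge is in closing the cycle back to the \emph{real} WEP. The paper derives real WEP of $\cS$ from the tensor identity by running the real case of Han's argument (\cite[Theorem 4.1]{Htp}), which forces it to verify that Lemma 5.7 of \cite{KPTT1} survives complexification. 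You instead go directly from complex WEP of $\cS_c$ to real WEP of $\cS$ by restricting the weak expectation $I(\cS_c) \to (\cS_c)^{dd}$ to $I(\cS)$ and post-composing with the canonical ucp projection $(\cS_c)^{dd} \cong (\cS^{dd})_c \to \cS^{dd}$. This is valid and arguably cleaner: the two identifications you need, $I(\cS)_c \cong I(\cS_c)$ and $(\cS^{**})_c \cong (\cS_c)^{**}$, are both already in play in the paper (the former is used in the proof of Corollary \ref{el}, the latter is recorded at the end of Section \ref{rcast}), so your route buys a shorter argument at no extra cost in machinery. One cosmetic point: you cite Lemma \ref{ncomma} to complexify $\hat\imath$, but that lemma concerns maps into complex C$^*$-algebras; what you actually need is only that the complexification of a ucp map between real operator systems is ucp, which is immediate from the definition of the complexified cones and is used throughout the paper.
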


\begin{proof}  We already deemed it elementary,  by complexifying, that if $\cS$ has the real WEP  then $\cS_c$ has complex WEP.   The latter implies by the complex theory \cite{KPTT2} that 
$\cS_c \elten \cT_c = \cS_c \maxten \cT_c$, and hence as in the last proof that $\cS \elten \cT = \cS \maxten \cT$, for all real operator systems $\cT$. 

Henceforth suppose that  $\cS \elten \cT = \cS \maxten \cT$  for all finite dimensional real operator systems $\cT$.   That this  implies that $\cS_c$ has the WEP follows from the principles discussed in and after  the second paragraph of Section \ref{stabil}. 
Alternatively, it follows by the idea for the analogous implication in Theorem \ref{ComplCPFP}.   We obtain  by complexifying  that 
  $\cS_c \minten \cT_c = \cS_c \maxten \cT_c$.  We then choose  finite dimensional subsystems $W$ of $\cS$ so that $E_\lambda = W_c$ work in the  proof of \cite[Theorem 4.1]{Htp}  to yield that $\cS_c$ has the complex WEP. 
 
A similar proof (the real case 
of the proof of \cite[Theorem 4.1]{Htp})
shows that $\cS$ has the real WEP.
  That proof appeals to Lemma 5.7 of \cite{KPTT1}, which  holds in the real case (see Remark 1 after Theorem \ref{mtenf} in Section \ref{maxtens}). 
\end{proof}

{\bf Remark.}  Indeed if $\cS$ is a real C$^*$-algebra then WEP is also equivalent to $\cS \minten \cS_2 = \cS \maxten \cS_2$, where $\cS_n$ is the span of the identity and the first $n$ generators of $C^*(\bF_n)$ and their adjoints.  This follows by  complexifying and appealing to the complex case of this result  \cite{Kavruk, Kavrukthesis}.

\begin{corollary} \label{wepvn}  An injective real operator system has the real WEP.   A real operator system which is an (operator system) bidual, 
or even is a dual operator space
(or equivalently by {\rm Theorem \ref{dual}} is a dual operator system),  has the real WEP if and only if it is completely order isomorphic to a real injective real von Neumann algebra.  In particular a finite dimensional 
real operator system $\cS$ has the real WEP if and only if it is real injective, or if and only if it is completely order isomorphic to a real C$^*$-algebra.  
The latter is also equivalent (assuming  $\cS$ finite dimensional) to $\cS$ (or equivalently its dual) 
having real CPFP, or to $\cS \elten \cS^d = \cS \maxten \cS^d$.  
\end{corollary}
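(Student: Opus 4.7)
The plan is to handle the four assertions in turn, reducing the nontrivial ones to the complex case by complexification.

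First, injectivity implies WEP because the inclusion $i : \cS \to \cS^{dd}$ can be extended to $I(\cS)$ via the identity on $\cS \subseteq I(\cS) = \cS$ (or, equivalently, by injectivity applied directly to the map $i$).

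For the dual operator system statement, the ``if'' direction is immediate from the first assertion, since a real injective von Neumann algebra is an injective real operator system. For the ``only if'' direction, suppose $\cS$ is a dual real operator system with real WEP. Then $\cS_c$ is a dual complex operator system with complex WEP (by Theorem \ref{Complwep} combined with Theorem \ref{dual}). By the complex version of this result (e.g.\ in \cite{KPTT2}), $\cS_c$ is unitally completely order isomorphic to a complex injective von Neumann algebra $M$. Transport the canonical conjugation $\theta_{\cS}$ to a conjugate-linear period-two unital complete order automorphism of $M$; it is then a $*$-automorphism of $M$ as a real von Neumann algebra, so its fixed-point algebra $N$ is a real von Neumann algebra with $N_c = M$, and $N \cong \cS$ unitally completely order isomorphically. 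Real injectivity of $N$ follows from complex injectivity of $M$ by the standard complexify--extend--restrict--project argument (complexify a ucp map $V \to N$, extend to $W_c \to M$ by injectivity of $M$, restrict to $W$, then compose with the real-part projection $M \to N$, which is ucp and fixes $N$).

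The finite-dimensional assertion is an application of the previous one, since every finite-dimensional operator system is a dual operator system and every finite-dimensional real von Neumann algebra is a real C$^*$-algebra (a finite direct sum of matrix algebras over $\bR$, $\bC$, $\bH$), each factor being real injective (either directly by the real Arveson--Hahn--Banach theorem in the $\bR$ case, or by complexification combined with injectivity of $M_n(\bC)$ in the $\bC$ and $\bH$ cases).

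For the final cluster of equivalences in the finite-dimensional setting, note that CPFP (i.e.\ $\cS \minten \cT = \cS \maxten \cT$ for all $\cT$) is strictly stronger than WEP (i.e.\ $\cS \elten \cT = \cS \maxten \cT$ for all $\cT$), giving CPFP $\Rightarrow$ WEP via Theorems \ref{ComplCPFP} and \ref{Complwep}. Conversely, WEP plus finite dimensionality implies $\cS$ is a finite-dimensional real C$^*$-algebra by the above, and every such algebra has real CPFP (use Theorem \ref{ComplCPFP} together with complex CPFP of $\cS_c$, which is a finite direct sum of complex matrix algebras). Self-duality of CPFP in finite dimensions follows from the duality $(\cS \minten \cT)^d = \cS^d \maxten \cT^d$ noted in Remark 2 after Lemma \ref{maxex}, so $\cS$ has CPFP iff every $\cT^d$ satisfies $\cS^d \minten \cT^d = \cS^d \maxten \cT^d$, which (ranging over finite-dimensional $\cT$) is CPFP for $\cS^d$. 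Finally, the characterization $\cS \elten \cS^d = \cS \maxten \cS^d \iff \cS$ has WEP in finite dimensions is Kavruk's self-dual tensor characterization \cite{Kavruk}; the real case follows by complexification, using Corollary \ref{el}, Theorem \ref{mtenf}, and Theorem \ref{dcom} to identify $(\cS \elten \cS^d)_c \cong \cS_c \elten (\cS_c)^d$ and analogously for $\maxten$, combined with the fact that the inclusion $\cS \elten \cS^d \subseteq \cS \maxten \cS^d$ is an equality iff its complexification is, as seen via Proposition \ref{inco}. The main obstacle is the descent step in the dual operator system argument, specifically ensuring that the conjugation transported to $M$ yields a real injective von Neumann algebra whose complexification is $M$; all other steps are either immediate from the preceding sections or direct complexification.
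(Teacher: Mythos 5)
Your proposal is correct and, like the paper, ultimately reduces everything to the complex case by complexification, but several steps are handled differently. For the first assertion you observe directly that injectivity forces $I(\cS)=\cS$, so the required extension of $i:\cS\to\cS^{dd}$ is tautological; the paper instead complexifies and quotes the C$^*$-algebra case. For the dual/bidual statement the paper's route is: real WEP of $\cS$ $\Leftrightarrow$ complex WEP of $\cS_c$ $\Leftrightarrow$ complex injectivity of $\cS_c$ $\Leftrightarrow$ real injectivity of $\cS$ (citing \cite{BCK}), followed by Ruan's theorem that a real injective operator system is a C$^*$-algebra, hence (having a predual) a von Neumann algebra. You instead descend from the complex injective von Neumann algebra $M\cong\cS_c$ by transporting the conjugation $\theta_{\cS}$ and taking fixed points; this works (the transported map is a normal conjugate-linear period-two $*$-automorphism by the Banach--Stone theorem for unital complete order isomorphisms, so its fixed points form a real von Neumann algebra complexifying to $M$), and your complexify--extend--restrict--project argument for real injectivity of the fixed-point algebra is the standard one. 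One point to make explicit: the complex result you quote from \cite{KPTT2} is stated there for biduals, whereas $\cS_c$ may only be a dual operator space; one needs the weak$^*$ continuous ucp projection $(\cS_c)^{**}\to\cS_c$ built from the predual, which is exactly the adaptation the paper spells out. Your treatment of the final equivalences is considerably more detailed than the paper's one-line ``by complexification,'' and your CPFP$\Leftrightarrow$dual-CPFP step via the finite-dimensional Farenick--Paulsen duality is a legitimate alternative to simply noting that finite-dimensional real C$^*$-algebras and their duals are again completely order isomorphic to C$^*$-algebras.
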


\begin{proof} 
The second statement follows from the proof, and an adaption of the proof, of  the complex case  \cite[Proposition 6.10 and Theorem 6.11]{KPTT2}.  Indeed if $\cS$ 
is a bidual real operator system then $\cS$ has the real WEP
if and only if $\cS_c$ has the complex WEP, and if and only if $\cS_c$
is complex injective.   If $\cS$ merely is a dual operator space, then one may adapt the proof in \cite[Proposition 6.10]{KPTT2} replacing $\cS^*$ in that proof by the predual operator space to get a 
 weak* continuous completely contractive projection $\cS^{**} \to \cS$.
This is necessarily unital so ucp.  The rest of the argument is the same. 
Consequently $\cS$ has the real WEP  if and only if $\cS$
is real injective \cite{BCK}.  
Ruan showed that
a real injective operator system is a C$^*$-algebra.  Since it has a predual it is a von Neumann algebra \cite{Li}.  

Similarly, the first  statement follows by complexifying and using the C$^*$-algebra case.
Note that every finite dimensional real C$^*$-algebra is injective \cite{Li}. 
Finally, the last assertions also follow from  the complex case by complexification.     
\end{proof}

 \subsection{The DCEP}

 An operator system $\cS$ has the 
 {\em double commutant expectation property} or {\em DCEP} if every unital complete order embedding $\varphi: \cS \to B(H)$ extends to a ucp map $\varphi': I(\cS) \to \varphi(\cS)''$, where $\varphi(\cS)''$ denotes the double commutant of $\varphi(\cS)$ inside $B(H)$.
 \[
\begin{tikzcd}
I(\cS) \arrow[dashrightarrow, rd, "\varphi' " black] & \\
\mathcal \cS \arrow[u, hook] \arrow[r, "\varphi" black] & \varphi(S)'' \arrow[r, hook] & B(H)
\end{tikzcd}
\]

\bigskip

\begin{theorem} \label{Compldcep}  A real operator system $\cS$ has the real DCEP
 if and only if $\cS_c$ has the complex DCEP.  This is also equivalent to: 
 $$\cS \elten \cT = \cS \cten \cT$$ for all real operator systems $\cT$, and is equivalent to: 
 $$\cS \elten C^*(\bF_\infty) = \cS \maxten C^*(\bF_\infty),$$
 and is equivalent to the unital complete order embedding  $\cS \maxten B \subseteq A  \maxten B$
 for all unital real C$^*$-algebras $A$ and $B$ with 
 $\cS$ a subsystem of $A$.
\end{theorem}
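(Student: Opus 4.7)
The plan is to reduce to the complex case and then transfer via the complexification identities for tensor products established in Section \ref{tens}. Once the equivalence $(1)\Leftrightarrow(2)$ is in hand, the remaining equivalences follow from the known complex characterizations of DCEP from \cite{Kavruk, KPTT2}, together with the compatibility of complexification with $\elten$, $\cten$, and $\maxten$.

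First I would prove $(1)\Leftrightarrow(2)$ using two facts already available in the paper: $I(\cS)_c \cong I(\cS_c)$ from the real injective envelope theory of \cite{BCK}, and the fact that the real bicommutant inside $B_{\mathbb{R}}(H_r)$ coincides with the complex bicommutant inside $B_{\mathbb{C}}(H_c)$ (noted after Proposition \ref{concon}). For $(1)\Rightarrow(2)$, given a complex complete order embedding $\varphi\colon \cS_c \to B(H_c)$, restrict to a real complete order embedding $\cS \to B_{\mathbb{R}}(H_r)$ via Proposition \ref{concon}, apply real DCEP to extend to a ucp map $I(\cS) \to \varphi(\cS)''_{\mathbb{R}}$, and complexify using Lemma \ref{ncomma} to obtain the desired complex ucp extension $I(\cS_c) \to \varphi(\cS_c)''_{\mathbb{C}}$. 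The converse is symmetric.

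For the equivalence of $(2)$ with each of $(3)$, $(4)$, $(5)$, I would invoke the known complex analogues (complex $\cS_c$ has DCEP iff $\cS_c \elten W = \cS_c \cten W$ for all complex $W$, iff $\cS_c \elten C^*_{\mathbb{C}}(\mathbb{F}_\infty) = \cS_c \maxten C^*_{\mathbb{C}}(\mathbb{F}_\infty)$, iff the max-embedding property holds for complex C$^*$-algebras) and then transfer using Corollaries \ref{el}, \ref{coc}, Theorem \ref{mtenf}, and Proposition \ref{gpalg} (the last giving $C^*_{\mathbb{R}}(\mathbb{F}_\infty)_c \cong C^*_{\mathbb{C}}(\mathbb{F}_\infty)$). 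The transfer itself follows the template laid out in the introduction to Section \ref{stabil}: a real equality $\cS \otimes_\alpha \cT = \cS \otimes_\beta \cT$ complexifies to $\cS_c \otimes_\alpha \cT_c = \cS_c \otimes_\beta \cT_c$, and conversely $\cS \otimes_\alpha \cT \hookrightarrow \cS_c \otimes_\alpha \cT_c$ is a complete order embedding by the complexification identities, so a complex equality restricts to a real one. To pass from equalities for complexifications $\cT_c$ to equalities for arbitrary complex $W$, I would use that the canonical maps $W \to W_c \to W$ factor the identity, so by functoriality $\cS_c \otimes_\alpha W \hookrightarrow \cS_c \otimes_\alpha W_c$ completely order embeds, and the desired equality for $W$ follows from the one for $W_c$.

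The main obstacle is purely organizational: one must carefully match each real tensor product condition with its complex counterpart and verify that the complete order embeddings behave correctly under complexification in each direction. No new technical ideas should be required beyond what has been developed in Sections \ref{cx}--\ref{tens}; for the last equivalence $(5)$ in particular, one combines Theorem \ref{mtenf} (applied to both $\cS \maxten B$ and $A \maxten B$) with the complex max-embedding characterization of DCEP, and then restricts to the real parts.
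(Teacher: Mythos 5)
Your overall strategy matches the paper's: both proofs lean on the complex theory of \cite{KPTT2} and transfer via the complexification identities for $\elten$, $\cten$ and $\maxten$ (Corollaries \ref{el}, \ref{coc}, Theorem \ref{mtenf}) and Proposition \ref{gpalg}. Your direct argument for real DCEP $\Leftrightarrow$ complex DCEP is a genuine (and attractive) variant: the paper treats the forward direction as the ``elementary by complexifying'' observation and recovers the converse only by cycling through the tensor characterizations (checking the real case of (5)$\Rightarrow$(4) of \cite[Theorem 7.3]{KPTT2}, or invoking the Section \ref{stabil} principle), whereas you close that equivalence directly using $I(\cS)_c\cong I(\cS_c)$, the bicommutant identification after Proposition \ref{concon}, $(\psi(\cS)'')_c=\psi_c(\cS_c)''$, Lemma \ref{ncomma}, and the real-part projection. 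That is sound and arguably more transparent, at the cost of relying on the compatibility of the identification $I(\cS)_c\cong I(\cS_c)$ with the inclusions of $\cS$ and $\cS_c$.

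There is, however, a gap in your treatment of the last condition. Your ``transfer template'' applies to statements of the form $\cS\otimes_\alpha\cT=\cS\otimes_\beta\cT$ for all $\cT$, where the free variable $\cT$ can be pushed through $\cT\to\cT_c\to\cT$ by functoriality. The max-embedding condition $\cS\maxten B\subseteq A\maxten B$ is quantified over pairs $(A,B)$ with $\cS$ a subsystem of $A$, and an arbitrary complex $A'$ containing $\cS_c$ is not of the form $A_c$ for a real $A$ containing $\cS$; so knowing the real statement for all real pairs does not, by your template, yield the complex statement for all complex pairs. You only describe the direction (complex DCEP) $\Rightarrow$ (real max-embedding); as written the five conditions are not all linked. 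The paper closes this by verifying the real case of the relevant part of the proof of \cite[Theorem 7.1]{KPTT2} (using Corollary \ref{cocin} among other real-case checks). Within your framework the cheapest repair is to specialize: taking $A=I(\cS)$ (a real C$^*$-algebra containing $\cS$ unitally, by the real injective envelope theory) and $B=C^*_{\bR}(\bF_\infty)$, the embedding $\cS\maxten C^*_{\bR}(\bF_\infty)\subseteq I(\cS)\maxten C^*_{\bR}(\bF_\infty)$ says precisely that the max and el structures on $\cS\otimes C^*_{\bR}(\bF_\infty)$ coincide, i.e.\ condition (4), which you have already tied to the rest of the cycle.
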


\begin{proof}  We already mentioned that,  by complexifying, if $\cS$ has the real DCEP  then $\cS_c$ has complex DCEP.   The latter implies by the complex theory \cite{KPTT2} that 
$\cS_c \elten \cT_c = \cS_c \cten \cT_c$, and hence as in the last proofs that $\cS \elten \cT = \cS \cten \cT$, for all real operator systems $\cT$.  As in the proof of Theorem \ref{ComOLLP} for the analogous result concerning $B(H)$, and using Proposition \ref{gpalg}, it is easy to argue that $\cS \elten C^*(\bF_\infty) = \cS \maxten C^*(\bF_\infty)$ if and only if $\cS_c \elten C^*_{\Cdb}(\bF_\infty) = \cS_c \maxten C^*_{\Cdb}(\bF_\infty).$ 
By the complex theory this is equivalent to $\cS_c$ having the  complex DCEP.  And the latter by a similar proof  implies the last statement of the theorem.  To see the converse, that the last statement of the theorem
implies the real DCEP, we checked the real case of that part of the proof of 
Theorem  7.1 of \cite{KPTT2}.  This relies on several previous results of those authors which need to be checked in the real case, such as the variant of Theorem  6.4 of \cite{KPTT1} which we presented in Corollary \ref{cocin}. 

We will be done if we can show that $\cS \elten \cT = \cS \cten \cT$  for all real operator systems $\cT$, implies $\cS$ has DCEP.   
However the proof that (5) implies (4) of Theorem  7.3 of \cite{KPTT2} works in the real case. 
Alternatively, we can use the principles discussed in and after  the second paragraph of Section \ref{stabil}. 
\end{proof}

{\bf Remarks.}  1)\ Again by complexifying, a real C$^*$-algebra $A$ has the 
real DCEP if and only if it has the real WEP.

2)\ Similarly, an operator system $\cS$ has the real DCEP if and only if 
$\cS \minten (\bR^5/I) = \cS \cten (\bR^5/I)$, using \cite[Theorem 5.9]{Kavrukw}.
Here $I$ is as in the discussion before Theorem \ref{realK}.

  \subsection{The SLLP} 

An operator system $\cS$ has the {\em operator system local lifting property} or {\em SLLP} if whenever $\varphi: \cS \to A/I$ is ucp, where $A$ is a C*-algebra and $I \subseteq A$ is an ideal, the restriction $\varphi_0$ of $\varphi$ to any finite-dimensional subsystem $\cS_0 \subseteq \cS$ admits a ucp lifting $\widetilde{\varphi}_0: \cS_0 \to A$ such that $\varphi_0 = q \circ \widetilde{\varphi}_0$, where $q: A \to A/I$ is the canonical quotient map.
\[
\begin{tikzcd}
& & A \arrow[d, "q" ] \\
\cS_0 \arrow[urr, "\widetilde{\varphi_0}"] \arrow[r, hook] & \cS \arrow[r, "\varphi" black] & A/I
\end{tikzcd}
\]
  (We write SLLP as opposed to the more common OSLLP to distinguish it from Ozawa's earlier operator space local lifting property.)

\bigskip
The operator system local lifting property (SLLP) 
 may be defined with or without the liftings being ucp (these definitions are equivalent
 as is explained in \cite[Remark 8.3]{KPTT2}, and the real case is the same). 
 For a real C$^*$-algebra  the real SLLP will be called the LLP.

 \begin{theorem} \label{ComOLLP}  A real operator system $\cS$ has the real SLLP if  and only if 
  $\cS_c$ has the complex SLLP.  These are also equivalent to  
  $$\cS \minten B(H) = \cS \maxten B(H)$$ for all real  Hilbert spaces $H$, and equivalent to
 $$\cS \minten \cT = \cS \erten \cT$$ for all real operator systems $\cT$.   In particular, every $C^*$-nuclear real operator system $\cS$ has the real SLLP. 
\end{theorem}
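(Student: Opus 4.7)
The plan is to proceed in the pattern established for WEP and DCEP. I would first address the equivalence ``$\cS$ real SLLP $\iff$ $\cS_c$ complex SLLP'' directly from the definitions. For the ``$\Leftarrow$'' direction, given a real ucp $\varphi : \cS \to A/I$ with $A$ a real unital C$^*$-algebra and $I$ a closed ideal, Proposition \ref{coker} gives a canonical identification $(A/I)_c \cong A_c/I_c$, so $\varphi$ complexifies to a complex ucp $\varphi_c : \cS_c \to A_c/I_c$. A finite-dimensional real subsystem $\cS_0 \subseteq \cS$ embeds into the finite-dimensional complex subsystem $(\cS_0)_c$; by complex SLLP this lifts to a ucp $\widetilde{\psi} : (\cS_0)_c \to A_c$. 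Composing with the canonical real ucp projection $\mathrm{Re} : A_c \to A$ (mentioned after equation (\ref{ofr})) and restricting to $\cS_0$ produces the required real ucp lifting $\widetilde{\varphi}_0 : \cS_0 \to A$, since $\mathrm{Re}$ commutes with the quotient map. The ``$\Rightarrow$'' direction is dual: a complex ucp $\varphi : \cS_c \to B/J$ restricts to a real ucp map $\varphi_{|\cS}$ which lifts on finite-dimensional subsystems of $\cS$ by real SLLP; Lemma \ref{ncomma} then extends the lifting complex-linearly to the complexification of the subsystem.

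Next, for the tensor characterizations, I would combine the complex analogues from \cite{KPTT2} with the complexification identities established earlier in this paper: Corollary \ref{comin} ($(\cS \minten \cT)_c \cong \cS_c \minten \cT_c$), Theorem \ref{mtenf} ($(\cS \maxten \cT)_c \cong \cS_c \maxten \cT_c$), and Corollary \ref{er} ($(\cS \erten \cT)_c \cong \cS_c \erten \cT_c$). If $\cS \minten B(H) = \cS \maxten B(H)$ for every real Hilbert space $H$, then complexifying and using $B(H)_c \cong B(H_c)$ (plus the fact that every complex Hilbert space arises this way) gives $\cS_c \minten B(K) = \cS_c \maxten B(K)$ for every complex Hilbert space $K$; the complex SLLP characterization then forces $\cS_c$ to have complex SLLP, hence $\cS$ has real SLLP by the first part. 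Conversely, if $\cS$ has real SLLP then $\cS_c$ has complex SLLP and thus $\cS_c \minten B(K) = \cS_c \maxten B(K)$ for all complex $K$; restricting to the subspaces $\cS \otimes B(H) \subseteq \cS_c \otimes B(H)_c$ yields the real identity.

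For the equivalence with $\cS \minten \cT = \cS \erten \cT$ for all real operator systems $\cT$, the same complexification strategy applies, now invoking the ``trick'' from the second paragraph of Section \ref{stabil}: the factorization $W \to W_c \to W$ using the complex inclusion $w \mapsto (w,0)$ and the real ucp projection shows, by functoriality of $\minten$ and $\erten$, that the identities $\cS_c \otimes_\alpha W_c = \cS_c \otimes_\beta W_c$ descend to $\cS_c \otimes_\alpha W = \cS_c \otimes_\beta W$ for every complex $W$. This allows one to apply the complex characterization of SLLP via $\minten = \erten$ from \cite{KPTT2} to $\cS_c$ even though $\cT_c$ does not range over all complex operator systems. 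The final statement is immediate: real C$^*$-nuclearity of $\cS$ means $\cS \minten B = \cS \maxten B$ for every real C$^*$-algebra $B$, and specializing $B = B(H)$ gives real SLLP.

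The main obstacle I expect is the step from real SLLP to the general tensor identity $\cS \minten \cT = \cS \erten \cT$: while complexification reduces the work to the complex theorem, one must verify that every auxiliary result used (e.g., the quotient/ideal structure in maximal tensor products employed in Kavruk's complex proof, such as our Theorem \ref{tenp51} and Lemma \ref{maxex}) is genuinely available in the real setting. These real analogues are in hand earlier in the paper, so the effort is mostly careful bookkeeping rather than new ideas.
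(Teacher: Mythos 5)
Your proposal is correct and follows essentially the same route as the paper: the definitional transfer of SLLP via the ucp projection $A_c \to A$ and Lemma \ref{ncomma}, the tensor characterizations via the complexification identities for $\minten$, $\maxten$, $\erten$ together with $B(H)_c \cong B(H_c)$, the Section \ref{stabil} trick (or the real case of the KPTT2 argument resting on Theorem \ref{tenp51} and biproximinality) for the remaining implication, and specializing $B = B(H)$ for the $C^*$-nuclear statement. The only cosmetic difference is that you spell out both directions of the real/complex SLLP equivalence explicitly, where the paper treats one direction as part of its blanket ``obvious by complexifying'' remark.
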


\begin{proof}  If  $\cS_c$ has the SLLP, then in the SLLP diagram we obtain a lifting $\cS_0 \subseteq (\cS_0)_c \to A_c$ making the SLLP diagram for $\cS_c$ commute.  If we then compose the latter lifting $\cS_0  \to A_c$
 with the projection $A_c \to A$ we obtain a desired lifting, since
 $$(q_I)_c \, (P( \widetilde{\varphi_0} (s_0))) = P((q_I)_c (\widetilde{\varphi_0} (s_0))) =
 P (\varphi_c(s_0)) =  \varphi_c(s_0)$$ for $s_0 \in \cS_0 \subseteq \cS$.  So 
  $\cS$ has the SLLP. 
  
  We also know $\cS_c$ has the SLLP if and only if 
  $$\cS_c \minten B_{\bC}(K) = \cS_c \maxten B_{\bC}(K)$$ for all complex  Hilbert spaces $K$.  We may write $K = H_c$ for a real  Hilbert space $H$, then 
  $\cS_c \minten B_{\bC}(K) = (\cS \minten B(H))_c$.   Similarly we saw that 
  $\cS_c \maxten B_{\bC}(K) = (\cS \maxten B(H))_c$.  Thus $\cS_c \minten B_{\bC}(K) = \cS_c \maxten B_{\bC}(K)$ for all complex  Hilbert spaces $K$, if and only if 
   $\cS \minten B(H) = \cS \maxten B(H)$ for all real Hilbert spaces $H$.
   Conversely, the latter condition implies $\cS \minten \cT = \cS \erten \cT$ for all real operator systems $\cT$, by the same simple argument as in the complex case \cite[Theorem 8.1]{KPTT2}. 

   It is similarly now clear by complexification that if $\cS_c$ has the SLLP  then
  $\cS \minten \cT = \cS \erten \cT$ for all real operator systems $\cT$. 
  The latter condition in turn implies that  $\cS \minten B(H) = \cS \maxten B(H)$, since $B(H)$ is injective.    
  
   We will be done if we can show that $\cS \erten \cT = \cS \minten \cT$  for all real operator systems $\cT$, implies $\cS$ has the SLLP. 
   This can be proved via the above and  the principle discussed in and after  the second paragraph of Section \ref{stabil}. We also give an alternative proof in which we check several other important results, and this may be valuable.  We observe that  the proof of (2) implies (1) in \cite[Theorem 8.5]{KPTT2} works in the real case too, providing that the real cases of  Theorem 5.1 and Proposition 5.3 in 
  \cite{KPTT2} hold.  
  The first of these we discussed in the real case 
  in Theorem \ref{mtenup}.

  The real case of Proposition 5.3 in 
  \cite{KPTT2} should state that $I \minten \cS$ is a completely biproximinal kernel in $A \minten \cS$, for any real operator system and  real $C^*$-algebra quotient $A/I$. 
  The proof of the complex case in \cite{KPTT2}   relies on Lemma 4.8 of \cite{KPTT2}, the real case of which we checked in Proposition \ref{neolp},
  and Lemma 2.4.7 in \cite{Pisbk} (which is 7.44 in \cite{P}).  The latter is true also in the real case as may be seen by complexifying all the spaces, and using the simple principle essentially seen in the proof of Corollary \ref{proxiff}.
  Namely that 
  if $q : E \to F$ is a surjective contraction such that $q_c$ is a 1-quotient map then $q$ is a 1-quotient map.   Indeed if $y \in F$ with $\| y \| = 1$, choose $x_1 + i x_2 \in E_c$ with $q_c(x_1 + i x_2) = y$ and $\| x_1 + i x_2 \| = 1.$  Here $x_i \in E$ so that $\| x_1 \| \leq \| x_1 + i x_2 \|  = 1$.
  Also $q(x_1) = y$ and so $1 \leq \| x_1 \|$.  So $\| x_1  \| = 1.$
 \end{proof}
 
 {\bf Remark.} We will need later that if $B$ and $C$ are  
 real C$^*$-algebras, or more generally real operator systems, with respectively the real WEP and real SLLP, then 
 we have 
 as in the complex case  that 
 $B \minten C = B \maxten C$.  
 Indeed since $C$ has the real SLLP we have 
 $$A \minten B = B \minten A = B \erten A = A \elten B = A \maxten B,$$
 the last equality because $B$ has real WEP.  

 \bigskip

We thank N. Ozawa for the following argument.

  \begin{theorem} \label{oz}  Suppose that  $V, W$ are real (resp.\ complex) operator systems with real (resp.\ complex) SLLP.
   Then $V \oplus^\infty W$ has real (resp.\ complex) SLLP.
\end{theorem}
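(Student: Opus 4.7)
My plan is to reduce to the tensor product characterization of SLLP from Theorem \ref{ComOLLP}: a (real or complex) operator system $\cS$ has SLLP if and only if $\cS \minten B(H) = \cS \maxten B(H)$ for every Hilbert space $H$ of the appropriate type. Writing $\otimes_\alpha$ for either of $\minten, \maxten$, it therefore suffices to establish, for every operator system $X$, the identity
\[ (V \oplus^\infty W) \otimes_\alpha X \; = \; (V \otimes_\alpha X) \oplus^\infty (W \otimes_\alpha X) \]
as unital operator systems via the canonical algebraic identification. Specializing to $X = B(H)$ and using the SLLP hypotheses on $V$ and $W$ will then yield $(V \oplus^\infty W) \minten B(H) = (V \oplus^\infty W) \maxten B(H)$, whence $V \oplus^\infty W$ has SLLP.

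To prove this tensor-product identity, I would construct mutually inverse ucp maps between the two sides. In one direction, the ucp projections $\pi_V\colon V \oplus^\infty W \to V$ and $\pi_W\colon V \oplus^\infty W \to W$ induce, via ucp functoriality of $\otimes_\alpha$, ucp maps $\pi_V \otimes \mathrm{id}_X$ and $\pi_W \otimes \mathrm{id}_X$, which pair to give a ucp map from $(V \oplus^\infty W) \otimes_\alpha X$ into the direct sum on the right. In the reverse direction, the completely positive (but non-unital) inclusions $\iota_V\colon v \mapsto (v,0)$ and $\iota_W\colon w \mapsto (0,w)$ induce, via cp functoriality, cp maps into $(V \oplus^\infty W) \otimes_\alpha X$, whose sum from the direct sum is unital since $(1_V,0) \otimes 1 + (0, 1_W) \otimes 1 = 1$. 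A direct check on elementary tensors $(v, w) \otimes x$ and $(v \otimes x, w \otimes y)$ shows that the two composite maps are identities on generators, so they are mutually inverse complete order isomorphisms.

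The main obstacle is cp (non-unital) functoriality of $\otimes_\alpha$, since the tensor products in Section \ref{tens} are developed primarily with ucp maps in mind. For $\maxten$, cp functoriality is immediate from Theorem \ref{mtenup}: a ucp map $(V \oplus^\infty W) \maxten X \to B(H)$ composed with $\iota_V \times \mathrm{id}_X$ yields a jointly cp (but non-unital) bilinear map $V \times X \to B(H)$, which linearizes to a cp map $V \maxten X \to B(H)$, showing $\iota_V \otimes \mathrm{id}_X$ is cp between the max tensor products. For $\minten$, cp functoriality follows by Arveson-extending a cp map $\phi\colon V \to M_k$ to a cp map on a containing $B(H)$ (or by Stinespring-dilating to a ucp map on a larger matrix algebra) and tensoring with a ucp map $\psi\colon X \to M_l$; the resulting tensor is cp on the ambient $B(H) \minten X$ and so on $V \minten X$. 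With these functoriality statements in place, the argument completes in both the real and complex settings.
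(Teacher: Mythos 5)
Your proof is correct, but it takes a genuinely different route from the paper's. The paper's argument (due to Ozawa) works directly with the definition of SLLP: given a ucp map $u : V \oplus^\infty W \to A/I$ and a finite dimensional subsystem $E$, one may enlarge $E$ to $V_0 \oplus^\infty W_0$ for finite dimensional unital subsystems $V_0 \subseteq V$ and $W_0 \subseteq W$, lift the two completely positive (non-unital) restrictions $v \mapsto u(v,0)$ and $w \mapsto u(0,w)$ separately --- using the remark after the definition that SLLP may equivalently be formulated with cp rather than ucp liftings --- and take the sum of the two lifts. Your route instead passes through the tensor characterization $\cS \minten B(H) = \cS \maxten B(H)$ of Theorem \ref{ComOLLP} together with the distributivity of $\minten$ and $\maxten$ over $\oplus^\infty$. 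That distributivity is true, and your handling of the one delicate point --- that the non-unital cp inclusions $\iota_V, \iota_W$ tensorize --- is sound: for $\maxten$, the linearization in Theorem \ref{mtenup} does apply to non-unital jointly cp maps (the archimedean closure only requires $u(e,f)$ to be a fixed bounded positive operator), and alternatively one can check directly on the generating cones that $\iota_V^{(k)}(P) \otimes Q$ remains an admissible elementary tensor and that $(1_V,0)\otimes f \leq e \otimes f$; for $\minten$ your dilation argument works, though it is even quicker to note that the spatial realization of $\minten$ places $(V \oplus^\infty W) \minten X$ block-diagonally inside $B\bigl((H_1 \oplus H_1') \otimes H_2\bigr)$. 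The paper's proof is shorter and entirely elementary; yours costs more machinery but records, as a byproduct, the useful fact that both extreme operator system tensor products distribute over $\oplus^\infty$.
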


\begin{proof}  Suppose that $u : V \oplus^\infty W \to A/I$ is ucp, and that $E$ is a finite dimensional subspace of $V \oplus^\infty W$.
We may assume that $E = V_0  \oplus^\infty W_0$ for unital subsystems $V_0 \subseteq V, W_0  \subseteq W$.
Consider $\varphi_1 = u_{|V_0}$.   This lifts to a completely positive map  $\psi_1 : V_0 \to A$ such that $q_I \circ \psi_1 = \varphi_1$.
Similarly $u_{|W_0}$  lifts to a completely positive map  $\psi_2 : V_0 \to A$ such that $q_I \circ \psi_2 = u_{|W_0}$.
Then $(v,w) \mapsto  \psi_1(v) + \psi_2(w)$ is completely positive and lifts $u_{|E}$. 
\end{proof}

 \subsection{Exactness} Let $\cS$  be a real operator system, $A$ a real  unital C$^*$-algebra and $I$  an ideal in $A$. Then by complexification it follows that $\cS \minten I$ is a kernel in $\cS \minten A$ (these are the completed minimal tensor products).
 By the functoriality of the minimal tensor product we get a ucp map 
 $\cS \minten A \to \cS \minten A/I$ with kernel containing $\cS \minten I$, and an induced ucp map
 $$(\cS \minten A)/(\cS \minten I) \to \cS \minten (A/I).$$
 We say that $\cS$ is {\em exact} if this map is bijective (which is true always if $\cS$ is finite dimensional, by complexifying) and is a complete order isomorphism.   (This is called 1-exactness in \cite{KPTT2}.

 \begin{theorem} \label{Comexa}  A real operator system $\cS$ is exact  if and only if
  $\cS_c$ is exact. In the definition of exactness we may take $A = \bB = B(\ell^2_{\bR})$ and $I$ the compact operators  $\bK$ here.   Exactness  is also equivalent to 
  $$\cS \minten \cT = \cS \elten \cT$$ for all real operator systems $\cT$.
\end{theorem}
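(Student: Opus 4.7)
The plan is to reduce all three assertions (the real/complex equivalence, the sufficiency of $A=\bB$, $I=\bK$, and the tensor characterization) to the corresponding complex results by complexification, using the machinery already developed in Sections 2, 6, and 10 of the paper.

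First I would verify that the canonical ucp map
$$\Phi : (\cS \minten A)/(\cS \minten I) \longrightarrow \cS \minten (A/I)$$
complexifies to the analogous complex map $\Phi'$ for the triple $(\cS_c, A_c, I_c)$. The ingredients are: Corollary \ref{comin} (min complexifies, so $(\cS \minten A)_c \cong \cS_c \minten A_c$ and $(\cS \minten I)_c \cong \cS_c \minten I_c$); Proposition \ref{coker} (kernel quotients complexify, so $((\cS \minten A)/(\cS \minten I))_c \cong (\cS_c \minten A_c)/(\cS_c \minten I_c)$); and the standard identification $A_c/I_c \cong (A/I)_c$ with $I_c$ an ideal in the real C$^*$-algebra $A_c$. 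A diagram chase shows $\Phi_c = \Phi'$ under these identifications. Since a ucp map between real operator systems is a complete order isomorphism iff its complexification is (apply Theorem \ref{rcth} to the map and its inverse), we obtain (a): $\cS$ is exact iff $\cS_c$ is exact. Assertion (b) follows immediately: $\bB_c \cong B_{\bC}(\ell^2_{\bC})$ and $\bK_c$ is the complex compacts, so by the complex case of the theorem (one of the known reformulations in \cite{KPTT2}), complex exactness of $\cS_c$ is equivalent to $\Phi'$ being a complete order isomorphism for this particular choice, hence by (a) equivalent to real exactness of $\cS$.

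For (c), the forward direction proceeds as follows. If $\cS$ is exact, then by (a) so is $\cS_c$, and the complex theorem yields $\cS_c \minten W = \cS_c \elten W$ completely order isomorphically for every complex operator system $W$. Specializing to $W = \cT_c$ and invoking Corollaries \ref{comin} and \ref{el}, we obtain $(\cS \minten \cT)_c = (\cS \elten \cT)_c$. Descending to the real parts (again via Theorem \ref{rcth}) gives $\cS \minten \cT = \cS \elten \cT$.

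Conversely, suppose $\cS \minten \cT = \cS \elten \cT$ for every real operator system $\cT$. Complexifying and applying Corollaries \ref{comin} and \ref{el} gives the equality $\cS_c \minten \cT_c = \cS_c \elten \cT_c$ whenever $\cT$ is real. To pass to arbitrary complex $W$, I invoke the trick from the second paragraph of Section \ref{stabil}: the composition $W \to W_c \to W$ of the canonical inclusion and the cp projection equals the identity, and functoriality of $\minten$ and $\elten$ (together with Proposition \ref{prop: min real into complex}--style arguments) yields complete order embeddings $\cS_c \otimes_\alpha W \hookrightarrow \cS_c \otimes_\alpha W_c$ for $\alpha \in \{\minten,\elten\}$. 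Since the identity is a complete order isomorphism on the larger space $\cS_c \otimes_\alpha W_c$, it is so on the subspace $\cS_c \otimes_\alpha W$. Thus $\cS_c$ is exact, and by (a) so is $\cS$. The main obstacle I expect is the careful bookkeeping in step one: verifying that the complexification of the cones defining the quotient $(\cS \minten A)/(\cS \minten I)$ matches the cones for the complex quotient, which requires invoking the description of quotient cones from Section \ref{quot} and the explicit complexification formula for matrix cones from Section \ref{CE}.
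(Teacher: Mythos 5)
Your overall strategy --- complexify the canonical quotient-comparison map, use Kavruk's reduction of complex exactness to the single pair $B_{\bC}(\ell^2_{\bC})/\bK$, and handle the tensor characterization by the $W \to W_c \to W$ retraction trick from Section \ref{stabil} --- is exactly the route the paper takes. But there is a quantifier gap in your derivation of (a). Step one gives you, for each \emph{real} pair $(A,I)$, that $\Phi_{A,I}$ is a complete order isomorphism iff $\Phi'_{A_c,I_c}$ is. This yields ``$\cS_c$ exact $\Rightarrow$ $\cS$ exact,'' since every real pair complexifies. It does \emph{not} by itself yield the converse: assuming $\cS$ exact you only learn that $\Phi'_{B,J}$ is a complete order isomorphism for complex pairs of the form $(A_c,I_c)$, whereas complex exactness of $\cS_c$ quantifies over all complex C$^*$-algebras $B$ and ideals $J$, most of which are not complexifications of real pairs (cf.\ Remark 2 after Theorem \ref{ucomla}). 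So the sentence ``we obtain (a): $\cS$ is exact iff $\cS_c$ is exact'' overreaches, and your subsequent derivation of (b) ``by (a)'' is then circular in the forward direction.

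The repair is precisely the observation you deferred to (b): by Kavruk's result complex exactness may be tested on the single pair $B_{\bC}(\ell^2_{\bC})$ with the compact operators, and this pair \emph{is} a complexification ($\bB_c$ and $\bK_c$, with $(\bB/\bK)_c$ the complex Calkin algebra), so your step one applies to it and closes the forward direction of (a). This is exactly how the paper argues, obtaining (a) and (b) together rather than sequentially. The rest of your proposal --- the forward direction of (c) by restricting the complex equality to the real copies via Corollaries \ref{comin} and \ref{el}, and the converse by the retraction trick combined with the complex tensor characterization --- is correct and matches the paper, which also notes an alternative for the converse: rerunning the proof of (2)$\Rightarrow$(3) of the relevant theorem of Kavruk--Paulsen--Todorov--Tomforde in the real setting.
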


\begin{proof}  Complexifying we have 
$$((\cS \minten A)/(\cS \minten I))_c \cong (\cS_c \minten A_c)/(\cS_c \minten I_c),$$
and $$(\cS \minten (A/I))_c \cong \cS_c \minten (A_c/I_c) .$$
Thus (looking at the appropriate subspace identifications) if $\cS_c$ is exact, 
so that $$(\cS_c \minten A_c)/(\cS_c \minten I_c) \cong \cS_c \minten (A_c/I_c),$$ then 
$(\cS \minten A)/(\cS \minten I) \to \cS \minten (A/I)$ and $\cS$ is exact. 
Conversely, if $\cS$ is exact then by complexification we have 
$$(\cS_c \minten A_c)/(\cS_c \minten I_c) \cong \cS_c \minten (A_c/I_c).$$
It is proved in \cite[Corollary 7.3]{Kavruk} that in the definition of (complex) exactness we may take $A = B_{\bC}(\ell^2_{\bC})$ and the ideal of compact operators.  These are both complexifications, with $A/I = (\bB/\bK)_c$.  Thus we see that $\cS_c$ is exact. 

As in previous proofs, if $\cS_c$ is exact then by the complex case 
$$\cS_c \minten \cT = \cS_c \elten \cT$$ for all complex operator systems $\cT$,
and so $\cS \minten \cT = \cS \elten \cT$ for all real operator systems $\cT$.
The converse can be proved via the above and  the principle discussed in and after  the second paragraph of Section \ref{stabil}. Alternatively, if $\cS \minten \cT = \cS \elten \cT$ for all real operator systems $\cT$,
then the proof of (2) $\Rightarrow$ (3) in \cite[Theorem 5.7]{KPTT2} works in the real case to show that $\cS$ is exact.
 \end{proof}

Thus as in the complex case, the CPFP (nuclearity) is the strongest of these five properties.   The WEP implies the DCEP, but the converse is false in general except for C$^*$-algebras (even false for subsystems of $M_n$, see examples below). Indeed there are no further  implications between these properties in general. The SLLP does not imply the  DCEP by the counterexample to Kirchberg's conjecture.  Neither the WEP nor DCEP implies the SLLP or exactness (Consider $\bB$).  (Even for subsystems of $M_n$), exactness does not imply the DCEP, see examples below), nor does it imply the LP 
(Lifting Property) or SLLP (\cite[Corollary 10.14]{Kavruk}).

As we saw, (in the real as in the complex case) $C^*$-nuclearity implies SLLP and DCEP. 
As in the complex system case (and similarly to the $C^*$-algebra and operator algebras cases see e.g.\ 
\cite{BD}) it is clear that exactness and DCEP together are equivalent to $\cS \minten \cT = \cS \cten \cT$ for all operator systems $\cT$, and hence  together are equivalent to C$^*$-nuclearity by  Corollary \ref{csnd}.  
$C^*$-nuclearity does not  imply WEP  (see below). It follows that none of the five properties except CPFP  implies $C^*$-nuclearity.
For a C$^*$-algebra WEP and DCEP agree, and imply QWEP.

  \begin{proposition} \label{410} Real exactness passes to real operator subsystems. That is, if $\cS$  is exact then every operator subsystem of $\cS$ is exact. Conversely, if every finite dimensional operator subsystem of $\cS$ is exact then $\cS$ is exact.
\end{proposition}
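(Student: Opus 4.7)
The plan is to reduce both directions to the corresponding complex statements via the stability of exactness under complexification, Theorem \ref{Comexa}, which asserts that $\cS$ is exact if and only if $\cS_c$ is exact.

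For the forward implication, assume $\cS$ is exact, and let $\cT \subseteq \cS$ be a real subsystem. Then $\cT_c$ embeds canonically as a complex subsystem of $\cS_c$, using Proposition \ref{inco} and the identification of $\cT_c$ with $\cT + i\cT$ inside $\cS + i\cS = \cS_c$. By Theorem \ref{Comexa}, $\cS_c$ is a (complex) exact operator system, and by the complex version of this proposition (see \cite{KPTT2,Kavruk}, where it is proved that subsystems of exact complex operator systems are exact), $\cT_c$ is exact. A final application of Theorem \ref{Comexa} shows $\cT$ is exact.

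For the converse, suppose every finite dimensional subsystem of $\cS$ is exact. I will show every finite dimensional subsystem $W$ of $\cS_c$ is exact, after which the complex version of the converse gives exactness of $\cS_c$, and Theorem \ref{Comexa} then yields exactness of $\cS$. Given a basis $\{w_1,\dots,w_n\}$ of such a $W$, write each $w_j = x_j + i y_j$ with $x_j, y_j \in \cS$. Let $\cT \subseteq \cS$ be the real subsystem spanned by $1$ together with $\{x_j, y_j, x_j^*, y_j^* : 1 \leq j \leq n\}$. This $\cT$ is a finite dimensional real subsystem of $\cS$, and $W$ sits inside $\cT_c$ as a complex subsystem (the inclusion is completely order preserving by the functoriality of the complexification, or directly by Proposition \ref{inco}). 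By hypothesis $\cT$ is exact, and by Theorem \ref{Comexa} so is $\cT_c$. The already-established forward direction, applied to the complex subsystem $W \subseteq \cT_c$, then implies $W$ is exact.

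The only real obstacle is invoking the complex version of the statement; once that is available (and it is standard in the complex theory, provable either directly from the definition or via the tensor characterization $\cS \minten \cT = \cS \elten \cT$ for all $\cT$ combined with the left-injectivity of $\elten$ and the fact that positivity of a fixed element involves only a finite dimensional slice of $\cS$), the rest is routine bookkeeping in passing between $\cS$ and $\cS_c$, $\cT$ and $\cT_c$, and $W$ and the ambient $\cT_c$.
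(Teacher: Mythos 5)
Your proposal is correct and follows essentially the same route as the paper: both directions are reduced to the complex statements via Theorem \ref{Comexa}, and for the converse you enlarge a given finite dimensional subsystem $W \subseteq \cS_c$ to a complexification $\cT_c$ of a finite dimensional real subsystem $\cT \subseteq \cS$ — which is exactly the "enlarging" device the paper indicates. You simply spell out the bookkeeping that the paper leaves implicit.
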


\begin{proof}  The first assertion follows by complexification of Proposition 4.10  in \cite{Kavruk} (or Corollary 5.8 in \cite{KPTT2}), or by the same proof.   Similarly for the second (if one complexifies one needs to choose the subspace $\cS_0$ in $\cS_c$ to be a complexification, but there is no difficulty doing that by enlarging it).
  \end{proof} 

\begin{corollary} \label{corduliex}
   An operator system is exact 
   as an operator system if and only if it is exact 
   as an operator space. 
\end{corollary}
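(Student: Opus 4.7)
The plan is to reduce to the known complex case by complexifying on both sides. Recall that for complex operator systems Kavruk has shown that operator system exactness (defined via the complete order isomorphism of the quotient map into $\cS_c \minten (A/I)$) is equivalent to operator space exactness (the analogous map being a complete isometry for the operator space quotient and operator space min-tensor product). Our task is to transfer this equivalence through the complexification functor.

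First I would invoke Theorem \ref{Comexa} to obtain that $\cS$ is exact as a real operator system if and only if $\cS_c$ is exact as a complex operator system; this step is already in hand. Next I would check the parallel statement for operator spaces, namely that a real operator space $X$ is operator space exact if and only if $X_c$ is operator space exact. This uses two facts already recorded in Sections \ref{rcast} and \ref{quot} (and originally due to Ruan and Sharma): first, $X_c \minten Y_c = (X \minten Y)_c$ for the operator space minimal tensor product, and second, $(X/J)_c = X_c/J_c$ for the operator space quotient. Combining these gives a canonical identification
\[
\big((\cS \minten \bB)/(\cS \minten \bK)\big)_c \;\cong\; (\cS_c \minten \bB_c)/(\cS_c \minten \bK_c),
\]
and similarly on the other side of the comparison map, from which it follows that the map is a complete isometry in the real case precisely when its complexification is.

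Once these two complexification transfer statements are in place, the conclusion is almost immediate: $\cS$ is operator system exact iff $\cS_c$ is operator system exact (by Theorem \ref{Comexa}) iff $\cS_c$ is operator space exact (by the complex case) iff $\cS$ is operator space exact (by the operator space transfer). The main point to be careful about is that the two comparison maps (operator system side and operator space side) are literally the same linear map $u: (\cS \minten \bB)/(\cS \minten \bK) \to \cS \minten (\bB/\bK)$; one checks this from the constructions. Then $u$ being a unital complete order isomorphism in the real case is equivalent via complexification to $u_c$ being a unital complete order isomorphism, which in the complex case is equivalent to $u_c$ being a complete isometry, which is equivalent via complexification to $u$ being a complete isometry.

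The main obstacle is verifying the complex equivalence cleanly; if a citation to Kavruk is available this is a one-line invocation, but otherwise one would need to reproduce the argument (which goes through Kirchberg-type identifications and the fact that min and the operator space/operator system quotients coincide on kernels of the form $\cS \minten \bK$ inside $\cS \minten \bB$). In our setting this coincidence follows from Theorem \ref{tenp51} combined with Proposition \ref{neolp} and Corollary \ref{proxiff}, so everything needed has been checked earlier in the real case, and no new tricks beyond the complexification machinery are required.
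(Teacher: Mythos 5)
Your proposal is correct and follows essentially the same route as the paper: the paper's proof is simply ``complexify and use \cite[Proposition 5.5]{KPTT2} and \cite[Proposition 4.3 (2)]{RComp}'', i.e.\ exactly your chain of equivalences, with the operator space transfer statement ($X$ exact iff $X_c$ exact) cited from Ruan rather than rederived from the complexification identities for $\minten$ and quotients. Your additional care about the two comparison maps coinciding is a fair point but is subsumed in the cited results.
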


\begin{proof} Complexify and use \cite[Proposition 5.5]{KPTT2} and \cite[Proposition 4.3 (2)]{RComp}.  
\end{proof}

  As is well known, exactness may be characterized in terms of a local embeddability into   subspaces of $M_n$ (and in the system case this can be done with completely positive maps, see e.g.\ \cite{Pisbk}). This is valid in the real case too with the same proofs 
  (see e.g.\ above Proposition 4.3 in \cite{RComp}).
  In the real case if $X$ is exact then $X$ is locally reflexive.  Indeed by that  Proposition 4.3  $X_c$ is complex exact, so complex locally reflexive.
  So $X$ is real locally reflexive.  We have not tried, but it seems that similar considerations should immediately derive from the known 
  complex case the real analogue of the existence
  of a universal separable 1-exact operator system.

\begin{corollary} \label{comeco}
    A finite dimensional real operator system $\cS$ has the lifting property if and only if $\cS^d$ is exact (and vice versa). 
\end{corollary}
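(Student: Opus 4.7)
The plan is to chain together the complexification results already established in the paper, reducing to the complex analogue of this theorem (which appears in \cite{KPTT2}). First I would observe that for a finite dimensional real operator system, the lifting property (LP) and the SLLP coincide, since in the definition of SLLP one may always take the finite dimensional subsystem $\cS_0$ to be $\cS$ itself. The same remark applies in the complex case to $\cS_c$, which is again finite dimensional.

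The main chain of equivalences then runs as follows. By Theorem \ref{ComOLLP}, $\cS$ has the real SLLP (equivalently, real LP) if and only if $\cS_c$ has the complex SLLP (equivalently, complex LP). By the complex version of the present statement (due to Kavruk--Paulsen--Todorov--Tomforde in \cite{KPTT2}), $\cS_c$ has complex LP if and only if the complex operator system dual $(\cS_c)^d$ is complex exact. By Theorem \ref{dcom}, there is a canonical unital complete order isomorphism $(\cS_c)^d \cong (\cS^d)_c$, so $(\cS_c)^d$ is complex exact if and only if $(\cS^d)_c$ is complex exact. Finally, by Theorem \ref{Comexa}, $(\cS^d)_c$ is complex exact if and only if $\cS^d$ is real exact. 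Composing these equivalences yields the result.

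The main obstacle, such as it is, lies in making sure that the duality appearing in the complex theorem we invoke uses the same convention as Theorem \ref{dcom}, so that the isomorphism $(\cS_c)^d \cong (\cS^d)_c$ lines up correctly with the faithful state $\varphi_c$ used on the complex side. One just needs to fix any faithful real state $\varphi$ on $\cS$ and note (as in the proof of Theorem \ref{dcom}) that $\varphi_c$ is a faithful complex state on $\cS_c$, so the order unit structures on $(\cS_c)^d$ and $(\cS^d)_c$ correspond under the isomorphism, and the exactness property (which only refers to the matrix ordering, not the choice of order unit) transfers without issue. No new tensor-product arguments are required.
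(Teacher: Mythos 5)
Your proof is correct and follows essentially the same route as the paper's: complexify via Theorem \ref{ComOLLP}, invoke the complex lifting-property/exactness duality (the paper cites Theorem 6.6 of \cite{Kavruk} rather than \cite{KPTT2}, a minor attribution difference), identify $(\cS_c)^d \cong (\cS^d)_c$ via Theorem \ref{dcom}, and descend with Theorem \ref{Comexa}. The extra remarks on LP versus SLLP for finite dimensional systems and on matching the faithful-state conventions are sensible bookkeeping but do not change the argument.
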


\begin{proof} We saw that $\cS$ has the lifting property if and only if $\cS_c$ does. By Theorem 6.6 in \cite{Kavruk} this is equivalent to $(\cS_c)^d \cong (\cS^d)_c$ being
exact.  By Theorem \ref{Comexa}  this holds  if and only if $\cS^d$  is exact.  The `vice versa' is similar. 
\end{proof}

As in Theorem 6.8 and Proposition 7.4 in \cite{Kavruk} we have: 

\begin{corollary} \label{cornul}
    Let $\cS$ be a finite dimensional real operator system with null subspace $J$. If $\cS$  has the lifting property then  $\cS/J$ does too.   
\end{corollary}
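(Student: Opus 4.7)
The plan is to reduce to the complex case via complexification, using the matching result in \cite{Kavruk} (Proposition 7.4 there). First I would observe that for a finite dimensional real operator system, the local lifting property is the (global) lifting property (LP), since the space is its own only relevant finite dimensional subsystem. Then, by the same complexification argument used in the proof of Theorem \ref{ComOLLP}, $\cS$ has the real LP if and only if $\cS_c$ has the complex LP. Indeed, on the one hand, given a ucp map $\varphi : \cS \to A/I$ and a complex lift of $\varphi_c : \cS_c \to A_c/I_c$ through $A_c$, composing with the canonical projection $A_c \to A$ yields a real lift of $\varphi$; conversely, a real lift extends (by Lemma \ref{ncomma}) to a complex lift of the complexification.

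Next I would verify that $J_c$ is a null subspace of $\cS_c$. By Proposition \ref{coker} and the discussion of quotients, $J_c$ is a kernel in $\cS_c$. To see it contains no nonzero positive elements, suppose $z = x + iy \in J_c$ is positive with $x \in J_{\rm sa}, y \in J_{\rm as}$. Then $c(x,y) \in M_2(\cS)^+$ (from the proof of Theorem \ref{Thm: Complexification is an op sys}), so in particular both diagonal copies of $x$ are positive in $\cS$. Since $J$ is null, $x = 0$. A self-adjoint positive matrix with zero diagonal must itself be zero, so $c(0,y) = 0$, forcing $y = 0$ and hence $z = 0$. Thus $J_c$ is null in $\cS_c$.

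Now by Proposition \ref{coker} we have $(\cS/J)_c \cong \cS_c/J_c$ unitally completely order isomorphically. Applying the complex case of the result (Proposition 7.4 of \cite{Kavruk}) to the finite dimensional complex operator system $\cS_c$ with its null subspace $J_c$, and using the complex LP of $\cS_c$ from the first step, we conclude that $\cS_c/J_c$ has the complex LP. Therefore $(\cS/J)_c$ has the complex LP, and by the first step (applied to the finite dimensional real operator system $\cS/J$), $\cS/J$ has the real LP.

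The main obstacle is the verification that nullity of $J$ is preserved under complexification; once that and the quotient identification $(\cS/J)_c \cong \cS_c/J_c$ are in hand, the rest is a standard complexification reduction of exactly the type used repeatedly in this paper.
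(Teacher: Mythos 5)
Your proof is correct and takes essentially the paper's intended route: the paper disposes of this corollary with the single line ``As in Theorem 6.8 and Proposition 7.4 in \cite{Kavruk}'', i.e.\ by reduction to the complex case, which is exactly your complexification scheme (LP of $\cS$ iff LP of $\cS_c$, the identification $(\cS/J)_c \cong \cS_c/J_c$ from Proposition \ref{coker}, then Kavruk's complex result). The one piece of genuine content you supply beyond what the paper makes explicit --- that $J_c$ is a null subspace of $\cS_c$, obtained by compressing $c(x,y)\geq 0$ to the diagonal to force $x=0$ and then invoking the fact that a positive $2\times 2$ operator matrix with zero diagonal vanishes to force $y=0$ --- is correct and is precisely the detail needed to make the reduction legitimate.
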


\begin{corollary} \label{cordulif}
    Let $\cS$ be a finite dimensional real operator system. Then $\cS$  has the lifting property if and only if every ucp map $\cS \to \bB / \bK$ has a ucp lift to $\bB$.    
\end{corollary}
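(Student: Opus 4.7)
The forward direction is immediate from the definition of the lifting property, taking $A = \bB$ and $I = \bK$; since $\cS$ is finite dimensional the ``local'' (finite-dimensional subsystem) condition is trivial, so the lifting property yields precisely that every ucp $\cS \to \bB/\bK$ lifts to a ucp map into $\bB$.

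For the converse my plan is to reduce to the complex case via complexification, invoking Proposition 7.4 of \cite{Kavruk} and then Theorem \ref{ComOLLP}. Concretely, assume every real ucp $\cS \to \bB_{\bR}/\bK_{\bR}$ has a ucp lift to $\bB_{\bR}$. I will show that every complex ucp $\Phi: \cS_c \to \bB_{\bC}/\bK_{\bC}$ has a complex ucp lift to $\bB_{\bC}$; the complex version of the present corollary then gives that $\cS_c$ has the complex SLLP, and Theorem \ref{ComOLLP} transfers this back to say that $\cS$ has the real SLLP (equivalently, the LP, since $\cS$ is finite dimensional).

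The key step is producing the complex lift of $\Phi$. Given $\Phi$, restrict to a real ucp map $\varphi : \cS \to \bB_{\bC}/\bK_{\bC}$. Writing $\ell^2_{\bC}$ as the complexification of $\ell^2_{\bR}$ (equivalently, realifying and then using Proposition \ref{concon}), one gets a real unital complete order embedding $\bB_{\bC} \hookrightarrow \bB_{\bR}$ which carries $\bK_{\bC}$ into $\bK_{\bR}$, inducing a real unital complete order embedding $\bB_{\bC}/\bK_{\bC} \hookrightarrow \bB_{\bR}/\bK_{\bR}$ on the Calkin quotients. Composing with $\varphi$ yields a real ucp $\cS \to \bB_{\bR}/\bK_{\bR}$, which by hypothesis lifts to a real ucp $\psi : \cS \to \bB_{\bR}$. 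Applying Lemma \ref{ncomma} (together with the projection $(\bB_{\bR})_c \cong \bB_{\bR} \oplus \overline{\bB_{\bR}} \to \bB_{\bR}$ used in its proof) one extends $\psi$ complex linearly to obtain a candidate ucp lift $\tilde\psi: \cS_c \to \bB_{\bC}$.

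The main obstacle will be verifying that $\tilde\psi$ actually lifts $\Phi$ after the zig-zag through $\bB_{\bR}$, i.e.\ that $q_{\bK_{\bC}} \circ \tilde\psi = \Phi$ as maps from $\cS_c$ into $\bB_{\bC}/\bK_{\bC}$ (and not into the larger $\bB_{\bR}/\bK_{\bR}$ or a conjugate copy). This amounts to checking the commutativity of the diagram
\[
\begin{tikzcd}
\cS \arrow[r, hook] \arrow[d, "\psi"] & \cS_c \arrow[d, "\tilde\psi"] \arrow[r, "\Phi"] & \bB_{\bC}/\bK_{\bC} \arrow[d, hook] \\
\bB_{\bR} \arrow[r, twoheadrightarrow] & \bB_{\bR}/\bK_{\bR} & \bB_{\bR}/\bK_{\bR}
\end{tikzcd}
\]
and then using uniqueness of the complex linear extension. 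The compatibility follows by tracking the canonical identifications in Section 2 and the fact that $\Phi$ is already complex linear, so its real restriction determines it; any two complex ucp lifts of $\varphi$ (one being $\tilde\psi$, the other the desired lift of $\Phi$) are forced to agree on $\cS$ and then, by complex linearity, on $\cS_c$.
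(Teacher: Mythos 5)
Your overall strategy (reduce to the complex case by showing that every complex ucp map $\cS_c \to \bB_{\bC}/\bK_{\bC}$ lifts, then invoke the complex version of the statement and Theorem \ref{ComOLLP}) is viable and differs from the paper, which instead runs Kavruk's complex argument directly in the real category using the real versions of its ingredients (Corollary \ref{comeco} and Theorem \ref{Comexa}). The forward direction and the embedding $\bB_{\bC}/\bK_{\bC} \hookrightarrow \bB_{\bR}/\bK_{\bR}$ are fine. But the key step — producing the complex lift $\tilde\psi$ — has a genuine gap. First, the identification ``$(\bB_{\bR})_c \cong \bB_{\bR} \oplus \overline{\bB_{\bR}}$'' is false: the decomposition $B_c \cong B \oplus \bar{B}$ used in the proof of Lemma \ref{ncomma} is only valid when $B$ is a \emph{complex} C$^*$-algebra being complexified as a real space, whereas $\bB_{\bR}=B(\ell^2_{\bR})$ is genuinely real and $(\bB_{\bR})_c \cong \bB_{\bC}$ (and $\overline{\bB_{\bR}}$ is not even defined, since the bar construction in this paper applies to complex spaces). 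Second, and more seriously, the real ucp lift $\psi:\cS \to \bB_{\bR}$ supplied by the hypothesis has no reason to take values in the copy of $\bB_{\bC}$ sitting inside $\bB_{\bR}$; consequently Lemma \ref{ncomma} cannot be applied to $\psi$ (its codomain must be a complex C$^*$-algebra), and complexifying $\psi$ instead lands you in $(\bB_{\bR})_c \cong B(\ell^2_{\bC}\oplus \ell^2_{\bC})$ rather than in the original $\bB_{\bC}$. Your diagram reflects this: $\tilde\psi$ is drawn into $\bB_{\bR}$, where ``uniqueness of the complex linear extension'' gives no purchase, so the deferred commutativity check is exactly the point that fails.

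The gap can be closed by inserting one more map: the canonical ucp conditional expectation $E:\bB_{\bR}\to \bB_{\bC}$, $E(T)=\tfrac12(T+u^{-1}Tu)$, where $u$ implements multiplication by $i$ on the realified $\ell^2_{\bC}$. One checks $E(\bK_{\bR})\subseteq \bK_{\bC}$, so $E$ induces a ucp left inverse of your embedding of Calkin algebras. Then $E\circ\psi:\cS\to\bB_{\bC}$ is a real ucp lift of $\Phi|_{\cS}$ with the correct (complex) codomain; now Lemma \ref{ncomma} applies and yields a complex ucp $\tilde\psi:\cS_c\to\bB_{\bC}$ extending $E\circ\psi$, and the uniqueness clause of that lemma, applied to the two complex-linear ucp extensions $q_{\bK_{\bC}}\circ\tilde\psi$ and $\Phi$ of $\Phi|_{\cS}$, forces $q_{\bK_{\bC}}\circ\tilde\psi=\Phi$. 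With this repair your proof is correct; what it buys over the paper's route is that one does not need to re-examine Kavruk's proof of Proposition 7.4 in the real setting, at the cost of the expectation argument above.
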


{\bf Remarks.}\ 1)\ One of the big problems in the operator system theory is if every 3 dimensional complex operator system is exact, or equivalently has the lifting property?  (This is false for 4 dimensions \cite{Kavruk}.)   A negative solution to this problem immediately follows from the above if we can find a 3 dimensional real operator system which is not exact, or equivalently does not have the lifting property.  This may be much easier, since computing with 3 real variables (which often means with 2 variables after scaling) is easier than with 6 (i.e.\ 3 complex variables). 

This is related to the real version of the Smith-Ward problem: namely if for every $T \in B_{\bR}(H)$ there is a compact real linear operator 
with $w_n(T+K) = w_n(\overset{\cdot}{T})$ for all $n \in \bN$.
Here $w_n(x) = \{ \varphi(x) : \varphi \in {\rm ucp}(\cS,M_n) \}$ where $x \in \cS$.
The methods and results of Paulsen and Kavruk (in e.g.\ \cite[Section 11]{Kavruk})
all work in the real case. 
In particular, the 
real version of the Smith-Ward problem is equivalent to every 3 dimensional real operator system being exact, or equivalently having the lifting property.  Indeed it is equivalent to the real case of Paulsen's criterion; i.e.\ to that for every 3 dimensional real operator 
subsystem $\cS$ of the Calkin algebra $\bQ = \bB_{\bR}(\ell^2)/\bK_{\bR}(\ell^2)$, the inclusion $\cS \subseteq \bQ$ has a ucp lift $\cS \to \bB_{\bR}(\ell^2)$.

Finally we believe that it is also not known if we can find a 3 dimensional real or complex operator system which is not C$^*$-nuclear. We plan to examine some 3 dimensional real examples where  calculations
are feasible.

\medskip

2)\ The real case of \cite[Theorem 6.10]{Kavruk} is the same as the complex.  That is, Todorov's  proof works to yield:
a  finite dimensional operator system $\cS$ has the LP  if and only if $C^*_u(\cS)$ has LLP. 

\bigskip

 {\bf Examples.}   
 We check the real case of many examples from the complex tensor product system literature.  We will sometimes silently be using results above, and note that the results in \cite[Proposition  5.2]{Kavrukw} and  \cite[Corollary 9.6]{KPTT2} are just the same in the real case, as is the helpful
\cite[Lemma  5.2]{Kavruk}.  

 Real $\cS_n$ is not exact for $n \geq 2$ in the real case (by appealing to the complex case in \cite[Corollary 10.13]{Kavruk}).  Hence it is not CPFP nor has WEP (although this is obvious from Corollary
 \ref{wepvn}).  It does have the real lifting property (see the proof of Theorem \ref{realK}, where this is used). Whether it has DCEP is settled in the negative by the solution to the Kirchberg conjectures (see below).  Thus it is not $C^*$-nuclear, hence neither is the dual of $\cS_n$.
 Note $S_n^d$ is
 computed explicitly in \cite{FP} in the complex case, and the same works in the real case to exhibit the  dual of $\cS_n$ as a real space 
 of very simple matrices.  In particular $S_2^d$ is the span in $M_4(\bR)$ of $I$ and $\{ E_{12},  E_{21}, E_{34}, E_{43} \}$,
 hence the latter is a matrix system that is not $C^*$-nuclear nor DCEP.  Also $S_n$ is identifiable as in the complex case \cite{FP} with the real case of the space $T_n/J_n$ studied there.
 As in \cite[Section 10]{Kavruk}, the real  tridiagonal matrices $T_n$ is real C$^*$-nuclear, so has real DCEP, but not real WEP by e.g.\ Corollary
 \ref{wepvn}.  Similarly for the space of $3 \times 3$ real matrices with zero in the $1$-$3$ and $3$-$1$ entries
 (the real version of the system considered in \cite[Theorem 5.16]{KPTT1}).  By complexifying and appealing to the complex case in the latter reference, this system is real C$^*$-nuclear, hence is exact, SLLP and DCEP, but it does not have real WEP.   Similarly the real version of the 
 chordal graph $C^*$-algebras $\cS_G$ considered after the just cited theorem, are real C$^*$-nuclear. 
 
 Similarly, the real version $C_{\bR}(S^1)^{(n)}$ of the Connes-van Suijlekom-Farenick Toeplitz system \cite{CvS,F} of real $n \times n$ Toeplitz matrices clearly has complexification $C_{\bC}(S^1)^{(n)}$, so it is exact and has DCEP (the latter by \cite[Theorem 6.26]{F}).
 Thus it and its dual are $C^*$-nuclear and SLLP (but not WEP).   Its dual is the real span $C_{\bR}(S^1)_{(n)}$ in $C(\Tdb)$ of the monomials $z^k$ for $k = 0, \cdots, n-1$.  Indeed the complexification of the last space  is $C_{\bC}(S^1)_{(n)}$,  and the canonical map $\rho : C_{\bR}(S^1)^{(n)}  \to (C_{\bR}(S^1)^{(n)})^d$ is the `restriction' of the duality pairing  
  $\phi : C_{\bC}(S^1)^{(n)} \to (C_{\bC}(S^1)^{(n)})^d$ considered in \cite[Proposition 4.6]{CvS}, which is a complete order isomorphism.  Indeed it is easy to see that $\rho_c = \phi$ if we view  $(C_{\bC}(S^1)^{(n)})^d = ((C_{\bR}(S^1)^{(n)})^d)_c$,  
  so that $\rho$ is a complete order isomorphism.

 If $I = \bR (1,1,-1,-1,-1)$ then 
 $\bR^5/I$  does not have the real DCEP, hence not the real CPFP nor WEP, and is not exact (c.f.\ \cite[Proposition 6.1]{Kavrukw}), but it has the real lifting property.   Similarly for the quotient of $M_n(\bR)$ by $J$, the diagonal matrices with trace zero.  
 The complexification of $J$ is
 the complex diagonal matrices with trace zero, which was shown in \cite{FP} to be a kernel.  Thus $J$ is a kernel.  The complexification of $M_n/J$ 
 is the quotient of $M_n(\bC)$   by the complex diagonal matrices with trace zero.  This is known to have the lifting property, hence $M_n(\bR)/J$ has the real lifting property.  However for $n = 3$ it does not have the DCEP \cite[Theorem 10.2]{Kavruk},  hence not the real CPFP nor WEP.  
 Note that  $M_n(\bR)/J$ may be identified with  the obvious real version $\cW_n^{\bR}$ of the space $\cW_n$ considered in \cite{FP}.   Indeed the real variant  on
 $M_n(\bR)$ of the map
 $\phi$ in \cite[Theorem 2.6]{FP} clearly has complexification $\phi$ and kernel $J$, so is a complete quotient map inducing 
 $M_n(\bR)/J \cong \cW_n^{\bR}$.  
 As in \cite{FP} it is clear that  $\cW_n^{\bR}$ contains $\cS_n$, so it is not exact.  
 
  Pop considers $\cS_{\ell^n_\infty(\bC)}$ for $n \geq 5$ \cite{Pop}, and shows it does not have DCEP, hence not the real CPFP nor WEP nor the LP (and that the LP does not imply WEP).   However it is exact.  The same will be true for 
 $\cS_{\ell^n_\infty(\bR)}$.  
 Similarly there should be a real version of the span of the generators of the Cuntz algebra $\cO_n$,  which is a subquotient of $M_n$.  These can be expected as in the complex case to have all five properties except WEP and CPFP.  We have not checked this.  

 The noncommutative cubes of \cite{FKPT} are complexifications of real systems ${\rm nc}_{\bR}(n)$.  
  By definition ${\rm nc}_{\bR}(n)$ is the real span of $1$ and the copies of the
   (selfadjoint) generator of $\bZ_2$ in $C^*_{\bR}(\star_{j=1}^n \, \bZ_2)$.
These generators generate $C^*_{\bR}(\star_{j=1}^n \, \bZ_2)$, so ${\rm nc}_{\bR}(n)$ `contains enough unitaries'.  
So ${\rm nc}_{\bR}(n)$ is $n+1$-dimensional and it has operator system complexification ${\rm nc}_{\bC}(n)$.  It also has the universal property
in  \cite[Proposition 4.1 (2)]{FKPT}, and is a quotient of $\cS_n$, and of a tridiagonal matrix system $T_{n+1}$ as in \cite[Proposition 4.2]{FKPT}.
It is also a quotient of $\bR^{2n}$ via the map in \cite[Theorem 5.5]{FKPT}.   Indeed the complexification of this real map $r$ is exactly 
the map  in \cite[Theorem 5.5]{FKPT}, so the real map is ucp and a real complete quotient map (it is an exercise to see
that if $u$ is a real ucp map with $u_c$ a  complete quotient map then $u$ is a  complete quotient map).    The kernel $J_n$ of this quotient map is the $n-1$ dimensional space span of $$\{ (1,1, -1,-1, 0 ,\cdots , 0), (1,1, 0,0, -1,-1, 0 ,\cdots , 0), \cdots ,  (1,1, 0, \cdots , 0, -1,-1) \},$$ 
which is a null subspace.    Thus the dual $(\bR^{2n}/J_n)^d$ is identifiable with $J_n^\perp \subseteq (\bR^{2n})^d$, which in turn 
is identifiable with $\cR_{\bR} \subseteq \bR^{2n}$. Here $\cR_{\bR}$ is the real version of the system $\cR$ in  \cite[Theorem 5.9]{FKPT}.  Clearly its complexification is $\cR$.   If $r : \bR^{2n} \to {\rm nc}_{\bR}(n)$ is the real complete quotient map above then 
$r^d : {\rm nc}_{\bR}(n)^d \to (\bR^{2n})^d = \bR^{2n}$ is a unital coi.   Moreover $(r^d)_c = r^d$. 
Also ${\rm nc}_{\bR}(3)$ is 
not WEP since it is not   a (4 dimensional) real $C^*$-algebra. 
We have ${\rm nc}_{\bR}(3) \minten {\rm nc}_{\bR}(3) = {\rm nc}_{\bR}(3) \erten {\rm nc}_{\bR}(3)$, but this is not ${\rm nc}_{\bR}(3) \cten {\rm nc}_{\bR}(3)$, or else the same would be true for ${\rm nc}_{\bC}(3)$ which is false
by \cite[Theorem 4.9]{FKPT}.  
This also shows  ${\rm nc}_{\bR}(3)$ does not  have DCEP.     
We also know 
that ${\rm nc}_{\bR}(3)$ is not $C^*$-nuclear, hence its dual is not either.  Hence its dual is not  DCEP.
We thank Sam Harris for informing us of Kavruk's result that 
${\rm nc}_{\bR}(3)$ is a WEP detector, and its dual is a nuclearity detector \cite{Kav3}.   Thus ${\rm nc}_{\bR}(3)^d \minten \bB \neq 
{\rm nc}_{\bR}(3)^d \maxten \bB$, so ${\rm nc}_{\bR}(3)^d$ does not have the lifting property, so ${\rm nc}_{\bR}(3)$ is not exact.

  All two dimensional real operator systems of course have all the properties, indeed are isomorphic to $\ell^\infty_2(\bR)$.  We leave this as an exercise analogous to the complex case.

\begin{corollary} \label{comcpwep} A complex operator system has the complex CPFP (resp.\ WEP, DCEP, exactness, SLLP) if and only if it has the real
CPFP (resp.\ WEP, DCEP, exactness, SLLP).  
\end{corollary}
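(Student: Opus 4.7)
The plan is to reduce each equivalence to the complexification characterizations already established in this section. If $W$ is a complex operator system and $W_r$ denotes it regarded as a real operator system, then by Theorems \ref{ComplCPFP}, \ref{Complwep}, \ref{Compldcep}, \ref{Comexa}, and \ref{ComOLLP}, $W_r$ has real (P) if and only if $(W_r)_c$ has complex (P), for (P) ranging over CPFP, WEP, DCEP, exactness, and SLLP. As was noted in Section \ref{rcast} (via the isomorphism $\cS_c \cong \cS \oplus^\infty \bar{\cS}$ for complex operator systems $\cS$), the realification of a complex operator system admits a canonical unital complete order isomorphism $(W_r)_c \cong W \oplus^\infty \bar{W}$. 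Thus the corollary reduces to showing that $W$ has complex (P) if and only if $W \oplus^\infty \bar{W}$ has complex (P).

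For this reduction I would first observe that $\bar{W}$ has any of the five properties exactly when $W$ does: the conjugate-linear unital complete order isomorphism $W \to \bar{W}$ transports CPFP factorizations, WEP and DCEP expectations, exactness witnesses, and SLLP liftings to corresponding data on $\bar{W}$. Therefore it suffices to prove that each of the five properties is both preserved under the $\ell^\infty$-direct sum of two complex operator systems and inherited by each complete $M$-summand.

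Preservation under $\oplus^\infty$ is exactly Theorem \ref{oz} for SLLP; for the other four properties one uses the respective tensor product characterizations together with the distributivity of each of $\minten$, $\maxten$, $\elten$, $\erten$, and $\cten$ over $\oplus^\infty$, so that any equality of tensor norms on $W$ and on $\bar W$ with a fixed $\cT$ upgrades to the same equality on $W \oplus^\infty \bar W$. Inheritance by the complete $M$-summand $W \hookrightarrow W \oplus^\infty \bar{W}$ uses that $W$ is simultaneously a subsystem (via the canonical ucp inclusion) and a complete order quotient (via the ucp projection onto the first summand); composing these with the relevant tensor product diagrams restricts an equality $(W \oplus^\infty \bar W) \otimes_\alpha \cT = (W \oplus^\infty \bar W) \otimes_\beta \cT$ to $W \otimes_\alpha \cT = W \otimes_\beta \cT$. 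For CPFP one may also argue directly: composing a factorization net for $W \oplus^\infty \bar{W}$ through $M_{n_t}(\bC)$ on either side with the inclusion of and projection onto $W$ yields a CPFP factorization for $W$.

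The main technical point is the verification of distributivity of the various operator system tensor products over $\oplus^\infty$, together with the straightforward check that ucp projections and ucp inclusions interact correctly with each tensor structure. For $\minten$ this is immediate from the matrix-state description; for $\maxten$, $\elten$, $\erten$, and $\cten$ it follows from the corresponding universal properties (unital jointly completely positive maps, bimodule expectations onto $I(\cdot)$, and unital jointly completely positive maps with commuting ranges, respectively) by checking the natural splittings on each factor. Once these routine categorical facts are in place, the five equivalences follow mechanically from the complexification theorems.
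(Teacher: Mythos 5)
Your proposal is correct, and for two of the five properties it is literally the paper's argument: the paper proves the forward direction for exactness and SLLP exactly by observing that $\bar{\cS}$ inherits the property and that $\cS_c \cong \cS \oplus^\infty \bar{\cS}$, invoking Theorem \ref{oz} for the SLLP direct sum; and the converse direction in the paper is precisely your ``inheritance by the complemented summand via the tensorial characterization and functoriality'' argument. Where you diverge is in the forward direction for CPFP, WEP and DCEP. The paper remarks that ``a similar argument probably works'' for these but instead gives short direct arguments: for CPFP, that complex ucp maps are real ucp and $M_n(\bC)$ is a complemented real subsystem of $M_{2n}(\bR)$, so a complex factorization diagram becomes a real one; for WEP, that the real and complex biduals coincide and the complex injective envelope is a real injective envelope; for DCEP, that the real bicommutant in $B_{\bR}(H)$ equals the complex bicommutant in $B_{\bC}(H)$. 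Your uniform route instead requires verifying that each of the tensor identities characterizing these properties passes to $\ell^\infty$-sums. This is true, but the burden is slightly heavier than ``routine'': for $\elten$, $\erten$ and $\cten$ one should either identify $I(\cS_1 \oplus^\infty \cS_2)$ with $I(\cS_1)\oplus^\infty I(\cS_2)$, or (more economically) use that the coordinate projections extend by injectivity to ucp maps on the injective envelope, and that all the tensor products involved are functorial under completely positive (not merely ucp) maps so that the non-unital inclusions $\cS_i \hookrightarrow \cS_1\oplus^\infty\cS_2$ can be tensored. With those checks supplied, your argument closes; the paper's direct arguments buy brevity by sidestepping them, while your version is more systematic and would extend verbatim to any further property with a functorial tensor characterization.
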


\begin{proof} Suppose that $\cS$ is a complex exact operator system.  Then it is easy to see from the definition of exactness that $\bar{\cS}$ and 
$\cS_c \cong \cS \oplus \bar{\cS}$ are complex exact.   So $\cS$ is real exact. 

Similarly if $\cS$ is a complex operator system with SLLP.   It is easy to see that $\bar{\cS}$ has complex SLLP.    By Theorem \ref{oz} 
$\cS_c \cong \cS \oplus \bar{\cS}$  has complex SLLP.    So $\cS$ has real SLLP.   

A similar argument probably works for other properties here.  Alternatively, the first  is because complex ucp maps are real ucp. Also  
 recall that $M_n(\bC)$ may be viewed as a complemented real subsystem  of $M_{2n}(\bR)$.
 All of this then gives a real factorization diagram for $i_{\cS} : \cS \to \cS$ through 
 $M_{2n_t}(\bR)$ showing that $\cS$ has the real CPFP. 

 The second is because the real bidual equals the complex bidual,
complex ucp maps are real ucp, and the complex injective envelope is a real
injective envelope.

The third is similar to the second.  We 
observe that $B_{\bC}(H) \subset B_{\bR}(H)$.  It is easy to see that the real bicommutant of $\cS$ in the $B_{\bR}(H)$ is the complex bicommutant in $B_{\bC}(H).$

For the converses, let (P) be any one of the five properties.  If a complex operator system $\cS$ has the real (P)
then $\cS_c$ has complex (P). 
However since $\cS$ is complex complemented in 
$\cS_c$ we see by the tensorial characterization of (P), and functoriality, that $\cS$ has complex (P).
 \end{proof}

We leave the proof of the following as an exercise for the reader, using in part some of the principles we have used many times above.

 \begin{corollary} \label{rip}  Selfadjoint elements $(s_i)$ and $(t_i)$  in a real operator system $\cS$ have any one of the Riesz interpolation properties in {\rm \cite[Proposition 6.6--6.10]{Kavrukw}} if and only if they have the same property in $\cS_c$.
\end{corollary}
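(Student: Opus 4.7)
The plan is to use two facts repeatedly employed throughout the paper: (i) the canonical inclusion $j : \cS \hookrightarrow \cS_c$ is a unital complete order embedding by Proposition \ref{inco}, and (ii) the ``real part'' map $\mathrm{Re}: \cS_c \to \cS$ is ucp, with $\mathrm{Re} \circ j = \mathrm{id}_\cS$. Together, (i) and (ii) let us both transfer matrix-order inequalities from $\cS$ up into $\cS_c$, and pull selfadjoint witnesses back down from $\cS_c$ into $\cS$, without losing any order relations with elements that came from $\cS$.

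For the easier direction (property holding in $\cS$ for the tuples $(s_i),(t_j)$ implies the property holding for them in $\cS_c$), any selfadjoint interpolant or positive decomposition furnished by the property in $\cS$ is, via $j^{(n)}$, automatically a valid witness in $M_n(\cS_c)$; every order relation between the selfadjoint matrices involved is preserved because $j^{(n)}$ is a complete order embedding.

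For the converse, start with tuples in $M_n(\cS)_{\rm sa}$ satisfying the hypothesis of one of the Riesz properties. By (i) this hypothesis persists in $M_n(\cS_c)$, and the assumed Riesz property for those same tuples in $\cS_c$ produces a selfadjoint witness $w \in M_n(\cS_c)_{\rm sa}$ (for interpolation-type properties) or a decomposition into positives in $M_n(\cS_c)$ (for Riesz decomposition-type properties). Then set $z := \mathrm{Re}^{(n)}(w) \in M_n(\cS)_{\rm sa}$: since $\mathrm{Re}^{(n)}$ is completely positive and fixes the elements of $M_n(\cS)$ pointwise, every inequality $s_i \leq w \leq t_j$ (or positivity $w_k \geq 0$ together with a sum identity $\sum_k w_k = \sum_i s_i$) descends verbatim to the corresponding statement in $M_n(\cS)$.

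The only potential difficulty is matching each of the five formulations in \cite[Propositions 6.6--6.10]{Kavrukw} to this ``embed via $j$, apply the property in $\cS_c$, pull back by $\mathrm{Re}$'' template. But since each Riesz property is phrased purely as an existence statement about the matrix cones $\{M_n(\cS)^+\}$ (inequalities or sum-decompositions into positives), and since $j$ and $\mathrm{Re}$ are both completely positive with $\mathrm{Re} \circ j = \mathrm{id}_\cS$, the verification is completely mechanical rather than substantive, which is why the author could leave it as an exercise.
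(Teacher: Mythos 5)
Your proof is correct and is exactly the argument the paper has in mind: the paper gives no written proof (it leaves the corollary as an exercise ``using some of the principles we have used many times above''), and those principles are precisely your pair of maps --- the unital complete order embedding $j:\cS\to\cS_c$ of Proposition \ref{inco} to push the hypotheses and any real witness up, and the ucp projection $\mathrm{Re}:\cS_c\to\cS$ with $\mathrm{Re}\circ j=\mathrm{id}_{\cS}$ to pull a complex witness back down while preserving selfadjointness, positivity, and sum identities. Nothing further is needed.
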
 

The proof of \cite[Theorem 7.2]{Kavrukw} works in the real case for 
the real version of 
the $(k,m)$-tight Riesz interpolation property in $B$ in {\rm \cite[Definition 7.1]{Kavrukw}}.  It follows easily that a  real unital C$^*$-algebra $A$ has the $(k,m)$-tight Riesz interpolation property in $B$  if and only if 
$A_c$ has that property in $B_c$.   Since Kavruk showed that the latter is equivalent to  $B_c$ having complex WEP, we deduce: 

 \begin{corollary} \label{rip2}  A  real unital C$^*$-algebra $A$
 has the complete $(2,3)$-tight (resp.\ complete $(k,m)$-tight for all integers $k,m$) Riesz interpolation property in $B(H)$ 
 in {\rm \cite[Definition 7.1]{Kavrukw}}  if and only if $A$ has the real WEP. \end{corollary}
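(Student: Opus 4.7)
My plan is to prove this by reducing to Kavruk's complex result via complexification, using the machinery already assembled in the preceding paragraphs and in Section~\ref{stabil}.

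First, I would unwind the equivalences. By Theorem~\ref{Complwep}, the real unital C$^*$-algebra $A$ has real WEP if and only if $A_c$ has complex WEP. By Kavruk's theorem (the complex case of the result cited in the paragraph before the statement), $A_c$ has complex WEP if and only if $A_c$ has the complete $(2,3)$-tight Riesz interpolation property in $B(H_c)$ (equivalently, the complete $(k,m)$-tight property for all integers $k,m$). So the only thing left is to transfer the complete tight Riesz interpolation property between $A \subseteq B(H)$ and $A_c \subseteq B(H_c)$.

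The key step is to upgrade Corollary~\ref{rip} and the remark about \cite[Theorem 7.2]{Kavrukw} from the $(k,m)$-tight Riesz interpolation property in an ambient C$^*$-algebra $B$ to the \emph{complete} version in $B(H)$. For this I would first recall from the introduction that $B_{\bR}(H)_c \cong B_{\bC}(H_c)$ unitally completely order isomorphically, so the ambient space behaves correctly under complexification. Then I would mimic the argument sketched for Corollary~\ref{rip}: given tuples of selfadjoint elements in $M_k(A)$ and $M_m(A)$ witnessing the hypothesis of the complete $(k,m)$-tight Riesz property in $B(H)_c = B(H_c)$, their ``real parts'' (and ``imaginary parts'' via the canonical conjugation $\theta_{A_c}$ of Proposition~\ref{cco}) produce the corresponding data in $M_k(A)$ and $M_m(A)$; conversely, a real interpolant in $B(H)$ remains an interpolant in $B(H_c)$ via the inclusion $B(H) \hookrightarrow B(H_c)$ of Proposition~\ref{concon}. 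The symmetry of the matrix cones under $\theta$ ensures that interpolants can be symmetrized, as in the proof of Corollary~\ref{rip}.

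Putting these pieces together, $A$ has the complete $(2,3)$-tight Riesz property in $B(H)$ iff $A_c$ has the complete $(2,3)$-tight Riesz property in $B(H_c)$ iff $A_c$ has complex WEP iff $A$ has real WEP, and likewise for the complete $(k,m)$-tight property for all $k,m$. The main obstacle I anticipate is the ``completeness'' bookkeeping: the cited result in \cite{Kavrukw} is stated for tuples of selfadjoint elements in $A$, but the corollary asks for the matricial (``complete'') version, so I need to be careful that the complexification argument of Corollary~\ref{rip} applies uniformly to tuples in $M_k(A)$ and $M_m(A)$, using the identifications $M_n(A_c) = M_n(A)_c$ and the fact that the conjugation $\theta_{A_c}$ amplifies to a conjugation of $M_n(A_c)$ whose fixed points are $M_n(A)$. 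Once this is in place, the equivalences chain together without further difficulty.
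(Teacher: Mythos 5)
Your proposal is correct and follows essentially the same route as the paper: the paper likewise establishes that $A$ has the (complete) $(k,m)$-tight Riesz interpolation property in $B(H)$ if and only if $A_c$ has it in $B(H_c)$ (via the complexification machinery and the identification $B_{\bR}(H)_c \cong B_{\bC}(H_c)$), and then chains this with Kavruk's complex theorem and the equivalence of real WEP for $A$ with complex WEP for $A_c$. Your extra care about the matricial bookkeeping ($M_n(A_c) = M_n(A)_c$ and the amplified conjugation) is exactly the content the paper compresses into ``it follows easily.''
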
 

For real and complex operator systems Kavruk's proof of the above works except in one place, and indeed Lupini extended the complex version
to systems.  We expect that the operator system version of the above and many of the other results in \cite{Lupini} have real variants.
However we have not had time yet to check more than a couple of these.

\section{The real Kirchberg conjectures} \label{kk}

One difficulty that presents itself at the outset with the real Kirchberg conjectures, is that Kirchberg (and hence many others later) often used that every unital C$^*$-algebra $A$ is a quotient of the C$^*$-algebra of a free group, or equivalently that the Russo-Dye theorem holds
for $A$, which is false in the real case.  We write $\cC_{\bR}$ for the full real group C$^*$-algebra of $\bF_\infty$.

 Lemma 4.3 in  \cite{KPTT2}, which we said earlier works in the real case,  shows that 
 $I = \bR (1,1,-1,-1,-1)$ is a kernel in $\bR^5$, so that $\bR^5/I$ is a 4 dimensional real operator system for a unique matrix ordering (Proposition \ref{kuq}). 

By the real Connes embedding problem we mean the statement that every real von Neumann algebra $M$ 
with a faithful normal tracial state and separable predual is  embeddable as a von Neumann algebra in a trace preserving way in $R^\omega$.  The complex variant of the latter statement is a common formulation of the complex Connes embedding problem (see e.g.\ \cite{P}). 
 
\begin{theorem} \label{realK}  {\rm (Real Kirchberg conjectures)}\ The following conjectures are all equivalent (and are all false!):
\begin{itemize} \item [{\rm (i)}] $\cC_{\bR}$ has the real WEP.
 \item [{\rm (ii)}] $\cC_{\bR} \maxten \cC_{\bR} = \cC_{\bR} \minten \cC_{\bR}$. 
 \item [{\rm (iii)}] $\cC_{\bR} \maxten \cC_{\bR}$ has a faithful tracial state. 
\item [{\rm (iv)}] If $G$ is a free group then $C^*_{\bR}(G)$ has real WEP. 
\item [{\rm (v)}]  Any unital real C$^*$-algebra 
which is densely spanned by its unitaries, 
 has real QWEP. 
 \item [{\rm (v')}]  Any unital real C$^*$-algebra 
 has real QWEP. 
\item [{\rm (vi)}] Any real von Neumann algebra has real QWEP. 
\item [{\rm (vii)}] Any unital real C$^*$-algebra with the real LLP has the real WEP.
\item [{\rm (viii)}] $\cS_2 \minten \cS_2 = \cS_2 \cten \cS_2$ (fully real case).
\item [{\rm (ix)}] $C^*_{\bR}(\bF_2)   \minten  C^*_{\bR}(\bF_2) = C^*_{\bR}(\bF_2)   \maxten  C^*_{\bR}(\bF_2)$.
\item [{\rm (x)}] Real $\cS_2$ has the real DCEP.
\item [{\rm (xi)}] Every finite dimensional real operator system that has the real lifting property (LP) has the real DCEP.
\item [{\rm (xii)}]  The 4 dimensional real operator system $\cS = \bR^5/I$ satisfies $\cS \minten \cS = \cS \cten \cS$, where $I = \bR (1,1,-1,-1,-1)$. 
\item [{\rm (xiii)}] $\bR^5/I$ has real DCEP. 
\item [{\rm (xiv)}]  The real Connes embedding problem. 
\item [{\rm (xv)}]  The complex Connes embedding problem. 
 \end{itemize} \end{theorem}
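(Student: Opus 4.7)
The plan is to reduce each of the real conjectures (i)--(xiii) to its complex counterpart via the complexification results already established earlier in the paper, then invoke the well-known Kirchberg equivalences in the complex setting, and finally conclude the falsity by citing MIP*{} = RE through (xv). The equivalence (xiv)$\Leftrightarrow$(xv) is already cited in the introduction (via \cite{Oz,BDKS}), so the burden is to connect items (i)--(xiii) with their complex analogues.

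First I would handle the ``property'' statements. Using Proposition \ref{gpalg}, we have $(\cC_{\bR})_c = \cC_{\bC}$ and $C^*_{\bR}(G)_c = C^*_{\bC}(G)$ for any free group $G$. Combined with Theorem \ref{Complwep}, statements (i) and (iv) become the complex WEP of $\cC_{\bC}$, and similarly (vii) becomes the complex statement ``LLP $\Rightarrow$ WEP'' via Theorems \ref{Complwep} and \ref{ComOLLP}. For (x) and (xiii), Theorem \ref{Compldcep}, together with $(\cS_2)_c = \cS_2^{\bC}$ and $(\bR^5/I)_c = \bC^5/I_c$ (via Proposition \ref{coker}), converts these to the complex DCEP versions. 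The tensor statements (ii), (viii), (ix), (xii) complexify directly using Theorem \ref{mtenf}, Corollary \ref{comin}, and Corollary \ref{coc}, which give $(X \otimes_\alpha Y)_c = X_c \otimes_\alpha Y_c$ for $\alpha \in \{\min, \max, \mathfrak c\}$. (Equality of the real tensor products forces equality of the complexifications, and conversely equality of the complexifications restricts to equality on the real subsystems by Proposition \ref{inco}.)

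For (iii), I would show that $\cC_{\bR} \maxten \cC_{\bR}$ has a faithful tracial state if and only if its complexification $\cC_{\bC} \maxten \cC_{\bC}$ does: one direction uses that the complexification $\tau_c$ of a faithful tracial state $\tau$ is again faithful and tracial (as observed in Section \ref{rcast} and Theorem \ref{dcom}), and the other direction uses that the real part of a faithful complex tracial state remains faithful on the real algebra. For the QWEP statements (v), (v$'$), (vi), the key intermediate lemma is that a real C$^*$-algebra (resp.\ real von Neumann algebra) $A$ has real QWEP if and only if $A_c$ has complex QWEP: one direction complexifies the quotient $B \twoheadrightarrow A$ with $B$ real WEP using Proposition \ref{coker} and Theorem \ref{Complwep}; the other uses that $A_c \cong A \oplus \bar A$ so that a complex QWEP presentation $B \twoheadrightarrow A_c$ yields $B \twoheadrightarrow A$ by composing with the ucp projection onto $A$, and the resulting quotient still has WEP after realification. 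Statement (xi) is perhaps the subtlest: using Theorems \ref{Complwep}, \ref{Compldcep}, and \ref{ComOLLP}, a real finite dimensional system $\cS$ has the real LP (resp.\ DCEP) if and only if $\cS_c$ does, so real (xi) implies complex (xi); for the converse, given a complex finite dimensional operator system $\cT$ with complex LP one can either realify $\cT$ (noting it has real LP by Corollary \ref{comcpwep}) and appeal to real (xi) plus Corollary \ref{comcpwep} again, or deduce (xi) more cleanly by showing real (xi) $\Leftrightarrow$ complex (xi) via (x)$\Leftrightarrow$(xiii).

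With every condition pinned to its complex counterpart, the final step invokes Kirchberg's classical theorem (see \cite{P, Kavruk, KPTT2, Kavrukw}) that the complex analogues of (i)--(xiii) are pairwise equivalent and equivalent to (xv), together with the already-cited equivalence (xiv)$\Leftrightarrow$(xv). The falsity then follows from the MIP*{} = RE disproof of complex Connes embedding. The main obstacle I expect is (v): the usual complex argument writes every unital C$^*$-algebra as a quotient of a free group C$^*$-algebra via Russo--Dye, which fails in the real category. I would get around this by instead using the free real C$^*$-algebra on a self-adjoint generating set, together with the universal property of $C^*_{\bR}(\mathbb{F}_\infty)$ for algebras densely spanned by unitaries, checking that this still reduces (v) to a quotient of $\cC_{\bR}$ with WEP, which is (i). Statement (v$'$) then follows from (v) as in the complex case by adjoining sufficiently many unitaries via $M_2$-dilation, and (v$'$)$\Rightarrow$(vi) is immediate since von Neumann algebras are generated by their unitaries.
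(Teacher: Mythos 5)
Your overall strategy---complexify each condition and quote the complex Kirchberg equivalences---coincides with the paper's for items (i)--(iv), (viii)--(x), (xii), (xiii) and (iii), and your treatment of (vii) and (xi) is a legitimate, slightly more symmetric, variant of what the paper does. The genuine problem is in your handling of the QWEP block (v), (v$'$), (vi). Your key lemma is that $A$ has real QWEP if and only if $A_c$ has complex QWEP, and for the backward direction you write that ``$A_c \cong A \oplus \bar A$, so a complex QWEP presentation $B \twoheadrightarrow A_c$ yields $B \twoheadrightarrow A$ by composing with the ucp projection onto $A$.'' The decomposition $X_c \cong X \oplus \bar X$ holds for a \emph{complex} algebra $X$ (it is the splitting of the complexification of a realification), not for a genuinely real C$^*$-algebra: for example $\bH_c = M_2(\bC)$, which is simple and certainly not $\bH \oplus \bar{\bH}$. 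For real $A$ the map $A_c \to A$ is the real-part conditional expectation, which is ucp but not a $*$-homomorphism, so composing it with a surjective $*$-homomorphism $B \twoheadrightarrow A_c$ does not produce a QWEP presentation of $A$. To repair this you need the fact that real QWEP passes to the range of a ucp conditional expectation (the real case of Pisier's Corollary 9.61 in \cite{P}); that is exactly how the paper proves this equivalence in Corollary \ref{qweps}, but that corollary appears \emph{after} the theorem, so you cannot simply cite it here without independently checking the real case of Pisier's projection argument.

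Structurally, the paper does not reduce (v), (v$'$), (vi) to their complex counterparts at all: it closes the loop by the chain (iv) $\Rightarrow$ (v) $\Rightarrow$ (vi) $\Rightarrow$ (v$'$) together with the implication (vi) $\Rightarrow$ (vii) $\Rightarrow$ (i), where (vi) $\Rightarrow$ (vii) is the long step requiring the real cases of a sequence of results from \cite{P} (local liftability of $i_B : B \to B^{**}$, Theorem 9.67, max-injectivity, and the biproximinality input from Theorem \ref{tenp51}). Your route would, if the QWEP complexification lemma were properly established, bypass that verification; but the work does not disappear, it is relocated into the real case of Pisier's QWEP permanence properties. Your suggested $M_2$-dilation route from (v) to (v$'$) also needs an actual proof that $M_2(A)$ is densely spanned by unitaries for every unital real C$^*$-algebra $A$ (real Russo--Dye fails already for $C([0,1],\bR)$, whose only unitaries are $\pm 1$); the paper instead goes (v) $\Rightarrow$ (vi) by the real Russo--Dye theorem for von Neumann algebras and then (vi) $\Rightarrow$ (v$'$) via the real case of the fact that $A$ has QWEP whenever $A^{**}$ does.
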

 
\begin{proof}  By complexification in the way that we have done very many times above, items (i), (ii), (iv), (viii), (ix), and  (x) are each equivalent to the matching complex statement in the standard account of the (complex) Kirchberg conjectures (see e.g.\ \cite[Theorem 13.1]{P}). So they are each equivalent to each other (see e.g.\ \cite{P,Kavrukthesis,Kavrukw}), and are all false by \cite{MIP}.   Similarly for (iii) once we recall from Section \ref{rcast} that $\cS$ has a faithful real state if and only if $\cS_c$ has a faithful complex state.  Similarly for  (xii) and (xiii) since $\bC^5/J = \bC^5/I_c  \cong (\bR^5/I)_c$ is a complexification.  Thus the result follows from the complex case \cite[Theorem 5.14]{Kavrukw}.

Similarly  (vii) follows from  its complex variant, which in turn is is equivalent to (i)--(iv).  And (vii) implies (i) since $\cC_{\bR}$ has the real LLP.  This follows from 
Theorem \ref{ComOLLP} because 
$\cC = (\cC_{\bR})_c$ has the complex LLP and SLLP.

Clearly (v) implies (vi) by the Russo-Dye theorem for real von Neumann algebras \cite{Li,MMPS}.  And (iv) implies (v) because as in the complex case if 
a unital real C$^*$-algebra $A$ is densely spanned by its unitary group $G$, then there exists a $*$-homomorphism $C^*_{\bR}(G) \to A$. 

(i) $\Leftrightarrow$ (xi)\  It follows from \cite[Theorem 9.1]{KPTT2} that the complex (hence the real) variant of (i) is equivalent  to:
every complex operator system with the SLLP has the DCEP.
This implies every finite dimensional real operator system $\cS$ with $\cS_c$  having the lifting property (LP) has the DCEP.  Hence we have (xi).

(xi) $\Rightarrow$ (x)\ 
Complex $S_2$ has the complex SLLP by \cite[Proposition 9.9]{KPTT2}.  Hence real $S_2$ has the real SLLP, and hence has the LP.  Thus (x) is implied by (xi).

(vi) implies (v)'\ Given a unital real C$^*$-algebra $A$ then $A^{**}$ has QWEP by(vi).
So (v)' holds by the real version of part of \cite[Theorem 9.66]{P}, namely that $A$ has QWEP if $A^{**}$ has QWEP.  In turn the proof of this part  uses Corollaries 7.27 and  9.65 there.
(All numbers in the rest of this paragraph are as in \cite{P}.) 
Corollary 7.27 (and Proposition 7.26) are true in the real case by the same arguments, and we have used this elsewhere.  Corollary 9.65 uses Proposition  9.64 and Remark 9.13.  Proposition  9.64 cites (7.6), which is our Lemma \ref{maxex}.  Remark 9.13 is true in the real case.  It somewhat loosely cites (7.10) in Remark 7.20,
but the principles here (consequences of what Pisier calls `max-injectivity') are precisely the same in the real case, using our versions of the properties of the tensor products used.

We say nothing about the
proof of the equivalence with 
item (xiv) and (xv), since it is already known that the usual (complex) Connes embedding problem (CEP) is equivalent to the real version \cite{BDKS,Oz},
and also to the complex Kirchberg conjectures.   (We were not aware of this, and an earlier version of our paper gave a different proof, which we may perhaps present elsewhere.) 
Hence it is equivalent to 
the other items above. 

It remains to prove that (vi) implies (vii).  However all steps in the usual (lengthy) proof of this 
appear to work in the real case.  We give some more details.  
The part of the proof in \cite{P} involving decomposable maps is only actually applied to the canonical injective $*$-homomorphism $i_B : B \to B^{**}$ on a C$^*$-algebra $B$, which is completely positive.  So one may avoid using facts from the theory of decomposable maps 
(see \cite{BPr} for that).

The idea of the proof of (vi) implying (vii), following \cite{P}:  If $B$ is a real C$^*$-algebra with LLP
then by (vi) we see that $B^{**}$ has real QWEP.  We now need the real case of (i) implying (ii) in \cite[Theorem 9.67]{P}.  Let us take this for granted for now.  This yields that the linear map $i_B$ above satisfies that 
$I_{\cC_{\bR}} \otimes i_B$ is contractive as a map $\cC_{\bR} \minten B \to \cC_{\bR} \maxten B^{**}$.  Since we have a faithful $*$-homomorphic embedding 
$\cC_{\bR} \maxten B \subseteq \cC_{\bR} \maxten B^{**}$ (this follows by the same proof in the complex case, e.g.\ \cite[Proposition 7.26]{P}), it follows that we have a faithful $*$-embedding 
$\cC_{\bR} \minten B \subseteq \cC_{\bR} \maxten B$.  
Taking complexifications we see that $\cC \minten B_c = \cC \maxten B_c$ so that 
$B_c$ and hence $B$ have the WEP.  So (vii) holds.

Claim: The real case of (i) implying (ii) in \cite[Theorem 9.67]{P}, and its proof, is valid for a faithful unital $*$-homomorphism or ucp map $u$, and with $C_1 = \cC_{\bR}$: In the complex case this proof uses \cite[Corollary 9.40]{P}, which follows in the real case by complexifying.
 The proof referred to in the Claim also uses \cite[Theorem 9.38]{P}, that $u$ is locally liftable.  In the real case Eq.\ (9.7) there simply follows for our special $u$ by  functoriality of $\maxten$.  The rest of the proof that $u$ is locally liftable is as in the complex case.  There is an appeal to 
\cite[Proposition  7.48]{P}, but (certainly the part we need of) this latter Proposition is the same in the real case. 
More particularly, the argument should proceed as follows: for the inclusion $i_E : E \to C$ for a finite dimensional subsystem $E$ of $C$, fix a complete order embedding $E^d \subseteq \bB$, and let $s_E \in (E^d \minten C)^+ \subseteq (\bB \minten C)_+$ be the associated tensor.
Then $(I_{\bB} \otimes u)(s_E) \geq 0$ in $(\bB  \minten A)/(\bB  \minten I)$.  By Theorem \ref{tenp51} we see
$(I_{E^d} \otimes u)(s_E) \geq 0$ in $(E^d  \minten A)/(E^d  \minten I)$.  As in the argument for (2) implies (1) of \cite{KPTT2}, which we  checked earlier in the real case, $E^d  \minten I$ is biproximinal, and we can lift to an element in $(E^d  \minten A)^+$, which gives the desired completely positive lift $E \to A$.
\end{proof}

 {\bf Remark.}  
 1)\ One may of course multiply the equivalences in the last theorem, e.g.\ replacing $\cS_2$ by $C^*_{\bR}(\bZ_2 * \bZ_3)$ (see \cite[Theorem 5.14]{Kavrukw}) or other such examples appearing in the literature that work in the complex case.   

 \medskip
 
2)\  In connection with the last lines of the proof we mention that many other theorems in the extensive  local lifting theory  for  complex C$^*$-algebras and operator spaces and systems work in the real case.

\begin{corollary} \label{qweps} A real C$^*$-algebra $A$ is real QWEP  if and only if $A_c$ is complex QWEP, and  if and only if $A^{**}$ is real QWEP. 
If $A$ is densely spanned by unitaries (e.g.\ if $A$ is a von Neumann algebra) then this is also equivalent to the existence of a free group $G$ and a surjective $*$-homomorphism $\pi : C_{\bR}^*(G) \to A$ for which $I \otimes \pi$ induces a 
 $*$-homomorphism $\cC_{\bR} \minten C_{\bR}^*(G) \to \cC_{\bR} \maxten A$.  
\end{corollary}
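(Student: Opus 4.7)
The plan is to prove the three claimed equivalences in sequence, relying on complexification manipulations and results from the preceding sections.

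For the first equivalence, $A$ real QWEP $\iff$ $A_c$ complex QWEP: the forward direction is immediate. If $A = B/I$ with $B$ real WEP, then by Proposition \ref{coker} $A_c \cong B_c/I_c$, and by Theorem \ref{Complwep} $B_c$ has complex WEP, so $A_c$ is complex QWEP. For the reverse, I would reduce to von Neumann algebras via the bidual. Assuming $A_c$ is complex QWEP, the complex bidual stability of QWEP (\cite[Theorem 9.66]{P}) gives that $(A_c)^{**} \cong (A^{**})_c$ is complex QWEP. The remaining step is the real-vs-complex QWEP equivalence for the von Neumann algebra $A^{**}$ (addressed below), which combined with the real bidual stability (the second equivalence) yields that $A$ is real QWEP.

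For the second equivalence, $A$ real QWEP $\iff$ $A^{**}$ real QWEP: this is the real case of \cite[Theorem 9.66]{P}, already invoked in the proof of Theorem \ref{realK} in the step (vi) $\Rightarrow$ (v'), where the authors verified that the necessary ingredients (the real versions of \cite[Corollary 7.27]{P} and \cite[Corollary 9.65]{P}, our Lemma \ref{maxex}, and \cite[Remark 9.13]{P}) all transfer from the complex setting.

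For the third equivalence (the unitarily generated case): given $A$ densely spanned by its unitary group $\mathcal{U}(A)$, let $G$ be the free group on $\mathcal{U}(A)$ and $\pi: C^*_{\bR}(G) \to A$ the canonical $*$-epimorphism extending the group inclusion. For the forward direction, $A$ real QWEP implies $A_c$ is complex QWEP by the first equivalence, so Kirchberg's complex tensor characterization of QWEP produces a $*$-homomorphism $I \otimes \pi_c: \cC \minten C^*_{\bC}(G) \to \cC \maxten A_c$; by the embeddings $\cC_{\bR} \minten C^*_{\bR}(G) \subseteq \cC \minten C^*_{\bC}(G)$ from Corollary \ref{comin} and $\cC_{\bR} \maxten A \subseteq \cC \maxten A_c$ from Theorem \ref{mtenf} (together with Proposition \ref{gpalg}), this restricts to the desired real $*$-homomorphism. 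For the reverse, complexifying the given real map yields its complex analog, so $A_c$ is complex QWEP by Kirchberg's complex theorem, and hence $A$ is real QWEP by the first equivalence.

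The main obstacle is the real-vs-complex QWEP equivalence for von Neumann algebras used in the reverse direction of the first claim. For this I would invoke Kirchberg's tensor-norm characterization of QWEP for a vN algebra $M$ (via the equality of appropriate min and binormal tensor norms on $M \otimes M^{\rm op}$), an identity manifestly preserved under complexification by our results in Section \ref{tens}; as a sanity check, the analogous equivalence for tracial vN algebras recovers the known equivalence of real and complex CEP established in \cite{Oz, BDKS}.
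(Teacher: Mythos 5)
Your forward directions, your treatment of the second equivalence (quoting the $(vi)\Rightarrow(v')$ step of Theorem \ref{realK} for ``$A^{**}$ real QWEP $\Rightarrow A$ real QWEP'', and complexifying for the converse), and your handling of the free-group characterization via \cite[Theorem 9.67]{P} together with Proposition \ref{gpalg} and the embeddings $\cC_{\bR} \minten C^*_{\bR}(G) \subseteq \cC \minten C^*_{\bC}(G)$ and $\cC_{\bR} \maxten A \subseteq \cC \maxten A_c$ all essentially match the paper's proof.

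The gap is in the reverse direction of the first equivalence, i.e.\ ``$A_c$ complex QWEP $\Rightarrow A$ real QWEP''. The paper's argument is short: if $A_c$ is complex QWEP it is also \emph{real} QWEP (write $A_c=B/I$ with $B$ complex WEP; then $B$ has real WEP by Corollary \ref{comcpwep} and $I$ is a closed real ideal), and since $A$ is the range of the canonical ucp projection $A_c\to A$, the real case of \cite[Corollary 9.61]{P} (stability of QWEP under completely positive projections/conditional expectations) gives that $A$ is real QWEP. You instead detour through $A^{**}$ and then assert a real-vs-complex QWEP equivalence for von Neumann algebras via ``Kirchberg's tensor-norm characterization \ldots\ the equality of appropriate min and binormal tensor norms on $M\otimes M^{\rm op}$'', claiming this identity is ``manifestly preserved under complexification by our results in Section \ref{tens}''. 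That claim does not hold up: Section \ref{tens} establishes complexification compatibility only for $\minten$, $\maxten$, $\cten$, $\elten$ and $\erten$; the binormal norm (equivalently, min-continuity of the multiplication map $M\otimes M'\to B(H)$ in standard form) is not among them, and using it would require first proving the real version of that characterization and that real standard forms, commutants and the relevant norm identity complexify correctly --- none of which is supplied, and the half you actually need (passing from the complex identity down to real QWEP of $M$) is precisely the nontrivial one. Your tracial ``sanity check'' covers only the CEP case, not general von Neumann algebras. So as written the key implication is unproved; the repair is exactly the paper's projection argument, which also makes the bidual detour unnecessary.
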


\begin{proof} It follows from the complexification identity for quotients that if $A$ is real QWEP then $A_c$ is complex QWEP.  To prove the converse, if $A_c$ is complex QWEP then it is also real QWEP by Corollary \ref{comcpwep}.  Since there is a projection onto $A$, $A$ is real QWEP 
 by the real case of \cite[Corollary 9.61]{P}.    Thus we have that 
$A$ is real QWEP if and only if $A_c$ is complex QWEP.   

We saw in the proof of (v)' above that 
$A$ is real QWEP if $A^{**}$ is real QWEP.   Conversely,
if $A_c$ is complex QWEP then $(A_c)^{**} = (A^{**})_c$ is
complex QWEP.  So $A^{**}$ is real QWEP by the above. 

If $A_c$ is complex QWEP then by \cite[Theorem 9.67]{P} there exists a free group $G$ such that the usual surjective $*$-homomorphism $\pi : C_{\bR}^*(G) \to A$,  $I_c \otimes \pi_c$  induces a 
 $*$-homomorphism $\cC \minten C_{\bC}^*(G) \to \cC  \maxten A_c$.   Then restrict to the copy of $\cC_{\bR} \minten C_{\bR}^*(G)$. 
 This argument can be reversed.  Suppose that we have a surjective $*$-homomorphism $\pi : C_{\bR}^*(G) \to A$ for which $I \otimes \pi$ induces a 
 $*$-homomorphism as stated.  Complexifying this latter $*$-homomorphism shows that $A_c$ is complex QWEP by \cite[Theorem 9.67]{P}, so 
 $A$ is QWEP.

 Every real von Neumann is densely spanned by unitaries by the Russo-Dye theorem for such algebras \cite{Li,MMPS}.
\end{proof}

{\bf Remarks.}  1)\ It now follows 
as in the proof of Corollary \ref{comcpwep} that a  complex C$^*$-algebra  has the complex QWEP if and only if it has the real QWEP.

\smallskip

2)\ Several of the other characterizations of QWEP in \cite{P} (namely 9.69--9.75 there) are valid in the real case, by complexification.  We leave these to the reader.

\bigskip

It is known that the Kirchberg conjectures are equivalent to that the predual of any  von Neumann algebra is finitely representable in the trace class $S_1$.  Indeed Kirchberg showed that a 
C$^*$-algebra is QWEP if and only if its dual is finitely representable in  $S^1$; and that a von Neumann algebra is QWEP if and only if it is Banach space isometric to 
a Banach space quotient of $B(H)$.

\begin{theorem} \label{frr}  
The dual of a real von Neumann algebra $M$ is 
completely finitely representable in $M_*$.  Also, $M$ is QWEP if and only if
 it is 
completely isometric to 
an operator space quotient of $B(H)$ for a real Hilbert space $H$, and if and only if $M_*$ is completely finitely representable in the trace class $S^1$.
Hence the real Kirchberg conjectures are equivalent to that the dual of any real 
C$^*$-algebra, or predual of any real von Neumann,  is completely finitely representable in the trace class $S_1$. 
\end{theorem}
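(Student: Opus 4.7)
The plan is to deduce everything from the known complex versions of these results (due to Kirchberg--Junge--Pisier, see e.g.\ the treatment in \cite{P}) by complexification, using Corollary \ref{qweps} as the engine that links real and complex QWEP. First I would establish the key transfer lemma: for real operator spaces $X, Y$, the space $X$ is completely finitely representable in $Y$ if and only if $X_c$ is completely finitely representable in $Y_c$. The forward direction is easy: given a finite dimensional complex subspace $F \subseteq X_c$, enlarge it to $E_c$ where $E$ is a finite dimensional real subspace of $X$ containing the real and imaginary parts of a basis of $F$; then any near complete isometry $E \to Y$ complexifies to one $E_c \to Y_c$ by functoriality of complexification (using that real complete isometries extend uniquely to complex complete isometries on complexifications, as in Ruan's unique complexification theorem). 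The reverse direction uses that $X$ is real completely isometrically embedded in $X_c$ via $\kappa$ (and $X_c$ has the canonical projection $\mathrm{Re}: X_c \to X$ which is completely contractive as mentioned after equation~(\ref{ofr})), so any finite dimensional real $E \subseteq X$ gives $E \subseteq X_c$ complex, which embeds near-completely-isometrically into $Y_c$; restricting to $Y$ via the completely contractive real projection preserves the operator space structure on $E$.

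Next I would handle the identifications needed: for a real von Neumann algebra $M$ we have $M^* = (M_*)^{**}$ at the Banach space level but more importantly $(M_c)_* \cong (M_*)_c$ as operator spaces (complexification commutes with taking preduals of real von Neumann algebras; the real part of a normal functional on $M_c$ is a functional on $M$, and the decomposition $M_c = M + iM$ gives the predual decomposition). Similarly $B(H)_c \cong B(H_c)$ and $S^1_{\bR}(H)_c \cong S^1_{\bC}(H_c)$ completely isometrically. Operator space quotients commute with complexification by Sharma's result $(X/Y)_c \cong X_c/Y_c$ quoted in the introduction, so $M$ is completely isometric to an operator space quotient of $B(H)$ (real) if and only if $M_c$ is completely isometric to an operator space quotient of $B(H_c)$ (complex).

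With these identifications in hand, the first assertion (that $M^*$ is completely finitely representable in $M_*$) follows from the complex case applied to $M_c$, combined with the transfer lemma. For the equivalences characterizing QWEP: by Corollary \ref{qweps}, $M$ is real QWEP iff $M_c$ is complex QWEP; the complex theorem says this is equivalent to $M_c$ being completely isometric to an operator space quotient of a complex $B(H_c)$, and to $(M_c)_*$ being completely finitely representable in the complex $S^1$. By the identifications and the transfer lemma, these translate directly to the real statements about $M$, $M_*$, $B(H)$, and real $S^1$.

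The final claim about the Kirchberg conjectures follows by combining this with Theorem \ref{realK}: condition (vi) there is exactly that every real von Neumann algebra is QWEP, and for C$^*$-algebras $A$ the passage from $A$ to $A^{**}$ (which is QWEP iff $A$ is, again by Corollary \ref{qweps}) lets us reduce the C$^*$-algebra statement to the von Neumann algebra statement. The main obstacle is the transfer lemma for complete finite representability — one must be careful that the $\varepsilon$-near complete isometries behave well under complexification, particularly that a map $E_c \to Y_c$ which is near completely isometric and restricts to $E$ with values in $Y$ yields the desired real embedding without degrading the constant; this is handled by the averaging trick $\frac{1}{2}(T + \theta_Y \circ T \circ \theta_{E_c})$ using the canonical conjugations, which is completely contractive since $\theta_{E_c}, \theta_Y$ are.
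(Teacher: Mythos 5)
Your overall strategy (reduce everything to the complex theorems via Corollary \ref{qweps}) is the right instinct, and it is what the paper does for \emph{one} of the implications, but there is a genuine gap at the heart of your plan: the ``reverse'' direction of your transfer lemma, namely that $X_c$ completely finitely representable in $Y_c$ implies $X$ completely finitely representable in $Y$. The averaging map $\tilde{T} = \frac{1}{2}(T + \theta_Y \circ T \circ \theta_{E_c})$ does map $E$ into $Y$ and satisfies $\|\tilde{T}\|_{cb} \leq \|T\|_{cb}$, but it gives you no control from below: nothing forces a $(1+\epsilon)$-complete isomorphism $T: E_c \to Y_c$ to carry the real points of $E_c$ anywhere near $Y$. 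In the extreme case $T(E) \subseteq i\,Y$ the averaged map is zero on $E$. So $\tilde{T}$ need not be injective, let alone a near-complete isometry onto its range, and the same objection applies to composing with the projection ${\rm Re}: Y_c \to Y$. Since both your proof of the first assertion ($M^*$ completely finitely representable in $M_*$) and your proof of ``QWEP $\Rightarrow M_*$ completely finitely representable in $S^1$'' route through this direction, neither goes through as written. A secondary, fixable, gap: the ``if'' half of your claimed equivalence ``$M$ is an operator space quotient of a real $B(H)$ iff $M_c$ is an operator space quotient of $B(H_c)$'' does not follow from $(X/Y)_c \cong X_c/Y_c$ (that identity gives only the ``only if''); one must compose the completely contractive weak*-continuous projection $B_{\bR}((H_c)_r) \to B_{\bC}(H_c)$ with the complex quotient map and with ${\rm Re}: M_c \to M$, and check each factor is a complete quotient map.

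The paper avoids the problematic descent entirely. For the representability-in-$S^1$ direction it restricts the complex $(1+\epsilon)$-cb-embedding $E_c \to S^1_{\bC}(H_c)$ to the real subspace $E$ and then \emph{enlarges the target} rather than projecting: $S^1_{\bC}(H_c)$, viewed as a real operator space, is itself a subspace of the real trace class $S^1_{\bR}(H_c)$ (dual to the weak*-closed completely complemented inclusion $B_{\bC}(H_c) \subseteq B_{\bR}(H_c)$), so $E$ already sits $(1+\epsilon)$-cb-isomorphically inside a real $S^1$. This trick is special to the target $S^1$ and is exactly what a general ``reverse transfer lemma'' cannot give you. For the remaining implications --- QWEP $\Rightarrow$ operator space quotient of $B(H)$, quotient of $B(H)$ $\Rightarrow$ $M^*$ completely finitely representable in $S^1$, and the first assertion $M^*$ completely finitely representable in $M_*$ --- the paper does not complexify at all: it reruns the complex proofs (Pisier's Theorem 15.5 and the Effros--Junge--Ruan local reflexivity results) in the real setting. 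Only the closing implication ($M_*$ completely finitely representable in $S^1$ $\Rightarrow$ QWEP) is proved by complexification, and that one needs only the unproblematic \emph{forward} transfer (a real $(1+\epsilon)$-cb-isomorphism complexifies to one with the same constants, since $u_c$ is a restriction of $u^{(2)}$). The paper's remark following the theorem, on the failure of the multiplicative-domain argument for real $M_2$, is a further warning that naive real--complex transfer is treacherous precisely in this circle of ideas.
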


\begin{proof} If a C$^*$-algebra $A$ is real QWEP then
$A_c$ is complex QWEP, hence completely finitely representable in the trace class $S^1(H_c)$.  For any finite dimensional subspace 
$E$ of $A^*$, $E_c$ is a subspace 
 of $(A_c)^*$, so that there is a subspace 
 $F$ of $S_1(H_c)$ which is $(1+\epsilon)$-cb-isomorphic to $E_c$.  Hence $E$ is real $(1+\epsilon)$-cb-isomorphic to
 a finite dimensional real subspace 
 of $S^1(H_c)$.  However $B(H_c)$ is a weak* closed complemented
 subspace of $B_{\bR}(H_c)$, and  $S^1(H_c)$
 may be viewed as a real subspace of $S^1_{\bR}(H_c)$.  Here $H = \ell^2$.

The proof of \cite[Theorem 15.5]{P} works in the real case to show that, first, if a real von Neumann algebra $M$ is QWEP then it is an operator space quotient of $B(H)$.   Second, if $M$  is an operator space quotient of $B(H)$ then $M^*$ (and hence $M_*$) is completely finitely representable in $S^1$.  For this we need to notice that the proof in \cite{EJR} that $M^*$ is 
completely finitely representable in $M_*$, works in the real case.  Indeed Theorem 6.6 there, and its proof, and its corollary the strong local reflexivity of $R_*$, 
work similarly in the real case. 

Finally, suppose that $M_*$ is 
completely finitely representable in $S^1$.
Suppose that $F$ is a finite dimensional subspace 
 of $(M_c)_* \cong (M_*)_c$.  Choose 
 a finite dimensional subspace $E \subseteq M_*$ with
 $F \subseteq E_c$. There is a subspace 
 $F$ of $S_1(H)$ which is $(1+\epsilon)$-cb-isomorphic to $E$.  Hence $E_c$ is complex $(1+\epsilon)$-cb-isomorphic to $F_c \subseteq S^1(H)_c \cong S_1(H_c)$.  Again $H = \ell^2$.
 Hence $M_c$ has complex QWEP, and hence $M$ has real QWEP. 
\end{proof}

{\bf Remark.} The converses of the statements in the last result are probably true at the Banach space level, as they are in the complex case \cite{P}.
However the route of the complex proof of the latter result in 
\cite[Section 15]{P} 
is obstructed 
by the Jordan algebra part of the argument  (due originally to Ozawa \cite{Ozqwep}).  
Unfortunately  the perhaps crucial Theorem 5.5 there on multiplicative domain of positive unital maps is false in the real case, even for maps on $M_2$. 
Note that \cite[Corollary  9.70]{P} (which is also used) in the real case follows by the complex case and complexification, via Remark 1 above.

Nonetheless, since $B_{\bR}(X_r,\bR) \cong B_{\bC}(X,\bC)_r$ isometrically for any complex Banach space $X$ (see \cite[Proposition 1.1.6]{Li}), it is clear that if the dual of any real 
C$^*$-algebra is finitely representable in the real trace class $S_1$  
then the complex dual of any complex 
C$^*$-algebra $N$ is {\em real} finitely representable in the real trace class $S_1$,
and hence in the complex trace class.  Indeed the desired result seems to be true with the completely bounded norm replaced by the norm at the level of $2 \times 2$ matrices.

This is related to a hard open problem in the subject mentioned to us by N. Ozawa: it is not known 
in the complex case if QWEP is implied by the dual $A^*$ being (complex) $C$-finitely representable (that is, 
finitely representable up to a fixed constant $C > 1$).   If this were the case then we can complete 
the argument at the end of in the last paragraph 
as follows.  If we complexify once more,  
 we see that $(N_*)_c \cong (N_c)_*$ (or $(N^*)_c \cong (N_c)^*$) is complex 
$C$-finitely representable in the complex trace class.
This implies that $N_c$ has complex QWEP, and hence $N$ has real QWEP.  

\section{Abstract projections and Tsirelson's problem} \label{aptp}

In this section, we consider the real case of two separate problems, which are related in the complex case by \cite{AR23} and \cite{ART23}. The first involves abstractly characterizing those elements of an operator system $V$ whose image in the C*-envelope $C^*_e(V)$ is a projection. The second involves describing Tsirelson's correlation sets in terms of operator systems and their projections. By considering the real case of these problems, we will show that real and complex quantum correlations sets are equal and the real and complex quantum commuting correlation sets are equal.

\begin{definition}
    Let $V$ be a real or complex operator system. A self-adjoint element $p \in V$ is an \textit{abstract projection} if whenever $x \in M_n(V)_{\rm sa}$ has the property that for every $\epsilon > 0$ there exists $t > 0$ such that \[ \begin{bmatrix} x & x \\ x & x \end{bmatrix} + \epsilon \begin{bmatrix} p \otimes I_n & 0 \\ 0 & (I-p) \otimes I_n \end{bmatrix} + t \begin{bmatrix} (I-p) \otimes I_n & 0 \\ 0 & p \otimes I_n \end{bmatrix} \geq 0 \] it follows that $x \geq 0$.
\end{definition}

The main result of \cite{AR23} is that whenever $V$ is a complex operator system and $p \in V$ is an abstract projection, then there exists a Hilbert space $H$ and a complete order embedding $\pi: V \to B(H)$ such that $\pi(p)$ is a projection operator (see Theorem 5.3 there). In particular, the image of $p$ in $C^*_e(V)$ is a projection whenever $p$ is an abstract projection.

\begin{theorem}
    Let $V$ be a real operator system and suppose that $p \in V$ is an abstract projection. Then $p$ is an abstract projection in $V_c$. Consequently $p$ is a projection in $C^*_e(V)$.
\end{theorem}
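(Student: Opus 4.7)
The plan is to reduce the abstract projection condition for $p$ in $V_c$ at matrix level $n$ to the abstract projection condition for $p$ in $V$ at matrix level $2n$, exploiting the description of $V_c$ via $x+iy \mapsto c(x,y) \in M_2(V)$. First I would fix $x \in M_n(V_c)_{\rm sa}$ satisfying the hypothesis, and write $x = a+ib$ with $a \in M_n(V)_{\rm sa}$ and $b \in M_n(V)_{\rm as}$, as in Section~\ref{CE}. The hypothesis asserts that for each $\epsilon > 0$ there is some $t>0$ with
\[ M(\epsilon,t) \;:=\; \begin{bmatrix} x & x \\ x & x \end{bmatrix} + \epsilon \begin{bmatrix} p\otimes I_n & 0 \\ 0 & (I-p)\otimes I_n \end{bmatrix} + t \begin{bmatrix} (I-p)\otimes I_n & 0 \\ 0 & p\otimes I_n \end{bmatrix} \;\geq\; 0 \]
in $M_{2n}(V_c)$. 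Since $p$ and $I-p$ are real, the two projection terms contribute only to the real part, so by the definition of positivity in the complexification this is equivalent to $c(\Re M(\epsilon,t), \Im M(\epsilon,t)) \geq 0$ in $M_{4n}(V)$, where the real part is $\begin{bmatrix} a & a \\ a & a \end{bmatrix}$ plus the two projection terms and the imaginary part is $\begin{bmatrix} b & b \\ b & b \end{bmatrix}$.

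The key step is then a bookkeeping identity. I would introduce the canonical real orthogonal matrix $\sigma$ on $\bR^2 \otimes \bR^2 \otimes \bR^n$ that swaps the two size-$2$ tensor factors and fixes the size-$n$ factor, and verify by direct expansion on elementary tensors that
\[ \sigma^T \, c(\Re M(\epsilon,t), \Im M(\epsilon,t)) \, \sigma \;=\; \begin{bmatrix} c(a,b) & c(a,b) \\ c(a,b) & c(a,b) \end{bmatrix} + \epsilon \begin{bmatrix} p\otimes I_{2n} & 0 \\ 0 & (I-p)\otimes I_{2n} \end{bmatrix} + t \begin{bmatrix} (I-p)\otimes I_{2n} & 0 \\ 0 & p\otimes I_{2n} \end{bmatrix}. \]
In essence $\sigma$ just exchanges the ``outer abstract-projection'' block of size $2$ with the ``inner $c(\cdot,\cdot)$'' block of size $2$; both the data matrix and the projection-term matrices transform correctly because $p \otimes I_n$ is central relative to the swap. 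Since conjugation by $\sigma$ preserves positivity in $M_{4n}(V)$, the right-hand side is positive for every $\epsilon>0$ and the corresponding $t>0$. The right-hand side is exactly the matrix appearing in the abstract projection hypothesis applied to $c(a,b) \in M_{2n}(V)_{\rm sa}$; invoking that $p$ is an abstract projection in $V$ gives $c(a,b) \geq 0$ in $M_{2n}(V)$, which by construction of $V_c$ is precisely $x \geq 0$. Hence $p$ is an abstract projection in $V_c$.

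For the last sentence of the theorem I would apply Theorem 5.3 of \cite{AR23} to $V_c$ to conclude that the image of $p$ in $C^*_e(V_c)$ is a genuine projection. Using $C^*_e(V_c) = C^*_e(V)_c$ (noted in the proof of Theorem~\ref{ucomla}) together with the fact that $p$ lies in the real subalgebra $C^*_e(V) \subseteq C^*_e(V)_c$, the equations $p^* = p$ and $p^2 = p$ holding in the complexification automatically hold inside $C^*_e(V)$, giving the result. The only real obstacle is the tensor-factor bookkeeping in the displayed permutation identity: one must be careful to keep straight which size-$2$ factor corresponds to the abstract-projection sum and which to the $c(\cdot,\cdot)$ structure, since conflating the two orderings is easy and would break the argument.
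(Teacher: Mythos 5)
Your proposal is correct and follows essentially the same route as the paper's proof: translate the positivity hypothesis in $M_{2n}(V_c)$ into $M_{4n}(V)$ via $c(\cdot,\cdot)$, conjugate by the permutation swapping the two size-$2$ tensor factors (the paper's matrix $U$ is exactly your $\sigma$) to recover the abstract projection condition for $c(a,b)$ at level $2n$, and then deduce the final claim from Theorem 5.3 of \cite{AR23} together with $C^*_e(V) \subseteq C^*_e(V)_c = C^*_e(V_c)$. Your displayed permutation identity is the correct bookkeeping and matches the paper's computation.
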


\begin{proof}
    Suppose $x + iy \in M_n(V_c)_{\rm sa}$ and that for every $\epsilon > 0$ there exists $t > 0$ such that \[ \begin{bmatrix} x + iy & x + iy \\ x + iy & x + iy \end{bmatrix} + \epsilon \begin{bmatrix} p \otimes I_n & 0 \\ 0 & (I-p) \otimes I_n \end{bmatrix} + t \begin{bmatrix} (I-p) \otimes I_n & 0 \\ 0 & p \otimes I_n \end{bmatrix} \in M_{2n}(V_c)^+. \] By the definition of $M_n(V_c)^+$, we see that \[ \begin{bmatrix} x & -y \\ y  & x  \end{bmatrix} \otimes J + \epsilon I_2 \otimes \begin{bmatrix} p \otimes I_n & 0 \\ 0 & (I-p) \otimes I_n \end{bmatrix} + t I_2 \otimes \begin{bmatrix} (I-p) \otimes I_n & 0 \\ 0 & p \otimes I_n \end{bmatrix} \in M_{4n}(V)^+ \] where $J = \begin{bmatrix} 1 & 1 \\ 1 & 1 \end{bmatrix}$. Conjugating by the permutation matrix \[ U = \begin{bmatrix} I_n & 0 & 0 & 0 \\ 0 & 0 & I_n & 0 \\ 0 & I_n & 0 & 0 \\ 0 & 0 & 0 & I_n \end{bmatrix}, \] we get that \[ J \otimes \begin{bmatrix} x & -y \\ y & x \end{bmatrix} + \epsilon \begin{bmatrix} p \otimes I_{2n} & 0 \\ 0 & (I-p) \otimes I_{2n} \end{bmatrix} + t \begin{bmatrix} (I-p) \otimes I_{2n} & 0 \\ 0 & p \otimes I_{2n} \end{bmatrix} \in M_{4n}(V)^+. \] Since $p$ is an abstract projection for the real operator system $V$, we conclude that $c(x,y) \in M_{2n}(V)^+$ and thus $x+iy \in M_n(V_c)^+$. Therefore $p$ is an abstract projection in $V_c$ and hence is a projection in $C^*_e(V_c)$. Since $C^*_e(V) \subseteq C^*_e(V)_c = C^*_e(V_c)$, $p$ is a projection in $C^*_e(V)$ as well.
\end{proof}

Fix integers $n,m,k \in \mathbb{N}$. A \textit{correlation} (in an $k$-partite 
scenario) consists of a tuple of positive real numbers $\{p(\vec{a}|\vec{b}) : \vec{a} \in [n]^k, \vec{b} \in [m]^k\}$, where $[N] := \{1,2,3,\dots,N\}$. The correlation is \textit{non-signalling} if it satisfies the non-signalling condition that
\[ p_i(a_i|b) := \sum_{a_j, j \neq i} p(a_1,a_2,\dots,a_k|b_1,\dots,b_{i-1},b,b_{i+1},\dots,b_k) \] is well-defined, i.e. is the same for every $\vec{b}$ with $b_i = b$.
A correlation is \textit{quantum commuting} if there exists a complex C*-algebra $A$, projection-valued measures 
$\{P_a^b(1)\}_{a=1}^n, \{P_a^b(2)\}_{a=1}^n, \dots, \{P_a^b(k)\}_{a=1}^n$ in $A$ for each $b \in [m]$ so that $[P_a^b(i), P_{a'}^{b'}(j)] = 0$ whenever $i \neq j$, and a state $\varphi: A \to \mathbb{C}$ such that \[ p(\vec{a}| \vec{b}) = \varphi( \prod_{i=1}^k P_{a_i}^{b_i}(i) ) \] for all $\vec{a}, \vec{b}$  above.
In the case when $A$ is finite dimensional, we call $p$ a \textit{quantum correlation}. We let $C_{qc}(n,m,k)$ and $C_q(n,m,k)$ denote the sets of quantum-commuting and quantum correlations, respectively. It is known that these sets are convex subsets of real Euclidean space and that $C_{qc}(n,m,k)$ is always closed (c.f. Proposition 5.3 of \cite{Fritz})

\bigskip 

{\bf Remark}: The sets $C_{qc}$ and $C_q$ are usually defined in the following spatial manner. A tuple $p(\vec{a}|\vec{b})$ is in $C_{qc}(n,m,k)$ if there exists a Hilbert space $H$ and projection-valued measures $\{P_a^b(1)\}_{a=1}^m, \{P_a^b(2)\}_{a=1}^m, \dots, \{P_a^b(k)\}_{a=1}^m \subseteq B(H)$ for each $b \in [n]$ so that $[P_a^b(i), P_{a'}^{b'}(j)] = 0$ whenever $i \neq j$, and a unit vector $h \in H$ such that \[ p(\vec{a}| \vec{b}) = \langle \prod_{i=1}^k P_{a_i}^{b_i}(i) h, h \rangle. \] Letting $A$ be the C*-algebra generated by the projection-valued measures and $\varphi: A \to \mathbb{C}$ be $\varphi(x)=\langle xh, h\rangle$ shows such a correlation is a correlation in the sense we have defined it. The other direction is an application of the GNS construction on the C*-algebra $A$. For $C_q$, the usual definition is similar except that we assume $H = H_1 \otimes H_2 \otimes \dots H_k$ with each $H_i$ finite-dimensional, and we require each projection $P_a^b(k)$ to be an elementary tensor product of projections on the $H_i's$ with the $i$-th projection equal to $I_{H_i}$ for $i \neq k$. Such projections clearly generate a finite-dimensional C*-algebra with $[P_a^b(i), P_{a'}^{b'}(j)] = 0$ whenever $i \neq j$. To go from our definition of $C_q$ to the spatial definition is more delicate: see Remark 2.5 of \cite{DPP}, or more directly Theorem 5.3 of \cite{PSSTW}, for the bipartite case. Adapting the proof of Theorem 5.3 of \cite{PSSTW} gives the multi-partite case. 

\begin{theorem}[c.f.\ Theorem 1 of \cite{JNP+}, Theorem 4.1 of \cite{Fritz}, and Theorem 36 of \cite{Oz}]
    Kirchberg's conjecture is equivalent to $C_{qc}(n,m,2) = \overline{C_q(n,m,2)}$ for all $n, m \in \mathbb{N}$.
\end{theorem}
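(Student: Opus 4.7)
The plan is to realize the correlation sets as state spaces of tensor products of a universal real operator system built from abstract projections, and then invoke the real Kirchberg conjectures (Theorem \ref{realK}), using that real and complex Kirchberg are equivalent.

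First I would show that replacing complex C$^*$-algebras by real ones in the definitions of $C_{qc}(n,m,2)$ and $C_q(n,m,2)$ gives the same correlation sets. Given a complex realization $(A,\{P_a^b(i)\},\varphi)$, viewing $A$ as a real C$^*$-algebra (forgetting the complex structure) with state $\Re\varphi$ gives a real realization with the same probabilities, since $\varphi(\prod_i P_{a_i}^{b_i}(i)) \in \bR$ already. Conversely, for a real realization $(B,\{Q_a^b(i)\},\psi)$, the complexification $(B_c,\{Q_a^b(i)\},\psi_c)$ is a complex realization producing the same correlation, using Section \ref{rcast} that $\psi_c$ is a complex state and the remark that $Q_a^b(i)$ viewed inside $B_c$ is still a PVM with the required commutation. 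For $C_q$ one notes that the complexification of a finite-dimensional real C$^*$-algebra is finite-dimensional, and passing to the spatial picture as in the remark after the definition one may arrange the tensor factor structure to persist. So it suffices to prove the theorem for the real analogues.

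Next, following \cite{AR23, ART23} in the complex case, I would introduce the universal real operator system $\mathcal{V}(n,m)$ generated by $n$ abstract $m$-outcome projection-valued measures. The theorem just proved above (that abstract projections in a real operator system become genuine projections in $C^*_e(\cdot)$) together with Corollary \ref{fasy} and the universal property of $C^*_{\mathrm{u}}$, identifies $C^*_{\mathrm{u}}(\mathcal{V}(n,m)) \cong C^*_{\bR}(\star_{i=1}^n \bZ_m)$, the full real group C$^*$-algebra of a free product of copies of $\bZ_m$. Standard GNS/dilation arguments (the real analogues of \cite{Fritz, JNP+}) then yield:
\begin{itemize}
\item elements of $C_{qc}^{\bR}(n,m,2)$ correspond to states on $\mathcal{V}(n,m) \cten \mathcal{V}(n,m)$, using Corollary \ref{cocin} to dilate to commuting representations in $C^*_{\bR}(\star \bZ_m) \maxten C^*_{\bR}(\star \bZ_m)$;
\item elements of $\overline{C_q^{\bR}(n,m,2)}$ correspond to states on $\mathcal{V}(n,m) \minten \mathcal{V}(n,m)$, using that $\minten$ is detected by completely positive maps into matrix algebras (Corollary \ref{plem}) and that finite-dimensional real C$^*$-realizations produce such factorizations.
\end{itemize}

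Consequently $C_{qc}(n,m,2) = \overline{C_q(n,m,2)}$ for all $n,m$ iff the state spaces of $\mathcal{V}(n,m) \minten \mathcal{V}(n,m)$ and $\mathcal{V}(n,m) \cten \mathcal{V}(n,m)$ agree, iff these two operator systems coincide. A standard reduction to the dichotomic ($m=2$) case, using that any PVM is a direct sum of two-outcome measurements plus the functoriality of $\minten$ and $\cten$, reduces this to $\cS_n \minten \cS_n = \cS_n \cten \cS_n$ in the real case; a further standard reduction (free amalgamation over the unit) reduces to $n=2$. This is exactly item (viii) of Theorem \ref{realK}, hence equivalent to the real Kirchberg conjectures, which by that theorem are equivalent to the complex Kirchberg conjecture.

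The main obstacle will be the careful verification of the GNS-style correspondence between states on the commuting tensor product and commuting real PVM realizations: one needs the real C$^*$-envelope of $\mathcal{V}(n,m)$ to be the full real free product C$^*$-algebra (so that states dilate to the tensor C$^*$-algebra with the right commutation), which rests on the real abstract-projections theorem above and on Proposition \ref{gpalg}. A secondary technical point is to check that the closure $\overline{C_q^{\bR}}$ is the one arising from the minimal tensor product (not some intermediate one), which follows from Corollary \ref{comin} and the finite-dimensional matrix factorization characterization of $\minten$.
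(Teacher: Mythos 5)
This theorem is not proved in the paper at all: it is stated as a known result, with the proof delegated to the cited sources (Theorem 1 of \cite{JNP+}, Theorem 4.1 of \cite{Fritz}, Theorem 36 of \cite{Oz}); the paper's own new contribution in this section is the subsequent corollary that the real and complex correlation sets coincide. Your sketch is in the spirit of those references (and of Kavruk's operator-system reformulation), and your first paragraph essentially reproves that corollary, but as a proof of the stated theorem it has concrete gaps. The most serious one is the reduction chain at the end. The claim that ``any PVM is a direct sum of two-outcome measurements'' and that this, plus functoriality, reduces the general $(n,m)$ Tsirelson problem to $\cS_2 \minten \cS_2 = \cS_2 \cten \cS_2$ is not correct as stated: an $m$-outcome PVM is not a direct sum of two-outcome PVMs in any sense that transports tensor-product identities, and the passage from PVM-generated systems (which live in $C^*(\star_n \bZ_m)$ and whose two-outcome incarnation is the noncommutative cube ${\rm nc}(n)$, not $\cS_n \subseteq C^*(\bF_n)$) to $\cS_2$ is precisely the deep content of Kavruk's theorem and of \cite[Theorem 36]{Oz}, not a formal ``free amalgamation over the unit.'' As written, the hardest implication (Tsirelson for all $n,m$ implies Kirchberg) is assumed rather than proved.

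A second gap is the assertion that $\overline{C_q(n,m,2)}$ corresponds exactly to states on $\mathcal{V}(n,m) \minten \mathcal{V}(n,m)$. That the minimal tensor product correlations contain $\overline{C_q}$ is easy from Corollary \ref{plem}; the reverse containment (that every min-tensor-product state is a limit of finite-dimensional ones) is one of the genuinely nontrivial points in \cite{Fritz, JNP+} and rests on residual finite-dimensionality of $C^*(\star_n \bZ_m)$ or an equivalent approximation argument, which you do not supply. Finally, the detour through the real correlation sets is unnecessary for this (purely complex) statement -- it is harmless given Theorem \ref{realK}, but it adds the burden of re-verifying the real analogues of all of the above without shortening any of them. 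If you want a self-contained proof, the cleanest route is to prove the complex statement directly following \cite{Fritz} and \cite{Oz}, supplying the two missing ingredients named above; the real/complex comparison is then the separate (and easier) corollary that the paper actually proves.
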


The paper MIP*=RE \cite{MIP} proves that there exist $n$ an $m$ such that $C_{qc}(n,m,2) \neq \overline{C_q(n,m,2)}$. The proof is non-constructive, but gives estimates of $n,m \geq 10^{20}$. No sharp upper bound on the size of $n$ and $m$ needed for $C_{qc}(n,m,2) \neq \overline{C_q(n,m,2)}$ are known. For instance, it is open if $C_{qc}(3,2,2) = \overline{C_q(3,2,2)}$ or not.

If we replace the complex C*-algebra with a real C*-algebra in the definition of a quantum commuting correlation (respectively, a quantum correlation), then we get a potentially different set of tuples which we denote $C_{qc}^{\bR}(n,m,k)$ (or $C_q^{\bR}(n,m,k)$, respectively). By characterizing these sets in terms of operator systems and their states, we will see that there is no difference in the real and complex cases.

\begin{definition}
    Fix integers $n,m,k \in \mathbb{N}$. A \textit{quantum-commuting operator system} $\cS$ is an operator system with unit $e$ which is spanned by abstract projections  $$\{Q(\vec{a}|\vec{b}) : \vec{a} \in [m]^n, \vec{b} \in [k]^n\}$$ which satisfy the non-signaling condition that 
    \[ Q_i(a_i|b) := \sum_{a_j, j \neq i} Q(a_1,a_2,\dots,a_k|b_1,\dots,b_{i-1},b,b_{i+1},\dots,b_k) \] is well-defined in the sense above,  and the condition \[ \sum_{\vec{a} \in [m]^n} Q(\vec{a}|\vec{b}) = e \] for every $\vec{b} \in [k]^n$. If $\cS$ is $k$-minimal for some $k$, then we call $\cS$ a \textit{quantum operator system}.
\end{definition}

The above definition is more general than the one in \cite{AR23} and \cite{ART23}, where we only consider the case $k=2$. However, the argument is essentially unchanged. Because the sum over all $\vec{a}$ of $Q(\vec{a}|\vec{b})$ is the identity, $\{Q(\vec{a}|\vec{b}) \}_{\vec{a}}$ is a projection-valued measure for any fixed $\vec{b}$. Hence $Q(\vec{a}| \vec{b}) Q(\vec{a}' | \vec{b}) = 0$ whenever $\vec{a} \neq \vec{a}'$. It is now easily seen that \[ \prod_{i=1}^k Q_i(a_i|\vec{b}) = Q(\vec{a}|\vec{b}). \] 

\begin{proposition}
    The complexification of a quantum (resp.\ quantum-commuting)
     operator system $\cS$ is a quantum (resp.\ quantum-commuting) operator system.
\end{proposition}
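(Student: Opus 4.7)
The plan is to combine the theorem just proved (that abstract projections in $\cS$ remain abstract projections in $\cS_c$) with the elementary observation that the other defining conditions of a quantum-commuting operator system are purely linear and hence transfer automatically to the complexification.

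First I would note that the elements $\{Q(\vec{a}|\vec{b})\}$ are abstract projections in $\cS_c$ by the theorem immediately preceding this proposition. Next, since these elements span $\cS$ over $\bR$, they span $\cS_c = \cS + i\cS$ over $\bC$. The non-signaling condition
\[ \sum_{a_j, j \neq i} Q(a_1,\dots,a_k|b_1,\dots,b_{i-1},b,b_{i+1},\dots,b_k) = Q_i(a_i|b) \]
and the partition-of-unity condition $\sum_{\vec{a}} Q(\vec{a}|\vec{b}) = e$ are both linear identities among the $Q(\vec{a}|\vec{b})$ and the unit $e$, which is unchanged in passing to $\cS_c$. Hence these identities continue to hold in $\cS_c$, and the same elements $\{Q(\vec{a}|\vec{b})\}$ witness $\cS_c$ as a quantum-commuting operator system.

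For the quantum case, we must additionally verify that $k$-minimality is preserved. This was established in the lemma preceding Proposition \ref{mr}: if $\cS$ admits a unital complete order embedding into $C(X, M_k(\bR))$ for some compact Hausdorff $X$, then $\cS_c$ admits a unital complete order embedding into $C(X, M_k(\bC))$, which is $k$-minimal by Corollary \ref{cor: k-minimal characterization}. Applying this to a quantum operator system $\cS$ gives that $\cS_c$ is itself $k$-minimal, and combined with the first paragraph this yields that $\cS_c$ is a quantum operator system.

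There is no real obstacle here — the two essential inputs (preservation of abstract projections, and preservation of $k$-minimality) have already been established earlier in the excerpt, and the remaining axioms are manifestly linear. The proof amounts to assembling these ingredients and observing that the linear conditions pass to the complexification verbatim.
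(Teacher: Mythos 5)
Your proposal is correct and follows essentially the same route as the paper's proof: invoke the preceding theorem to see that the generators remain abstract projections in $\cS_c$, note that the non-signaling and summation conditions are linear identities preserved under the embedding $\cS \to \cS_c$, and use the lemma preceding Proposition \ref{mr} (complete order embedding into $C(X,M_k(\bR))$ complexifies to one into $C(X,M_k(\bC))$) to preserve $k$-minimality. Your write-up is somewhat more explicit than the paper's terse three-sentence argument, but the ingredients and their assembly are identical.
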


\begin{proof}
    Since each generator $Q(\vec{a}|\vec{b})$ of $\cS$ is an abstract projection in $\cS$, it is an abstract projection in $\cS_c$ as well. If $\cS$ is $k$-minimal for some $k$, then $\cS$ is also $k$-minimal. The non-signalling conditions still hold in $\cS_c$ since the embedding $\cS \to \cS_c$ is a linear isomorphism.
\end{proof}

The next result is known in the (finite dimensional) quantum setting (see \cite{Nav}). 
We do not know if it was known in the general setting. For the readers convenience we supply a quick 
proof. 

\begin{corollary}
    The real and complex Tsirelson correlation sets coincide.
\end{corollary}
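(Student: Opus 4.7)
The plan is to establish the two equalities $C_q^{\bR}(n,m,k) = C_q(n,m,k)$ and $C_{qc}^{\bR}(n,m,k) = C_{qc}(n,m,k)$ directly at the level of the C$^*$-algebraic definitions; any version of Tsirelson's problem then transfers verbatim between the real and complex settings. Both inclusions in each equality are straightforward once one has the correct starting data. I would not bother routing through the operator system characterization from \cite{AR23,ART23}, though the preceding proposition, combined with the observation in Section \ref{rcast} that real states on $\cS$ correspond to complex states on $\cS_c$, would give an alternative route.

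For $C_{qc}^{\bR}(n,m,k) \subseteq C_{qc}(n,m,k)$, I would start from a real realization $p(\vec{a}|\vec{b}) = \varphi\!\left(\prod_i P_{a_i}^{b_i}(i)\right)$, with $A$ a real C$^*$-algebra, $\{P_a^b(i)\}$ PVMs satisfying the prescribed commutation hypothesis across the indices $i$, and $\varphi$ a real state on $A$. Complexifying to $A_c$, each $P_a^b(i)$ remains a projection in $A_c$, and both the PVM and commutation identities persist because $A \hookrightarrow A_c$ is a unital $*$-homomorphism. The canonical extension $\varphi_c$ is a complex state on $A_c$ agreeing with $\varphi$ on $A$, as noted in Section \ref{rcast}, so the tuple $(A_c, \{P_a^b(i)\}, \varphi_c)$ witnesses $p \in C_{qc}(n,m,k)$. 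The quantum case goes through identically once one notes that $\dim_{\bC} A_c = \dim_{\bR} A$, so finite-dimensionality is preserved. For the reverse inclusion $C_{qc}(n,m,k) \subseteq C_{qc}^{\bR}(n,m,k)$, I would begin with a complex realization $(A, \{P_a^b(i)\}, \varphi)$ and regard $A$ as a real C$^*$-algebra by forgetting the $\bC$-module structure; the projections and the commutation relations are preserved. Define $\varphi_r := \Re \circ \varphi$, which is a (real, selfadjoint) state on $A$ regarded as a real C$^*$-algebra. The essential point is that $\prod_i P_{a_i}^{b_i}(i)$ is a product of pairwise commuting projections coming from distinct PVMs, hence is itself a projection, hence is selfadjoint; therefore $\varphi$ already takes a real value on it, and
\[ \varphi_r\!\left(\prod_i P_{a_i}^{b_i}(i)\right) = \varphi\!\left(\prod_i P_{a_i}^{b_i}(i)\right) = p(\vec{a}|\vec{b}). \]
The quantum case uses that a complex C$^*$-algebra of complex dimension $d$ is a real C$^*$-algebra of real dimension $2d$.

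The only genuine subtlety is the realness of the numerical correlation values, and it matters only for the harder direction $C_{qc} \subseteq C_{qc}^{\bR}$. One needs $\varphi\!\left(\prod_i P_{a_i}^{b_i}(i)\right) \in \bR$ in order for $\Re \varphi$ to record the same data, and this is supplied precisely by the quantum commuting structure via the elementary fact that a product of pairwise commuting projections is again a projection. Everything else amounts to routine bookkeeping with complexification of states and of C$^*$-algebras.
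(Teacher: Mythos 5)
Your argument is correct, and it takes a genuinely different route from the paper. The paper deliberately funnels both inclusions through the abstract-projection machinery of Section \ref{aptp}: it invokes the characterizations from \cite{AR23, ART23} of $C_{qc}$ and $C_q$ in terms of (quantum-commuting or quantum) operator systems and their states, uses the preceding proposition that the complexification of a real quantum(-commuting) operator system is again one, and for the reverse inclusion forgets the complex structure of the operator system and replaces $\varphi$ by $\Re\varphi$. Your proof instead complexifies or realifies the ambient C$^*$-algebra directly, which shortcircuits the operator-system characterizations entirely; the only nontrivial inputs are that $\varphi_c$ is a state on $A_c$, that the realification of a complex C$^*$-algebra is a real C$^*$-algebra on which $\Re\varphi$ is a real state, and that a product of pairwise commuting projections is a projection so that $\varphi$ is already real-valued (indeed nonnegative) on it. That last observation is exactly the same pivot the paper uses when it writes $\varphi(Q(\vec{a}|\vec{b})) \ge 0$, so the mathematical heart of the two proofs coincides; what differs is the wrapping. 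Your version is shorter and more self-contained, and correctly preserves finite-dimensionality in the $C_q$ case under the paper's algebraic definition of that set (the one via finite-dimensional C$^*$-algebras rather than the spatial tensor-product one). What the paper's longer route buys is thematic coherence with Section \ref{aptp} and, implicitly, the real analogues of the operator-system characterizations of the correlation sets, which have independent interest; note that the paper's reverse inclusion tacitly relies on the real version of the representation theorem for abstract projections established just before, a dependency your argument avoids altogether.
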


\begin{proof}
    It is shown in \cite{AR23} that each 
    $p \in C_{qc}(n,m,k)$ if and only if there exists a complex quantum commuting operator system $\cS = \text{span} \{Q(\vec{a}|\vec{b})\}$ and a state $\varphi: \cS \to \mathbb{C}$ such that $p(\vec{a}|\vec{b}) = \varphi(Q(\vec{a}|\vec{b}))$. Suppose $p \in C_{qc}^{\bR}(n,m,k)$. Then there exists a real C*-algebra, projection valued measures $\{P_a^b(k)\} \subseteq A$, and a state $\varphi: A \to \mathbb{R}$ such that \[ p(\vec{a}| \vec{b}) = \varphi( \prod_{i=1}^k P_{a_i}^{b_i}(i) ). \] As the linear span of $Q(\vec{a}|\vec{b}) := \prod_{i=1}^k P_{a_i}^{b_i}(i)$ defines a real quantum-commuting operator system, the same is true of its complexification, and the state $\varphi_c$ agrees with $\varphi$ on the generators $Q(\vec{a}|\vec{b})$. Thus $C_{qc}^{\bR}(n,m,k) \subseteq C_{qc}^{\bR}(n,m,k)$. On the other hand, given a complex quantum commuting operator system $\cS$ with generators $Q(\vec{a}|\vec{b})$ and state $\varphi: \cS \to \bC$, we obtain a real quantum-commuting operator system by forgetting the complex structure, and a real state by setting $\varphi'(x) = \Re(\varphi(x))$. Since $\varphi(Q(\vec{a}|\vec{b})) \geq 0$, $p(\vec{a}|\vec{b}) = \varphi(Q(\vec{a}|\vec{b})) = \varphi'(Q(\vec{a}|\vec{b})).$ So $C_{qc}(n,m,k) = C_{qc}^{\bR}(n,m,k)$. For quantum correlations, the arguments are the same except we note that when $A$ is finite-dimensional, $A$ is $d$-minimal for some $d$. This follows easily from Corollary \ref{cor: k-minimal characterization} 
    and \cite[Theorem 5.7.1]{Li}).  
    It is shown in \cite{ART23} that each 
    $p \in C_{q}(n,m,k)$ if and only if there exists a ($d$-minimal) quantum operator system $\cS = \text{span} \{Q(\vec{a}|\vec{b})\}$ and a state $\varphi: \cS \to \mathbb{C}$ such that $p(\vec{a}|\vec{b}) = \varphi(Q(\vec{a}|\vec{b}))$. From these facts we see that $C_{q}(n,m,k) = C_{q}^{\bR}(n,m,k)$.
\end{proof}

{\bf Acknowledgements.} This work was presented in two lectures at 
"Operator Systems and their Applications" BIRS workshop 25w5405.  We thank the organizers, and the 
BIRS institute, and thank many of the attendees there for their comments, particularly Vern Paulsen and Taka Ozawa.   We thank Caleb McClure for spotting some typos in the first ArXiV version, B. Xhabli for an interchange related to Theorem \ref{fdom}, and the referee for several useful comments. 
D.P. Blecher acknowledges support by NSF Grant DMS-2154903.  T. Russell  is supported by grants from  the Binational Science Foundation (BSF-2024161) and the TCU Research and Creative Activities Fund.

\end{document}